\documentclass[reqno]{amsart}

\usepackage{amsmath,amsthm,amssymb,amsfonts}

\usepackage{aligned-overset}
\usepackage{enumitem}
\usepackage[margin=35mm]{geometry}
\usepackage{mathtools}
\usepackage{tikz-cd}
\usepackage{xspace}
\usepackage{fixme}
\fxsetup{
  status=final, % show notes; change to final to hide
}

\newlist{equivalence}{enumerate}{1}
\setlist[equivalence]{label=(\alph*)}

\usepackage[colorlinks=true,
            linkcolor=blue,
            citecolor=blue,
            urlcolor=blue]{hyperref}

\newtheorem{theorem}{Theorem}[section]
\newtheorem{corollary}[theorem]{Corollary}
\newtheorem{lemma}[theorem]{Lemma}
\newtheorem{proposition}[theorem]{Proposition}
\newtheorem{example}[theorem]{Example}

\theoremstyle{definition}
\newtheorem{definition}[theorem]{Definition}

\theoremstyle{remark}
\newtheorem{remark}[theorem]{Remark}

\numberwithin{equation}{section}

\newcommand{\C}{\mathbb{C}}

\newcommand{\complex}{\mathbb{C}}

\newcommand{\integers}{\mathbb{Z}}

\DeclareMathOperator{\Hom}{Hom}
\DeclareMathOperator{\aut}{Aut}
\DeclareMathOperator{\der}{Der}

\DeclareMathOperator{\inn}{Inn}
\DeclareMathOperator{\Ad}{Ad}
\DeclareMathOperator{\Sym}{Sym}

\newcommand{\hd}{\smash{\hat{\delta}}}

\newcommand{\id}{\operatorname{id}}
\newcommand{\gau}{\operatorname{gau}}
\newcommand{\omegat}{\tilde{\omega}}

\newcommand{\Dergr}{\operatorname{Der}_{\textup{gr}}}
\newcommand{\Derext}{\operatorname{Der}_{\textup{ext}}}
\newcommand{\LConetwo}{L_{\complex}(1,2)}
\newcommand{\g}{\mathfrak{g}}
\newcommand{\deltat}{\tilde{\delta}}
\newcommand{\paraa}[1]{\big(#1\big)}
\newcommand{\parab}[1]{\Big(#1\Big)}

\newcommand{\kronr}{\triangleright}
\newcommand{\kronl}{\triangleleft}

\newcommand*{\Star}{$^\ast$\nobreakdash}

\newcommand{\cf}{\mbox{cf.}\xspace}				% confer 
\newcommand{\eg}{\mbox{e.\,g.}\xspace}			% exempli gratia	
\newcommand*{\ie}{\mbox{i.\,e.}\xspace}			% id est
\author{Joakim Arnlind}
\address{Department of Mathematics\\Link\"oping University\\Sweden}
\email{joakim.arnlind@liu.se}

\author{Stefan Wagner}
\address{Department of Mathematics and Natural Sciences \\ Blekinge Institute of Technology\\Sweden}
\email{stefan.wagner@bth.se}

\title[Factor Systems and geometric structures of strongly graded rings]{Factor Systems and geometric structures \\ of strongly graded rings}

\subjclass[2020]{Primary 16W50, 16W25; Secondary 46L87}

\keywords{Strongly graded ring, factor system, module frame system, derivation, lifting, Atiyah sequence, curvature, Leavitt path algebra}

\begin{document}

\begin{abstract}
Graded rings provide a natural algebraic framework for encoding symmetry via decompositions into homogeneous components indexed by a group, together with multiplication rules reflecting the group operation.
Among graded rings, strongly graded rings form a particularly well-behaved and structurally rich class.

In this paper we introduce a notion of factor systems for strongly graded rings, consisting of algebraic data that encode both the bimodule structure of the homogeneous components and their multiplication relations. 
In particular, this framework makes it possible to carry~out explicit computations.

We show that strongly graded rings with fixed principal component are classified, up to isomorphism, by conjugacy classes of such factor systems. 
Conversely, every abstract factor system gives rise to a strongly graded ring realizing it. 
In this way, the global structure of a strongly graded ring can be reconstructed from algebraic data on the principal component together with the grading group.

Factor systems also provide a convenient framework for studying the problem of lifting derivations from the principal component to graded derivations of the whole ring. 
We derive explicit compatibility conditions for the existence of such lifts and interpret the resulting obstruction in cohomological terms. 
This leads to an algebraic analogue of the Atiyah sequence for strongly graded rings and to curvature-type invariants measuring the failure of graded lifts to form Lie algebra homomorphisms. 

The theory is illustrated by applying it to Leavitt path algebras.
\end{abstract}

\maketitle

%\listoffixmes

\tableofcontents

\section{Introduction}

Graded rings, which throughout this paper are assumed to be graded by groups, play a~fundamental role in many areas of mathematics by encoding symmetry, decomposition, and additional algebraic structure. 
A grading decomposes a ring into components indexed by a group, allowing one to study the ring through its homogeneous pieces and the relations between them.

More precisely, if $G$ is a group, then a ring $S$ is called \emph{$G$-graded} if it admits a decomposition
\begin{equation*}
    S = \bigoplus_{g\in G} S_g
\end{equation*}
into additive subgroups $(S_g)_{g\in G}$, called the \emph{homogeneous components}, 
such that
\begin{equation*}
    S_g S_h \subseteq S_{gh}
\quad \text{for all } g,h \in G .
\end{equation*}
The component $S_e$ corresponding to the identity element $e\in G$ is a subring of $S$, called the \emph{principal component}.

For a comprehensive treatment of graded rings we refer to the monograph of 
C.~N\u{a}st\u{a}sescu and F.~van Oystaeyen, \emph{Graded Ring Theory}~\cite{NaOy82}.

Gradings provide a powerful structural tool: many global properties of a graded ring - such as ideals, primeness, simplicity, and module-theoretic behavior - can often be studied through the interaction of its homogeneous components.
This viewpoint has proved particularly fruitful in the study of strongly graded rings and their generalizations.

A $G$-graded ring $S$ is called \emph{strongly $G$-graded} if
\begin{equation*}
    S_g S_h = S_{gh}
\quad \text{for all } g,h \in G .
\end{equation*}
In this case each homogeneous component $S_g$ is an invertible $S_e$-bimodule. 
Strongly graded rings form a distinguished subclass of graded rings, as the strong grading condition forces maximal compatibility between the homogeneous components and leads to a rich structural theory (see, for instance,~\cite{NaOy82}).

Strong gradings arise naturally in a wide range of mathematical contexts. 
Important examples include crossed products and skew group rings in ring theory, saturated Fell bundles in operator algebra theory, and Hopf--Galois extensions in noncommutative geometry. 
%In many situations the grading is induced by symmetries or group actions, and the strong grading condition ensures that these symmetries are reflected faithfully in the algebraic structure.

Despite their ubiquity, the internal structure of strongly graded rings is often difficult to describe explicitly.  
The guiding idea of this work is that strongly graded rings admit a structural description in terms of algebraic data attached to the principal component and the grading group, thereby extending the classical notion of a factor system from group extension theory.

The problem of describing extensions of algebraic structures in terms of auxiliary data has a long history.  
In group theory, the classification of extensions of a group $G$ by a group $N$ was already studied in Schreier’s PhD thesis from 1923 (see also \cite{Schreier1925}) and in the work of Baer~\cite{Baer34} in the 1930s.  
A central insight of this theory is that a group extension can be encoded by a \emph{factor system} satisfying suitable compatibility conditions.  
This viewpoint later became a cornerstone of the cohomological theory of group extensions developed by Eilenberg and MacLane~\cite{MacEil42,EilenbergMacLane}.

An analogous philosophy appears in the $C^\ast$-algebraic setting of group actions.  
For free~actions of compact quantum groups on unital $C^\ast$-algebras, a generalized factor system theory has been developed, encoding such actions via data associated with the fixed point algebra and the representation theory of the quantum group~\cite{SchWa15,SchWa16,SchWa17}.  
The resulting framework yields a versatile and effective tool for analyzing noncommutative principal bundles.

The main contribution of this paper is the introduction of a notion of factor systems for strongly graded rings.
Roughly speaking, a factor system associated with a strongly graded ring encodes two fundamental aspects of the graded structure:

\pagebreak[3]
\begin{itemize}
\item 
    the bimodule structure of the homogeneous components over the principal component,
\item 
    the multiplication relations between these components.
\end{itemize}
In this way, the global structure of the graded ring can be reconstructed from algebraic data on the principal component together with the grading group.

This perspective provides a flexible framework for studying strongly graded rings. 
In particular, it enables classification results, the study of permanence properties, applications of $K$-theory, and explicit computations in concrete examples, as well as the construction of invariants and characteristic classes.

%This perspective provides a flexible framework for studying strongly graded rings. In particular, it enables
%\begin{itemize}
%\item 
%    classification results for strongly graded rings with fixed principal component,
%\item 
%    the study of permanence phenomena, \ie, which algebraic properties of the grading group and the principal component pass to the graded ring,
%\item 
%    the application of $K$-theoretic methods,
%\item 
%    the construction of invariants and characteristic classes, and
%\item 
%    explicit computations in concrete examples.
%\end{itemize}
Thus, factor systems allow one to reduce structural questions about strongly graded rings to algebraic data on the principal component and the grading group.

Furthermore, the geometry of strongly graded rings has received comparatively little systematic attention.
Their close relationship to Hopf--Galois extensions, which form a natural subclass, suggests that strongly graded rings may be viewed as algebraic models for noncommutative principal bundles.

In differential geometry, connections and curvature arise from the Atiyah sequence associated with a principal bundle.  
For a smooth principal bundle $q \colon P \to X$ with structure group $G$, the Atiyah sequence
\[
    0 \longrightarrow \mathfrak{gau}(P) \longrightarrow \mathcal{V}(P)^G \longrightarrow \mathcal{V}(X) \longrightarrow 0
\]
relates $G$-equivariant vector fields on $P$ to vector fields on the base manifold $X$.  
Sections of this sequence correspond to connections, and the associated curvature measures the failure of such sections to define Lie algebra homomorphisms.  
This structure underlies the classical Chern--Weil construction of characteristic classes.

Motivated by this picture, we develop an algebraic analogue of the Atiyah sequence in the setting of strongly graded rings (\cf~\cite[Sec.~7]{SchWa21}).
Using the factor system associated with a strongly graded ring, we study the problem of lifting derivations from the principal component to graded derivations of the entire ring.  
%We show that the obstruction to such lifts is given by a cohomological invariant associated with the factor system.
We show that the obstruction to such lifts can be expressed in terms of a cohomological invariant associated with the factor system.  
This leads naturally to a notion of curvature that measures the failure of graded lifts to define a Lie algebra homomorphism, thereby providing an algebraic counterpart to curvature in the classical setting.

\subsection*{Conventions and notation}

Throughout the paper, the identity element of a group is denoted by $e$.
All rings are assumed to be associative and unital with identity element denoted by $1$.
For a ring $R$ and integers $n,m \ge 1$, we write $M_{n,m}(R)$ for the set
of $n \times m$ matrices with entries in $R$.
Elements of $M_{n,1}(R)$ and $M_{1,n}(R)$ are interpreted as column and row
vectors, respectively.
For a matrix $A \in M_{n,m}(R)$, we denote by $A^t \in M_{m,n}(R)$ its transpose.
Matrix multiplication is used whenever the dimensions are compatible.

We also make use of Leavitt path algebras associated with directed graphs.
Since their detailed construction is not central to the main exposition,
we recall their definition and basic properties in Appendix~\ref{sec:LPA}.

\subsection*{Organization of the paper}

%In Section~\ref{sec:prelim} we collect the definitions, notation, and auxiliary results that will be used throughout the paper.

In Section~\ref{sec:facsys} we develop the theory of \emph{factor systems associated with strongly graded rings}.

We begin by recalling the motivating example of crossed products and then show how factor systems arise naturally from arbitrary strongly graded rings in Section~\ref{sec:facsys_assoc}. 
Using Dade’s characterization of strong gradings, we introduce \emph{module frame systems} (see Definition~\ref{def:mfs}) and construct the associated \emph{factor system} (see Definition~\ref{def:factorsystem}), which encodes both the bimodule structure of the homogeneous components and their multiplication. 
We establish the fundamental relations satisfied by factor systems (see Lemma~\ref{lem:factorsystem}) and prove that strongly graded rings with fixed principal component are classified, up to isomorphism, by their factor systems (see Theorem~\ref{thm:equivalence}).

In Section~\ref{sec:facsys_abs} we isolate the defining properties of factor systems arising from strongly graded rings, thereby introducing the notion of an \emph{abstract factor system} (see Definition~\ref{def:facsys_abs}).
Given a group $G$ and a unital ring $R$, we then construct a strongly $G$-graded ring from such data and show that the original abstract factor system is realized as an associated factor system of this ring (see Theorem~\ref{thm:facsys}).
This construction yields a correspondence between conjugacy classes of abstract factor systems for $(R,G)$ and equivalence classes of strongly $G$-graded rings with principal component~$R$ (see Corollary~\ref{cor:class}).

In Section~\ref{sec:involution}, we study the additional structure that arises when the graded ring carries a compatible involution. 
We introduce the notion of an \emph{algebraic Parseval property} (see Definition~\ref{def:parseval}), which allows module frame systems to be chosen in a symmetric form, and analyze the consequences for the associated factor systems (see Lemma~\ref{rem:parseval} and Lemma~\ref{lem:parseval}). 
In particular, this leads to a concrete description of the involution in terms of the factor system. 
The theory is illustrated with examples from complex Leavitt path algebras (see Section~\ref{sec:clpa}).

In Section~\ref{sec:lifting} we study the problem of lifting derivations from the principal component of a strongly graded ring to graded derivations of the whole ring. 
Using the factor system associated with a strongly graded ring, we derive explicit algebraic conditions characterizing when a derivation of the principal component admits a graded lift (see Theorem~\ref{thm:liftder}). 
We then specialize these conditions to crossed products and skew group rings, where they take a more concrete form and lead to a range of illustrative examples and consequences, including the case of compatible involutions.

In Section~\ref{cohomo} we reinterpret the lifting problem in cohomological terms. 
Starting from a family of covariant derivatives on the homogeneous components, we define defect maps and show that they determine a central $2$-cocycle, called the \emph{multiplicative curvature} (see Definition~\ref{def:mult_curv}). 
Its cohomology class measures the obstruction to lifting a derivation to a graded derivation. 
More precisely, we show that a graded lift exists precisely when there exist covariant derivatives $\nabla_{\delta,g}$ along $\delta$ on the homogeneous components and when the associated multiplicative curvature vanishes (see Theorem~\ref{thm:liftder_cohomo}). 
We further discuss consequences in the equivariant case and also obtain a lifting criterion for strongly $\mathbb{Z}$-graded rings (see Corollary~\ref{cor:Z_lift_from_generator}).

In Section~\ref{sec:Atiyah} we introduce the \emph{Atiyah sequence} associated with a strongly graded ring and describe its terms explicitly. 
In particular, we identify the gauge Lie ring with the Lie ring of crossed homomorphisms on the grading group with values in the center of the principal~component (see Theorem~\ref{thm:gau}). 
This provides a natural Lie-theoretic framework for the lifting problem and the study of derivations admitting graded lifts.

In Section~\ref{sec:atiyah_curvature} we study the \emph{Atiyah curvature} associated with a section of the Atiyah sequence. 
We show that it measures the failure of a family of graded lifts to define a Lie homomorphism, relate it to the curvature of the induced covariant derivatives on the homogeneous components, and place it in the context of Lecomte’s Chern--Weil theory. 
We also derive an explicit matrix description of this curvature in terms of an associated factor system.

Finally, in Appendix~\ref{sec:LPA} we recall the definition and basic properties of Leavitt path algebras, which include natural examples of unital strongly $\mathbb{Z}$-graded ${}^\ast$-algebras illustrating the theory.

\subsection*{Future directions}

We conclude by emphasizing that the present work forms part of a broader research program aimed at the systematic study of Levi--Civita--type structures in noncommutative spaces. 
A central objective of this program is the development of a general theory of derivation-based Levi--Civita--type connections for noncommutative spaces (see, \eg,~\cite{a:lc.kronecker,ah:on.the.existence.LC}). 
Within this framework, we plan to identify a natural Atiyah-type sequence for derivations, providing the algebraic infrastructure for defining connections, metric compatibility, and torsion in the noncommutative setting. 

The results of the present paper establish the necessary algebraic for this approach in the context of strongly graded rings, interpreted as algebraic models of noncommutative principal bundles. 
They thus provide a natural starting point for the systematic study of Levi--Civita--type structures in this framework.

\section{Factor systems of strongly graded rings}
\label{sec:facsys}

Let $G$ be a fixed group.
To understand the internal structure of strongly $G$-graded rings it is natural to look for data on the corresponding principal components together with $G$ that encode both the grading and the multiplication.
A guiding example is furnished by crossed~products, whose construction we recall for convenience:

Let $R$ be a ring.
A pair consisting of a group homomorphism $\alpha: G \to \aut(R)$ and a map $\omega: G \times G \to R^\times$, where $R^\times$ denotes the group of units of $R$, is called a \emph{factor system} if the following compatibility conditions are satisfied:
\begin{align}
    \alpha_g \circ \alpha_h &= \Ad(\omega(g,h)) \circ \alpha_{gh}, \label{eq:coaction_condition}
    \\
    \omega(g,h) \omega(gh,k) &= \alpha_g(\omega(h,k)) \omega(g,hk) \label{eq:cocycle_condition}
\end{align}
for all $g,h,k \in G$, where $\Ad(x)(r) := x r x^{-1}$ denotes conjugation in $R$.
Given such a factor~system $(\alpha,\omega)$, the \emph{crossed product} $R \rtimes_{(\alpha,\omega)} G$ is defined as the set of formal sums
\begin{equation}\label{eq:formal_sums}
    \sum_{g \in G} r_g u_g, \quad r_g \in R,
\end{equation}
with multiplication determined by
\begin{equation}\label{eq:crossed_product}
    (r_g u_g)(r_h u_h) = r_g \alpha_g(r_h) \omega(g,h) u_{gh}.
\end{equation}
The subspaces $S_g := R u_g$, $g \in G$, define a strong $G$-grading with principal component $S_e = R$.
Each $u_g$, $g \in G$, is invertible with inverse $\omega(g^{-1},g)^{-1} u_{g^{-1}}$, so that $S_g = R u_g$ is a free, rank-one, invertible $R$-bimodule.

More generally, the existence of invertible elements in the homogeneous components is decisive:
if a strongly $G$-graded ring $S = \bigoplus_{g \in G} S_g$ contains, for each $g \in G$, an invertible element $u_g \in S_g$ such that $S_g = S_e u_g = u_g S_e$, then $S$ is isomorphic to a crossed product $S_e \rtimes_{(\alpha,\omega)} G$ associated with a suitable factor system $(\alpha,\omega)$.

Indeed, writing $R := S_e$ and fixing such a family $(u_g)_{g \in G}$, the corresponding factor system $(\alpha,\omega)$ is given as follows.
The action $\alpha: G \to \aut(R)$ is defined by conjugation,
\begin{equation*}
    \alpha_g(r) := u_g r u_g^{-1}, \quad r \in R,
\end{equation*}
and the cocycle $\omega: G \times G \to R^\times$ is determined by the relations
\begin{equation*}
    u_g u_h = \omega(g,h) u_{gh}, \quad g,h \in G.
\end{equation*}

The existence of such invertible elements already follows under a
simple module-theoretic condition on the homogeneous components.
The following observation, due to J.~Öinert (personal communication),
will be useful later.

\begin{lemma}\label{lem:j.oi}
    Let $S=\bigoplus_{g\in G} S_g$ be a unital strongly $G$-graded ring with principal component $R:=S_e$.
    If $S_g$ is a free right $R$-module of rank one for every $g\in G$, then each $S_g$ contains an invertible element.
    The analogous statement holds if each $S_g$ is a free left $R$-module of rank one.
\end{lemma}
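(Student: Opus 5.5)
Fix $g\in G$. Since $S_g$ is a free right $R$-module of rank one, there is $u_g\in S_g$ with $S_g=u_gR$ and $u_g r=0$ only for $r=0$. The plan is to show that $u_g$ is a unit of $S$, and to find its inverse inside $S_{g^{-1}}$.

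First, strong grading gives $1\in R=S_{g^{-1}}S_g=S_{g^{-1}}u_gR$. So there are finitely many $v_i\in S_{g^{-1}}$ and $r_i\in R$ with
\[
\sum_i v_i u_g r_i=1 .
\]
Each $v_iu_g$ lies in $S_{g^{-1}}S_g\subseteq R$.

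Next, we produce a right inverse. Also $1\in S_gS_{g^{-1}}$, so $1=\sum_j x_jy_j$ with $x_j\in S_g$ and $y_j\in S_{g^{-1}}$. Write $x_j=u_g a_j$ with $a_j\in R$. Then $1=u_g w$, where
\[
w:=\sum_j a_jy_j\in S_{g^{-1}} .
\]

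Now we show $w$ is also a left inverse. From $u_g w=1$ we get $u_g(wu_g)=u_g$, so $u_g(wu_g-1)=0$. Here $wu_g-1\in R$, because $wu_g\in S_{g^{-1}}S_g\subseteq R$. Freeness of $S_g$ on $u_g$ (the map $r\mapsto u_gr$ is injective) forces $wu_g=1$. Hence $u_g$ is invertible with inverse $w\in S_{g^{-1}}$. This injectivity step is where freeness, and not merely cyclicity, is used. It is the only real point; the first step above is not even needed.

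The left-module version is symmetric. Write $S_g=Ru_g$ and $1=\sum_j y_jx_j$ with $x_j=a_ju_g$. This gives $wu_g=1$ with $w:=\sum_j y_ja_j\in S_{g^{-1}}$. Then $(u_gw-1)u_g=0$ with $u_gw-1\in R$, and left freeness yields $u_gw=1$.

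Finally, for the crossed-product description that follows the lemma, note that $S_g=u_gR$ implies $S_g=u_gRu_g^{-1}u_g\subseteq Ru_g$. Here $u_gRu_g^{-1}\subseteq S_gRS_{g^{-1}}\subseteq R$. Conversely, $Ru_g=u_g(u_g^{-1}Ru_g)\subseteq u_gR$. Hence $S_g=u_gR=Ru_g$.
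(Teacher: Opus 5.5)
Your proof is correct and follows essentially the same route as the paper's. You write $1=\sum_j x_jy_j$ with $x_j=u_ga_j$ to get a right inverse $w\in S_{g^{-1}}$, then apply injectivity of $r\mapsto u_gr$ to $u_g(wu_g-1)=0$; your extra remarks (the unused first step, the explicit check that $wu_g\in R$, and $S_g=u_gR=Ru_g$) are correct but not needed for the lemma.
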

\begin{proof}
We prove the right-module case. 
The left-module case is analogous.
Let $g\in G$ and let $u\in S_g$ be a basis of $S_g$ as a free right $R$-module,
\ie, $S_g=uR$.
Since $S$ is strongly graded,
\begin{equation*}
    1=\sum_i x_i y_i
\end{equation*}
for suitable $x_i \in S_g$ and $y_i \in S_{g^{-1}}$.
Writing $x_i = u r_i$ with $r_i \in R$ gives
\begin{equation*}
    1= \sum_i (u r_i) y_i = u \left(\sum_i r_i y_i\right) = uv
\end{equation*}
for some $v \in S_{g^{-1}}$.
To conclude, note that the map $R \to uR$, $r \mapsto ur$, is injective.
Using $uv=1$, one obtains $u(vu)=u$, and hence $vu=1$.
Thus $u$ is invertible with inverse $v$.
\end{proof}

Even within this structured setting, crossed products give rise to a rich variety of interesting examples and capture a broad spectrum of noncommutative phenomena.
Most importantly, the factor system framework provides a transparent algebraic formalism that makes the underlying structure explicit and computations remarkably accessible.

Motivated by this, we now extend the framework from crossed products to arbitrary strongly $G$-graded rings. 
%The exposition follows the approach of \cite{SchWa17}.

\subsection{Factor systems associated with strongly \texorpdfstring{$G$}{G}-graded rings}
\label{sec:facsys_assoc}

As a preliminary step, we recall Dade’s characterization of strongly graded
rings \cite[Prop.~1.6]{Dade1980} in a form convenient for our purposes:

\begin{lemma}\label{lem:dade}
    A $G$-graded ring $S = \bigoplus_{g \in G} S_g$ is strongly $G$-graded if and only if for each $g \in G$ there exist a positive integer $n_g$ and elements 
    \begin{equation*}
        x_g \in M_{n_g,1}(S_g)
        \quad \text{and} \quad
        y_g \in M_{n_g,1}(S_{g^{-1}})
    \end{equation*}
    such that $y_g^t x_g = 1$.
\end{lemma}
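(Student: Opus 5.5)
The plan is to reduce both directions to the single fact that, in a $G$-graded ring, the identity element $1$ lies in $S_e$. For the \emph{forward direction} I would assume $S$ is strongly $G$-graded and fix $g \in G$. Then $S_{g^{-1}} S_g = S_e$, and this set contains $1$. By the definition of a product of additive subgroups, $S_{g^{-1}} S_g$ consists of finite sums $\sum_{i=1}^{n} b_i a_i$ with $b_i \in S_{g^{-1}}$ and $a_i \in S_g$. I would choose such a representation $1 = \sum_{i=1}^{n_g} b_i a_i$ and set $x_g := (a_1,\dots,a_{n_g})^t \in M_{n_g,1}(S_g)$ and $y_g := (b_1,\dots,b_{n_g})^t \in M_{n_g,1}(S_{g^{-1}})$. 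Then $y_g^t x_g = \sum_i b_i a_i = 1$, which is what is required.

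For the \emph{converse}, I would assume that such $x_g, y_g$ exist for every $g$. Writing them out, this says $1 \in S_{g^{-1}} S_g$ for every $g \in G$. The inclusion $S_g S_h \subseteq S_{gh}$ is part of the graded structure, so only the reverse inclusion needs proof. Given $s \in S_{gh}$, I would write
\[
  s = s\cdot 1 = s \, y_{h}^t x_{h} = \sum_i (s\, b_i)\, a_i ,
\]
where $y_h = (b_i)_i$ has entries in $S_{h^{-1}}$ and $x_h = (a_i)_i$ has entries in $S_h$. Each $s b_i$ lies in $S_{gh}S_{h^{-1}} \subseteq S_g$, and each $a_i$ lies in $S_h$. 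Hence $s \in S_g S_h$, and therefore $S_{gh} = S_g S_h$.

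Neither direction presents a genuine obstacle. The only step requiring care is the matrix and index bookkeeping in the converse. There one must use the condition for the element $h$, which gives $1 \in S_{h^{-1}} S_h$, rather than for $g$. The alternative would be to use $1 \in S_{g}S_{g^{-1}}$, which is the condition for $g^{-1}$, and insert $1$ on the left of $s$. Either choice works, since the hypothesis is assumed for all group elements.
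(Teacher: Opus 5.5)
Your proof is correct and complete. The forward direction unpacks $1 \in S_e = S_{g^{-1}}S_g$ into a finite sum recorded as $(x_g,y_g)$, and the converse uses the condition at $h$ to write $s=\sum_i (s b_i)a_i \in S_gS_h$ for $s\in S_{gh}$. The paper gives no argument of its own and simply cites Dade's characterization \cite[Prop.~1.6]{Dade1980} (strong grading if and only if $1 \in S_gS_{g^{-1}}$ for all $g$); your argument is the standard self-contained proof of that result. The one background fact you use without proof, $1\in S_e$, is standard, and the paper also takes it for granted (for instance, in requiring $x_e=y_e=1$).
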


The lemma shows that strong $G$-grading can be witnessed by suitable matrix pairs. 

Leavitt path algebras form an important class of $\mathbb Z$-graded rings arising from directed graphs.
Under suitable conditions on the graph they are strongly graded and provide natural examples of strongly graded rings that are not crossed products.
For the convenience of the reader, we recall their definition and the relevant structural properties in Appendix~\ref{sec:LPA}.

The next result shows that for Leavitt path algebras one can construct elements $x_g,y_g$ satisfying the conditions of Lemma~\ref{lem:dade} in all negative homogeneous degrees (and in degree $0$).
The corresponding construction in positive degree is more delicate and
will be obtained in Theorem~\ref{thm:xdaggerx.eeast}.

\begin{lemma}\label{lem:neg.degree.x}
    Let $L_k(E)$ be a strongly graded unital Leavitt path algebra.
    For $n \ge 1$, let $P_n(E)$ denote the finite set of real paths of length $n$.
    If
    \begin{equation*}
        x_n := (p^\ast)_{p\in P_n(E)}^t
        \quad \text{and} \quad
        y_n := (p)_{p\in P_n(E)}^t,
    \end{equation*}
    then $y_n^t x_n = 1$.
\end{lemma}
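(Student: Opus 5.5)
The identity to prove is
\begin{equation*}
    y_n^t x_n=\sum_{p\in P_n(E)} p\,p^\ast = 1 .
\end{equation*}
The degrees are already right: each $p$ has degree $n$ and each $p^\ast$ has degree $-n$. So $x_n$ and $y_n$ have entries in the homogeneous components required by Lemma~\ref{lem:dade} for $g=-n$, and $P_n(E)$ is finite, so $x_n,y_n$ are genuine finite column vectors. I will prove the identity by induction on $n$, using only the Cuntz--Krieger relations. I also need the structural facts recalled in Appendix~\ref{sec:LPA}: a unital Leavitt path algebra has finitely many vertices, with $1=\sum_{v\in E^0} v$, and strong gradedness forces $E$ to be row-finite without sinks. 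Hence every vertex is regular, and (CK2) reads $v=\sum_{s(e)=v} e e^\ast$ for every vertex $v$.

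\emph{Base case $n=1$.} The edges are partitioned according to their source, so
\begin{equation*}
    \sum_{e\in E^1} e e^\ast=\sum_{v\in E^0}\sum_{s(e)=v} e e^\ast=\sum_{v\in E^0} v=1 .
\end{equation*}
This step uses (CK2) at every vertex, which is exactly where the absence of sinks (together with row-finiteness) enters.

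\emph{Inductive step.} Every path of length $n+1$ factors uniquely as $pe$ with $p\in P_n(E)$, $e\in E^1$ and $s(e)=r(p)$. Therefore
\begin{equation*}
    \sum_{q\in P_{n+1}(E)} q q^\ast=\sum_{p\in P_n(E)} p\Big(\sum_{s(e)=r(p)} e e^\ast\Big)p^\ast=\sum_{p\in P_n(E)} p\, r(p)\, p^\ast=\sum_{p\in P_n(E)} p p^\ast ,
\end{equation*}
using (CK2) at $r(p)$ and then $p\,r(p)=p$. By the induction hypothesis the right-hand side equals $1$.

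The main obstacle is not computational but one of conventions and hypotheses. First, I must match the source/range convention and the form of (CK2) used in Appendix~\ref{sec:LPA}; with the opposite convention one instead factors $q=ep$ and applies (CK2) at $r(e)$. Second, I must extract from the characterization of strongly graded Leavitt path algebras recalled there the precise statement that each vertex emits finitely many and at least one edge, so that (CK2) holds at every vertex. Once these are pinned down, the argument above is routine.
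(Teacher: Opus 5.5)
Your proof is correct and is essentially the paper's argument: both are inductions on $n$ that use only the Cuntz--Krieger relation $\sum_{e\in s^{-1}(v)}ee^\ast=v$ at every vertex, the identity $e\,r(e)=e$ (hence $p\,r(p)=p$), and $\sum_{v\in E_0}v=1$. The conventions you hedged on match the appendix exactly, so your factorization $q=pe$ with $s(e)=r(p)$ works as written. The only difference is bookkeeping. The paper splits off the first edge, $q=ep$, to prove the refined identity $\sum_{p\in P_n(E,v)}pp^\ast=v$ vertex by vertex and sums over $v$ at the end. You split off the last edge and show $\sum_{q\in P_{n+1}(E)}qq^\ast=\sum_{p\in P_n(E)}pp^\ast$ directly, which avoids the vertex-localized statement.
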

\begin{proof}
For $v\in E_0$ and $n \ge 1$, let $P_n(E,v)$ denote the set of all real paths of length $n$ with source $v$. 
It is first shown that
\begin{equation}\label{eq:ppstar.one}
    \sum_{p \in P_n(E,v)} pp^\ast = v.
\end{equation}

For $n=1$ this is the Cuntz--Krieger relation
\begin{equation*}
    \sum_{p \in P_1(E,v)} p p^\ast
    =
    \sum_{e: s(e)=v} e e^\ast
    = v.
\end{equation*}

Assume that~\eqref{eq:ppstar.one} holds for some $n\ge 1$. 
A direct computation gives
\begin{gather*}
    \sum_{p \in P_{n+1}(E,v)} pp^\ast
    =
    \sum_{e \in s^{-1}(v)} \sum_{p \in P_{n}(E,r(e))}(ep) (ep)^\ast
    =
    \sum_{e \in s^{-1}(v)} \sum_{p \in P_{n}(E,r(e))} e(p p^\ast) e^\ast
    \\
    =
    \sum_{e \in s^{-1}(v)}e \left(\sum_{p\in P_{n}(E,r(e))} p p^\ast\right) e^\ast
    =
    \sum_{e \in s^{-1}(v)}er(e)e^\ast
    =
    \sum_{e \in s^{-1}(v)}ee^\ast
    = v,
\end{gather*}
which proves~\eqref{eq:ppstar.one} by induction.

Fix $n\ge 1$. Then
\begin{equation*}
    y_n^t x_n
    = \sum_{p\in P_n(E)} pp^\ast
    = \sum_{v\in E_0}\sum_{p\in P_n(E,v)} pp^\ast
    = \sum_{v\in E_0} v
    = 1,
\end{equation*}
since $L_k(E)$ is unital and $E$ has no sinks.
\end{proof}

We formalize this matrix-pair realization of as follows:

\begin{definition}\label{def:mfs}
    Let $S=\bigoplus_{g\in G} S_g$ be a strongly $G$-graded ring.
    A \emph{module frame system} for $S$ is a family $(x_g,y_g)_{g\in G}$
    such that for each $g\in G$ there exists a positive integer $n_g$ with 
    \begin{equation*}
        x_g \in M_{n_g,1}(S_g),
        \quad
        y_g \in M_{n_g,1}(S_{g^{-1}})
        \quad \text{and} \quad
        y_g^t x_g = 1.
    \end{equation*}
    In addition, $n_e = 1$ and $x_e = y_e = 1$.
\end{definition}

By Lemma~\ref{lem:dade}, every strongly $G$-graded ring admits a module frame system.

%\begin{definition}\label{def:mfs}
%    A \emph{module frame system} for $S$ is a family $(x_g,y_g)_{g\in G}$ as in Lemma~\ref{lem:dade} with $n_e = 1$ and $x_e = y_e = 1$.
%\end{definition}

Let $S=\bigoplus_{g\in G} S_g$ be a strongly $G$-graded ring, and let $(x_g,y_g)_{g\in G}$ be a module frame system for $S$.
With this choice, each homogeneous component $S_g$, for $g \in G$, admits an explicit description in terms of the principal component $S_e$. 
Indeed, for each $g \in G$,
\begin{equation}\label{eq:hom_comp}
    S_g = M_{1,n_g}(S_e) x_g,
\end{equation}
so that any element $s \in S_g$ can be written as
\begin{align*}
    s = u^t x_g
\end{align*}
for some $u \in M_{n_g,1}(S_e)$. 
Moreover, $u^t$ may be chosen uniquely in $M_{n_g,1}(S_e) x_g y_g^t$.

This description allows both the bimodule structure and the multiplication on the homogeneous components to be expressed entirely in terms of $S_e$ and the group $G$.
We make this~explicit by introducing the following auxiliary maps and notation:

For each $g\in G$, define a ring homomorphism
\begin{equation*}
    \alpha_g: M_{n,m}(S) \to M_{n_g n,n_g m}(S)
\end{equation*}
by
\begin{align*}
    \alpha_g(A) =
    \begin{pmatrix}
        x_g s_{11} y_g^t & x_g s_{12} y_g^t & \cdots & x_g s_{1m} y_g^t 
        \\
        \vdots & \vdots & & \vdots 
        \\
        x_g s_{n1} y_g^t & x_g s_{n2} y_g^t & \cdots & x_g s_{nm} y_g^t
    \end{pmatrix},
\end{align*}
for $A=(s_{ij})\in M_{n,m}(S)$.
It is immediate that $\alpha_g$ is graded, in the sense that it restricts to
\begin{equation*}
    M_{n,m}(S_h) \to M_{n_g n,n_g m}(S_h)
\end{equation*}
for each $h\in G$.
Moreover, for $s \in S$, one has
\begin{equation}\label{eq:commrel1}
    x_g s = x_g s y_g^t x_g = \alpha_g(s) x_g.
\end{equation}

We also fix notation for Kronecker products:
Let $R$ be a ring, and let $A=(a_{ij})\in M_{n_1,n_2}(R)$ and $B=(b_{kl})\in M_{m_1,m_2}(R)$. 
The right and left Kronecker products of $A$ and $B$ are the matrices
\begin{equation*}
    A \kronr B \in M_{n_1 m_1,n_2 m_2}(R),
    \quad
    A \kronl B \in M_{n_1 m_1,n_2 m_2}(R)
\end{equation*}
defined by
\begin{equation*}
    A\kronr B =
    \begin{pmatrix}
        A b_{11} & A b_{12} & \cdots & A b_{1m_2} \\
        A b_{21} & A b_{22} & \cdots & A b_{2m_2} \\
        \vdots & \vdots & & \vdots \\
        A b_{m_1 1} & A b_{m_1 2} & \cdots & A b_{m_1 m_2}
    \end{pmatrix},
    \quad
    A\kronl B =
    \begin{pmatrix}
        a_{11} B & a_{12} B & \cdots & a_{1n_2} B \\
        a_{21} B & a_{22} B & \cdots & a_{2n_2} B \\
        \vdots & \vdots & & \vdots \\
        a_{n_1 1} B & a_{n_1 2} B & \cdots & a_{n_1 n_2} B
    \end{pmatrix}.
\end{equation*}

For all $g,h \in G$, define
\begin{align*}
    \omega(g,h) &:= (x_g \kronr x_h) y_{gh}^t \in M_{n_g n_h, n_{gh}}(S_e),
    \\
    \omegat(g,h) &:= x_{gh}(y_h \kronl y_g)^t \in M_{n_{gh},n_g n_h}(S_e).
\end{align*}
Then
\begin{equation}\label{eq:commrel2}
    x_g\kronr x_h = \omega(g,h) x_{gh}
    \quad\text{and}\quad
    y_{gh}^t \omegat(g,h) = (y_h\kronl y_g)^t .
\end{equation}

\begin{lemma}\label{lem:commrel}
    For $v \in M_{n_h,1}(S_e)$ one has
    \begin{equation*}
        x_g \kronr (v^t x_h) = \alpha_g(v)^t \omega(g,h) x_{gh} \in M_{n_g,1}(S_{gh}).
    \end{equation*}
\end{lemma}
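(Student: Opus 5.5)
The plan is to compute $x_g \kronr (v^t x_h)$ directly, using the commutation relation \eqref{eq:commrel1} to move $x_g$ past the entries of $v$, and then \eqref{eq:commrel2} to recombine the Kronecker product. Write $v = (v_1,\dots,v_{n_h})^t$ and $x_h = (x_h^1,\dots,x_h^{n_h})^t$, so that $v^t x_h = \sum_k v_k x_h^k \in S_h$ is a single element. By the definition of $\kronr$ with $B$ a $1\times 1$ matrix, $x_g \kronr (v^t x_h) = x_g \sum_k v_k x_h^k = \sum_k x_g v_k x_h^k$, an element of $M_{n_g,1}(S_{gh})$.

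First, apply \eqref{eq:commrel1} to each entry $v_k \in S_e$, which gives $x_g v_k = \alpha_g(v_k) x_g$. This yields
\begin{equation*}
    x_g \kronr (v^t x_h) = \sum_k \alpha_g(v_k)\, x_g x_h^k .
\end{equation*}
Next, recognize the right-hand side as a block matrix product. The matrix $\alpha_g(v)^t$ is the $n_g \times n_g n_h$ block row $\big(\alpha_g(v_1), \dots, \alpha_g(v_{n_h})\big)$, since transposition of the $n_h\times 1$ block column of $n_g\times n_g$ blocks $x_g v_k y_g^t$ has to be read blockwise. The matrix $x_g \kronr x_h$ is the block column with blocks $x_g x_h^k$. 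Hence the sum equals $\alpha_g(v)^t (x_g \kronr x_h)$. Finally, \eqref{eq:commrel2} replaces $x_g \kronr x_h$ by $\omega(g,h) x_{gh}$, giving the claim.

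The only genuine subtlety, and the step I expect to be the main obstacle, is the meaning of the transpose $\alpha_g(v)^t$. Here $\alpha_g(v) \in M_{n_g n_h, n_g}(S)$, and a literal entrywise transpose would give the blocks $(x_g v_k y_g^t)^t$ rather than $x_g v_k y_g^t$. So I would first check the paper's convention, namely that $\alpha_g(v^t) = \big(\alpha_g(v_1),\dots,\alpha_g(v_{n_h})\big)$. I would then interpret $\alpha_g(v)^t$ as this block transpose, equivalently $\alpha_g(v^t)$, which is what makes the identity correct. Once that convention is fixed, the rest is the routine bookkeeping above, and membership in $M_{n_g,1}(S_{gh})$ follows from the gradedness of $\alpha_g$ and $\omega(g,h) \in M_{n_gn_h,n_{gh}}(S_e)$.
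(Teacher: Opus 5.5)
Your proof is correct and is essentially the paper's argument: expand $x_g \kronr (v^t x_h)$ as $\sum_k x_g v_k (x_h)_k$, move $x_g$ past each $v_k$ via \eqref{eq:commrel1}, regroup the sum as $\alpha_g(v)^t(x_g \kronr x_h)$, and finish with \eqref{eq:commrel2}. Your caveat about the transpose is also right: the paper tacitly reads $\alpha_g(v)^t$ blockwise, as the row $(\alpha_g(v_1),\dots,\alpha_g(v_{n_h})) = \alpha_g(v^t)$, and its later use of $\alpha_g(v)^t$ and $\alpha_g(v^t)$ as interchangeable is consistent with that reading.
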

\begin{proof}
    Let $v \in M_{n_h}(S)$.
    Then
    \begin{align*}
        x_g \kronr (v^t x_h) &=  
        \begin{pmatrix}
            x_g v_1 & x_g v_2 & \cdots & x_g v_{n_h}
        \end{pmatrix}
        x_h
        \overset{\eqref{eq:commrel1}}{=}
        \begin{pmatrix}
            \alpha_g(v_1)x_g & \alpha_g(v_2) x_g & \cdots & \alpha_g(v_{n_h}) x_g
        \end{pmatrix}x_h
        \\
        &= 
        \sum_{i=1}^{n_h} \alpha_g(v_i) x_g(x_h)_i
        = 
        \alpha_g(v)^t(x_g \kronr x_h)
        \overset{\eqref{eq:commrel2}}{=}
        \alpha_g(v)^t\omega(g,h)x_{gh}.
        \qedhere
    \end{align*}
\end{proof}

With this notation, the bimodule structure of $S_g$, for $g \in G$, is given by
\begin{equation}\label{eq:bimod}
    r \cdot (u^t x_g) = r u^t x_g,
    \quad 
    (u^t x_g) \cdot r \overset{\eqref{eq:commrel1}}{=} u^t \alpha_g(r) x_g,
\end{equation}
for all $r \in S_e$ and $u \in M_{n_g,1}(S_e)$.
Moreover, by Lemma~\ref{lem:commrel}, the product of homogeneous~elements satisfies
\begin{equation}\label{eq:multi}
    (u^t x_g) \cdot (v^t x_h) = u^t \alpha_g(v)^t \omega(g,h) x_{gh},
    \quad
    g,h \in G,
\end{equation}
for all $u \in M_{n_g,1}(S_e)$ and $v \in M_{n_h,1}(S_e)$.

The preceding discussion leads to the following definition:

\begin{definition}\label{def:factorsystem}
    Let $S = \bigoplus_{g\in G} S_g$ be a strongly $G$-graded ring, and let
    $(x_g,y_g)_{g\in G}$ be a module frame system for $S$.
    For $g,h \in G$, define
    \begin{equation*}
        \alpha_g : S_e \to M_{n_g}(S_e),
        \quad
        \alpha_g(r) := x_g r y_g^t,
    \end{equation*}
    and
    \begin{equation*}
        \omega(g,h) := (x_g \kronr x_h) y_{gh}^t \in M_{n_gn_h,n_{gh}}(S_e).
    \end{equation*}
    The triple
    \begin{equation*}
        (n,\alpha,\omega) := (n_g,\alpha_g,\omega(g,h))_{g,h \in G}
    \end{equation*}
    is called the \emph{factor system} associated with $(x_g,y_g)_{g\in G}$.
    We may simply speak of a factor system of $S$ without specifying the
    underlying module frame system.
\end{definition}

The basic properties of a factor system are summarized in the following lemma:

\begin{lemma}\label{lem:factorsystem}
    Let $S$ be a strongly $G$-graded ring, and let $(n,\alpha,\omega)$ be a factor system of $S$.
    Then the following assertions hold:
    \begin{enumerate}
        \item\label{it:omegae}
            One has $\omega(e,e) = 1$ and $\alpha_e = \id_{S_e}$.
        \item\label{it:omega_dagger} 
            For all $g,h \in G$,
            \begin{equation}\label{eq:paruni}
                \omegat(g,h) \omega(g,h) = \alpha_{gh}(1)
                \quad \text{and} \quad
                \omega(g,h) \omegat(g,h) = \alpha_g(\alpha_h(1)).
            \end{equation}
        \item\label{it:coaction}  
            For all $g,h \in G$ and $r \in S_e$,
            \begin{align}
                 \omega(g,h) \alpha_{gh}(r) 
                 = \alpha_g(\alpha_h(r)) \omega(g,h). 
                 \label{eq:coaction_condition_new}
            \end{align}
            %(see~\eqref{eq:cocycle_condition}).
         \item\label{it:cocycle} 
            For all $g,h,k \in G$,
            \begin{align}
                 (\omega(g,h) \kronr 1_{n_k}) \omega(gh,k) 
                 = \alpha_g(\omega(h,k)) \omega(g,hk). \label{eq:cocycle_condition_new}
            \end{align}
            %(see~\eqref{eq:coaction_condition}).
        \item\label{it:conjugate}  
            Let $(n',\alpha',\omega')$ be another factor system of $S$.
            Then there exist families of elements $v_g \in M_{n'_g,n_g}(S_e)$, $w_g \in M_{n_g,n'_g}(S_e)$, $g \in G$, satisfying
            \begin{align}
                &v_g w_g = \alpha'_g(1)
                \quad \text{and} \quad
                w_g v_g = \alpha_g(1), \label{eq:v1}
                \\
                &\alpha'_g(r) v_g = v_g \alpha_g(r), \label{eq:v2}
                \\
                &\omega'(g,h) v_{gh} = v_g \alpha_g(v_h) \omega(g,h) \label{eq:v3}
            \end{align}
            for all $g,h \in G$ and $r \in S_e$.
        \item
            Conversely, let $v_g \in M_{n'_g,n_g}(S_e)$, $w_g \in M_{n_g,n'_g}(S_e)$, $g \in G$, be families of elements satisfying $w_g v_g = \alpha_g(1)$ for all $g \in G$.
            Then the triple $(n',\alpha',\omega')$ defined as follows is a factor system of $S$:
            \begin{align*}
                &\alpha'_g(r) := v_g \alpha_g(s) w_g,
                \\
                &\omega'(g,h) := v_g \alpha_g(v_h) \omega(g,h) w_{gh} 
            \end{align*}
            for all $g,h \in G$ and $r \in S_e$.
    \end{enumerate}
\end{lemma}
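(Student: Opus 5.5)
Every item comes from direct computation with the defining identities $y_g^t x_g = 1$, \eqref{eq:commrel1} and \eqref{eq:commrel2}, together with the fact that $\alpha_g$ is a ring homomorphism on matrices. This homomorphism property holds because $\alpha_g(a)\alpha_g(b) = x_g a y_g^t x_g b y_g^t = x_g ab\, y_g^t$, entrywise in block form. Item~\eqref{it:omegae} is immediate from $n_e=1$ and $x_e=y_e=1$.

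For~\eqref{it:omega_dagger}, I will expand $\omegat(g,h)\omega(g,h) = x_{gh}(y_h\kronl y_g)^t(x_g\kronr x_h)y_{gh}^t$. Checking indices, $(y_h\kronl y_g)^t(x_g\kronr x_h) = \sum_{i,j}(y_h)_i(y_g)_j(x_g)_j(x_h)_i = y_h^t x_h = 1$, so the product equals $x_{gh}y_{gh}^t = \alpha_{gh}(1)$. For the other order, I will use \eqref{eq:commrel2} to write $\omega(g,h)\omegat(g,h) = (x_g\kronr x_h)\,y_{gh}^t x_{gh}\,(y_h\kronl y_g)^t = (x_g\kronr x_h)(y_h\kronl y_g)^t$. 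This should be exactly the block matrix $\alpha_g(x_h y_h^t) = \alpha_g(\alpha_h(1))$; the step to verify is that the Kronecker conventions $\kronr$/$\kronl$ match the block layout of $\alpha_g$ on $M_{n_h}$. I expect this index bookkeeping to be the only real obstacle throughout.

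For~\eqref{it:coaction}, I will note that $\omega(g,h)\alpha_{gh}(r) = (x_g\kronr x_h)\,r\,y_{gh}^t$. Applying \eqref{eq:commrel1} twice (first for $h$, then for $g$ inside each block) gives $(x_g\kronr x_h)r = \alpha_g(\alpha_h(r))(x_g\kronr x_h)$, and multiplying on the right by $y_{gh}^t$ yields the claim. For~\eqref{it:cocycle}, I will compute $x_g\kronr x_h\kronr x_k$ in two ways. First, $(x_g\kronr x_h)\kronr x_k = (\omega(g,h)x_{gh})\kronr x_k = (\omega(g,h)\kronr 1_{n_k})(x_{gh}\kronr x_k) = (\omega(g,h)\kronr 1_{n_k})\omega(gh,k)x_{ghk}$. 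Second, $x_g\kronr(x_h\kronr x_k) = x_g\kronr(\omega(h,k)x_{hk})$, which by the matrix version of Lemma~\ref{lem:commrel} equals $\alpha_g(\omega(h,k))\omega(g,hk)x_{ghk}$. Right-multiplying both expressions by $y_{ghk}^t$ removes $x_{ghk}$. For this to give the cocycle identity, the two sides must already lie in the range where $x_{ghk}y_{ghk}^t$ acts as the identity from the right. This holds because each side ends with a factor $\omega(\cdot,\cdot) = (\cdots)y_{ghk}^t$, and $y_{ghk}^t x_{ghk}y_{ghk}^t = y_{ghk}^t$.

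For~\eqref{it:conjugate}, I will set $v_g := x'_g y_g^t$ and $w_g := x_g y_g'^t$. Then $v_gw_g = x'_g y_g'^t = \alpha'_g(1)$ and $w_gv_g = \alpha_g(1)$. Next, $\alpha'_g(r)v_g = x'_g r y_g^t = v_g\alpha_g(r)$. Finally, $v_g\alpha_g(v_h)\omega(g,h)$ expands to $(x'_g\kronr x'_h)\,y_{gh}^t$-type terms, which collapse to $(x'_g\kronr x'_h)y_{gh}^t = \omega'(g,h)x'_{gh}y_{gh}^t = \omega'(g,h)v_{gh}$. For the converse, I will set $x'_g := v_g x_g$ and $y_g'^t := y_g^t w_g$. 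Then $y_g'^t x'_g = y_g^t w_g v_g x_g = y_g^t x_g y_g^t x_g = 1$, and $x'_e = y'_e = 1$ provided we take $v_e = w_e = 1$, which I will assume as part of the normalization. Computing $\alpha'_g$ and $\omega'$ from Definition~\ref{def:factorsystem} with \eqref{eq:commrel1}/\eqref{eq:commrel2} then returns the stated formulas, where the $s$ in the formula for $\alpha'_g$ is read as $r$.
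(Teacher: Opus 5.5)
Your proposal is correct and follows essentially the same route as the paper: direct computation from $y_g^t x_g = 1$ and the Kronecker identities, with the same choices $v_g = x'_g y_g^t$, $w_g = x_g (y'_g)^t$ in (5) and the frame $(v_g x_g,\, y_g^t w_g)$ in (6). In (3)--(4) you push $x$'s through via \eqref{eq:commrel1} and Lemma~\ref{lem:commrel} rather than inserting $(y_h\kronl y_g)^t(x_g\kronr x_h)=1$, which is only a cosmetic reorganization; your added normalization $v_e = w_e = 1$ in (6) is genuinely needed for $(v_g x_g,\, y_g^t w_g)$ to be a module frame system in the sense of Definition~\ref{def:mfs}, a point the paper's statement and proof pass over.
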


\begin{remark}
    The elements $\tilde\omega(g,h)$, $g,h \in G$, are not included in the definition of a factor system; instead they are regarded as auxiliary elements for formulating the relations.
\end{remark}

\begin{proof}[Proof of Lemma~\ref{lem:factorsystem}]
    Let $(x_g,y_g)_{g\in G}$ be a module frame system for $S$ implementing the factor system $(n,\alpha,\omega)$.
    \setlist[enumerate]{wide=0pt, leftmargin=*, label=(\arabic*)}
    \begin{enumerate}
        \item 
            This follows immediately from the normalization $n_e = 1$, $x_e = 1$, and $y_e = 1$.
        \item
            Let $g,h \in G$. 
            A direct computation shows that
            \begin{gather*}
                \omegat(g,h) \omega(g,h) 
                = 
                x_{gh}(y_h \kronl y_g)^t (x_g \kronr x_h) y_{gh}^t 
                = 
                x_{gh}y_{gh} = \alpha_{gh}(1),
                \\
                \omega(g,h) \omegat(g,h) 
                = 
                (x_g \kronr x_h) y_{gh}^t x_{gh}(y_h \kronl y_g)^t
                = 
                (x_g \kronr x_h) (y_h \kronl y_g)^t 
                = 
                \alpha_g(x_hy_h^t) = \alpha_g(\alpha_h(1)).
            \end{gather*}
        \item
            Let $g,h \in G$ and $r \in S_e$. 
            Using that $\alpha_g(\alpha_h(s)) = (x_g \kronr x_h) r (y_h \kronl y_g)^t$, one obtains
            \begin{align*}
                \omega(g,h) \alpha_{gh}(r)
                &= 
                (x_g \kronr x_h) y_{gh}^t x_{gh} r y_{gh}^t
                = 
                (x_g \kronr x_h) r y_{gh}^t
                \\
                &= 
                (x_g \kronr x_h) r (y_h \kronl y_g)^t (x_g \kronr x_h) y_{gh}^t
                = 
                \alpha_g(\alpha_h(r)) \omega(g,h).
            \end{align*}
        \item
            Let $g,h,k \in G$.
            Then
            \begin{align*}
                (\omega(g,h) \kronr 1_{n_k}) \omega(gh,k)
                &= ((x_g \kronr x_h) \kronr 1_{n_k}) (y_{gh}^t \kronr 1_{n_k}) (x_{gh} \kronr x_k) y_{ghk}^t
                =
                (x_g \kronr x_h \kronr x_k) y_{ghk}^t.
            \end{align*}
            On the other hand, since $\alpha_g(\omega(h,k)) = (x_g \kronr x_h \kronr x_k) (y_{hk} \kronl y_g)^t$, one finds that
            \begin{equation*}
                \alpha_g(\omega(h,k)) \omega(g,hk)
                = (x_g \kronr x_h \kronr x_k) (y_{hk} \kronl y_g)^t (x_g \kronr x_{hk}) y_{ghk}^t
                = (x_g \kronr x_h \kronr x_k) y_{ghk}^t.
            \end{equation*}
            This establishes the formula.
        \item 
            Let $(x'_g,y'_g)_{g\in G}$ be a module frame system for $S$ implementing the factor system $(n',\alpha',\omega')$.
            For each $g \in G$, set 
            \begin{equation*}
                v_g := x'_g y_g^t
                \quad
                \text{and}
                \quad
                w_g := x_g (y'_g)^t
            \end{equation*}
            Now, fix $g,h \in G$ and $r \in S_e$.
            Then
            \begin{align*}
                v_g w_g = x'_g y_g^t x_g (y'_g)^t = x'_g (y'_g)^t = \alpha'_g(1),
                \quad \text{and} \quad
                w_g v_g = x_g (y'_g)^t x'_g y_g^t = x_g y_g^t = \alpha_g(1),
                \quad
            \end{align*}
            which proves~\eqref{eq:v1}.
            \eqref{eq:v2} is verified as follows:
            \begin{align*}
                \alpha'_g(r) v_g = x'_g r (y'_g)^t x'_g y_g^t = x'_g r y_g^t = x'_g y_g^t x_g r y_g = v_g \alpha_g(r),
            \end{align*}
            Finally, for~\eqref{eq:v3}, it is noted that $v_g \alpha_g(v_h) = (x'_g \kronr x'_h) (y_h \kronl y_g)^t$, so that
            \begin{align*}
                v_g \alpha_g(v_h) \omega(g,h) 
                &= (x'_g \kronr x'_h) (y_h \kronl y_g)^t (x_g \kronr x_h) y_{gh}^t
                \\
                &= (x'_g \kronr x'_h) y_{gh}^t
                = (x'_g \kronr x'_h) (y'_{gh})^t x'_{gh} y_{gh}^t
                = \omega'(g,h) v_{gh}.
            \end{align*}
        \item 
            It is straightforward to check that $(n',\alpha',\omega')$ is the factor system of $S$ associated with the elements $v_g x_g \in M_{n'_g,1}(S_g)$, $y_g^t w_g \in M_{1,n'_g}(S_{g^{-1}})$, $g \in G$.
            \qedhere
    \end{enumerate}
\end{proof}

\begin{remark}
    For crossed products, Definition~\ref{def:factorsystem} reproduces the classical notion of a factor system (see~\eqref{eq:coaction_condition} and~\eqref{eq:cocycle_condition}).
\end{remark}

\begin{example}\label{ex:L(1,2)_factorsystem}
We illustrate the above construction for the Leavitt path algebra $L_{\mathbb C}(1,2)$ associated with the graph $E$ with one vertex $v$ and two loop edges $e_1,e_2$, which is strongly $\mathbb Z$-graded but not a crossed product (see Section~\ref{sec:L(1,2)}).

%For $n=0$, set $x_0 = y_0 = 1$.
For $n=-1$, set
\begin{equation*}
    x_{-1} :=
    \begin{pmatrix}
        e_1^\ast
        \\
        e_2^\ast
    \end{pmatrix}
    \quad \text{and} \quad
    y_{-1} :=
    \begin{pmatrix}
        e_1
        \\
        e_2
    \end{pmatrix},
\end{equation*}
so that
\begin{equation*}
    y_{-1}^t x_{-1} = e_1e_1^\ast + e_2e_2^\ast = 1.
\end{equation*}
For $n<0$, define recursively
\begin{equation*}
    x_{n-1} := x_{-1} \kronr x_n
    \quad \text{and} \quad
    y_{n-1} := y_n \kronl y_{-1}.
\end{equation*}
Then $y_n^t x_n = 1$ for all $n<0$, as is easily checked.

For $n>0$, define
\begin{equation*}
    x_n := e_1^n,
    \quad \text{and} \quad
    y_n := (e_1^\ast)^n.
\end{equation*}
Then $y_n^t x_n = y_n x_n = (e_1^\ast)^n e_1^n = 1$.
Hence $(x_n,y_n)_{n\in \mathbb Z}$ is a module frame system for $L_{\mathbb C}(1,2)$.

For $n,m \ge 0$, the associated factor system can be described explicitly.
Indeed, since $x_n = e_1^n$ and $y_n = (e_1^\ast)^n$, one obtains
\begin{equation*}
    \alpha_n(r) = e_1^n r (e_1^\ast)^n
\end{equation*}
for $r \in L_0$, and
\begin{equation*}
    \omega(n,m)
    =
    (x_n \kronr x_m)y_{n+m}^t
    =
    e_1^{n+m}(e_1^\ast)^{n+m}.
\end{equation*}
In particular, the cocycle $\omega(n,m)$ is the range projection of the path
$e_1^{n+m}$.
\end{example}

\begin{remark}
The module frame system used in Example~\ref{ex:L(1,2)_factorsystem} is not
canonical. 
For instance, for $n>0$ one could equally choose
\begin{equation*}
    \tilde{x}_n :=
    \frac{1}{\sqrt{2}}
    \begin{pmatrix}
        e_1^n\\
        e_2^n
\end{pmatrix}.
\end{equation*}
Such choices lead to different module frame systems.
%but yield conjugate factor systems as described in Lemma~\ref{lem:factorsystem}.
\end{remark}

We now introduce a notion of equivalence for factor systems:

\begin{definition}\label{def:conjugate}
    Let $(n,\alpha,\omega)$ and $(n',\alpha',\omega')$ be factor systems associated with strongly $G$-graded rings $S$ and $S'$, respectively, having the same principal component $R = S_e = S'_e$.
    They are \emph{conjugate} if there exist families of elements
    \begin{equation*}
        v_g \in M_{n'_g,n_g}(R), 
        \quad
        w_g \in M_{n_g,n'_g}(R), 
        \quad
        g \in G,
    \end{equation*}
    such that for all $g,h \in G$ and $r \in S_e$ one has
    \begin{align*}
        &v_g w_g = \alpha'_g(1)
        \quad \text{and} \quad
        w_g v_g = \alpha_g(1). 
        \\
        &\alpha'_g(r) v_g = v_g \alpha_g(r),
        \\
        &\omega'(g,h) v_{gh} = v_g \alpha_g(v_h) \omega(g,h).
    \end{align*}
\end{definition}

\begin{theorem}\label{thm:equivalence}
    Let $S = \bigoplus_{g \in G} S_g$ and $S' = \bigoplus_{g \in G} S_g'$ be strongly $G$-graded rings with principal component $R$, and let $(n,\alpha,\omega)$ and $(n',\alpha',\omega')$ be associated factor systems, respectively. 
	Then the following statements are equivalent:
	\begin{equivalence}
	\item\label{en:equiv}
		The strongly $G$-graded rings $S$ and $S'$ are isomorphic.
	\item\label{en:conj}
		The factor systems $(n,\alpha,\omega)$ and $(n',\alpha',\omega')$ are conjugate.
	\end{equivalence}
\end{theorem}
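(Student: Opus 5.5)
For the direction \ref{en:equiv}$\Rightarrow$\ref{en:conj}, I will fix a graded isomorphism $\varphi\colon S\to S'$. Here ``isomorphic'' is read as isomorphic as strongly $G$-graded rings with principal component $R$, i.e.\ $\varphi$ is graded and restricts to the identity on $R$.

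Let $(x_g,y_g)_{g\in G}$ and $(x'_g,y'_g)_{g\in G}$ be module frame systems implementing $(n,\alpha,\omega)$ and $(n',\alpha',\omega')$. Applying $\varphi$ entrywise gives elements $\varphi(x_g)\in M_{n_g,1}(S'_g)$ and $\varphi(y_g)\in M_{n_g,1}(S'_{g^{-1}})$. These satisfy $\varphi(y_g)^t\varphi(x_g)=1$, with $\varphi(x_e)=\varphi(y_e)=1$. So they form a module frame system for $S'$.

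Because $\varphi|_R=\id$, the factor system of $S'$ associated with $(\varphi(x_g),\varphi(y_g))_{g\in G}$ is literally $(n,\alpha,\omega)$. Indeed, $\varphi(x_g)r\varphi(y_g)^t=\varphi(x_gry_g^t)=x_gry_g^t$, and similarly $(\varphi(x_g)\kronr\varphi(x_h))\varphi(y_{gh})^t=\varphi(\omega(g,h))=\omega(g,h)$. Now $(n,\alpha,\omega)$ and $(n',\alpha',\omega')$ are two factor systems of the same ring $S'$. Lemma~\ref{lem:factorsystem}\eqref{it:conjugate} then supplies $v_g,w_g$ satisfying exactly the conjugacy relations of Definition~\ref{def:conjugate}.

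For the converse, given conjugating families $v_g,w_g$, I will define $\varphi\colon S\to S'$ on each $S_g$ by
\[
\varphi(u^tx_g):=u^t w_g x'_g,\qquad u\in M_{n_g,1}(R).
\]
Equivalently, $\varphi(s)=s\,y_g^t w_g x'_g$ for $s\in S_g$, where $s y_g^t\in M_{1,n_g}(R)$. This second formula shows that $\varphi$ is well defined and additive. It is also the identity on $R$, since $w_e=w_ev_ew_e\cdot$ (more carefully, $\alpha_e(1)=1$ forces $w_ev_e=v_ew_e=1$). One should check that the correspondence $1\mapsto w_e$ is harmless: the relation~\eqref{eq:v3} at $g=h=e$ gives $v_e=v_e v_e$, hence $v_e=1=w_e$. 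The inverse map is $s'\mapsto s'(y'_g)^tv_gx_g$. Using $v_gw_g=\alpha'_g(1)=x'_g(y'_g)^t$ and $w_gv_g=\alpha_g(1)$, the compositions reduce to $s\,y_g^tx_gy_g^tx_g=s$ and the analogous identity in $S'$.

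It remains to show that $\varphi$ is multiplicative. Using~\eqref{eq:multi} in $S$ and in $S'$, this amounts to the identity
\[
u^t\alpha_g(v)^t\omega(g,h)w_{gh}x'_{gh}=u^tw_g\,\alpha'_g\big((w_h)^t{}^{\!}\text{-twisted }v\big)\ldots,
\]
that is, to the matrix identity
\[
\alpha_g(v^t)\,\omega(g,h)\,w_{gh}=w_g\,\alpha'_g(v^tw_h)\,\omega'(g,h)\,\alpha'_{gh}(1)
\]
after right-multiplying by $x'_{gh}$. I expect this verification to be the main obstacle. The hypotheses are stated with $v$'s on the left, whereas the formula involves $w$'s, so the relations must be transported. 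I will first derive the ``$w$-versions'' of the relations:
\[
w_g\alpha'_g(r)=\alpha_g(r)w_g
\qquad\text{and}\qquad
\omega(g,h)w_{gh}=\alpha_g(w_h)\,w_g\text{-analogue}\ \ldots\ \omega'(g,h)\alpha'_{gh}(1).
\]
These follow by multiplying~\eqref{eq:v2} and~\eqref{eq:v3} by $w$'s on both sides and absorbing the idempotents $\alpha_g(1)$ and $\alpha'_g(1)$. For that absorption I will use $\alpha_g(r)\alpha_g(1)=\alpha_g(r)$ and~\eqref{eq:paruni}.

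A cleaner route avoids this bookkeeping. Instead of $\varphi$, construct the inverse direction $\psi\colon S'\to S$, $\psi(s')=s'(y'_g)^tv_gx_g$. Multiplicativity of $\psi$ then uses~\eqref{eq:v2} and~\eqref{eq:v3} directly: the product $\psi(s')\psi(t')$ becomes, via Lemma~\ref{lem:commrel}, an expression containing $v_g\alpha_g(v_h)\omega(g,h)$, which is replaced by $\omega'(g,h)v_{gh}$. I would therefore prove multiplicativity for $\psi$, and deduce that $\varphi=\psi^{-1}$ is a graded ring isomorphism.
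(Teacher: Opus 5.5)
Your proposal is correct and follows the paper's proof. For \ref{en:equiv}$\Rightarrow$\ref{en:conj} you reduce to Lemma~\ref{lem:factorsystem}~(\ref{it:conjugate}) exactly as the paper does (transporting the frame system along the isomorphism just makes that reduction explicit), and your map $\psi(s')=s'(y'_g)^t v_g x_g$ coincides with the paper's $\varphi_g(u^t x'_g)=u^t v_g x_g$, whose multiplicativity the paper likewise checks via \eqref{eq:v2} and \eqref{eq:v3}; your additional checks (well-definedness from the intrinsic formula, and $v_e=w_e=1$ so the isomorphism fixes $R$) fill in points the paper leaves implicit, and the garbled $w$-route display should simply be dropped in favour of $\psi$.
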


\begin{proof}
We distinguish the data associated with $S' = \bigoplus_{g \in G} S'_g$ by adding a prime to all corresponding symbols.  
To establish that~\ref{en:equiv} implies~\ref{en:conj}, it suffices to verify that, for a fixed strongly $G$-graded ring, different choices of module frame systems yield conjugate factor systems.  
This is precisely the content of Lemma~\ref{lem:factorsystem}~(\ref{it:conjugate}).

Conversely, let $(x_g,y_g)_{g\in G}$ and $(x'_g,y'_g)_{g\in G}$ be module frame system for $S$ and~$S'$, respectively, implementing $(n,\alpha,\omega)$ and $(n',\alpha',\omega')$.
Moreover, let $v_g \in M_{n'_g,n_g}(S_e)$, $w_g \in M_{n_g,n'_g}(S_e)$, $g \in G$, be elements satisfying $w_g v_g = \alpha_g(1)$ for all $g \in G$, and which implement the conjugation between the two factor systems.

For each $g \in G$, define
\begin{equation*}
    \varphi_g : S'_g \to S_g, 
    \quad
    \varphi_g(u^t x'_g) := u^t v_g x_g,
    \quad u \in M_{n'_g,1}(R).
\end{equation*}
Then $\varphi_g$ is an isomorphism of $R$-bimodules with inverse 
$\varphi_g^{-1}(v x_g) = v w_g x'_g$, $v \in M_{n_g,1}(R)$.
Furthermore, for all $g,h \in G$, $u \in M_{n'_g,1}(R)$ and $v \in M_{n'_h,1}(R)$,
\begin{align*}
    \varphi_g(u^t x'_g) \cdot \varphi_h(v^t x'_h)
        &= 
        u^t v_g \alpha_g(v)^t \alpha_g(v_h) \omega(g,h) x_{gh} 
        \\
        &\overset{\eqref{eq:v2}}{=} 
        u^t \alpha'_g(v)^t v_g \alpha_g(v_h) \omega(g,h) x_{gh} 
        \\
        &\overset{\eqref{eq:v3}}{=} 
        u^t \alpha'_g(v)^t \omega'(g,h) v_{gh} x_{gh} 
        \overset{\eqref{eq:multi}}{=}
        \varphi_{gh}((u^t x'_g)(v^t x'_h)).
\end{align*}
Hence $\bigoplus_{g \in G} \varphi_g$ implements an equivalence between $S = \bigoplus_{g \in G} S_g$ and $S' = \bigoplus_{g \in G} S_g'$.
\end{proof}

\begin{remark}\label{rem:K0_factor_sys}
    Let $S$ be a strongly $G$-graded ring with principal component $R$, and let $(n,\alpha,\omega)$ be an associated factor system.
    For each $g \in G$, $\alpha_g: R \to M_{n_g}(R)$ induces a homomorphism
    $K_0(\alpha_g): K_0(R) \to K_0(R)$.
    \eqref{eq:paruni} and~\eqref{eq:coaction_condition_new} imply that
    $K_0(\alpha_{gh}) = K_0(\alpha_g) \circ K_0(\alpha_h)$ for all $g,h \in G$,
    and hence define a group homomorphism $G \to \aut(K_0(R))$.
    This assignment depends only on the conjugacy class of $(n,\alpha,\omega)$, and therefore yields an invariant of $S$.
\end{remark}

\subsection{Abstract factor systems}
\label{sec:facsys_abs}

%By Theorem~\ref{thm:equivalence}, strongly $G$-graded rings are uniquely determined by their factor systems up to conjugacy.
%We now address the reconstruction problem in the opposite direction, starting from a triple $(n,\alpha,\omega)$ satisfying the relations of Lemma~\ref{lem:factorsystem} and constructing an associated strongly $G$-graded ring.

The module frame description of strongly graded rings also yields an
explicit realization of the homogeneous components as finitely generated
projective modules over the principal component.
Indeed, let $S = \bigoplus_{g\in G} S_g$ be a strongly $G$-graded ring and
let $(x_g,y_g)_{g\in G}$ be a module frame system for $S$.
Then for each $g \in G$ the element
\begin{equation*}
    p_g := x_g y_g^t \in M_{n_g}(S_e)
\end{equation*}
is an idempotent.
Moreover, identifying $S_e^{n_g}$ with $M_{1,n_g}(S_e)$, the homogeneous component $S_g$ can be identified with the projective left $S_e$-module $S_e^{n_g} p_g$ via the map
\begin{equation*}
    S_e^{n_g} p_g \to S_g,
    \quad
    u \mapsto u x_g,
\end{equation*}
whose inverse is given by $s_g \mapsto s_g y_g^t$.

This observation will also be useful in the reconstruction of strongly
$G$-graded rings from abstract factor systems carried out below.
More precisely, we address the reconstruction problem in the opposite
direction: starting from a triple $(n,\alpha,\omega)$ satisfying the
relations of Lemma~\ref{lem:factorsystem}, we construct an associated
strongly $G$-graded ring.

Since a factor system consists solely of data on the principal component
and the group $G$, it is convenient to encode this information in the
following definition:

\begin{definition}\label{def:facsys_abs}
    Let $R$ be a unital ring and let $G$ be a group.  
    A \emph{factor system for $(R,G)$} is a triple $(n,\alpha,\omega)$ consisting of  
    \begin{itemize}
        \item 
            a family $(n_g)_{g \in G}$ of positive integers, normalized by $n_e = 1$;
        \item 
            a family $(\alpha_g : R \to M_{n_g}(R))_{g \in G}$ of ring homomorphisms, normalized by $\alpha_e = \id_R$;
        \item 
            a family $(\omega(g,h) \in M_{n_g n_h,\,n_{gh}}(R))_{g,h \in G}$ of elements, normalized by $\omega(g,1) = \omega(1,g) = \alpha_g(1)$ for all $g \in G$.
    \end{itemize}
    In addition, for all $g,h \in G$ there exists an auxiliary element $\tilde\omega(g,h) \in M_{n_{gh},n_g n_h}(R)$, and these data are required to satisfy, for all $g,h,k \in G$ and $r \in R$,
    \begin{gather*}
        \omegat(g,h) \omega(g,h) = \alpha_{gh}(1)
        \quad \text{and} \quad
        \omega(g,h) \omegat(g,h) = \alpha_g(\alpha_h(1)),
        \\
        \omega(g,h) \alpha_{gh}(r) = \alpha_g(\alpha_h(r)) \omega(g,h),
        \\
        (\omega(g,h) \kronr 1_{n_k}) \omega(gh,k) = \alpha_g(\omega(h,k)) \omega(g,hk).
    \end{gather*}
 %   In addition, for all $g,h \in G$ there exists an auxiliary element $\tilde\omega(g,h) \in M_{n_{gh},n_g n_h}(R)$, and with this notation the above data are required to satisfy~\eqref{eq:paruni}, \eqref{eq:coaction_condition_new}, and \eqref{eq:cocycle_condition_new} for all $g,h,k \in G$ and $r \in R$.
\end{definition}

\begin{remark}
    Each factor system of a strongly $G$-graded ring with principal component $R$ is, in particular, a factor system for $(R,G)$.
\end{remark}

Henceforth, let $R$ be a unital ring, let $G$ be a group, and let $(n,\alpha,\omega)$ be a factor system for $(R,G)$.  
By convention, for each $g \in G$, the map $\alpha_g: R \to M_{n_g}(R)$ is extended entrywise to matrix algebras over $R$, and this extension is denoted by the same symbol.

For each $g \in G$, define
\begin{equation*}
    S_g := M_{1,n_g}(R) \alpha_g(1),
\end{equation*}
and endow $S_g$ with its natural left $R$-module structure and the right $R$-module structure $x \cdot r := x \alpha_g(r)$ for all $x \in S_g$ and $r \in R$.
Then $S_g$ is an $R$-bimodule.  

\begin{lemma}\label{lem:multi}
    For $g,h\in G$, define the $R$-bilinear map
    \begin{equation*}
        m_{g,h}: S_g \times S_h \to S_{gh},
        \quad 
        m_{g,h}(s_g,s_h) := s_g \alpha_g(s_h) \omega(g,h).
    \end{equation*}
    The image of $m_{g,h}$ generates $S_{gh}$ as a left $R$-module.
\end{lemma}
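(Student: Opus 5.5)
The plan is to check three things directly from the relations in Definition~\ref{def:facsys_abs}, with no further structure needed: that $m_{g,h}$ takes values in $S_{gh}$, that it is $R$-bilinear in the appropriate balanced sense, and that every element of $S_{gh}$ is a finite sum of values of $m_{g,h}$.

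\emph{Well-definedness.} First I check sizes. For $s_g\in M_{1,n_g}(R)$ and $s_h\in M_{1,n_h}(R)$, the matrix $\alpha_g(s_h)$ is $n_g\times n_gn_h$ and $\omega(g,h)$ is $n_gn_h\times n_{gh}$, so $m_{g,h}(s_g,s_h)\in M_{1,n_{gh}}(R)$. To see that it lies in $S_{gh}=M_{1,n_{gh}}(R)\alpha_{gh}(1)$, I multiply on the right by $\alpha_{gh}(1)$. The relation $\omega(g,h)\alpha_{gh}(r)=\alpha_g(\alpha_h(r))\omega(g,h)$ with $r=1$ gives
\[
s_g\alpha_g(s_h)\omega(g,h)\alpha_{gh}(1)=s_g\alpha_g(s_h)\alpha_g(\alpha_h(1))\omega(g,h)=s_g\alpha_g\big(s_h\alpha_h(1)\big)\omega(g,h).
\]
Since $s_h\in S_h$, we have $s_h\alpha_h(1)=s_h$, so the right-hand side is $m_{g,h}(s_g,s_h)$ again, and hence $m_{g,h}(s_g,s_h)\in S_{gh}$.

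\emph{Bilinearity.} Left $R$-linearity in $s_g$ is obvious, and additivity in each variable is clear. The middle relation $m_{g,h}(s_g\cdot r,s_h)=m_{g,h}(s_g,rs_h)$ follows from multiplicativity of $\alpha_g$: $s_g\alpha_g(r)\alpha_g(s_h)=s_g\alpha_g(rs_h)$. For right linearity I use multiplicativity once more, then the coaction relation:
\[
m_{g,h}(s_g,s_h\alpha_h(r))=s_g\alpha_g(s_h)\alpha_g(\alpha_h(r))\omega(g,h)=s_g\alpha_g(s_h)\omega(g,h)\alpha_{gh}(r)=m_{g,h}(s_g,s_h)\cdot r .
\]

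\emph{Generation.} This is the only step that needs an idea, and the idea is to use the auxiliary element $\tilde\omega(g,h)$. Let $t\in S_{gh}$, so that $t=t\alpha_{gh}(1)$. Applying $\tilde\omega\omega=\alpha_{gh}(1)$ twice and $\omega\tilde\omega=\alpha_g(\alpha_h(1))$ once gives
\[
t=t\tilde\omega\omega=t\tilde\omega\omega\tilde\omega\omega=z\,\alpha_g(\alpha_h(1))\,\omega(g,h),\qquad z:=t\tilde\omega(g,h)\in M_{1,n_gn_h}(R).
\]
Next I split $z$ into $n_h$ consecutive blocks $z_j\in M_{1,n_g}(R)$, matching the block structure of $\alpha_g$ applied entrywise to an $n_h\times n_h$ matrix. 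With $e_j$ the $j$-th standard unit column vector, block multiplication gives
\[
z\,\alpha_g(\alpha_h(1))=\sum_{j} z_j\,\alpha_g\big(e_j^t\alpha_h(1)\big)=\sum_j z_j\alpha_g(1)\,\alpha_g\big(e_j^t\alpha_h(1)\big).
\]
Here $s_h^{(j)}:=e_j^t\alpha_h(1)\in S_h$ and $s_g^{(j)}:=z_j\alpha_g(1)\in S_g$. Therefore $t=\sum_j m_{g,h}(s_g^{(j)},s_h^{(j)})$, so the image of $m_{g,h}$ even additively generates $S_{gh}$.

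I expect the main obstacle to be the bookkeeping in this last block decomposition: identifying $z\alpha_g(A)$ as $\sum_j z_j\alpha_g(\text{row}_j A)$ with the correct Kronecker convention for the entrywise extension of $\alpha_g$. Everything else is a one-line application of the defining relations.
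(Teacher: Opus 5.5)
Your proof is correct and is essentially the paper's argument. The paper factors $m_{g,h}$ through $\mu_{g,h}(s_g,s_h)=s_g\alpha_g(s_h)$ followed by right multiplication by $\omega(g,h)$: Lemma~\ref{lem:multi_1} shows that the rows of $\alpha_g(\alpha_h(1))$ are the values $\mu_{g,h}(e_i\alpha_g(1),e_j\alpha_h(1))$, and Lemma~\ref{lem:multi_2} uses~\eqref{eq:paruni} to show that $y\mapsto y\,\omega(g,h)$ is invertible, which is exactly your identity $t=(t\,\omegat(g,h))\,\alpha_g(\alpha_h(1))\,\omega(g,h)$; absorbing the block coefficients $z_j$ into the first argument instead of taking left $R$-combinations, and checking well-definedness and bilinearity explicitly (which the paper leaves implicit), are only cosmetic differences.
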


The proof of Lemma~\ref{lem:multi} is reduced to two auxiliary results.

\begin{lemma}\label{lem:multi_1}
    For $g,h\in G$, define the $R$-bilinear map
    \begin{equation*}
        \mu_{g,h}: S_g \times S_h \to M_{1,n_gn_h}(R) \alpha_g(\alpha_h(1)),
        \quad 
        \mu_{g,h}(s_g,s_h) := s_g \alpha_g(s_h).
    \end{equation*}
    The image of $\mu_{g,h}$ generates $M_{1,n_gn_h}(R) \alpha_g(\alpha_h(1))$ (as a left $R$-module).
\end{lemma}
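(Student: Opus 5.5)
The plan is to show that the left $R$-submodule generated by the image of $\mu_{g,h}$ contains $\alpha_g(\alpha_h(1))$ row by row. It then automatically contains all of $M_{1,n_gn_h}(R)\,\alpha_g(\alpha_h(1))$, since every element there is an $R$-linear combination of the rows of $\alpha_g(\alpha_h(1))$. The reverse inclusion is immediate. Indeed, $s_g = s_g\alpha_g(1)$ and $s_h = s_h\alpha_h(1)$, so $s_g\alpha_g(s_h) = s_g\alpha_g(s_h)\,\alpha_g(\alpha_h(1))$ because $\alpha_g$ is multiplicative entrywise. Hence the image of $\mu_{g,h}$ lies in the stated module.

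For the main step, write $\alpha_g(\alpha_h(1))$ as the block matrix $(\alpha_g(a_{jl}))_{j,l}$, where $\alpha_h(1) = (a_{jl}) \in M_{n_h}(R)$. Its rows are indexed by pairs $(j,i)$ with $1\le j\le n_h$ and $1\le i\le n_g$, and are obtained as $\varepsilon\,\alpha_g(\epsilon_j\alpha_h(1))$. Here $\epsilon_j\in M_{1,n_h}(R)$ is the $j$-th standard row vector, and $\varepsilon$ is the row vector in $M_{1,n_g n_h}$ selecting row $i$ of block $j$. Let $\epsilon_i'\in M_{1,n_g}(R)$ be the $i$-th standard row. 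I then take
\begin{equation*}
  s_h := \epsilon_j\alpha_h(1)\in S_h, \qquad s_g := \epsilon'_i\alpha_g(1)\in S_g .
\end{equation*}
By the definition of $\alpha_g$ extended entrywise to a row vector, $\alpha_g(s_h)$ is the $n_g\times n_gn_h$ block row $(\alpha_g(a_{j1}),\dots,\alpha_g(a_{jn_h}))$. Multiplying on the left by $\epsilon'_i\alpha_g(1)$ uses $\alpha_g(1)\alpha_g(a)=\alpha_g(a)$ and picks out row $i$ of each block. This is exactly the $(j,i)$-th row of $\alpha_g(\alpha_h(1))$, so each such row is literally $\mu_{g,h}(s_g,s_h)$.

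The only real obstacle is bookkeeping: the indexing convention of the entrywise extension of $\alpha_g$ to $M_{1,n_h}(R)$ must match the block structure used for $\alpha_g(\alpha_h(1))$, so that "row $i$ of block $j$" is computed consistently. Both are the entrywise extension, so they agree. No Kronecker product subtleties enter, because $\omega$ does not appear here. Lemma~\ref{lem:multi} will then follow by multiplying on the right by $\omega(g,h)$ and using $\alpha_g(\alpha_h(1))\,\omega(g,h)=\omega(g,h)\,\alpha_{gh}(1)$ together with $\omegat$.
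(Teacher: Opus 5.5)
Your proposal is correct and follows essentially the same route as the paper. Both reduce to the rows of $\alpha_g(\alpha_h(1))$, take $s_g=\epsilon'_i\alpha_g(1)$ and $s_h=\epsilon_j\alpha_h(1)$, and use $\alpha_g(1)\alpha_g(a)=\alpha_g(a)$ to see that $\mu_{g,h}(s_g,s_h)$ is exactly the corresponding row; you also verify explicitly that the image lands in $M_{1,n_gn_h}(R)\,\alpha_g(\alpha_h(1))$, which the paper leaves implicit.
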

\begin{proof}
Fix $g,h \in G$, set $m := n_g$, $n := n_h$, and write
\begin{equation*}
    p := \alpha_g(\alpha_h(1)) \in M_{mn}(R).
\end{equation*}
Since the rows $e_{(i,j)}p \in M_{1,mn}(R)p$ generate $M_{1,mn}(R)p$
as a left $R$-module, it suffices to show that each such row lies in the
image of $\mu_{g,h}$.

Let $e_i \in M_{1,m}(R)$ and $e_j \in M_{1,n}(R)$ be the standard basis rows, and set
\begin{equation*}
    s_g := e_i \alpha_g(1) \in S_g
    \quad \text{and} \quad
    s_h := e_j \alpha_h(1) \in S_h.
\end{equation*}
By construction, $\alpha_g(e_j) \in M_{m,mn}(R)$ is the row block matrix with $\alpha_g(1)$ in the $j$th block and zeros elsewhere.
Consequently,
\begin{equation*}
    s_g \alpha_g(s_h)
    =
    e_i \alpha_g(1) \alpha_g(e_j) p
    =
    (0 | \cdots | e_i \alpha_g(1) | \cdots | 0) p.
\end{equation*}

Write $\alpha_h(1) = (q_{r\ell})$ and identify $p = (\alpha_g(q_{r\ell}))$ as an $n \times n$ block matrix with $m \times m$ blocks.
Then
\begin{equation*}
    (s_g \alpha_g(s_h))_{(k,\ell)}
    =
    \sum_{k'=1}^m (\alpha_g(1))_{i,k'}
    (\alpha_g(q_{j\ell}))_{k',k}
    =
    (\alpha_g(q_{j\ell}))_{i,k},
\end{equation*}
using $\alpha_g(1)\alpha_g(q_{j\ell})=\alpha_g(q_{j\ell})$.

On the other hand,
\begin{equation*}
    (e_{(i,j)}p)_{(k,\ell)}
    =
    p_{(i,j),(k,\ell)}
    =
    \big(\alpha_g(q_{j\ell})\big)_{i,k}.
\end{equation*}
Thus $s_g \alpha_g(s_h) = e_{(i,j)}p$, as required.
\end{proof}

The next lemma is a direct consequence of~\eqref{eq:paruni}:

\begin{lemma}\label{lem:multi_2}
For $g,h \in G$, the $R$-bimodule map
\begin{equation*}
    \rho_{\omega(g,h)}:
    M_{1,n_g n_h}(R) \alpha_g(\alpha_h(1)) \to S_{gh},
    \quad
    y \mapsto y\,\omega(g,h),
\end{equation*}
is an isomorphism of $R$-bimodules.
\end{lemma}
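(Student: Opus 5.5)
The plan is to write down an explicit inverse using the auxiliary element $\omegat(g,h)$. I will check in turn that $\rho_{\omega(g,h)}$ lands in $S_{gh}$, that it is $R$-bilinear for the stated bimodule structures, and that multiplication by $\omegat(g,h)$ is a two-sided inverse.

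First, well-definedness. Let $y = z\,\alpha_g(\alpha_h(1))$ with $z \in M_{1,n_gn_h}(R)$. By the coaction relation \eqref{eq:coaction_condition_new} with $r=1$,
\[
y\,\omega(g,h) = z\,\alpha_g(\alpha_h(1))\,\omega(g,h) = z\,\omega(g,h)\,\alpha_{gh}(1).
\]
Since $\omega(g,h) \in M_{n_gn_h,n_{gh}}(R)$, this lies in $M_{1,n_{gh}}(R)\alpha_{gh}(1) = S_{gh}$.

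Next, bilinearity. Left $R$-linearity is clear. For the right action, I use the natural right structure $y\cdot r = y\,\alpha_g(\alpha_h(r))$ on the domain, which is the one compatible with $\mu_{g,h}$ from Lemma~\ref{lem:multi_1}, and $x\cdot r = x\,\alpha_{gh}(r)$ on $S_{gh}$. Then \eqref{eq:coaction_condition_new} gives
\[
\rho_{\omega(g,h)}(y\cdot r) = y\,\alpha_g(\alpha_h(r))\,\omega(g,h) = y\,\omega(g,h)\,\alpha_{gh}(r) = \rho_{\omega(g,h)}(y)\cdot r.
\]

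For bijectivity, define $\sigma : S_{gh} \to M_{1,n_gn_h}(R)\alpha_g(\alpha_h(1))$ by $x \mapsto x\,\omegat(g,h)$. To see that $\sigma$ lands in the stated target, write $\omegat(g,h) = \omegat(g,h)\,\omega(g,h)\,\omegat(g,h)$, which follows from \eqref{eq:paruni} provided $\omegat(g,h) = \alpha_{gh}(1)\,\omegat(g,h)$. Then
\[
x\,\omegat(g,h) = x\,\omegat(g,h)\,\omega(g,h)\,\omegat(g,h) = x\,\omegat(g,h)\,\alpha_g(\alpha_h(1)),
\]
which lies in $M_{1,n_gn_h}(R)\alpha_g(\alpha_h(1))$.

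With this in hand, the two compositions are identities. For $y$ in the domain, \eqref{eq:paruni} gives $y\,\omega(g,h)\,\omegat(g,h) = y\,\alpha_g(\alpha_h(1)) = y$, since $y$ is fixed by the idempotent $\alpha_g(\alpha_h(1))$. Similarly, for $x \in S_{gh}$, $x\,\omegat(g,h)\,\omega(g,h) = x\,\alpha_{gh}(1) = x$.

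The main obstacle is the absorption property $\omegat = \alpha_{gh}(1)\,\omegat$ (equivalently $\omegat = \omegat\,\alpha_g(\alpha_h(1))$), which the abstract axioms do not state explicitly. If it fails, I would replace $\omegat$ by
\[
\omegat' := \alpha_{gh}(1)\,\omegat(g,h)\,\alpha_g(\alpha_h(1)).
\]
To check that $\omegat'$ still satisfies \eqref{eq:paruni}, I use the identities $\omega(g,h)\,\alpha_{gh}(1) = \alpha_g(\alpha_h(1))\,\omega(g,h)$, which is \eqref{eq:coaction_condition_new} at $r=1$, together with $\omega = \omega\,\omegat\,\omega$ from \eqref{eq:paruni}. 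For example,
\[
\omegat'\,\omega(g,h) = \alpha_{gh}(1)\,\omegat(g,h)\,\omega(g,h)\,\alpha_{gh}(1) = \alpha_{gh}(1),
\]
and the other product is checked the same way. With $\omegat'$ in place of $\omegat$, the argument above goes through unchanged and yields the claimed bimodule isomorphism.
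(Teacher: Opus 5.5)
Your proof is correct and follows the paper's intended argument: the paper only notes that the lemma is a direct consequence of \eqref{eq:paruni}, i.e.\ right multiplication by $\omegat(g,h)$ inverts $\rho_{\omega(g,h)}$, with \eqref{eq:coaction_condition_new} at $r=1$ handling the targets and with \eqref{eq:coaction_condition_new} handling right linearity. The absorption detour through $\omegat'$ is unnecessary, since for $x\in S_{gh}$ one gets $x\,\omegat(g,h)=x\,\alpha_{gh}(1)\,\omegat(g,h)=x\,\omegat(g,h)\,\omega(g,h)\,\omegat(g,h)=x\,\omegat(g,h)\,\alpha_g(\alpha_h(1))$ directly from \eqref{eq:paruni}; also, \eqref{eq:paruni} only gives $\omega(g,h)\,\omegat(g,h)\,\omega(g,h)=\omega(g,h)\,\alpha_{gh}(1)$ rather than $\omega(g,h)$, though you never actually use that claim.
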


Since $m_{g,h}=\rho_{\omega(g,h)}\circ\mu_{g,h}$ for all $g,h\in G$, 
Lemma~\ref{lem:multi} now follows directly from
Lemmas~\ref{lem:multi_1} and~\ref{lem:multi_2}.

With this in place, we set
\begin{equation*}
    S := \bigoplus_{g\in G} S_g
\end{equation*}
and define a multiplication $m: S\times S\to S$ as follows:
for $g,h\in G$ and homogeneous elements $s_g\in S_g, s_h\in S_h$, set
\begin{equation*}
    m(s_g,s_h) := m_{g,h}(s_g,s_h),
\end{equation*}
and extend $m$ to all of $S\times S$ by $R$-bilinearity.

\begin{lemma}\label{lem:multiass}
    The map $m$ is associative.
\end{lemma}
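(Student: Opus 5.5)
By $R$-bilinearity it suffices to check associativity on homogeneous elements. Fix $g,h,k\in G$ and $s_g\in S_g$, $s_h\in S_h$, $s_k\in S_k$. We compare
\begin{equation*}
    m(m(s_g,s_h),s_k) = s_g\alpha_g(s_h)\omega(g,h)\,\alpha_{gh}(s_k)\,\omega(gh,k)
\end{equation*}
with
\begin{equation*}
    m(s_g,m(s_h,s_k)) = s_g\,\alpha_g\big(s_h\alpha_h(s_k)\omega(h,k)\big)\,\omega(g,hk)
    = s_g\alpha_g(s_h)\,\alpha_g(\alpha_h(s_k))\,\alpha_g(\omega(h,k))\,\omega(g,hk),
\end{equation*}
where the second expression uses that the entrywise extension of $\alpha_g$ to matrices is multiplicative.

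The first step is to move $\alpha_{gh}(s_k)$ past $\omega(g,h)$ in the left-hand expression. The coaction relation $\omega(g,h)\alpha_{gh}(r)=\alpha_g(\alpha_h(r))\omega(g,h)$ holds for scalars $r\in R$. For the row vector $s_k\in M_{1,n_k}(R)$ one applies it entrywise: the matrix $\alpha_{gh}(s_k)$ has blocks $\alpha_{gh}((s_k)_l)$, and $\alpha_g(\alpha_h(s_k))$ has blocks $\alpha_g(\alpha_h((s_k)_l))$. This gives the blockwise identity
\begin{equation*}
    \omega(g,h)\,\alpha_{gh}(s_k) = \alpha_g(\alpha_h(s_k))\,(\omega(g,h)\kronr 1_{n_k}).
\end{equation*}

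Care is needed with the Kronecker convention here. We need $\alpha_g(\alpha_h(\cdot))$ applied to a $1\times n_k$ row to yield $n_k$ blocks of size $n_gn_h$, and $\kronr$ must put $\omega(g,h)$ in each diagonal block. This is consistent with the definition $A\kronr B=(A b_{ij})$, and the block-index ordering must be checked to match the one used in $\alpha_g$ of a matrix. We also need the analogous ordering in Lemma~\ref{lem:multi_1}.

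With this identity the left-hand side becomes
\begin{equation*}
    s_g\alpha_g(s_h)\,\alpha_g(\alpha_h(s_k))\,(\omega(g,h)\kronr 1_{n_k})\,\omega(gh,k),
\end{equation*}
and the cocycle relation $(\omega(g,h)\kronr 1_{n_k})\omega(gh,k)=\alpha_g(\omega(h,k))\omega(g,hk)$ turns it into the right-hand side. That closes the computation.

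Two bookkeeping points remain, and I regard them as the only real obstacles beyond the index check.

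\begin{itemize}
    \item $m$ must be well defined on the summands $S_g=M_{1,n_g}(R)\alpha_g(1)$. The right factor $\omega(g,h)$ absorbs projections, since $\omega(g,h)=\omega(g,h)\alpha_{gh}(1)$ by the coaction relation with $r=1$. Hence $m_{g,h}$ indeed lands in $S_{gh}$.
    \item $m$ must respect the bimodule structures used in the bilinear extension. The middle linearity $m(s_g\cdot r,s_h)=m(s_g,r s_h)$ is immediate from $s_g\alpha_g(r)\alpha_g(s_h)=s_g\alpha_g(rs_h)$.
\end{itemize}

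Neither is needed for associativity itself beyond ensuring that the expressions are computed in the same ambient matrix spaces.
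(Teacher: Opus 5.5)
Your proof is correct and is essentially the paper's own: you apply the coaction relation entrywise to move $\alpha_{gh}(s_k)$ past $\omega(g,h)$, which produces $\omega(g,h)\kronr 1_{n_k}$, and then the cocycle relation and multiplicativity of $\alpha_g$ give $m(s_g,m(s_h,s_k))$. One small correction to your side remark: the coaction relation with $r=1$ only gives $\omega(g,h)\alpha_{gh}(1)=\alpha_g(\alpha_h(1))\omega(g,h)$, not $\omega(g,h)=\omega(g,h)\alpha_{gh}(1)$ (the abstract axioms do not force the latter in general); combined with $\alpha_g(s_h)=\alpha_g(s_h)\alpha_g(\alpha_h(1))$, however, it still shows that $m_{g,h}$ takes values in $S_{gh}$.
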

\begin{proof}
Let $g,h,k \in G$ and let $s_g\in S_g, s_h\in S_h, s_k\in S_k$.
Then
\begin{align*}
    m(m(s_g,s_h),s_k)
    &=
    s_g \alpha_g(s_h) \omega(g,h) \alpha_{gh}(s_k) \omega(gh,k)
    \\
    &\overset{\eqref{eq:coaction_condition_new}}{=}
    s_g \alpha_g(s_h) \alpha_g(\alpha_h(s_k)) (\omega(g,h) \kronr 1_{n_k}) \omega(gh,k)
    \\
    &\overset{\eqref{eq:cocycle_condition_new}}{=}
    s_g \alpha_g(s_h) \alpha_g(\alpha_h(s_k)) \alpha_g(\omega(h,k)) \omega(g,hk)
%    \\
%    &=
%    x_g \alpha_g(x_h \alpha_h(x_k) \omega(h,k)) \omega(g,hk)
    =
    m(s_g,m(s_h,s_k)).
    \qedhere
\end{align*}
\end{proof}

Thus $S$ is a strongly $G$-graded ring with principal component $R$.

We proceed to construct a factor system of $S$:
For each $g\in G$, define
\begin{equation*}
    x_g
    :=
    (e_1\alpha_g(1),\ldots,e_{n_g}\alpha_g(1))^t
    \in M_{n_g,1}(S_g),
\end{equation*}
where $e_i \in M_{1,n_g}(R)$ denote the standard basis rows.
To obtain compatible elements in degree $g^{-1}$, write
$\omegat(g^{-1},g) \in M_{1,n_{g^{-1}}n_g}(R)$ in block form
\begin{equation*}
    \omegat(g^{-1},g)
    =
    (t_{g,1} | \cdots | t_{g,n_g}),
    \quad
    t_{g,i} \in M_{1,n_{g^{-1}}}(R),
\end{equation*}
and define
\begin{equation*}
    y_g
    :=
    (t_{g,1}\alpha_{g^{-1}}(1),\ldots,t_{g,n_g}\alpha_{g^{-1}}(1))^t
    \in M_{n_g,1}(S_{g^{-1}}).
\end{equation*}

\begin{lemma}
    The family $(x_g,y_g)_{g\in G}$ is a module frame system for $S$.
\end{lemma}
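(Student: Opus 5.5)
We need to check three things: $x_g$ and $y_g$ lie in the correct components, the normalization at $g=e$ holds, and $y_g^t x_g = 1$ in $S_e=R$.

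\textbf{Membership and normalization.} Membership is immediate. Each entry $e_i\alpha_g(1)$ lies in $M_{1,n_g}(R)\alpha_g(1)=S_g$. Each entry $t_{g,i}\alpha_{g^{-1}}(1)$ lies in $M_{1,n_{g^{-1}}}(R)\alpha_{g^{-1}}(1)=S_{g^{-1}}$.

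For $g=e$ we have $n_e=1$ and $\alpha_e=\id_R$, so $x_e=1$. For $y_e$, the entry $t_{e,1}$ is $\omegat(e,e)$. From $\omegat(e,e)\omega(e,e)=\alpha_e(1)=1$ and the normalization $\omega(e,e)=\alpha_e(1)=1$ we get $\omegat(e,e)=1$, hence $y_e=1$.

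\textbf{The identity $y_g^t x_g=1$.} The product is computed in $S$ with the multiplication $m$, and it lands in $S_e=M_{1,1}(R)\alpha_e(1)=R$. Expanding,
\begin{equation*}
    y_g^t x_g=\sum_{i=1}^{n_g} m_{g^{-1},g}\big(t_{g,i}\alpha_{g^{-1}}(1),\,e_i\alpha_g(1)\big)
    =\sum_{i} t_{g,i}\alpha_{g^{-1}}(1)\,\alpha_{g^{-1}}(e_i\alpha_g(1))\,\omega(g^{-1},g).
\end{equation*}
Since $\alpha_{g^{-1}}$ is a ring homomorphism, we have $\alpha_{g^{-1}}(1)\alpha_{g^{-1}}(e_i)=\alpha_{g^{-1}}(e_i)$. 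As in the proof of Lemma~\ref{lem:multi_1}, $\alpha_{g^{-1}}(e_i)$ is the block row with $\alpha_{g^{-1}}(1)$ in the $i$th block and zeros elsewhere.

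It follows that $\sum_i t_{g,i}\alpha_{g^{-1}}(e_i)$ is exactly the block row
\begin{equation*}
    (t_{g,1}\alpha_{g^{-1}}(1)\,|\,\cdots\,|\,t_{g,n_g}\alpha_{g^{-1}}(1)).
\end{equation*}
This is the block-diagonal product $\omegat(g^{-1},g)\,(1_{n_g}\kronr \alpha_{g^{-1}}(1))$, up to the ordering convention of the Kronecker product, which I will match to the entrywise extension of $\alpha_{g^{-1}}$. Multiplying on the right by $\alpha_{g^{-1}}(\alpha_g(1))$ then gives
\begin{equation*}
    y_g^t x_g=\omegat(g^{-1},g)\,\alpha_{g^{-1}}(\alpha_g(1))\,\omega(g^{-1},g).
\end{equation*}

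\textbf{Absorbing the idempotent.} The relations in Definition~\ref{def:facsys_abs} give
\begin{equation*}
    \omegat(g^{-1},g)\,\alpha_{g^{-1}}(\alpha_g(1))
    =\omegat(g^{-1},g)\,\omega(g^{-1},g)\,\omegat(g^{-1},g)
    =\alpha_e(1)\,\omegat(g^{-1},g)
    =\omegat(g^{-1},g).
\end{equation*}
Hence
\begin{equation*}
    y_g^t x_g=\omegat(g^{-1},g)\,\omega(g^{-1},g)=\alpha_e(1)=1.
\end{equation*}

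\textbf{Main obstacle.} The main obstacle is the index bookkeeping in the second step. The blocks of $\omegat(g^{-1},g)$ must be ordered consistently with the block structure of the entrywise extension $\alpha_{g^{-1}}$ applied to row vectors. In particular, the $i$th block of length $n_{g^{-1}}$ has to correspond to $\alpha_{g^{-1}}(e_i)$. I would verify this convention against the indexing used in Lemma~\ref{lem:multi_1} before relying on it.
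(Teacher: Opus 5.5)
Your proposal is correct and follows the same route as the paper: expand $y_g^t x_g$ via $m_{g^{-1},g}$, recognize the block row $(t_{g,1}\alpha_{g^{-1}}(1)|\cdots|t_{g,n_g}\alpha_{g^{-1}}(1))$, and reduce to $\omegat(g^{-1},g)\omega(g^{-1},g)=1$ by \eqref{eq:paruni}. You are more explicit than the paper, which absorbs the factor $\alpha_{g^{-1}}(\alpha_g(1))$ implicitly (into $\omega(g^{-1},g)$ rather than into $\omegat(g^{-1},g)$, equally valid because $\alpha_e(1)=1$) and states $y_e=1$ without justification; your ordering worry is harmless, since the block-diagonal factor is $\alpha_{g^{-1}}(1_{n_g})=\alpha_{g^{-1}}(1)\kronr 1_{n_g}$ in the paper's convention, not $1_{n_g}\kronr\alpha_{g^{-1}}(1)$, and you only ever use the block row itself.
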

\begin{proof}
Observe that $n_e=1$ and $x_e = y_e = 1$.
Let $g \neq e $.
Then
\begin{align*}
    y_g^t x_g
    =
    \sum_{i=1}^{n_g} t_{g,i} \alpha_{g^{-1}}(1) \alpha_{g^{-1}}(e_i\alpha_g(1)) \omega(g^{-1},g)
    =
    \sum_{i=1}^{n_g} (0 | \cdots | t_{g,i} \alpha_{g^{-1}}(1) | \cdots | 0) \omega(g^{-1},g).
\end{align*}
By construction, this sum is precisely $\omegat(g^{-1},g)\omega(g^{-1},g)$, which equals $1$ by~\eqref{eq:paruni}.
\end{proof}

We claim that the factor system of $S$ associated with $(x_g,y_g)_{g\in G}$ coincides with the factor system $(n,\alpha,\omega)$.
The following auxiliary identity will be used to verify this.
Let $g\in G$.
Applying $\alpha_g$ to $\omegat(g^{-1},g)\omega(g^{-1},g)=1$
and using~\eqref{eq:cocycle_condition_new} in the special case
$(g,h,k)=(g,g^{-1},g)$, one finds that
\begin{equation}\label{eq:alpha-omega-id}
    \alpha_g(\omegat(g^{-1},g)) (\omega(g,g^{-1}) \kronr 1_{n_g})
    =
    \alpha_g(1).
\end{equation}
Taking the $j$th column of~\eqref{eq:alpha-omega-id} gives
\begin{equation}\label{eq:column-id}
    \alpha_g(t_{g,j} \alpha_{g^{-1}}(1)) \omega(g,g^{-1})
    =
    \alpha_g(1)e_j^t.
\end{equation}

\begin{lemma}\label{lem:recover-alpha}
    For each $g\in G$, the map 
    \begin{equation*}
        \alpha_g': R \to M_{n_g}(R),
        \quad
        \alpha_g'(r) := x_g r y_g^t
    \end{equation*}
    coincides with $\alpha_g$.
\end{lemma}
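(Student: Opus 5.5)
Fix $g\in G$ and $r\in R$, and compute the $(i,j)$ entry of $\alpha'_g(r)=x_g r y_g^t$ directly in the constructed ring $S$. The $i$th entry of $x_g$ is $e_i\alpha_g(1)\in S_g$; viewing $r\in R=S_e$, the product $e_i\alpha_g(1)\cdot r$ in $S$ is $m_{g,e}(e_i\alpha_g(1),r)=e_i\alpha_g(1)\alpha_g(r)\omega(g,e)=e_i\alpha_g(r)$, using the normalization $\omega(g,e)=\alpha_g(1)$ and that $\alpha_g$ is a ring homomorphism. It therefore suffices to compute $m_{g,g^{-1}}(e_i\alpha_g(r),\,t_{g,j}\alpha_{g^{-1}}(1))$, which by definition of $m$ equals
\begin{equation*}
    e_i\alpha_g(r)\,\alpha_g\bigl(t_{g,j}\alpha_{g^{-1}}(1)\bigr)\,\omega(g,g^{-1}) \in S_e = M_{1,1}(R)\alpha_e(1)=R .
\end{equation*}

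The key step is now the column identity \eqref{eq:column-id}, which says exactly that $\alpha_g(t_{g,j}\alpha_{g^{-1}}(1))\omega(g,g^{-1})=\alpha_g(1)e_j^t$. Substituting gives $e_i\alpha_g(r)\alpha_g(1)e_j^t=e_i\alpha_g(r)e_j^t=(\alpha_g(r))_{ij}$, so $\alpha'_g(r)=\alpha_g(r)$ entrywise. The case $g=e$ is trivial since $x_e=y_e=1$ and $\alpha_e=\id_R$.

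The only point requiring care is the bookkeeping behind \eqref{eq:column-id}: one must check that the $j$th column of $\alpha_g(\omegat(g^{-1},g))(\omega(g,g^{-1})\kronr 1_{n_g})$ is really $\alpha_g(t_{g,j}\alpha_{g^{-1}}(1))\omega(g,g^{-1})$. Here the block structure of $\omega(g,g^{-1})\kronr 1_{n_g}$ (copies of $\omega(g,g^{-1})$ placed according to the right Kronecker convention) must match the block decomposition $\omegat(g^{-1},g)=(t_{g,1}|\cdots|t_{g,n_g})$. The extra factor $\alpha_{g^{-1}}(1)$ is harmless because $\alpha_g(\alpha_{g^{-1}}(1))\omega(g,g^{-1})=\omega(g,g^{-1})\alpha_e(1)=\omega(g,g^{-1})$ by \eqref{eq:coaction_condition_new}. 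Since the paper states \eqref{eq:column-id} before the lemma, I would take it as given and mainly verify that the indexing of rows of $y_g$ matches the columns used there. This index matching is the main (if mild) obstacle.
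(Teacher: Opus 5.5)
Your proposal is correct and is essentially the paper's own proof: the paper also writes $(\alpha'_g(r))_{ij}=e_i\alpha_g(r)\cdot t_{g,j}\alpha_{g^{-1}}(1)$, expands this product via $m_{g,g^{-1}}$, and concludes with \eqref{eq:column-id}. Your explicit justification of $e_i\alpha_g(1)\cdot r=e_i\alpha_g(r)$ through the normalization $\omega(g,e)=\alpha_g(1)$ only spells out a step that the paper leaves implicit.
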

\begin{proof}
Let $g\in G$ and let $r\in R$.
For all $1 \leq i,j \leq n_g$,
\begin{equation*}
    (\alpha_g'(r))_{ij}
    =
    e_i\alpha_g(r) \cdot t_{g,j} \alpha_{g^{-1}}(1)
    =
    e_i\alpha_g(r) \alpha_g(t_{g,j} \alpha_{g^{-1}}(1)) \omega(g,g^{-1})
    \overset{\eqref{eq:column-id}}{=}
    (\alpha_g(r))_{ij}.
\end{equation*}
Hence $\alpha_g'(r)=\alpha_g(r)$.
\end{proof}

\begin{lemma}\label{lem:recover-omega}
    For all $g,h \in G$, the element
    \begin{equation*}
        \omega'(g,h) := (x_g \kronr x_h) y_{gh}^t \in M_{n_g n_h, n_{gh}}(R)
    \end{equation*}
    coincides with $\omega(g,h)$.
\end{lemma}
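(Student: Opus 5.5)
The plan is to compute $\omega'(g,h)$ row by row, using the multiplication $m$ of $S$ and the identities already established. First I interpret $x_g \kronr x_h$ inside $M_{n_gn_h,1}(S_{gh})$. Its $(i,j)$ entry is the product $m\big(e_i\alpha_g(1),\,e_j\alpha_h(1)\big)$ of the $i$th entry of $x_g$ and the $j$th entry of $x_h$. The proof of Lemma~\ref{lem:multi_1} already identifies $\mu_{g,h}\big(e_i\alpha_g(1),e_j\alpha_h(1)\big)$ with the row $e_{(i,j)}\alpha_g(\alpha_h(1))$. Hence
\begin{equation*}
    (x_g\kronr x_h)_{(i,j)} = e_{(i,j)}\,\alpha_g(\alpha_h(1))\,\omega(g,h) = e_{(i,j)}\,\omega(g,h),
\end{equation*}
where the last equality uses $\alpha_g(\alpha_h(1))\omega(g,h)=\omega(g,h)$. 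That identity follows from~\eqref{eq:coaction_condition_new} with $r=1$ together with $\omega(g,h)\alpha_{gh}(1)=\omega(g,h)$, and the latter follows from~\eqref{eq:paruni}. So the $(i,j)$ entry is simply the $(i,j)$th row of $\omega(g,h)$, viewed as an element of $S_{gh}=M_{1,n_{gh}}(R)\alpha_{gh}(1)$.

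Next I multiply by $y_{gh}^t$. Each row of $(x_g\kronr x_h)y_{gh}^t$ has the form $\sum_k \omega(g,h)_{(i,j),k}\cdot(\text{$k$th entry of }x_{gh})\cdot y_{gh}^t$. Write $u:=e_{(i,j)}\omega(g,h)\in M_{1,n_{gh}}(R)\alpha_{gh}(1)$. Since $u=u\alpha_{gh}(1)=\sum_k u_k\,e_k\alpha_{gh}(1)$, we have $u = u\,x_{gh}$ in $S_{gh}$, where $x_{gh}$ is regarded as a column of elements of $S_{gh}$ and $u$ as a row of scalars in $R$ acting on the left. Consequently
\begin{equation*}
    u\,y_{gh}^t=u\,x_{gh}y_{gh}^t=u\,\alpha'_{gh}(1)=u\,\alpha_{gh}(1)=u,
\end{equation*}
by Lemma~\ref{lem:recover-alpha}. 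The products $x_{gh}y_{gh}^t$ here are computed in $S$ and land in $M_{n_{gh}}(R)$. Assembling the rows gives $\omega'(g,h)=\omega(g,h)$.

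The main obstacle is bookkeeping rather than any single hard identity. One must keep track of when a row vector in $M_{1,n}(R)\alpha(1)$ is an element of $S$ versus a tuple of left $R$-coefficients. One must also check that left $R$-linearity of $m$ justifies $u = u\,x_{gh}$ as an equality in $S_{gh}$. The delicate point is the identification $(x_g\kronr x_h)_{(i,j)}=e_{(i,j)}\omega(g,h)$: it relies on the block computation from Lemma~\ref{lem:multi_1} being stated for exactly the entries $e_i\alpha_g(1)$, $e_j\alpha_h(1)$ of $x_g$, $x_h$. Once that is in place, the rest reduces to Lemma~\ref{lem:recover-alpha} applied at $gh$.
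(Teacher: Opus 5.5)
Your strategy is the paper's: you identify the $(i,j)$ entry of $x_g\kronr x_h$ with the row $e_{(i,j)}\omega(g,h)$ via the computation in Lemma~\ref{lem:multi_1}, then multiply by $y_{gh}^t$. Your second step differs slightly from the paper's. You write $u=u\,x_{gh}$ in $S_{gh}$ and use Lemma~\ref{lem:recover-alpha} at $gh$ with $r=1$, rather than applying \eqref{eq:column-id} entrywise. That part is fine.

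\textbf{The gap.} You claim that $\omega(g,h)\alpha_{gh}(1)=\omega(g,h)$ follows from \eqref{eq:paruni}. What \eqref{eq:paruni} actually gives is
$\omega(g,h)\alpha_{gh}(1)=\omega(g,h)\tilde\omega(g,h)\omega(g,h)=\alpha_g(\alpha_h(1))\omega(g,h)$.
This is just the case $r=1$ of \eqref{eq:coaction_condition_new} again. So you are combining one identity with itself, and neither $\omega(g,h)=\omega(g,h)\alpha_{gh}(1)$ nor $\omega(g,h)=\alpha_g(\alpha_h(1))\omega(g,h)$ results. In fact neither follows from the axioms of Definition~\ref{def:facsys_abs}.

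\textbf{A counterexample.} Take $R=\mathbb{C}$ and $G=\mathbb{Z}/3$. Use the factor system of $\mathbb{C}[\mathbb{Z}/3]$ coming from the frame $x_0=y_0=1$, $x_g=(u_g,0)^t$, $y_g=(u_g^{-1},0)^t$ for $g=1,2$. Then $n_1=n_2=2$, $\alpha_1(r)=\alpha_2(r)=rE_{11}$, and $\omega(1,1)$ is the matrix unit $E_{11}\in M_{4,2}(\mathbb{C})$. Now replace $\omega(1,1)$ by $E_{11}+cE_{22}$.
\begin{itemize}
\item Row $2$ is the index $(i,j)=(2,1)$. It is annihilated by $\alpha_1(1)\kronr 1_2$, and hence by $\alpha_1(\alpha_1(1))$.
\item Column $2$ is annihilated by $\alpha_2(1)$.
\end{itemize}
Using this, one checks that all of the following still hold: the normalizations; \eqref{eq:paruni} with the same $\tilde\omega(1,1)$; \eqref{eq:coaction_condition_new}; and the eight instances of \eqref{eq:cocycle_condition_new} involving $\omega(1,1)$. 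The multiplication $m$ is also unchanged. Hence $S$ and the recovered $\omega'(1,1)=E_{11}$ are unchanged, and the lemma fails for $c\neq0$.

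\textbf{What is needed.} The statement requires the extra normalization $\omega(g,h)=\omega(g,h)\alpha_{gh}(1)$. By the $r=1$ case of \eqref{eq:coaction_condition_new}, this is equivalent to $\omega(g,h)=\alpha_g(\alpha_h(1))\omega(g,h)$. It holds automatically for factor systems coming from module frame systems. The paper's own proof uses it tacitly at the same two places:
\begin{itemize}
\item $e_i\alpha_g(e_j\alpha_h(1))\omega(g,h)=e_{(i,j)}\omega(g,h)$;
\item $\omega(g,h)\alpha_{gh}(1)e_k^t=\omega(g,h)e_k^t$.
\end{itemize}
Once you assume this normalization, your argument is complete. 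Without it, what your argument actually proves is $\omega'(g,h)=\omega(g,h)\alpha_{gh}(1)$.
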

\begin{proof}
Let $g,h\in G$, and let $1 \le i \le n_g$, $1 \le j \le n_h$, and $1\le k\le n_{gh}$.
Equality is verified by computing the $((i,j),k)$-entry of $\omega'(g,h)$.

As in the proof of Lemma~\ref{lem:multi_1},
\begin{equation*}
    (x_g \kronr x_h)_{(i,j)} 
    = 
    e_i \alpha_g(e_j\alpha_h(1)) \omega(g,h)
    =
    e_{(i,j)} \omega(g,h),
\end{equation*}
and by definition $(y_{gh})_k = t_{gh,k} \alpha_{gh^{-1}}(1) \in S_{gh^{-1}}$.
Hence
\begin{equation*}
    \omega'(g,h)_{((i,j),k)}
    =
    (x_g \kronr x_h)_{(i,j)} \cdot (y_{gh})_k
    =
    e_{(i,j)} \omega(g,h) \alpha_{gh}(t_{gh,k} \alpha_{gh^{-1}}(1)) \omega(gh,gh^{-1}).
\end{equation*}
Applying~\eqref{eq:column-id} with $g=gh$ and $j=k$ yields
\begin{equation*}
    \omega'(g,h)_{((i,j),k)}
    =
    e_{(i,j)} \omega(g,h) \alpha_{gh}(1) e_k^t
    =
    e_{(i,j)} \omega(g,h) e_k^t
    =
    \omega(g,h)_{((i,j),k)}.
\end{equation*}
Since the indices were arbitrary, $\omega'(g,h)=\omega(g,h)$.
\end{proof}

We summarize this construction as follows:

\begin{theorem}\label{thm:facsys}
    Each factor system for $(R,G)$ gives rise to a strongly $G$-graded ring with principal component $R$,
    for which it appears as an associated factor system.
\end{theorem}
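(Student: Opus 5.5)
The proof of Theorem~\ref{thm:facsys} assembles the construction carried out above. Starting from a factor system $(n,\alpha,\omega)$ for $(R,G)$, I take $S := \bigoplus_{g\in G} S_g$ with $S_g := M_{1,n_g}(R)\alpha_g(1)$ and the multiplication $m$ obtained by $R$-bilinear extension of the maps $m_{g,h}$. There are three things to check: that $S$ is a unital associative ring, that the grading is strong with principal component $R$, and that $(n,\alpha,\omega)$ is the factor system attached to some module frame system of $S$.

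For the ring structure, Lemma~\ref{lem:multiass} gives associativity. Each $m_{g,h}$ lands in $S_{gh}$ because $\omega(g,h)=\omega(g,h)\alpha_{gh}(1)$, which follows from $\omegat\omega=\alpha_{gh}(1)$ and $\omega\omegat\omega=\alpha_g(\alpha_h(1))\omega$. I also have to confirm that $m_{g,h}$ is balanced, i.e.\ that $m_{g,h}(s_g\cdot r,s_h)=m_{g,h}(s_g,rs_h)$; this holds since $\alpha_g$ is multiplicative. For the unit, $S_e=M_{1,1}(R)\alpha_e(1)=R$ because $n_e=1$ and $\alpha_e=\id_R$. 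The normalization $\omega(g,e)=\omega(e,g)=\alpha_g(1)$ then gives $1\cdot s_g=s_g\alpha_g(1)=s_g$ and $s_g\cdot 1=s_g\alpha_g(1)\alpha_g(1)=s_g$. So $1\in R$ is the identity of $S$ and $S_e=R$ is a subring.

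For strongness, Lemma~\ref{lem:multi} shows that the image of $m_{g,h}$ generates $S_{gh}$ as a left $R$-module. Since $R=S_e\subseteq S_eS_g\subseteq S_g$-products, i.e.\ left multiplication by $R$ is multiplication by elements of $S_e$, this gives $S_gS_h=S_{gh}$. Alternatively, the module frame system already constructed yields strongness directly through Lemma~\ref{lem:dade}.

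For the factor system, the lemma preceding Lemma~\ref{lem:recover-alpha} shows that $(x_g,y_g)_{g\in G}$ is a module frame system for $S$ with the prescribed sizes $n_g$. Lemmas~\ref{lem:recover-alpha} and~\ref{lem:recover-omega} then show that the associated maps $\alpha'_g=x_g(\cdot)y_g^t$ and $\omega'(g,h)=(x_g\kronr x_h)y_{gh}^t$ coincide with $\alpha_g$ and $\omega(g,h)$. Hence $(n,\alpha,\omega)$ is the factor system associated with this frame in the sense of Definition~\ref{def:factorsystem}.

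I expect the main obstacle to be the bookkeeping in the last step, namely the identity~\eqref{eq:column-id}, on which both recovery lemmas rest. To obtain it, I apply $\alpha_g$ to $\omegat(g^{-1},g)\omega(g^{-1},g)=\alpha_e(1)=1$ and then rewrite $\alpha_g(\omega(g^{-1},g))$ using the cocycle identity with $(g,g^{-1},g)$ and the normalization $\omega(e,g)=\alpha_g(1)$. This expresses $\alpha_g(\omega(g^{-1},g))$ as $(\omega(g,g^{-1})\kronr 1_{n_g})\alpha_g(1)$ up to the right inverse of $\omega(g,e)=\alpha_g(1)$. Two checks are needed here: that the idempotents $\alpha_g(1)$ absorb correctly, and that the block decomposition of $\omegat(g^{-1},g)$ matches the Kronecker index conventions of $\kronr$. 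Everything else is routine once that identity is in place.
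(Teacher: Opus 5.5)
Your route is the paper's: build $S=\bigoplus_{g}M_{1,n_g}(R)\alpha_g(1)$ with the products $m_{g,h}$, then read $(n,\alpha,\omega)$ back off the frame $(x_g,y_g)_{g\in G}$ via Lemmas~\ref{lem:recover-alpha} and~\ref{lem:recover-omega}. The step that fails is your claim that $\omega(g,h)=\omega(g,h)\alpha_{gh}(1)$ follows from $\omegat\omega=\alpha_{gh}(1)$ and $\omega\omegat\omega=\alpha_g(\alpha_h(1))\omega$. These two identities only give $\omega(g,h)\alpha_{gh}(1)=\omega(g,h)\omegat(g,h)\omega(g,h)=\alpha_g(\alpha_h(1))\omega(g,h)$, which is the coaction identity at $r=1$.

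Where you invoke it, to see that $m_{g,h}$ lands in $S_{gh}$, you do not need it: $s_h=s_h\alpha_h(1)$ gives $\alpha_g(s_h)\omega(g,h)=\alpha_g(s_h)\alpha_g(\alpha_h(1))\omega(g,h)=\alpha_g(s_h)\omega(g,h)\alpha_{gh}(1)$. But the identity is exactly what the last step needs. The computation in Lemma~\ref{lem:multi_1} gives $(x_g\kronr x_h)_{(i,j)}=e_{(i,j)}\alpha_g(\alpha_h(1))\omega(g,h)$. So by \eqref{eq:column-id} the cocycle of the frame $(x_g,y_g)$ is $\omega(g,h)\alpha_{gh}(1)$, and this equals $\omega(g,h)$ only if $\omega(g,h)$ already sits in the corner. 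Lemma~\ref{lem:recover-omega} uses this tacitly in two places, so citing it does not close the gap.

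The identity is not a consequence of Definition~\ref{def:facsys_abs}. Take $R=k$ a field, $G=\{e,a,a^2\}\cong\mathbb{Z}/3$, $S=k[G]$, and the padded frame $x_a=(u_a,0)^t$, $y_a=(u_{a^2},0)^t$, $x_{a^2}=(u_{a^2},0)^t$, $y_{a^2}=(u_a,0)^t$. Then $\alpha_a(r)=\alpha_{a^2}(r)=\mathrm{diag}(r,0)$ and $\omega(a,a)=E_{11}\in M_{4,2}(k)$. Replace $\omega(a,a)$ by $E_{11}+E_{22}$ and keep $\omegat(a,a)$. Every axiom still holds; this is a finite check using that $E_{22}$ is killed on the left by $\alpha_a(1_2)=\mathrm{diag}(1,0,1,0)$ and on the right by $\alpha_{a^2}(1)=\mathrm{diag}(1,0)$. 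Yet $(E_{11}+E_{22})\alpha_{a^2}(1)\ne E_{11}+E_{22}$, whereas every factor system of a module frame satisfies $\omega(g,h)\alpha_{gh}(1)=(x_g\kronr x_h)y_{gh}^tx_{gh}y_{gh}^t=\omega(g,h)$.

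So you have two options:
\begin{itemize}
\item add the normalization $\omega(g,h)=\omega(g,h)\alpha_{gh}(1)$ to the definition, after which your argument is complete; or
\item settle for realizing $(n,\alpha,\omega)$ up to conjugacy, since $v_g=w_g=\alpha_g(1)$ conjugates $\omega$ to $\omega\alpha_{gh}(1)$.
\end{itemize}

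Your other worry, \eqref{eq:column-id}, does resolve, and without any inverse of $\alpha_g(1)$. Set $c:=\alpha_g(\omegat(g^{-1},g))(\omega(g,g^{-1})\kronr 1_{n_g})$. Applying $\alpha_g$ to $\omegat(g^{-1},g)\omega(g^{-1},g)=1$ and using the cocycle identity at $(g,g^{-1},g)$ yields only $c\,\alpha_g(1)=\alpha_g(1)$. However, $\omegat(g^{-1},g)\alpha_{g^{-1}}(\alpha_g(1))=\omegat(g^{-1},g)$ together with $\omega(g,g^{-1})r=\alpha_g(\alpha_{g^{-1}}(r))\omega(g,g^{-1})$ gives $c\,\alpha_g(1)=c$. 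Hence $c=\alpha_g(1)$, which is \eqref{eq:column-id}.
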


We conclude by addressing the classification problem through an extension of the notion of equivalence for factor systems for $(R,G)$, in analogy with Definition~\ref{def:conjugate}:

\begin{definition}\label{def:conjugate_abstract}
    Let $(n,\alpha,\omega)$ and $(n',\alpha',\omega')$ be factor systems for $(R,G)$.
    We say that they are \emph{conjugate} if there exist families of elements
    \begin{equation*}
        v_g \in M_{n'_g,n_g}(R),
        \quad
        w_g \in M_{n_g,n'_g}(R),
        \quad
        g\in G,
    \end{equation*}
    satisfying~\eqref{eq:v1}–\eqref{eq:v3} for all $g,h\in G$ and $s \in R$.
\end{definition}

Taken together with the preceding constructions and results, this yields the following classification statement.

\begin{corollary}\label{cor:class}
    Let $R$ be a unital ring and $G$ a group.
    There is a one-to-one correspondence between conjugacy classes of factor systems for $(R,G)$
    and equivalence classes of strongly $G$-graded rings with principal component $R$.
\end{corollary}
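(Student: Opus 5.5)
The plan is to define maps in both directions between conjugacy classes of factor systems for $(R,G)$ and isomorphism classes of strongly $G$-graded rings with principal component $R$. Here two such rings count as equivalent if there is a graded ring isomorphism between them; as in the proof of Theorem~\ref{thm:equivalence}, it should restrict to the identity on $R$ (or at least be $R$-bimodule compatible). Then check that the two maps are well defined and mutually inverse.

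\emph{From rings to factor systems.} To a strongly $G$-graded ring $S$ with $S_e=R$, assign the conjugacy class of the factor system attached to any module frame system. Such a system exists by Lemma~\ref{lem:dade}, after normalizing $x_e=y_e=1$. By Lemma~\ref{lem:factorsystem} this is a factor system for $(R,G)$. The class does not depend on the choice of frame system, by Lemma~\ref{lem:factorsystem}~(\ref{it:conjugate}). It also does not depend on the representative of the isomorphism class of $S$: a graded isomorphism $\Phi\colon S\to S'$ fixing $R$ sends a frame system $(x_g,y_g)$ of $S$ to the frame system $(\Phi(x_g),\Phi(y_g))$ of $S'$ with literally the same factor system, because $\alpha_g$ and $\omega$ are computed inside $R$. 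Hence the map on classes is well defined. Injectivity is exactly the implication from~\ref{en:conj} to~\ref{en:equiv} in Theorem~\ref{thm:equivalence}.

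\emph{From factor systems to rings.} To $(n,\alpha,\omega)$, assign the ring $S(n,\alpha,\omega)$ built in this subsection. Theorem~\ref{thm:facsys} says that $(n,\alpha,\omega)$ is an associated factor system of this ring, which gives surjectivity of the first map. It remains to see that conjugate abstract factor systems give isomorphic rings. Both rings realize their respective factor systems, so conjugacy of the systems implies, by Theorem~\ref{thm:equivalence}, that the realizing rings are isomorphic. The only subtlety is that Definition~\ref{def:conjugate_abstract} and Definition~\ref{def:conjugate} use the same relations~\eqref{eq:v1}--\eqref{eq:v3}, so abstract conjugacy coincides with conjugacy of the associated factor systems of the realizing rings.

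\emph{Main obstacle.} The step I expect to need the most care is checking that Theorem~\ref{thm:equivalence} really produces a ring isomorphism restricting to the identity on $R$. In degree $e$ one has $v_e w_e=1$, $v_e\in R$, and $v_e r=r v_e$ by~\eqref{eq:v2}, so $\varphi_e$ is multiplication by a central unit on the coordinate $u\in R$; this is still the identity on $R$ via $u\mapsto u v_e x_e$ only up to that unit. I would address this either by renormalizing $v_e=1$ using~\eqref{eq:v3} with $g=h=e$, which gives $v_e=v_e^2$ and hence $v_e=1$ since $v_e$ is invertible, or by taking equivalence to mean graded isomorphism compatible with the $R$-bimodule structure. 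Once this is settled, the two assignments are mutually inverse, and the correspondence follows.
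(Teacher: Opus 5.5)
Your proposal is correct and follows the paper's route. The corollary is obtained by combining three results:
\begin{itemize}
\item Lemma~\ref{lem:factorsystem}~(\ref{it:conjugate}) gives independence from the choice of module frame system.
\item Theorem~\ref{thm:equivalence} gives injectivity, and also that conjugate abstract systems yield isomorphic rings.
\item Theorem~\ref{thm:facsys} gives realizability, hence surjectivity.
\end{itemize}
Your concern about the degree-$e$ part is well founded, but no renormalization is needed. Relations \eqref{eq:v1} and \eqref{eq:v3} at $g=h=e$, together with $\omega(e,e)=1$ and $\alpha_e=\mathrm{id}$, force $v_e^2=v_e$ and hence $v_e=w_e=1$. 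So the isomorphism built in Theorem~\ref{thm:equivalence} automatically restricts to the identity on $R$.
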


\begin{remark}
    Each factor system determines a Picard homomorphism together with a concrete factor set, thereby placing the classification in
    Corollary~\ref{cor:class} within the well-developed framework of Picard formalism; see~\cite[Cor.~1.3.18 and the surrounding discussion]{NaOy82}.
\end{remark}

\subsection{Factor systems for strongly graded \texorpdfstring{$^\ast$}{Star}-rings}
\label{sec:involution}

In many concrete situations a $G$-graded ring $S = \bigoplus_{g\in G} S_g$ is equipped with a compatible involution ${}^\ast$, in the sense that $(S_g)^\ast = S_{g^{-1}}$ for all $g\in G$.
In this case, $(S,{}^\ast)$ is referred to as a $G$-graded $^\ast$-ring.
A particularly rich source of examples is provided by actions of compact Abelian groups on unital C\Star-algebras, where the corresponding spectral subspaces yield a grading by the unitary dual.

For the remainder of this section, we fix a strongly $G$-graded $^\ast$-ring $(S,{}^\ast)$. 
For $A \in M_{n,m}(S)$, we denote by $A^\dagger \in M_{m,n}(S)$ the matrix obtained by transposing $A$ and applying ${}^\ast$ entrywise.
We further write $A^\ast$ for the matrix obtained by applying ${}^\ast$ entrywise,
without transposition.

A natural question is whether there exists a module frame system for $S$ of the form $(z_g,z_g^\ast)_{g\in G}$.
This motivates the following terminology:

\begin{definition}\label{def:parseval}
    A strongly $G$-graded $^\ast$-ring $(S,{}^\ast)$ has the \emph{algebraic Parseval property} if there exists a module frame system for $S$ of the form $(z_g,z_g^\ast)_{g\in G}$.
\end{definition}

\begin{remark}
    The terminology is inspired by Parseval frames in Hilbert C\Star modules, where an analogous identity characterizes tight frames.
\end{remark}

The following lemma records a simple consequence of the algebraic Parseval property:

\begin{lemma}\label{rem:parseval}
    If $(S,{}^\ast)$ satisfies the algebraic Parseval property, then one may choose a module frame system for $S$ of the form $(z_g,z_g^\ast)_{g\in G}$.
    With this choice, the maps $\alpha_g$, $g \in G$, respect the involution, \ie, $\alpha_g(r^\dagger) = \alpha_g(r)^\dagger$ for all $g \in G$ and $r\in R$, and the generalized cocycle satisfies $\omegat(g,h) = \omega(g,h)^\dagger$ for all $g,h \in G$.
\end{lemma}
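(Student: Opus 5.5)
The first assertion is immediate from Definition~\ref{def:parseval}: there is some module frame system of the form $(z_g,z_g^\ast)_{g\in G}$. We fix it, so $x_g = z_g$ and $y_g = z_g^\ast$. Thus $y_g^t = z_g^\dagger$, and the defining identity reads $z_g^\dagger z_g = 1$. The normalization $x_e=y_e=1$ is compatible with this, since $1^\ast = 1$. The remaining claims are direct computations with the formulas of Definition~\ref{def:factorsystem}, using the rules for $\dagger$ on matrices over a $\ast$-ring.

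First we record the algebraic rules. For matrices $A,B$ over $S$ of compatible sizes, $(AB)^\dagger = B^\dagger A^\dagger$. This holds because $\ast$ is an anti-automorphism: entrywise $(\sum_k a_{ik}b_{kj})^\ast = \sum_k b_{kj}^\ast a_{ik}^\ast$, and transposition reverses the order of the matrix product. For $r\in R$, $r^\dagger = r^\ast$, since $1\times1$ transposition is trivial. Then
\begin{equation*}
\alpha_g(r)^\dagger = (z_g r z_g^\dagger)^\dagger = z_g r^\ast z_g^\dagger = \alpha_g(r^\dagger).
\end{equation*}
If $\alpha_g$ is extended entrywise to matrices, the same argument applies block by block. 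Transposing a block matrix whose $(i,j)$ block is $z_g s_{ij} z_g^\dagger$ gives a matrix whose $(j,i)$ block is $(z_g s_{ij}z_g^\dagger)^\dagger = z_g s_{ij}^\ast z_g^\dagger$.

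The main obstacle is the behaviour of the Kronecker products under $\dagger$. The claim to prove is
\begin{equation*}
(z_g \kronr z_h)^\dagger = (z_h^\ast \kronl z_g^\ast)^t,
\end{equation*}
up to the index ordering. The entries of $z_g\kronr z_h$ are the products $(z_g)_i (z_h)_j$, indexed by pairs $(i,j)$ with the $g$-index running fastest. The entries of $y_h\kronl y_g$ are $(z_h)_j^\ast (z_g)_i^\ast$, with the same ordering of pairs, since $\kronl$ multiplies on the left by the entries of the first factor. Because $((z_g)_i(z_h)_j)^\ast = (z_h)_j^\ast (z_g)_i^\ast$, the two vectors match entry by entry. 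We would verify this carefully by writing both sides out explicitly as in the proof of Lemma~\ref{lem:commrel}, checking that the index conventions for $\kronr$ and $\kronl$ align.

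Granting this identity, the last claim follows:
\begin{equation*}
\omega(g,h)^\dagger = \bigl((z_g\kronr z_h) z_{gh}^\dagger\bigr)^\dagger = z_{gh}(z_g\kronr z_h)^\dagger = z_{gh}(y_h\kronl y_g)^t = \omegat(g,h).
\end{equation*}
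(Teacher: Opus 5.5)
Your proof is correct and is the direct computation the paper leaves implicit, since it records this lemma without a written proof. The computation $\alpha_g(r)^\dagger=(z_g r z_g^\dagger)^\dagger=z_g r^\ast z_g^\dagger$, together with the entrywise identity $((z_g)_i(z_h)_j)^\ast=(z_h)_j^\ast(z_g)_i^\ast$ giving $(z_g\kronr z_h)^\dagger=(z_h^\ast\kronl z_g^\ast)^t$, yields $\omega(g,h)^\dagger=\omegat(g,h)$; your index check already settles the Kronecker step, since with the paper's conventions both sides agree exactly, so you can drop the hedge ``up to the index ordering''.
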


The algebraic Parseval property does not hold in general, and can already fail for group rings:

\begin{example}
Consider the ring $\mathbb{C}[t,t^{-1}]$ of complex Laurent polynomials, which is strongly $\mathbb{Z}$-graded with homogeneous components $\mathbb{C} t^n$ for $n\in\mathbb{Z}$.
Equip $\mathbb{C}[t,t^{-1}]$ with the involution determined by $z^\ast := \overline{z}$ for $z \in \mathbb{C}$ and  $t^\ast := -t^{-1}$, so that $(t^{-1})^\ast := -t$.

We claim that $(\mathbb{C}[t,t^{-1}],{}^\ast)$ fails the algebraic Parseval property.
Indeed, let $m\in\mathbb{N}$ and let 
\begin{equation*}
    z \in M_{m,1}(\mathbb{C}[t,t^{-1}]_1) = M_{m,1}(\mathbb{C}t).
\end{equation*}
Writing $z=(q_1,\dots,q_m)^t$ with $q_i \in \mathbb{C}t$, we have $q_i = \lambda_i t$ for some  $ \lambda_i \in \mathbb{C}$, and therefore
\begin{equation*}
    z^\dagger z = \sum_{i=1}^m q_i^\ast q_i
    = \sum_{i=1}^m (\lambda_i t)^\ast (\lambda_i t)
    = \sum_{i=1}^m (-\overline{\lambda_i}t^{-1})(\lambda_i t)
    = -\sum_{i=1}^m |\lambda_i|^2.
\end{equation*}
Thus $z^\dagger z$ is a non-positive real multiple of $1$, and in particular cannot equal $1$.
Consequently, there is no choice of $m$ and $z\in M_{m,1}(\mathbb{C}t)$ with $z^\dagger z=1$, so the algebraic Parseval property fails.
\end{example}

We now characterize when $(S,{}^\ast)$ has the algebraic Parseval property.
Fix a module frame~system $(x_g,y_g)_{g\in G}$ for $S$ and set $p_g := x_g y_g^t \in M_{n_g}(S_e)$.
Then $p_g$ is an idempotent and $p_g x_g = x_g$.

Suppose that $(S,{}^\ast)$ satisfies the algebraic Parseval property, and let $g\in G$.
Then there exists $z_g\in M_{m_g,1}(S_g)$ for some $m_g\in\mathbb{N}$ such that $z_g^\dagger z_g = 1$.
By~\eqref{eq:hom_comp}, each component $(z_g)_i$ can be written as $(z_g)_i = u_i^t x_g$ for some $u_i\in M_{n_g,1}(S_e)$, hence
\begin{equation*}
    z_g = r_g x_g,
    \quad
    r_g := (u_1^t,\ldots,u_{m_g}^t)^t \in M_{m_g,n_g}(S_e).
\end{equation*}
Replacing $r_g$ by $r_g p_g$ does not change $z_g$, so we may and do assume $r_g = r_g p_g$.
A direct~computation gives
\begin{equation*}
    z_g^\dagger z_g 
    = 
    (r_g x_g)^\dagger (r_g x_g)
    = 
    x_g^\dagger r_g^\dagger r_g x_g.
\end{equation*}
Thus $z_g^\dagger z_g = 1$ holds if and only if $x_g^\dagger r_g^\dagger r_g x_g = 1$, and multiplying by $(y_g^t)^\dagger$ on the left and $y_g^t$ on the right yields $r_g^\dagger r_g = (y_g^t)^\dagger y_g^t$.

Conversely, if $r_g\in M_{m_g,n_g}(S_e)$ satisfies $r_g^\dagger r_g = (y_g^t)^\dagger y_g^t$ for some $m_g\in\mathbb{N}$, then $z_g := r_g x_g$ satisfies $z_g^\dagger z_g = 1$.
Therefore:

\begin{lemma}\label{lem:parseval}
    With notation as above, the following are equivalent:
    \begin{enumerate}[label=(\alph*)]
        \item 
            $(S,{}^\ast)$ has the algebraic Parseval property.
        \item 
            For each $g\in G$, the element $(y_g^t)^\dagger y_g^t \in M_{n_g}(S_e)$ admits a factorization
            \begin{equation*}
                (y_g^t)^\dagger y_g^t = r_g^\dagger r_g
            \end{equation*}
            for some $m_g\in\mathbb{N}$ and $r_g\in M_{m_g,n_g}(S_e)$.
    \end{enumerate}
\end{lemma}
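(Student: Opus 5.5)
The plan is to make the discussion preceding the statement precise, proving (a)$\Rightarrow$(b) and (b)$\Rightarrow$(a) separately for each fixed $g\in G$. Throughout we use the fixed module frame system $(x_g,y_g)_{g\in G}$, the idempotent $p_g = x_g y_g^t$, and the description~\eqref{eq:hom_comp} of $S_g$. We also use the elementary facts $(AB)^\dagger = B^\dagger A^\dagger$ and $(A^\dagger)^\dagger = A$ for matrices over a $^\ast$-ring. Finally, since $(S_g)^\ast = S_{g^{-1}}$ and $S_e^\ast = S_e$, we know that $y_g^\dagger$ has entries in $S_g$ and that $r_g^\dagger$ has entries in $S_e$.

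\emph{(a)$\Rightarrow$(b).} Let $(z_g,z_g^\ast)_{g\in G}$ be a Parseval module frame system, so $z_g\in M_{m_g,1}(S_g)$ and $z_g^\dagger z_g = 1$. By~\eqref{eq:hom_comp}, write $z_g = r_g x_g$ with $r_g\in M_{m_g,n_g}(S_e)$; replacing $r_g$ by $r_g p_g$, we may assume $r_g = r_g p_g$. Then
\[
1 = z_g^\dagger z_g = x_g^\dagger r_g^\dagger r_g x_g .
\]
Multiplying on the left by $(y_g^t)^\dagger$ and on the right by $y_g^t$ gives
\[
(y_g^t)^\dagger y_g^t = (x_g y_g^t)^\dagger\, r_g^\dagger r_g\, x_g y_g^t = p_g^\dagger r_g^\dagger r_g p_g = (r_g p_g)^\dagger (r_g p_g) = r_g^\dagger r_g .
\]
This uses the identity $(y_g^t)^\dagger x_g^\dagger = (x_g y_g^t)^\dagger$, which holds with the paper's conventions on $\dagger$ and $t$; checking index conventions here is the only point needing care. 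The choice $r_g = r_g p_g$ is what makes the final equality exact, so this is the step I would double-check.

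\emph{(b)$\Rightarrow$(a).} Given $r_g$ with $r_g^\dagger r_g = (y_g^t)^\dagger y_g^t$, set $z_g := r_g x_g \in M_{m_g,1}(S_g)$. Then
\[
z_g^\dagger z_g = x_g^\dagger r_g^\dagger r_g x_g = x_g^\dagger (y_g^t)^\dagger y_g^t x_g = (y_g^t x_g)^\dagger (y_g^t x_g) = 1^\ast 1 = 1 .
\]
For $g=e$ the normalization of Definition~\ref{def:mfs} requires $z_e = 1$, $m_e = 1$. We simply choose $z_e := 1$, since $1^\ast 1 = 1$; this choice is independent of the given $r_e$. The entries of $z_g^\ast$ lie in $S_{g^{-1}}$, so $(z_g,z_g^\ast)_{g\in G}$ is a module frame system. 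Hence the algebraic Parseval property holds.

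The main obstacle is bookkeeping rather than a genuine difficulty. Two things must be verified: that $\dagger$ reverses products of non-square matrices consistently with the transposes appearing in $y_g^t$, and that the normalization at $e$ is respected. Both are routine.
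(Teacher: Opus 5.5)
Your proof is correct and follows the same argument as the paper. In one direction you write $z_g = r_g x_g$ with $r_g = r_g p_g$ and multiply $z_g^\dagger z_g = 1$ by $(y_g^t)^\dagger$ on the left and $y_g^t$ on the right; in the other you set $z_g := r_g x_g$. You also make explicit two points the paper leaves implicit: the normalization $r_g = r_g p_g$ is exactly what turns $p_g^\dagger r_g^\dagger r_g p_g$ into $r_g^\dagger r_g$, and in the converse direction $z_e := 1$ must be chosen separately to meet the normalization in Definition~\ref{def:mfs}.
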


\begin{remark}
    By Lemma~\ref{lem:factorsystem}~\emph{(\ref{it:conjugate})}, different choices of module frame systems for $S$ yield conjugate elements $(y_g^t)^\dagger y_g^t$.
    In particular, the validity of condition~\emph{(b)} in Lemma~\ref{lem:parseval} is independent of the particular choice of module frame system.
\end{remark}
%\begin{proof}
%Let $g \in G$.
%If $(x_g',y_g')_{g \in G}$ is another choice of a module frame system for $S$ giving rise to aconjugate factor system, then by Lemma~3.6\emph{(5)} one has $y_g'{}^\ast y_g' = w_g^\ast (y_g^\ast y_g) w_g$ for suitable elements $w_g \in M_{n_g,n_g'}(S_e)$. 
%Thus condition~\emph{(b)} for $y_g^\ast y_g$ holds if and only if it holds for $y_g'{}^\ast y_g'$, and the conclusion follows from Lemma~\ref{lem:frame-vs-matrix}.
%\end{proof}

\begin{corollary}\label{cor:parseval}
    Let $R$ be a ring, let $G$ be a group, let $S := R \rtimes_{(\alpha,\omega)} G$ be the crossed product associated with a factor system $(\alpha,\omega)$ (see~\eqref{eq:coaction_condition}--\eqref{eq:crossed_product}), and suppose that ${}^\ast$ is a compatible involution.
    Then $(S,{}^\ast)$ satisfies the algebraic Parseval property if and only if, for each $g\in G$, there exists $r_g \in R$ such that $r_g^\ast r_g = (u_g^{-1})^\ast u_g^{-1}$.
\end{corollary}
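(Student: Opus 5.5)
The plan is to apply Lemma~\ref{lem:parseval} with the most economical module frame system available in a crossed product. For each $g\in G$ take $n_g=1$, $x_g:=u_g\in S_g=Ru_g$ and $y_g:=u_g^{-1}\in S_{g^{-1}}$. The inverse lies in degree $g^{-1}$ because $u_g^{-1}=\omega(g^{-1},g)^{-1}u_{g^{-1}}$. Then $y_g^tx_g=u_g^{-1}u_g=1$. In degree $e$, I would rescale so that $u_e=1$; replacing $u_e$ by $1$ is harmless since $S_e=R$. This gives $x_e=y_e=1$, so Definition~\ref{def:mfs} is satisfied. With this choice $y_g^t$ is a $1\times 1$ matrix, and $(y_g^t)^\dagger y_g^t=(u_g^{-1})^\ast u_g^{-1}\in R$. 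Condition (b) of Lemma~\ref{lem:parseval} therefore reads: there are $m_g\in\mathbb N$ and a column $r_g=(r_{g,1},\dots,r_{g,m_g})^t\in M_{m_g,1}(R)$ with $\sum_i r_{g,i}^\ast r_{g,i}=(u_g^{-1})^\ast u_g^{-1}$.

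The direction ``$\Leftarrow$'' is then immediate. Given $r_g\in R$ with $r_g^\ast r_g=(u_g^{-1})^\ast u_g^{-1}$, take $m_g=1$ in Lemma~\ref{lem:parseval}. Concretely, $z_g:=r_gu_g\in S_g$ satisfies $z_g^\ast z_g=u_g^\ast r_g^\ast r_g u_g=u_g^\ast(u_g^{-1})^\ast u_g^{-1}u_g=1$, using $u_g^\ast(u_g^{-1})^\ast=(u_g^{-1}u_g)^\ast=1$. For $g=e$ one takes $r_e=1$, which gives $z_e=1$ as required.

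For ``$\Rightarrow$'', Lemma~\ref{lem:parseval} only yields a column. Write $z_g=(a_1u_g,\dots,a_mu_g)^t$ with $a_i\in R$; then $\sum_ia_i^\ast a_i=b_g:=(u_g^{-1})^\ast u_g^{-1}$. Note that $b_g$ is invertible in $R$, with inverse $u_gu_g^\ast$. The real content is to compress this column to a single element, and this is the step I expect to be the main obstacle. Equivalently, I must produce $s\in S_g$ with $s^\ast s=1$, since then $s=ru_g$ and $r^\ast r=b_g$.

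My first attempt would exploit the frames in every degree, not just degree $g$. Let $w\in M_{k,1}(S_{g^{-1}})$ be the Parseval column in degree $g^{-1}$. The mixed products $z_iw_j^\ast\in S_g$ and the entries of $z_g w^\dagger$ give an $m\times k$ matrix over $S_e$-translates of $u_g$. I would look for coefficients $c_i\in R$, built from $u_g$, $b_g^{-1}$ and the $a_i$, such that $s:=\sum_i z_ic_i$ satisfies
\[
s^\ast s=\sum_{i,j}c_i^\ast z_i^\ast z_j c_j=1 .
\]
The idea is to choose $c$ with $z_g c$ a ``unit vector'' relative to the idempotent $z_gz_g^\dagger\in M_m(S_e)$. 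If no such universal choice exists in an arbitrary $\ast$-ring, I would fall back on the single-element reformulation for the frame $x_g=u_g$. In that reformulation, Lemma~\ref{lem:parseval}(b) with $n_g=1$ is read with $r_g$ ranging over $M_{m_g,1}(R)$ and $r_g^\ast r_g$ interpreted as $r_g^\dagger r_g$. Under this reading the stated condition is exactly (b), and both directions follow from the first paragraph.
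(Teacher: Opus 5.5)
Your first paragraph and the direction ``$\Leftarrow$'' match the argument the paper intends. The corollary is stated without a separate proof and is meant to be read off from Lemma~\ref{lem:parseval} with $n_g=1$, $x_g=u_g$, $y_g=u_g^{-1}$, so that $(y_g^t)^\dagger y_g^t=(u_g^{-1})^\ast u_g^{-1}$. The problem is the compression step in ``$\Rightarrow$''. It cannot be carried out, whether from the frames in other degrees or by any other construction, because with $r_g\in R$ the implication is false for a general ring.

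Here is a counterexample. Take $R=\mathbb{Q}$ with trivial involution, $G=\mathbb{Z}$, and $S=\mathbb{Q}[t,t^{-1}]$ with $u_n=t^n$ and trivial $\alpha,\omega$. Use the compatible involution determined by $t^\ast=\tfrac12 t^{-1}$, that is, $(t^n)^\ast=2^{-n}t^{-n}$. For $z=(\lambda_1t^n,\dots,\lambda_mt^n)^t$ one gets $z^\dagger z=2^{-n}\sum_i\lambda_i^2$. Now $2^n$ is a rational square for $n$ even, and $2^n=(2^{(n-1)/2})^2+(2^{(n-1)/2})^2$ for $n$ odd. Hence $S$ has the algebraic Parseval property, with $m\le 2$ and $z_0=1$. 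On the other hand, $(u_1^{-1})^\ast u_1^{-1}=(2t)\,t^{-1}=2$, and no $r\in\mathbb{Q}$ satisfies $r^2=2$. Equivalently, no $s\in S_1=\mathbb{Q}t$ satisfies $s^\ast s=1$. So Parseval columns exist in every degree, yet the column in degree $1$ cannot be compressed to a single element.

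Consequently, what you call a fallback is in fact the correct statement. The condition must read $(u_g^{-1})^\ast u_g^{-1}=r_g^\dagger r_g=\sum_i r_{g,i}^\ast r_{g,i}$ for some $m_g\in\mathbb{N}$ and $r_g\in M_{m_g,1}(R)$. With that reading, both directions are exactly Lemma~\ref{lem:parseval} as in your first paragraph, and this is all the paper's argument actually establishes.

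The single-element version does hold under extra hypotheses guaranteeing that every finite sum $\sum_i a_i^\ast a_i$ is of the form $r^\ast r$. For example, if $R$ is a unital $C^\ast$-algebra, then $b_g\ge 0$ and $r_g:=b_g^{1/2}$ works. You should drop the compression attempt and either state the result with columns or add such a hypothesis.
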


\begin{remark}
    The condition in Lemma~\ref{lem:parseval}~\emph{(b)} holds automatically whenever $S_e$ embeds as a unital $C^\ast$-subalgebra of a $C^\ast$-algebra $A$ containing $S$ as a unital ${}^\ast$-subalgebra.
    In this case $y_g\in M_{1,n_g}(A)$, so $y_g^\ast y_g$ is positive in $M_{n_g}(A)$ and lies in $M_{n_g}(S_e)$ by strong grading.
\end{remark}

\subsubsection{Complex Leavitt path algebras}\label{sec:clpa}

The following lemmas describe the adjoint structure in strongly graded complex Leavitt path algebras and provide explicit ``Parseval elements'' in each homogeneous component.

\begin{lemma}\label{lem:xn.from.x1}
    Let $L_\complex(E)$ be a unital Leavitt path algebra.
    Let $x_1\in M_{n,1}(L_\complex(E))$ and define $x_{n+1} := x_1 \kronr x_n$ for $n \ge 1$. 
    If $x_1^\dagger x_1 =1$, then $x_n^\dagger x_n=1$ for all $n \ge 1$.
\end{lemma}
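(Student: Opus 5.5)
The proof is an induction on $n$, with the case $n=1$ being the hypothesis. Suppose $x_n^\dagger x_n = 1$ and write $x_1 = (a_1,\dots,a_k)^t$. By the definition of the right Kronecker product, $x_{n+1} = x_1 \kronr x_n$ is the column vector whose entries are the products $a_i b$, where $a_i$ runs over the entries of $x_1$ and $b$ runs over the entries of $x_n$. In block form,
\begin{equation*}
    x_1 \kronr x_n = \begin{pmatrix} x_1 (x_n)_1 \\ \vdots \\ x_1 (x_n)_m \end{pmatrix},
\end{equation*}
where $m$ is the length of $x_n$. Since only the sum over all entries matters, the ordering of the entries plays no role.

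The key step is to compute $x_{n+1}^\dagger x_{n+1}$ directly as a double sum. Using $(ab)^\ast = b^\ast a^\ast$, we get
\begin{equation*}
    x_{n+1}^\dagger x_{n+1}
    = \sum_{j=1}^m \big(x_1 (x_n)_j\big)^\dagger \big(x_1 (x_n)_j\big)
    = \sum_{j=1}^m (x_n)_j^\ast \, x_1^\dagger x_1 \, (x_n)_j .
\end{equation*}
Inserting $x_1^\dagger x_1 = 1$ reduces this to $\sum_j (x_n)_j^\ast (x_n)_j = x_n^\dagger x_n$, which equals $1$ by the induction hypothesis.

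The only point that needs care is the bookkeeping in the block decomposition: the factor $x_1$ must sit on the \emph{left} of each entry of $x_n$. That placement is what allows $x_1^\dagger x_1$ to appear in the middle of each summand and collapse to $1$. The convention for $\kronr$ stated earlier in the paper (the block $A b_{ij}$) gives exactly this placement. The argument uses nothing specific to Leavitt path algebras beyond the involution being anti-multiplicative, so it holds in any unital ${}^\ast$-ring.
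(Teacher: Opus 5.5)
Your proposal is correct and follows the paper's proof: the same induction, based on the identity $(x_1\kronr x_n)^\dagger(x_1\kronr x_n) = x_n^\dagger (x_1^\dagger x_1) x_n$, which you unpack entrywise as $\sum_j (x_n)_j^\ast\, x_1^\dagger x_1\, (x_n)_j$. Your remark is also accurate: the argument uses only that the involution is anti-multiplicative, so the lemma holds in any unital ${}^\ast$-ring.
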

\begin{proof}
Let $x_1\in M_{n,1}(L_\complex(E))$ such that $x_1^\dagger x_1 = 1$, 
%For $u,v\in M_{1,n}(L_\complex(E))$ and $w,\tilde w\in M_{1,m}(L_\complex(E))$, one verifies that
%\begin{equation}\label{eq:uvdaggerkron}
%    (u\kronr w)^\dagger(v\kronr \tilde w)
%    = 
%    w^\dagger (u^\dagger v)\tilde w .
%\end{equation}
and assume $x_n^\dagger x_n = 1$ for some $n \ge 1$. 
Then
\begin{equation*}
    x_{n+1}^\dagger x_{n+1}
    =
    (x_1\kronr x_n)^\dagger(x_1\kronr x_n)
    = x_n^\dagger (x_1^\dagger x_1) x_n
    = x_n^\dagger x_n = 1,
\end{equation*}
and the claim follows by induction.
\end{proof}

\begin{lemma}\label{lem:xdaggerx.eeast}
    Let $L_\complex(E)$ be a strongly graded unital Leavitt path algebra.
    For each $e \in E_1$ and each integer $n \ge 1$ there exist an integer $\tilde n \ge 1$ and $x \in M_{\tilde n,1}(L_\complex(E)_n)$ such that $x^\dagger x = ee^\ast$.
\end{lemma}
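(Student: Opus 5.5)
The idea is to build $x$ from products of the form $q\,p^\ast$, where $p,q$ are real paths, so that the factor $e$ can be peeled off. For real paths one has $q^\ast q = r(q)$. Hence if $r(q)=r(p)$, then $(qp^\ast)^\ast(qp^\ast) = p\,r(p)\,p^\ast = pp^\ast$. Moreover $qp^\ast$ is homogeneous of degree $|q|-|p|$, where $|\cdot|$ denotes length. It therefore suffices to write $ee^\ast$ as a finite sum $\sum_i a_i^\ast a_i$ with each $a_i$ of degree $n$; we then set $x := (a_1,\dots,a_{\tilde n})^t$, so that $x^\dagger x = \sum_i a_i^\ast a_i = ee^\ast$.

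First step: decompose $ee^\ast$ into range projections of longer paths. Fix an integer $m\ge 0$, to be chosen later. By \eqref{eq:ppstar.one}, applied at the vertex $r(e)$, we have $r(e)=\sum_{p\in P_m(E,r(e))} pp^\ast$, with the convention $P_0(E,v)=\{v\}$. Since $e = e\,r(e)$, this gives
\begin{equation*}
    ee^\ast = e\,r(e)\,e^\ast = \sum_{p\in P_m(E,r(e))} (ep)(ep)^\ast .
\end{equation*}
Each path $ep$ has length $m+1$ and range $r(p)$.

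Second step: for each such $p$, choose a real path $q_p$ with $r(q_p)=r(p)$ and $|q_p| = m+1+n$. Put $a_p := q_p (ep)^\ast$. Then $a_p$ has degree $n$ and $a_p^\ast a_p = (ep)(ep)^\ast$. The vector $x := (a_p)^t_{p\in P_m(E,r(e))}$, with $\tilde n := |P_m(E,r(e))|$, then satisfies $x^\dagger x = ee^\ast$. This set is finite because $E$ is finite (the algebra is unital) and nonempty because $E$ has no sinks.

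The main obstacle is the existence of the paths $q_p$, that is, long enough paths \emph{entering} the vertices $r(p)$. This is exactly where the strong grading hypothesis must enter, through the graph-theoretic characterization of strongly graded Leavitt path algebras recalled in Appendix~\ref{sec:LPA} (finite graph, no sinks, and Condition~(Y)). My first attempt will be $m=0$, i.e.\ find a single path $q$ of length $n+1$ ending at $r(e)$, for instance $q = q'e$ with $q'$ a path of length $n$ ending at $s(e)$; this works whenever vertices receive arbitrarily long paths. If some vertex is reached only by short paths, I will use the flexibility in $m$. Condition~(Y) says, roughly, that every infinite path from $r(e)$ eventually passes through vertices receiving paths of any prescribed length. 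Combining this with K\"onig-type finiteness of $E$, I expect to choose one uniform $m$ such that every $p\in P_m(E,r(e))$ ends at a vertex admitting an incoming path of length $m+1+n$. Pinning down this uniform $m$ from Condition~(Y) is the step requiring care.
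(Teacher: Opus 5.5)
Your construction is sound and gives a genuinely different, non-inductive proof. As written, though, the step you flag (the existence of a uniform $m$) is only announced, so here is how to close it before comparing.

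Take any $m\ge |E_0|-1$. For $p\in P_m(E,r(e))$, the path $ep$ runs through the $m+2>|E_0|$ vertex occurrences $s(e)=v_0,\ v_1=r(e),\ \dots,\ v_{m+1}=r(p)$. Hence $v_i=v_j$ for some $i<j$, and $w:=v_i$ lies on a closed path $c$ of length $\ell=j-i\ge 1$. Every length $L\ge 0$ is realized by a path ending at $w$: take the last $L$ edges of $c^k$ with $k\ell\ge L$. Appending the segment of $ep$ from $v_i$ to $v_{m+1}$, which has length $m+1-i$, shows that $r(p)$ is the range of paths of every length $\ge m+1-i$. In particular some path of length $m+1+n$ ends at $r(p)$, and this is your $q_p$.

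This uses only finiteness of $E$ and the absence of sinks. Condition~(Y) holds automatically for finite graphs without sinks, so it need not be invoked. Your K\"onig-plus-Condition~(Y) plan would also work: if an initial segment $\alpha$ of $ep$ admits $\beta$ with $r(\beta)=r(\alpha)$ and $|\beta|=|\alpha|+n$, then extending $\beta$ by the rest of $ep$ gives $q_p$. The pigeonhole argument is just shorter. A minor point: finiteness of $P_m(E,r(e))$ comes from row-finiteness together with $|E_0|<\infty$, not from unitality alone.

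The paper instead inducts on $d(s(e))$, the maximal length of a cycle-avoiding path from $s(e)$. When $r(e)$ lies on a cycle, it takes $x=\alpha e^\ast$ with $|\alpha|=n+1$ and $r(\alpha)=r(e)$. Otherwise it applies the induction hypothesis \emph{in degree $n+1$} to each $f\in s^{-1}(r(e))$ and stacks the vectors $x_f e^\ast$, using $\sum_f ff^\ast=r(e)$. Unwinding that recursion produces entries of exactly your shape $q\,(e\gamma)^\ast$. The difference is that the paper's paths $e\gamma$ have varying lengths, each stopped at the first cycle vertex, so it never expands further than needed.

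Your uniform-depth expansion replaces the auxiliary function $d$, the shift in degree, and the separate cycle and non-cycle cases with a single pigeonhole bound. In exchange it gives the explicit formula $x=\bigl(q_p\,(ep)^\ast\bigr)_{p\in P_m(E,r(e))}$, at the cost of possibly longer vectors.
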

\begin{proof}
Since $L_\complex(E)$ is unital and strongly graded, the vertex set $E_0$ is finite and has no sinks. 
Consequently, every vertex connects to a cycle by a real path (following edges indefinitely forces a repetition in the finite graph).
%Moreover, if $v \in E_0$ lies on a cycle and $N \ge 1$, there exists a real path $\alpha$ with $|\alpha| = N$ and $r(\alpha) = v$, obtained by traversing the cycle sufficiently many times.

For $v\in E_0$, define $d(v)$ as the maximal length of a cycle-avoiding
real path starting at $v$, with $d(v)=0$ if $v$ lies on a cycle.
Then $d(v)$ is finite for all $v\in E_0$, because a cycle-avoiding~path visits distinct vertices and the vertex set $E_0$ is finite.

The statement is proved by induction on $d(s(e))$:

\medskip
\noindent
\emph{Case $d(s(e))=1$.}
Then $r(e)$ lies on a cycle. 
Since vertices on a cycle admit return paths of arbitrary length, for every $n \ge 1$ there exists a real path $\alpha$ with $|\alpha| = n+1$ and $r(\alpha) = r(e)$. 
Set $x = \alpha e^\ast$.
Then $x \in L_\complex(E)_n = M_{1,1}(L_\complex(E)_n)$ and
\begin{equation*}
    x^\dagger x
    =
    (\alpha e^\ast)^\ast \alpha e^\ast
    =
    e\alpha^\ast\alpha e^\ast
    =
    er(e)e^\ast
    =ee^\ast.
\end{equation*}

\medskip
\noindent
\emph{Induction step.}
Assume the statement holds whenever $d(s(e)) \le N$.
Let $e \in E_1$ with $d(s(e)) = N+1$.
Then $d(r(e)) \le N$.
For each $f\in s^{-1}(r(e))$ and integer $n \ge 1$, the induction hypothesis yields $\tilde n \ge 1$ and $x_f \in M_{\tilde n_f,1}(L_{n+1}(E))$ such that $x_f^\dagger x_f = ff^\ast$.

Since $E$ is row-finite, the set $s^{-1}(r(e))$ is finite; write $s^{-1}(r(e)) = \{f_1,\dots,f_m\}$ for some integer $m \ge 1$.
Then
\begin{equation*}
    x=(x_{f_1}e^\ast,\ldots,x_{f_m}e^\ast)^t
\in M_{\tilde n_{f_1}+\ldots+\tilde n_{f_m},1}(L_N(E)),
\end{equation*}
and hence
\begin{equation*}
    x^\dagger x
    =\sum_{i=1}^m e x_{f_i}^\dagger x_{f_i} e^\ast
    =\sum_{i=1}^m e f_i f_i^\ast e^\ast
    =e \parab{\hspace{-2mm}\sum_{f\in s^{-1}(r(e))}\hspace{-4mm}ff^\ast} e^\ast
    =e r(e) e^\ast = e e^\ast.
\end{equation*}

\medskip
\noindent
\emph{Cycle case.}
If $s(e)$ lies on a cycle, the previous argument applies when $r(e)$ does not lie on a cycle.
If both lie on cycles, the initial construction $x = \alpha e^\ast$ (with $r(\alpha)=r(e)$) again yields $x^\dagger x = e e^\ast$.
%Finally, let $v\in E_0$ be a vertex in a cycle and assume that $e\in E_1$ such that $s(e)=v$. If $r(e)$ is in a cycle, we can find  $\alpha\in P(E)$ such that $r(\alpha)=r(e)$ and $|\alpha|=N+1$. Setting $x=\alpha e^\ast$ it follows that $(\alpha e^\ast)^\ast\alpha e^\ast=ee^\ast$. If $r(e)$ is not in a cycle, i.e. $d(r(e))\geq 1$, one can use the previous induction argument to find $x\in M_{\tilde{N},1}(L_{N}(E))$ such that $x^\dagger x = ee^\ast$. Namely, let $s^{-1}(r(e))=\{f_1,\ldots,f_M\}$ and let $x_i\in M_{\tilde{N}_i,1}(L_{N+1}(E))$ be such that $x_i^\dagger x_i=f_if_i^\ast$. Setting $x=(x_1e^\ast\,\, x_2e^\ast\,\,\cdots\,\, x_Me^\ast)^t$ one finds that $x^\dagger x = ee^\ast$. 
\end{proof}

\begin{corollary}\label{cor:xdaggerx.vertex}
    Let $L_\complex(E)$ be a strongly graded unital Leavitt path algebra.
    For each $v \in E_0$ and each integer $n \ge 1$ there exist an integer $\tilde n \ge 1$ and $x\in M_{\tilde n,1}(L_\complex(E)_n)$ such that $x^\dagger x = v$.
\end{corollary}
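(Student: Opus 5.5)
The plan is to reduce the corollary to Lemma~\ref{lem:xdaggerx.eeast} via the Cuntz--Krieger relation at $v$, stacking the resulting columns on top of each other. Fix $v \in E_0$ and $n \ge 1$. Since $L_\complex(E)$ is unital and strongly graded, $E$ has no sinks and is row-finite (as recalled in the proof of Lemma~\ref{lem:xdaggerx.eeast}). Hence $s^{-1}(v) = \{e_1,\dots,e_m\}$ is finite and nonempty, and
\begin{equation*}
    v = \sum_{i=1}^m e_i e_i^\ast .
\end{equation*}

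For each $i$, Lemma~\ref{lem:xdaggerx.eeast} applied to $e_i$ and the given $n$ yields an integer $\tilde n_i \ge 1$ and an element $x^{(i)} \in M_{\tilde n_i,1}(L_\complex(E)_n)$ with $(x^{(i)})^\dagger x^{(i)} = e_i e_i^\ast$. I then set $\tilde n := \tilde n_1 + \cdots + \tilde n_m$ and let $x \in M_{\tilde n,1}(L_\complex(E)_n)$ be the column obtained by stacking $x^{(1)},\dots,x^{(m)}$. All entries of $x$ lie in degree $n$, so $x$ has the required form.

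The verification is the identity
\begin{equation*}
    x^\dagger x = \sum_{i=1}^m (x^{(i)})^\dagger x^{(i)} = \sum_{i=1}^m e_i e_i^\ast = v ,
\end{equation*}
where the first equality holds because $x^\dagger x$ is a sum over the entries of $x$ of $(x_k)^\ast x_k$, which splits blockwise. There is no real obstacle once Lemma~\ref{lem:xdaggerx.eeast} is available. The only points to check are that $s^{-1}(v)$ is finite (row-finiteness, so that $\tilde n$ is finite) and nonempty (no sinks, so that the Cuntz--Krieger relation applies at $v$), and both are guaranteed by the hypotheses.
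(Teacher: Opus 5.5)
Your proposal is correct and matches the paper's proof: you expand $v=\sum_{f\in s^{-1}(v)} ff^\ast$ using row-finiteness and the absence of sinks, apply Lemma~\ref{lem:xdaggerx.eeast} to each edge emitted by $v$, and stack the resulting columns. Then $x^\dagger x$ splits blockwise into $\sum_i f_i f_i^\ast = v$.
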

\begin{proof}
Let $v \in E_0$ and let $n \ge 1$. 
Since $E$ is row-finite, the set of edges emitted by $v$ is finite; write $s^{-1}(v)=\{f_1,\dots,f_m\}$ for some integer $m\ge 1$. 
By Lemma~\ref{lem:xdaggerx.eeast}, for each $1\le i\le m$ there exists $x_i \in M_{\tilde n_i,1}(L_\complex(E)_n)$ such that $x_i^\dagger x_i = f_i f_i^\ast$.
Define
\begin{equation*}
    x := (x_1,\ldots,x_m)^t \in M_{\tilde n_1+\cdots+\tilde n_m,1}(L_\complex(E)_n),
\end{equation*}
Then
\begin{equation*}
    x^\dagger x
    = \sum_{i=1}^m x_i^\dagger x_i
    = \sum_{f\in s^{-1}(v)} ff^\ast
    = v.
    \qedhere
\end{equation*}
\end{proof}

\begin{theorem}\label{thm:xdaggerx.eeast}
    A unital Leavitt path algebra $L_\complex(E)$ has the algebraic Parseval property if and only if it is strongly graded.
\end{theorem}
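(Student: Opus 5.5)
One direction is formal: if $L_\complex(E)$ has the algebraic Parseval property, then by Definition~\ref{def:parseval} there is a module frame system $(z_n,z_n^\ast)_{n\in\mathbb Z}$. In particular, for every $n\in\mathbb Z$ we have $z_n\in M_{m_n,1}(L_\complex(E)_n)$ and $z_n^\ast\in M_{m_n,1}(L_\complex(E)_{-n})$ with $(z_n^\ast)^t z_n = 1$. Lemma~\ref{lem:dade} then shows that $L_\complex(E)$ is strongly $\mathbb Z$-graded. Note that this direction uses no specific features of Leavitt path algebras.

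For the converse, assume $L_\complex(E)$ is strongly graded and unital. I would construct, for each $n\in\mathbb Z$, a column $z_n$ with entries in degree $n$ and $z_n^\dagger z_n = 1$. Since $(z_n^\ast)^t = z_n^\dagger$, the family $(z_n,z_n^\ast)_n$ is then a module frame system, provided we also set $z_0 := 1$.
\begin{itemize}
\item \emph{Negative degrees.} Take $z_{-n} := x_n = (p^\ast)_{p\in P_n(E)}^t$ as in Lemma~\ref{lem:neg.degree.x}. Its entries $p^\ast$ have degree $-n$, and
\begin{equation*}
    z_{-n}^\dagger z_{-n} = \sum_{p\in P_n(E)} p\,p^\ast = 1,
\end{equation*}
which is exactly the content of that lemma (where $y_n$ coincides with $x_n^\ast$).
\item \emph{Positive degrees.} Fix $n\ge 1$. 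For each vertex $v\in E_0$, Corollary~\ref{cor:xdaggerx.vertex} provides a column $x^{(v)}$ with entries in $L_\complex(E)_n$ satisfying $(x^{(v)})^\dagger x^{(v)} = v$. Since $E_0$ is finite, I stack these columns into a single column
\begin{equation*}
    z_n := \bigl(x^{(v)}\bigr)_{v\in E_0}^t.
\end{equation*}
Then
\begin{equation*}
    z_n^\dagger z_n = \sum_{v\in E_0} (x^{(v)})^\dagger x^{(v)} = \sum_{v\in E_0} v = 1,
\end{equation*}
using that $L_\complex(E)$ is unital.
\end{itemize}
Alternatively, one can build $z_1$ this way and set $z_{n+1} := z_1\kronr z_n$, as permitted by Lemma~\ref{lem:xn.from.x1}. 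In that case one must additionally check that the entries of $z_1\kronr z_n$ lie in degree $n+1$, which holds because the grading is multiplicative.

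The substantive work is therefore the positive-degree construction, which is already carried out in Lemma~\ref{lem:xdaggerx.eeast} via induction on the distance $d(s(e))$ from a vertex to a cycle. The remaining care points are minor:
\begin{itemize}
\item Finiteness of $E_0$ and the absence of sinks both follow from the graph being unital and strongly graded.
\item These facts are needed for $\sum_v v = 1$ and for every $s^{-1}(v)$ being nonempty and finite.
\item The normalization $n_0 = 1$, $z_0 = 1$ required by Definition~\ref{def:mfs} is imposed by hand.
\end{itemize}
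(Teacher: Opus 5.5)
Your proposal is correct and follows essentially the same argument as the paper: the forward direction via Lemma~\ref{lem:dade}, negative degrees via Lemma~\ref{lem:neg.degree.x}, positive degrees by stacking the vertex columns from Corollary~\ref{cor:xdaggerx.vertex} and using $\sum_{v\in E_0} v = 1$, and $z_0 := 1$. Your optional variant, which builds only $z_1$ and takes Kronecker powers via Lemma~\ref{lem:xn.from.x1}, is also valid, although the paper does not use it in this proof.
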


\begin{proof}
Let $L_\complex(E)$ be a unital Leavitt path algebra. 
If $L_\complex(E)$ has the algebraic Parseval property, then it admits a module frame system; by Lemma~\ref{lem:dade}, this implies that $L_\complex(E)$ is strongly graded.

Conversely, suppose that $L_\complex(E)$ is strongly graded. 
To establish the algebraic Parseval property, it must be shown that for each $n \in \integers$ there exist an integer $\tilde n \ge 1$ and $x_n \in M_{\tilde n,1}(L_\complex(E)_n)$ such that $x_n^\dagger x_n = 1$.

For $n=0$ one may take $x_0=1$. 
If $n<0$, Lemma~\ref{lem:neg.degree.x} yields a column vector $x_n$ whose entries are all ghost paths of length $-n$, and $x_n^\dagger x_n=1$.
Now let $n>0$ and~write $E_0 = \{v_1,\dots,v_m\}$. 
By Corollary~\ref{cor:xdaggerx.vertex}, for each $1 \le i \le m$ there exists $\tilde n_i \ge 1$ and $x_i \in M_{\tilde n_i,1}(L_\complex(E)_n)$ such that $x_i^\dagger x_i = v_i $.
Define
\begin{equation*}
    x_n := (x_1,\ldots,x_m)^t \in M_{\tilde n_1+\cdots+\tilde n_m,1}(L_\complex(E)_n).
\end{equation*}
Then
\begin{equation*}
    x_n^\dagger x_n
    =\sum_{i=1}^m x_i^\dagger x_i
    =\sum_{i=1}^m v_i = 1.
\end{equation*}
Thus $L_\complex(E)$ has the algebraic Parseval property.
\end{proof}

\subsubsection*{Frame description of the involution}

In the rest of this section, suppose that $(S,{}^\ast)$ has the algebraic Parseval property and fix a module frame $(z_g,z_g^\dagger)_{g\in G}$, which will be used to give an explicit description of the involution.
Let $(n,\alpha,\omega)$ be the associated factor system.

For each $g\in G$, expand $z_g$ with respect to the standard column basis
$f_i\in M_{n_g,1}(S_e)$:
\begin{equation*}
    z_g = \sum_{i=1}^{n_g} (v_i^t z_g) f_i,
\end{equation*}
where $v_i^t \in M_{1,n_g}(S_e) \alpha_g(1)$.
The condition $z_g^\dagger z_g = 1$ then takes the form
\begin{equation}
    \sum_{i=1}^{n_g} u_i^t \alpha_{g^{-1}}(v_i)^t \omega(g^{-1},g) = 1,\label{eq:involution}
\end{equation}
where $u_i^t := z_g^\dagger v_i^\ast z_{g^{-1}}^\dagger \in M_{1,n_{g^{-1}}}(S_e) \alpha_{g^{-1}}(1)$ for $i=1,\dots,n_g$.

\begin{lemma}\label{lem:involution}
    With notation as above, for each $g \in G$ and all $w^t \in  M_{1,n_g}(S_e) \alpha_g(1)$, one has
    \begin{equation*}
        (w^t z_g)^\ast = J_g(w^t) z_{g^{-1}},
        \quad
        J_g(w^t) := \sum_{i=1}^{n_g} u_i^t \alpha_{g^{-1}}(v_i^t w^\ast) \in M_{1,n_{g^{-1}}}(R)\alpha_{g^{-1}}(1).
    \end{equation*}
\end{lemma}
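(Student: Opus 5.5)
The plan is to compute $(w^t z_g)^\ast = z_g^\dagger w^\ast$ directly, by re-expressing the row $z_g^\dagger$ in terms of $z_{g^{-1}}$. Then I move the scalar coefficients to the left of $z_{g^{-1}}$ using the commutation relation~\eqref{eq:commrel1}. Throughout I use the frame $(z_g,z_g^\ast)$, so $y_g^t = z_g^\dagger$ and $\alpha_g(r) = z_g r z_g^\dagger$. By Lemma~\ref{rem:parseval} the idempotent $\alpha_g(1) = z_g z_g^\dagger$ is self-adjoint.

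\emph{Step 1 (the coefficients).} I will take $v_i^t := f_i^t\alpha_g(1)$. Since $\alpha_g(1)z_g = z_g z_g^\dagger z_g = z_g$, this gives $v_i^t z_g = f_i^t z_g = (z_g)_i$, which is consistent with the expansion $z_g=\sum_i (v_i^t z_g) f_i$. Taking adjoints yields
\begin{equation*}
    z_g^\dagger = \sum_{i=1}^{n_g} (v_i^t z_g)^\ast f_i^t = \sum_{i=1}^{n_g} z_g^\dagger v_i^\ast f_i^t .
\end{equation*}
Next I rewrite each entry $z_g^\dagger v_i^\ast \in S_{g^{-1}}$ using $z_{g^{-1}}^\dagger z_{g^{-1}} = 1$:
\begin{equation*}
    z_g^\dagger v_i^\ast = z_g^\dagger v_i^\ast z_{g^{-1}}^\dagger z_{g^{-1}} = u_i^t z_{g^{-1}} .
\end{equation*}
This uses exactly the definition of $u_i^t$. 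The product $u_i^t$ lies in $M_{1,n_{g^{-1}}}(S_e)$ by degree counting, and it lies in $M_{1,n_{g^{-1}}}(S_e)\alpha_{g^{-1}}(1)$ because its right-hand factor is $z_{g^{-1}}^\dagger = z_{g^{-1}}^\dagger z_{g^{-1}} z_{g^{-1}}^\dagger$.

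\emph{Step 2 (moving scalars through $z_{g^{-1}}$).} Combining Step~1 with $(w^t z_g)^\ast = z_g^\dagger w^\ast$ gives
\begin{equation*}
    (w^t z_g)^\ast = \sum_{i=1}^{n_g} u_i^t z_{g^{-1}}\,(f_i^t w^\ast),
\end{equation*}
where $f_i^t w^\ast \in S_e$. By~\eqref{eq:commrel1}, $z_{g^{-1}} r = \alpha_{g^{-1}}(r) z_{g^{-1}}$ for $r\in S_e$. This yields $\sum_i u_i^t \alpha_{g^{-1}}(f_i^t w^\ast) z_{g^{-1}}$.

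\emph{Step 3 (matching the stated formula).} It remains to replace $f_i^t w^\ast$ by $v_i^t w^\ast$. Since $w^t = w^t\alpha_g(1)$ and $\alpha_g(1)$ is self-adjoint, taking $\dagger$ gives $w^\ast = \alpha_g(1) w^\ast$. Hence $v_i^t w^\ast = f_i^t\alpha_g(1)w^\ast = f_i^t w^\ast$, which produces exactly $J_g(w^t)$.

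For membership of $J_g(w^t)$ in $M_{1,n_{g^{-1}}}(R)\alpha_{g^{-1}}(1)$, I use that $u_i^t$ is absorbed by $\alpha_{g^{-1}}(1)$ on the right (Step~1). Since $\alpha_{g^{-1}}$ is multiplicative, $\alpha_{g^{-1}}(1)\alpha_{g^{-1}}(r)\alpha_{g^{-1}}(1) = \alpha_{g^{-1}}(r)$, so each summand $u_i^t\alpha_{g^{-1}}(v_i^t w^\ast)$ is again absorbed by $\alpha_{g^{-1}}(1)$. The consistency with~\eqref{eq:involution} is the special case $w^t z_g = 1$-type bookkeeping and is not needed.

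The main obstacle is purely bookkeeping. One must keep track of which of $\ast$ versus $\dagger$ applies to rows and columns, and one must check that the self-adjointness of $\alpha_g(1)$ (from the Parseval choice) is what lets $v_i$ and $f_i$ be interchanged. I would verify the latter first.
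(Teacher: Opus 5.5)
Your proof is correct, but it runs in the opposite direction from the paper's and rests on a slightly different identity.

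\textbf{The paper's route.} It starts from $J_g(w^t)z_{g^{-1}}$ and pushes $\alpha_{g^{-1}}(v_i^t w^\ast)$ through $z_{g^{-1}}$ by \eqref{eq:commrel1}. It then inserts $v_i^t = v_i^t z_g z_g^\dagger$ and collapses the sum with the normalization \eqref{eq:involution}, read as $\sum_i (u_i^t z_{g^{-1}})(v_i^t z_g) = z_g^\dagger z_g = 1$.

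\textbf{Your route.} You start from $z_g^\dagger w^\ast$ and use the summand-level identity $(z_g)_i^\ast = z_g^\dagger v_i^\ast = u_i^t z_{g^{-1}}$. Multiplying this on the right by $(z_g)_i = v_i^t z_g$ and summing over $i$ gives exactly \eqref{eq:involution}. You then apply \eqref{eq:commrel1}, and finally use that $\alpha_g(1)$ is self-adjoint together with $w^t = w^t\alpha_g(1)$ to trade $f_i^t$ for $v_i^t$.

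\textbf{What each buys.} The paper's argument is a pure verification that never uses the hypothesis on $w$ or the self-adjointness of $\alpha_g(1)$, so it actually gives the formula for every row $w^t \in M_{1,n_g}(S_e)$. Your argument derives the formula for $J_g$ rather than merely checking it. It also supplies the membership $J_g(w^t)\in M_{1,n_{g^{-1}}}(R)\alpha_{g^{-1}}(1)$, which the paper's proof leaves implicit.

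Your choice $v_i^t = f_i^t\alpha_g(1)$ loses no generality. Any $v_i^t \in M_{1,n_g}(S_e)\alpha_g(1)$ with $v_i^t z_g = (z_g)_i$ satisfies $v_i^t = v_i^t z_g z_g^\dagger = f_i^t\alpha_g(1)$, so the $v_i$ are forced.
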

\begin{proof}
Let $g \in G$ and let $w^t \in  M_{1,n_g,}(S_e) \alpha_g(1)$.
Then
\begin{equation*}
    J_g(w^t) z_{g^{-1}} = \sum_{i=1}^{n_g} u_i^t z_{g^{-1}} (v_i^t w^\ast) = \sum_{i=1}^{n_g} ((u_i^t z_{g^{-1}}) (v_i^t z_g)) (z_g^\dagger w^\ast) \overset{\eqref{eq:involution}}{=} z_g^\dagger w^\ast = (w^t z_g)^\dagger = (w^t z_g)^\ast,
\end{equation*}
    which proves the claim.
\end{proof}

\section{Lifting derivations}\label{sec:lifting}

Let $S$ be a strongly $G$-graded ring with principal component $R$, and let
$\delta: R \to R$ be a derivation.
In this section we study the problem of \emph{lifting} $\delta$ to a graded
derivation $\hd: S \to S$, \ie, a graded derivation satisfying
\begin{equation*}
    \hd \vert_{S_e} = \delta.
\end{equation*}

In general, such a lift does not exist, and its existence is constrained by the graded structure of~$S$.
Using a factor system associated with $S$, we formulate explicit compatibility conditions characterizing when $\delta$ admits a graded lift.

Notably, inner derivations of $R$ always admit graded lifts to $S$.
Thus, the obstructions~identified in Theorem~\ref{thm:liftder} below are genuinely outer phenomena.

Throughout this section, a derivation $\delta: R \to R$ is extended to
matrix algebras over $R$ by acting entrywise.

\begin{lemma}\label{lem:liftder1}
    Let $S$ be a ring and let $\delta: S \to S$ be a derivation.
    If $p \in S$ is an idempotent, then $p\delta(p)p=0$.
\end{lemma}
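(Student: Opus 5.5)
The plan is to apply the Leibniz rule to the defining identity $p^2 = p$ and then to compress the result by $p$ on both sides. The argument uses no structure beyond $\delta$ being additive and satisfying $\delta(ab) = \delta(a)b + a\delta(b)$ for all $a,b \in S$; the grading plays no role.

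Applying $\delta$ to $p = p^2$ gives
\begin{equation*}
    \delta(p) = \delta(p)p + p\delta(p).
\end{equation*}
Multiplying this identity on the left by $p$ and on the right by $p$, and using $p^2 = p$, one obtains
\begin{equation*}
    p\delta(p)p = p\delta(p)p^2 + p^2\delta(p)p = p\delta(p)p + p\delta(p)p = 2\,p\delta(p)p.
\end{equation*}
Subtracting $p\delta(p)p$ from both sides yields $p\delta(p)p = 0$. Note that this last step only requires additive cancellation, not that $2$ be invertible in $S$.

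There is no real obstacle here. The only point needing care is the order of multiplication: the computation must be carried out in a possibly noncommutative ring, so the factors must be kept in the order shown. Since derivations are extended entrywise to matrix rings, and such extensions are again derivations, the same argument applies verbatim to idempotent matrices such as $p_g = x_g y_g^t \in M_{n_g}(S_e)$. Presumably this is the form in which the lemma will be used later.
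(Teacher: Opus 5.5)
Your proposal is correct and follows the same approach as the paper: apply the Leibniz rule to $p = p^2$, compress by $p$ on both sides to get $p\delta(p)p = 2\,p\delta(p)p$, and cancel. Your remark that only additive cancellation is needed, so $2$ need not be invertible, is accurate.
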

\begin{proof}
	The Leibniz rule yields $\delta(p)=\delta(p^2)=p\delta(p)+\delta(p)p$. 
 Hence $p\delta(p)p=p\delta(p)p+p\delta(p)p$, and so $p\delta(p)p=0$ as claimed.
\end{proof}

\begin{lemma}\label{lem:liftder2}
    Let $S$ be a strongly $G$-graded ring with principal component $R$, let $(n,\alpha,\omega)$ be a factor system of $S$, and let $\delta: R \to R$ be a derivation.
	Then, for all $g,h \in G$, the following identities hold:
	\begin{enumerate}
		\item 
			$\alpha_g(1) \delta(\alpha_g(1)) \omega(g,h) = 0$,\label{it:liftder2simple}
		\item
			$\omega(g,h) \delta(\alpha_{gh}(1)) \alpha_{gh}(1) = 0$.\label{it:liftder2double}
	\end{enumerate}
\end{lemma}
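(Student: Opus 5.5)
The plan is to reduce both identities to Lemma~\ref{lem:liftder1}, applied in matrix rings over $R$. The derivation $\delta$, extended entrywise, is a derivation of each $M_N(R)$. The idempotents to use are $p := \alpha_g(1)$ and $q := \alpha_{gh}(1)$. Both are idempotent because $\alpha_g$ is a ring homomorphism; concretely, $p = x_g y_g^t$ and $(x_g y_g^t)^2 = x_g (y_g^t x_g) y_g^t = x_g y_g^t$, since $y_g^t x_g = 1$. The idea is to show that $\omega(g,h)$ absorbs $p$ on the left and $q$ on the right. The identities then read $p\,\delta(p)\,p\,\omega(g,h)$ and $\omega(g,h)\,q\,\delta(q)\,q$, and both vanish by Lemma~\ref{lem:liftder1}.

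\emph{Step 1 (absorption on the right).} Fix a module frame system $(x_g,y_g)_{g\in G}$ implementing the factor system. Then $\omega(g,h) = (x_g\kronr x_h)\,y_{gh}^t$ and $q = x_{gh} y_{gh}^t$. Using $y_{gh}^t x_{gh} = 1$, we get $y_{gh}^t q = y_{gh}^t$, and hence $\omega(g,h)\, q = \omega(g,h)$. With this, (\ref{it:liftder2double}) follows:
\begin{equation*}
\omega(g,h)\,\delta(q)\,q = \omega(g,h)\, q\,\delta(q)\, q = 0 .
\end{equation*}

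\emph{Step 2 (absorption on the left).} In (\ref{it:liftder2simple}) the matrix $\alpha_g(1)$ is $n_g\times n_g$ while $\omega(g,h)$ has $n_g n_h$ rows. I read the product blockwise, i.e.\ $\alpha_g(1)$ acts on the $n_g$-dimensional factor of the index set, the same convention under which $\alpha_g(\alpha_h(1))$ and $x_g\kronr x_h$ are formed. This is the step where care is needed: the convention must be fixed so that it agrees with the indexing of $x_g\kronr x_h$. Write $P$ for this block version of $p$. It is again idempotent, and $\delta(P)$ is the block version of $\delta(p)$, since $\delta$ acts entrywise. Since $p\,x_g = x_g y_g^t x_g = x_g$, applying this in each block gives $P(x_g\kronr x_h) = x_g\kronr x_h$. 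Therefore $P\,\omega(g,h) = \omega(g,h)$, and so
\begin{equation*}
P\,\delta(P)\,\omega(g,h) = P\,\delta(P)\,P\,\omega(g,h) = 0
\end{equation*}
by Lemma~\ref{lem:liftder1}.

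The main obstacle is the index bookkeeping in Step 2, namely checking that blockwise multiplication by $\alpha_g(1)$ is compatible with the Kronecker ordering in $x_g\kronr x_h$. If this is awkward, there is an alternative route. One can use the relation $\omega(g,h)\,\omegat(g,h) = \alpha_g(\alpha_h(1))$ together with $\alpha_g(1)\,\alpha_g(X) = \alpha_g(X)$. Together with Step~1 this gives
\begin{equation*}
\omega(g,h) = \omega(g,h)\, q = \omega(g,h)\,\omegat(g,h)\,\omega(g,h) = \alpha_g(\alpha_h(1))\,\omega(g,h),
\end{equation*}
and then $\alpha_g(1)$ is absorbed into $\alpha_g(\alpha_h(1))$.
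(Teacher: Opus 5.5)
Your proof is correct and follows essentially the paper's argument. Both rest on the absorption identities $\alpha_g(1)\,\omega(g,h)=\omega(g,h)$ (read blockwise, as you do) and $\omega(g,h)\,\alpha_{gh}(1)=\omega(g,h)$, combined with the Leibniz rule for an idempotent. The differences are minor: the paper differentiates the absorption identity directly and multiplies by $\alpha_g(1)$ instead of citing Lemma~\ref{lem:liftder1}, and you additionally verify the absorption identities and the block convention, which the paper leaves implicit.
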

\begin{proof}
Let $g,h \in G$.
Since $\omega(g,h)=\alpha_g(1)\omega(g,h)$, applying the Leibniz rule yields
\begin{equation*}
    \delta(\omega(g,h)) = \delta(\alpha_g(1)) \omega(g,h) + \alpha_g(1) \delta(\omega(g,h)).
\end{equation*}
Left multiplication by $\alpha_g(1)$ gives $\alpha_g(1) \delta(\alpha_g(1)) \omega(g,h)=0$, proving the first identity.

The proof of second identity is analogous, using the identity
$\omega(g,h)=\omega(g,h) \alpha_{gh}(1)$.
\end{proof}

\begin{theorem}\label{thm:liftder}
    Let $S$ be a strongly $G$-graded ring with principal component $R$, let
    $(n,\alpha,\omega)$ be a factor system associated with a module frame system  $(x_g,y_g)_{g \in G}$ for $S$, and let $\delta: R \to R$ be a derivation.
    Then the following assertions hold:
    \begin{enumerate}
        \item
            Let $\hd: S \to S$ be a graded derivation lifting $\delta$.
            For each $g \in G$, define
            \begin{equation*}
                \eta(g) := \hd(x_g) y_g^t \in M_{n_g}(R)\alpha_g(1).
           %     \quad
            %    \heta(g) := x_g \hd(y_g^t) \in \alpha_g(1) M_{n_g}(R).
            \end{equation*}
            Then, for all $g \in G$ and $r \in R$,
            \begin{equation}
                [\delta,\alpha_g](r)
                =
                [\eta(g),\alpha_g(r)]
                +
                \alpha_g(r)\delta(\alpha_g(1)).
                \label{eq:deriv_cond_1}
            \end{equation}
%            \begin{equation}
%                \delta(\alpha_g(r))
%                = 
%                \eta(g) \alpha_g(r) + \alpha_g(\delta(r)) + \alpha_g(r) (\delta(\alpha_g(1)) - \eta(g)),
%                \label{eq:deriv_cond_1}
%            \end{equation}
            and, for all $g,h \in G$,
            \begin{equation}
                \hspace{3em}
                \delta(\omega(g,h))
                =
                (\eta(g) \kronr 1_{n_h}) \omega(g,h) + \alpha_g(\eta(h)) \omega(g,h) + \omega(g,h) (\delta(\alpha_{gh}(1)) - \eta(gh))
                \label{eq:deriv_cond_2}
            \end{equation}
        \item
            Conversely, suppose that for each $g \in G$ there exist $\eta(g) \in M_{n_g}(R)$ with $\eta(e)=0$ such that~\eqref{eq:deriv_cond_1} holds for all $g \in G$ and $r \in R$, and~\eqref{eq:deriv_cond_2} holds for all $g,h \in G$.
            Define a map $\hd: S \to S$ by additively extending
            \begin{equation}
                \hd(u^t x_g) := \delta(u)^t x_g + u^t \eta(g) x_g
                \label{eq:deriv_formula}
            \end{equation}
            for all $g \in G$ and $u \in M_{n_g,1}(R)$.
            Then $\hd$ is a graded derivation lifting $\delta$.
    \end{enumerate}
\end{theorem}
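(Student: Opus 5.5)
For part (1), I would start from the basic identities $x_g r = \alpha_g(r) x_g$ from \eqref{eq:commrel1}, $x_g \kronr x_h = \omega(g,h) x_{gh}$ from \eqref{eq:commrel2}, and $y_g^t x_g = 1$, and apply $\hd$ to them. The first observation is that $\eta(g) = \hd(x_g)y_g^t$ indeed lies in $M_{n_g}(R)$ by gradedness. Moreover, $\hd(x_g) = \hd(x_g)y_g^t x_g = \eta(g)x_g$, and $\eta(g)\alpha_g(1) = \hd(x_g)y_g^t x_g y_g^t = \eta(g)$.

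Differentiating $x_g r y_g^t = \alpha_g(r)$ gives
\begin{equation*}
    \delta(\alpha_g(r)) = \eta(g)\alpha_g(r) + \alpha_g(\delta(r)) + x_g r\,\hd(y_g^t).
\end{equation*}
To dispose of the last term, I would differentiate $x_g y_g^t = \alpha_g(1)$, which yields $x_g\hd(y_g^t) = \delta(\alpha_g(1)) - \eta(g)\alpha_g(1) = \delta(\alpha_g(1)) - \eta(g)$. Since $x_g r = \alpha_g(r)x_g$, we get $x_g r\hd(y_g^t) = \alpha_g(r)(\delta(\alpha_g(1))-\eta(g))$, and this is exactly \eqref{eq:deriv_cond_1}.

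For \eqref{eq:deriv_cond_2}, I would write $\omega(g,h) = (x_g\kronr x_h)y_{gh}^t$ and apply $\hd$ using the Leibniz rule entrywise on the Kronecker product. There are three resulting terms:
\begin{itemize}
    \item The derivative of $x_g$ gives $(\eta(g)\kronr 1_{n_h})(x_g\kronr x_h)$.
    \item The derivative of $x_h$ gives $x_g\kronr(\eta(h)x_h) = \alpha_g(\eta(h))(x_g\kronr x_h)$, by the mechanism of Lemma~\ref{lem:commrel}.
    \item The derivative of $y_{gh}^t$ gives $(x_g\kronr x_h)\hd(y_{gh}^t) = \omega(g,h)x_{gh}\hd(y_{gh}^t) = \omega(g,h)(\delta(\alpha_{gh}(1)) - \eta(gh))$.
\end{itemize}
In the first two terms, $x_g\kronr x_h$ is then replaced by $\omega(g,h)x_{gh}$ and multiplied by $y_{gh}^t$. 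The main care here is bookkeeping of the Kronecker index conventions, namely checking that $\hd(x_g)\kronr x_h = (\eta(g)\kronr 1_{n_h})(x_g\kronr x_h)$ with the paper's $\kronr$ ordering.

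For part (2), the first task is to show that \eqref{eq:deriv_formula} is well defined, since $u$ is not unique: $u^t x_g = u'^t x_g$ only implies $u^t\alpha_g(1) = u'^t\alpha_g(1)$. So I must show that $u^t\alpha_g(1) = 0$ implies $\delta(u)^t x_g + u^t\eta(g)x_g = 0$. Here $\delta(u^t\alpha_g(1)) = 0$ gives $\delta(u)^t\alpha_g(1) = -u^t\delta(\alpha_g(1))$. Setting $r = 1$ in \eqref{eq:deriv_cond_1} should yield $\alpha_g(1)\delta(\alpha_g(1)) = \eta(g)\alpha_g(1) - \alpha_g(1)\eta(g)\alpha_g(1)$-type information, which I expect to close this, possibly combined with Lemma~\ref{lem:liftder2}. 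This is the step I expect to be the main obstacle: $\eta(g)$ is not assumed to satisfy $\eta(g) = \eta(g)\alpha_g(1)$ or $\alpha_g(1)\eta(g)\alpha_g(1)$-normalization, so I would first try to extract from \eqref{eq:deriv_cond_1} at $r=1$ the identity $\alpha_g(1)\delta(\alpha_g(1))\alpha_g(1)=0$ (Lemma~\ref{lem:liftder1}) together with $(1-\alpha_g(1))\eta(g)\alpha_g(1)$ being controlled. If necessary, I would replace $\eta(g)$ by $\eta(g)\alpha_g(1)$ or by $\alpha_g(1)\eta(g)\alpha_g(1)+(1-\alpha_g(1))\delta(\alpha_g(1))\alpha_g(1)$ and verify that the conditions and the formula are unchanged.

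Once well-definedness is settled, the remaining checks are routine. The map $\hd$ is additive and graded by construction, and it restricts to $\delta$ on $R$ because $\eta(e) = 0$ and $x_e = 1$. It then suffices to verify the Leibniz rule on homogeneous elements, in three cases:
\begin{itemize}
    \item Left multiplication by $r\in R$ is immediate.
    \item Right multiplication by $r\in R$, using \eqref{eq:bimod}, reduces exactly to \eqref{eq:deriv_cond_1}.
    \item The product of two homogeneous elements, using \eqref{eq:multi}, reduces, after expanding $\delta(u^t\alpha_g(v)^t\omega(g,h))$, to \eqref{eq:deriv_cond_1} applied entrywise to $v$ together with \eqref{eq:deriv_cond_2}; the terms $\delta(\alpha_{gh}(1))$ are absorbed via $\omega(g,h)=\omega(g,h)\alpha_{gh}(1)$ and Lemma~\ref{lem:liftder2}.
\end{itemize}
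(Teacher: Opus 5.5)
Your proposal follows the paper's proof essentially step by step. For (1) you differentiate $x_g r y_g^t=\alpha_g(r)$, $x_g y_g^t=\alpha_g(1)$ and $(x_g\kronr x_h)y_{gh}^t=\omega(g,h)$. For (2) you check well-definedness and then the Leibniz rule via \eqref{eq:multi}, \eqref{eq:deriv_cond_1}, \eqref{eq:deriv_cond_2} and Lemma~\ref{lem:liftder2}, exactly as the paper does.

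The well-definedness step you flag as the main obstacle is in fact immediate and needs no renormalization of $\eta(g)$. The case $r=1$ of \eqref{eq:deriv_cond_1} reads
\[
\delta(\alpha_g(1))=\eta(g)\alpha_g(1)+\alpha_g(1)\bigl(\delta(\alpha_g(1))-\eta(g)\bigr).
\]
So if $u^t\alpha_g(1)=0$, left multiplication by $u^t$ kills the second summand and gives $u^t\delta(\alpha_g(1))=u^t\eta(g)\alpha_g(1)$. Hence $\delta(u)^t\alpha_g(1)+u^t\eta(g)\alpha_g(1)=0$, and right multiplication by $x_g=\alpha_g(1)x_g$ finishes.

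Incidentally, the identity you guessed from the $r=1$ case has the wrong side. Right-multiplying by $\alpha_g(1)$ gives $\delta(\alpha_g(1))\alpha_g(1)=\eta(g)\alpha_g(1)-\alpha_g(1)\eta(g)\alpha_g(1)$, not a formula for $\alpha_g(1)\delta(\alpha_g(1))$. The latter would in general force $\alpha_g(1)\delta(\alpha_g(1))=0$.
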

\begin{proof}%[Proof of Theorem~\ref{thm:liftder}]
(1)
Let $g \in G$.
Since $\hd$ preserves homogeneous components, $\eta(g) \in M_{n_g}(R)$.
Moreover, 
\begin{equation*}
    \eta(g) \alpha_g(1) = \hd(x_g)y_g^t x_gy_g^t = \hd(x_g) y_g^t = \eta(g),
\end{equation*}
so $\eta(g)\in M_{n_g}(R)\alpha_g(1)$. %; similarly, $\heta(g)\in \alpha_g(1)M_{n_g}(R)$.

For $g \in G$ and $r \in R$, applying the Leibniz rule to
$\alpha_g(r) = x_g r y_g^t$ yields
\begin{align*}
    \delta(\alpha_g(r))
    &=
    \hd(x_g) r y_g^t + x_g \delta(r) y_g^t + x_g r \hd(y_g^t)
    \\
    &= 
    \eta(g)\alpha_g(r) + \alpha_g(\delta(r)) + \alpha_g(r) x_g \hd(y_g^t)
    \\
    &= \eta(g) \alpha_g(r) + \alpha_g(\delta(r)) + \alpha_g(r) (\delta(\alpha_g(1)) - \eta(g)).
\end{align*}
Rearranging terms, this is equivalent to
\begin{equation*}
    [\delta,\alpha_g](r)
    =
    [\eta(g),\alpha_g(r)]
    +
    \alpha_g(r)\delta(\alpha_g(1)),
\end{equation*}
which proves~\eqref{eq:deriv_cond_1}.
  
For $g,h \in G$, using $\omega(g,h)=(x_g\kronr x_h)y_{gh}^t$ and the Leibniz rule, a standard computation yields
\begin{align*}
    \delta(\omega(g,h))
    &=
    (\hd(x_g) \kronr x_h) y_{gh}^t + (x_g \kronr \hd(x_h)) y_{gh}^t + (x_g \kronr x_h) \hd(y_{gh}^t)
    \\
    &=
    (\eta(g) x_g \kronr x_h) y_{gh}^t + (x_g \kronr (\eta(h) x_h)) y_{gh}^t + \omega(g,h) (\delta(\alpha_{gh}(1)) - \eta(gh))
    \\
    &=
    (\eta(g) \kronr 1_{n_h}) \omega(g,h) + \alpha_g(\eta(h)) (x_g \kronr x_h) y_{gh}^t +  \omega(g,h) (\delta(\alpha_{gh}(1)) - \eta(gh))
    \\
    &=
    (\eta(g) \kronr 1_{n_h}) \omega(g,h) + \alpha_g(\eta(h)) \omega(g,h) + \omega(g,h) (\delta(\alpha_{gh}(1)) - \eta(gh)),
\end{align*}
which establishes~\eqref{eq:deriv_cond_2}.

(2)
The map $\hd$ defined by~\eqref{eq:deriv_formula} is well defined.
Indeed, if $u^t x_g=0$ for some $g\in G$ and~$u\in M_{n_g,1}(R)$, then
$u^t \alpha_g(1) = 0$, and applying $\delta$ to $u^t\alpha_g(1)=0$ yields
\begin{equation*}
    \delta(u)^t \alpha_g(1) + u^t \delta(\alpha_g(1)) = 0.
\end{equation*}
Since $\delta(\alpha_g(1)) = \eta(g) \alpha_g(1) + \alpha_g(1)(\delta(\alpha_g(1))-\eta(g))$, it follows that
\begin{equation*}
    \delta(u)^t \alpha_g(1) + u^t \eta(g) \alpha_g(1) = 0, 
\end{equation*}
and multiplying on the right by $x_g$ completes the argument.

Clearly, $\hd$ is graded, and $\eta(e)=0$ ensures that $\hd$ lifts $\delta$.
It therefore remains to show that $\hd$ is a derivation, which reduces to verifying
the Leibniz rule on products of homogeneous elements.
Let $g,h \in G$ and let $u \in M_{n_g,1}(R)$ and $v \in M_{n_h,1}(R)$.
Then
\begin{align*}
    \hd((u^t x_g) (v^t x_h)) 
    &=
    \hd(u^t \alpha_g(v)^t \omega(g,h) x_{gh})
    \\
    &=
    \delta(u^t \alpha_g(v)^t \omega(g,h)) x_{gh}
    + u^t \alpha_g(v)^t \omega(g,h) \eta(gh) x_{gh}
    \\
    &=
    \delta(u)^t \alpha_g(v)^t \omega(g,h) x_{gh} 
    + u^t \delta(\alpha_g(v))^t \omega(g,h) x_{gh}
    + u^t \alpha_g(v)^t \delta(\omega(g,h)) x_{gh} 
    \\
    &\quad 
    + u^t \alpha_g(v)^t \omega(g,h) \eta(gh) x_{gh}.
\end{align*}
Applying~\eqref{eq:deriv_cond_1} and~\eqref{eq:deriv_cond_2} to the second and third summands, respectively, yields
\begin{align*}
    \hd((u^t x_g) (v^t x_h)) 
    &=
    \delta(u)^t \alpha_g(v)^t \omega(g,h) x_{gh}
    + u^t \eta(g) \alpha_g(v)^t \omega(g,h) x_{gh}
    + u^t \alpha_g(\delta(v)^t) \omega(g,h) x_{gh}
    \\
    &\quad
    + u^t \alpha_g(v^t \eta(h)) \omega(g,h) x_{gh} 
    + u^t \alpha_g(v)^t (\delta(\alpha_g(1)) \kronr 1_{n_h}) \omega(g,h) x_{gh}
    \\
    &\quad
    + u^t \alpha_g(v)^t \omega(g,h) \delta(\alpha_{gh}(1)) x_{gh}.
\end{align*}
By Lemma~\ref{lem:liftder2}, the last two terms vanish, so that
\begin{align*}
    \hd((u^t x_g) (v^t x_h)) 
    &=
    \delta(u)^t \alpha_g(v)^t \omega(g,h) x_{gh}
    + u^t \eta(g) \alpha_g(v)^t  \omega(g,h) x_{gh}
    + u^t \alpha_g(\delta(v)^t) \omega(g,h) x_{gh}
    \\
    &\quad
    + u^t \alpha_g(v^t \eta(h)) \omega(g,h) x_{gh}.
\end{align*}
On the other hand,
\begin{align*}
    &\hd(u^t x_g) (v^t x_h) + (u^t x_g) \hd(v^t x_h)
    %&\overset{\eqref{eq:deriv_formula}}{=}
    =
    (\delta(u)^t x_g + u^t \eta(g) x_g) (v^t x_h) + (u^t x_g) (\delta(v)^t x_h + v^t \eta(h) x_h)
    \\
    &=
    (\delta(u)^t \alpha_g(v)^t + u^t \eta(g) \alpha_g(v)^t + u^t \alpha_g(\delta(v)^t) + u^t \alpha_g(v)^t \alpha_g(\eta(h))) \omega(g,h) x_{gh},
\end{align*}
which coincides with the previous expression.
Thus $\hd$ satisfies the Leibniz rule.
\end{proof}

\begin{remark}\label{lem:commH}
    Under the hypotheses of Theorem~\ref{thm:liftder}, if $S$ is commutative and $\delta: R \to R$ is a derivation, then~\eqref{eq:deriv_cond_1} holds for the choice $\eta(g) := \alpha_g(1) \delta(\alpha_g(1))$, $g \in G$.
    Indeed, in the commutative case one has $\alpha_g(r) = r\alpha_g(1)$ for all $r\in R$, and \eqref{eq:deriv_cond_1} therefore follows from the idempotent identity
    \begin{equation*}
        \delta(\alpha_g(1)) = \alpha_g(1)\delta(\alpha_g(1)) + \delta(\alpha_g(1))\alpha_g(1).
    \end{equation*}
    together with Lemma~\ref{lem:liftder1}.
\end{remark}

\begin{corollary}\label{cor:liftder_crossed}
    Let $R$ be a ring, let $G$ be a group, let $S := R \rtimes_{(\alpha,\omega)} G$ be the crossed product associated with a factor system $(\alpha,\omega)$ (see~\eqref{eq:coaction_condition}--\eqref{eq:crossed_product}), and let $\delta: R \to R$ be a derivation.
    Then the following assertions hold:
    \begin{enumerate}
        \item
            Let $\hd: S \to S$ be a graded derivation lifting $\delta$.
            For each $g \in G$, define
            \begin{equation*}
                \eta(g) := \hd(u_g) u_{g^{-1}} \in R.
            \end{equation*}
            Then, for all $g \in G$ and $r \in R$,
            \begin{equation}\label{eq:deriv_cond_1_crossed}
                [\delta,\alpha_g](r)
                =
                [\eta(g),\alpha_g(r)],
            \end{equation}
            and, for all $g,h \in G$,
            \begin{equation}\label{eq:deriv_cond_2_crossed}
                \delta(\omega(g,h))
                =
                \eta(g) \omega(g,h) + \alpha_g(\eta(h)) \omega(g,h) - \omega(g,h) \eta(gh).
            \end{equation}
        \item
            Conversely, suppose that there exists a map $\eta: G \to R$ with $\eta(e)=0$ such that~\eqref{eq:deriv_cond_1_crossed} holds for all $g \in G$ and $r \in R$, and~\eqref{eq:deriv_cond_2_crossed} holds for all $g,h \in G$.
            Define a map $\hd: S \to S$ by additively extending
            \begin{equation}\label{eq:deriv_formula_crossed}
                \hd(r u_g) := \delta(r) u_g + r \eta(g) u_g
            \end{equation}
            for all $g \in G$ and $r \in R$.
            Then $\hd$ is a graded derivation lifting $\delta$.
    \end{enumerate}
\end{corollary}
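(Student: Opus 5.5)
The plan is to specialize Theorem~\ref{thm:liftder} to the module frame system canonically attached to the crossed product, and then check that its identities reduce to \eqref{eq:deriv_cond_1_crossed}--\eqref{eq:deriv_cond_2_crossed}. For $S = R \rtimes_{(\alpha,\omega)} G$ we take $n_g = 1$ and $x_g := u_g$ for every $g \in G$. For $y_g$ we take the inverse of $u_g$, which the paper computes as $\omega(g^{-1},g)^{-1}u_{g^{-1}} \in S_{g^{-1}}$. Using \eqref{eq:cocycle_condition} together with the normalization, we may assume $u_e = 1$.

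Then $y_g^t x_g = u_g^{-1}u_g = 1$, so $(u_g,u_g^{-1})_{g\in G}$ is a module frame system. The associated factor system in the sense of Definition~\ref{def:factorsystem} is computed as follows.
\begin{itemize}
\item The idempotent is $\alpha_g(1) = u_g u_g^{-1} = 1$.
\item The action is $\alpha_g(r) = u_g r u_g^{-1}$, which is the classical action.
\item The cocycle is $\omega(g,h) = u_g u_h u_{gh}^{-1}$, which is the classical cocycle by \eqref{eq:crossed_product}.
\end{itemize}
Since $\alpha_g(1) = 1$, every term containing $\delta(\alpha_g(1))$ vanishes. The Kronecker product $\eta(g)\kronr 1_{1}$ is just $\eta(g)$. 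Substituting these into \eqref{eq:deriv_cond_1} and \eqref{eq:deriv_cond_2} gives exactly \eqref{eq:deriv_cond_1_crossed} and \eqref{eq:deriv_cond_2_crossed}.

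For part (1), Theorem~\ref{thm:liftder}(1) produces $\eta(g) = \hd(u_g)u_g^{-1}$. The statement uses $\eta(g) = \hd(u_g)u_{g^{-1}}$ instead. This is the one point that needs care: the statement's $u_{g^{-1}}$ should be read as the inverse of $u_g$. Equivalently, one works with the module frame element $y_g$ chosen above, and in the normalized setting the paper intends this notation. With that reading, part (1) is immediate from the theorem.

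For part (2), Theorem~\ref{thm:liftder}(2) applies directly, since $\eta(e) = 0$ is assumed. Formula \eqref{eq:deriv_formula} with $u = r \in R$ and $x_g = u_g$ becomes $\hd(ru_g) = \delta(r)u_g + r\eta(g)u_g$, which is \eqref{eq:deriv_formula_crossed}.

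The main obstacle is bookkeeping rather than substance: we must make sure the classical factor system coincides with the associated one, including normalization at $e$. If $u_e \neq 1$, we first rescale it, which changes neither the grading nor the conclusions.
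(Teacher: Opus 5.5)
Your proposal is correct and is the specialization the paper has in mind: the paper gives no separate proof, and the corollary is Theorem~\ref{thm:liftder} applied to the frame $x_g=u_g$, $y_g=u_g^{-1}$, $n_g=1$, where $\alpha_g(1)=1$ removes every $\delta(\alpha_g(1))$ term. Your reading of $u_{g^{-1}}$ as $u_g^{-1}$ is in fact forced. Literally, $\hd(u_g)u_{g^{-1}}=\hd(u_g)u_g^{-1}\,\omega(g,g^{-1})$, and with that choice \eqref{eq:deriv_cond_2_crossed} can fail when $\omega(g,g^{-1})\neq 1$: take $R=\C[s^{\pm1}]$, $S=R[t^{\pm1}]$, $u_1=st$, $u_n=t^n$ for $n\neq 1$, and $\hd=s\,\partial_s$; then at $(g,h)=(1,-1)$ the left side is $s$ while the right side is $s^2$.
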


\begin{remark}\label{rem:primary_obstruction}
    Under the hypotheses of Corollary~\ref{cor:liftder_crossed}, if $\delta$ admits a
    graded lift, then for each $g \in G$ the derivation $\alpha_g^{-1} \circ \delta \circ \alpha_g - \delta$ is inner by~\eqref{eq:deriv_cond_1_crossed}.
    Equivalently, the class of $\delta$ in $\der(R)/\inn(R)$ is fixed under the action of $G$ induced by~$\alpha$.
\end{remark}

Henceforth, we refer to the case $\omega(g,h)=1$ for all $g,h\in G$
as the skew group ring case.

\begin{remark}\label{rem:skew}
    Under the hypotheses of Corollary~\ref{cor:liftder_crossed}, 
    in the skew group ring case~\eqref{eq:deriv_cond_2_crossed} reduces~to
    \begin{equation*}
        \eta(gh) = \eta(g) + \alpha_g(\eta(h)),
    \end{equation*}
    so that $\eta$ is a \emph{crossed $R$-valued homomorphisms on $G$}.
\end{remark}

\begin{example}\label{ex:skew_lift}
    Let $S = R \rtimes_\alpha G$ be a skew group ring.
    Suppose that $\delta \in \der(R)$ satisfies $\delta \circ \alpha_g = \alpha_g \circ \delta$ for all $g \in G$.
    Then~\eqref{eq:deriv_cond_1_crossed} holds with $\eta(g)=0$.
    Hence, by Corollary~\ref{cor:liftder_crossed} and Remark~\ref{rem:skew}, $\delta$ admits a graded lift
    $\hd$ given for $g \in G$ and $r \in R$ by $\hd(r u_g) = \delta(r) u_g$.
\end{example}

\begin{example}\label{ex:quantum_torus}
    Let $q\in\mathbb C^\times$ and let $A_q$ be the (discrete) quantum torus generated by invertible elements $u^{\pm1}, v^{\pm1}$ subject to $uv=qvu$.
    Then $A_q$ is a skew group ring
    \begin{equation*}
        A_q \cong \mathbb C[u,u^{-1}] \rtimes_\alpha \mathbb Z,
        \quad
        \alpha_1(u) := qu.
    \end{equation*}

    Let $\delta$ be the canonical derivation on $\mathbb C[u,u^{-1}]$ given by $\delta(u^n) = n u^n$, $n \in \mathbb Z$.
    Since $\delta$ commutes with the action $\alpha$, it follows from Example~\ref{ex:skew_lift} that $\delta$ admits a graded lift $\hd \in \der(A_q)$, explicitly determined by $\hd(u)=u$ and $\hd(v)=0$. 
\end{example}

For the next result, recall that a derivation $\delta$ on a $^\ast$-ring $(S,{}^\ast)$ is called a \emph{${}^\ast$-derivation} if $\delta(s^\ast) = \delta(s)^\ast$ for all $s \in S$.

\begin{corollary}
\label{cor:star_derivations}
    Let $(S,{}^\ast)$ be a strongly $G$-graded $^\ast$-ring with principal component $R$
    satisfying the algebraic Parseval property.
    Choosing a module frame system for $S$ of the form $(z_g,z_g^\ast)_{g\in G}$,
    Theorem~\ref{thm:liftder} extends to ${}^\ast$-derivations.
    More precisely, for a ${}^\ast$-derivation $\delta: R \to R$ the following assertions hold:
    \begin{enumerate}
        \item
            Let $\hd: S \to S$ be a graded ${}^\ast$-derivation lifting $\delta$.
            For each $g \in G$, define
            \begin{equation*}
                \eta(g) := \hd(z_g) z_g^\dagger \in M_{n_g}(R)\alpha_g(1).
            \end{equation*}
            Then, for all $g \in G$ and $r \in R$,
            \begin{equation}
                \delta(\alpha_g(r))
                = 
                \eta(g) \alpha_g(r) + \alpha_g(\delta(r)) + \alpha_g(r) \eta(g)^\dagger,
                \label{eq:deriv_cond_1_parseval}
            \end{equation}
            and, for all $g,h \in G$,
            \begin{equation}
                \hspace{3em}
                \delta(\omega(g,h))
                =
                (\eta(g) \kronr 1_{n_h}) \omega(g,h) + \alpha_g(\eta(h)) \omega(g,h) + \omega(g,h) \eta(gh)^\dagger
                \label{eq:deriv_cond_2_parseval}
            \end{equation}
        \item
            Conversely, suppose that for each $g \in G$ there exist $\eta(g) \in M_{n_g}(R)$ with $\eta(e)=0$ such that~\eqref{eq:deriv_cond_1_parseval} holds for all $g \in G$ and $r \in R$, and~\eqref{eq:deriv_cond_2_parseval} holds for all $g,h \in G$.
            Define a map $\hd: S \to S$ by additively extending
            \begin{equation}
                \hd(u^t z_g) := \delta(u)^t z_g + u^t \eta(g) z_g
                %\label{eq:deriv_formula_parseval}
            \end{equation}
            for all $g \in G$ and $u^t \in M_{n_g,1}(R) \alpha_g(1)$.
            Then $\hd$ is a graded ${}^\ast$-derivation lifting $\delta$.
    \end{enumerate}
\end{corollary}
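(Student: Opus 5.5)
The plan is to reduce everything to Theorem~\ref{thm:liftder}, applied to the module frame system $(x_g,y_g) := (z_g, z_g^\ast)$, so that $y_g^t = z_g^\dagger$. Two things then remain. First, translate the conditions \eqref{eq:deriv_cond_1} and \eqref{eq:deriv_cond_2} into \eqref{eq:deriv_cond_1_parseval} and \eqref{eq:deriv_cond_2_parseval}. Second, check the ${}^\ast$-compatibility of the lift.

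\emph{Part (1).} Let $\hd$ be a graded ${}^\ast$-derivation lifting $\delta$, and set $\eta(g) = \hd(z_g)z_g^\dagger$. The proof of Theorem~\ref{thm:liftder}(1) produced the term $x_g\hd(y_g^t)$ and rewrote it as $\delta(\alpha_g(1)) - \eta(g)$. Here I instead use the ${}^\ast$-property entrywise: $\hd(z_g^\dagger) = \hd(z_g)^\dagger$. This gives
\begin{equation*}
    x_g\hd(y_g^t) = z_g \hd(z_g)^\dagger = (\hd(z_g) z_g^\dagger)^\dagger = \eta(g)^\dagger .
\end{equation*}
Substituting this into the Leibniz expansions of $\delta(\alpha_g(r)) = \delta(z_g r z_g^\dagger)$ and of $\delta(\omega(g,h)) = \delta((z_g\kronr z_h)z_{gh}^\dagger)$ yields \eqref{eq:deriv_cond_1_parseval} and \eqref{eq:deriv_cond_2_parseval} directly. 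The computation is the same as in Theorem~\ref{thm:liftder}, with only the last term changed. As a byproduct, $\delta(\alpha_g(1)) = \eta(g) + \eta(g)^\dagger$.

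\emph{Part (2).} Suppose $\eta$ satisfies the Parseval conditions. Put $r=1$ in \eqref{eq:deriv_cond_1_parseval} and use $\delta(1)=0$; this gives $\delta(\alpha_g(1)) = \eta(g)\alpha_g(1) + \alpha_g(1)\eta(g)^\dagger$. I then want to show that \eqref{eq:deriv_cond_1} and \eqref{eq:deriv_cond_2} hold, possibly after replacing $\eta(g)$ by $\eta(g)\alpha_g(1)$; the formula \eqref{eq:deriv_formula} is unchanged by this replacement, since $\alpha_g(1)x_g = x_g$. The key identity needed is $\alpha_g(r)\eta(g)^\dagger = \alpha_g(r)(\delta(\alpha_g(1)) - \eta(g))$ after right multiplication by $\alpha_g(1)$ or by $\omega(g,h)$. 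The expansion of $\delta(\alpha_g(1))$ above reduces this to comparing $\alpha_g(r)\eta(g)\alpha_g(1)$ with $\alpha_g(r)\alpha_g(1)\eta(g)$. This term-matching is where I expect the main obstacle. If the equations do not match exactly, I will bypass the translation: prove the Leibniz rule for $\hd$ directly, repeating the computation in Theorem~\ref{thm:liftder}(2) with \eqref{eq:deriv_cond_1_parseval} and \eqref{eq:deriv_cond_2_parseval}, and using Lemma~\ref{lem:liftder2} to kill the stray terms. Note that in the Leibniz computation the last terms always appear multiplied on the right by $\omega(g,h)x_{gh}$ or by $x_{gh}$. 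So the only discrepancy that could matter is $\omega(g,h)(\eta(gh)^\dagger + \eta(gh))x_{gh}$ versus $\omega(g,h)\delta(\alpha_{gh}(1))x_{gh}$. These agree by the $r=1$ identity together with Lemma~\ref{lem:liftder2}(\ref{it:liftder2double}).

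\emph{${}^\ast$-property.} It remains to show $\hd(s^\ast) = \hd(s)^\ast$ for $s = u^t z_g$. I would use the explicit formula of Lemma~\ref{lem:involution}, but a cleaner route is available. Since $\hd$ is already a derivation, the map $D(s) := \hd(s^\ast)^\ast$ is also a graded derivation lifting $\delta$, because $\delta$ is a ${}^\ast$-derivation. Two derivations agree on products once they agree on generators. Since $S_g = R z_g$ entrywise and $R$ is fixed, it suffices to check $D(z_g) = \hd(z_g)$. Now
\begin{equation*}
    \hd(z_g) = \eta(g) z_g, \qquad D(z_g) = \hd(z_g^\ast)^\ast .
\end{equation*}
Compute $\hd(z_g^\ast)$ by applying $\hd$ to $z_g^\dagger z_g = 1$: this gives $\hd(z_g^\dagger)z_g = -z_g^\dagger\eta(g)z_g$, hence $\hd(z_g^\dagger) = -z_g^\dagger\eta(g)\alpha_g(1)$ after multiplying by $z_g^\dagger$ on the right. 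The right-hand side would have to equal $(\eta(g)z_g)^\dagger = z_g^\dagger\eta(g)^\dagger$. Using $z_g^\dagger\alpha_g(1) = z_g^\dagger$, this amounts to $z_g^\dagger(\eta(g) + \eta(g)^\dagger) = z_g^\dagger\delta(\alpha_g(1)) = 0$. This vanishing follows from Lemma~\ref{lem:liftder1}-type idempotent arguments applied to $\alpha_g(1) = z_g z_g^\dagger$; this is the second delicate check.
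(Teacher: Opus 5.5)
Part (1) is the paper's argument. Your reduction of the Leibniz rule in (2) to Theorem~\ref{thm:liftder}(2) via $\eta(g)\mapsto\eta(g)\alpha_g(1)$ works without the obstacle you anticipate. The reason is that $\alpha_g(1)=z_gz_g^\dagger$ is a self-adjoint idempotent with $\alpha_g(r)\alpha_g(1)=\alpha_g(r)$ and $(\alpha_g(1)\kronr 1_{n_h})\omega(g,h)=\alpha_g(\alpha_h(1))\omega(g,h)=\omega(g,h)=\omega(g,h)\alpha_{gh}(1)$. Hence the modified family still satisfies both Parseval identities. For this family the $r=1$ case of \eqref{eq:deriv_cond_1_parseval} reads $\alpha_g(1)\eta(g)^\dagger=\delta(\alpha_g(1))-\eta(g)$, which turns \eqref{eq:deriv_cond_1_parseval} and \eqref{eq:deriv_cond_2_parseval} into \eqref{eq:deriv_cond_1} and \eqref{eq:deriv_cond_2}.

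Your ${}^\ast$-argument uses the conjugate derivation $D(s)=\hd(s^\ast)^\ast$ and reduces to checking $D(z_g)=\hd(z_g)$. This is a different and much shorter route than the paper's verification of \eqref{eq:star_check} through $J_g$ from Lemma~\ref{lem:involution}. However, its last step is wrong as written.

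Multiplying $\hd(z_g^\dagger)z_g=-z_g^\dagger\eta(g)z_g$ on the right by $z_g^\dagger$ only determines $\hd(z_g^\dagger)\alpha_g(1)$, not $\hd(z_g^\dagger)$. Applying $\hd$ to $z_g^\dagger=z_g^\dagger\alpha_g(1)$ shows the missing part is $\hd(z_g^\dagger)(1_{n_g}-\alpha_g(1))=z_g^\dagger\delta(\alpha_g(1))$.

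The vanishing you then invoke, $z_g^\dagger\delta(\alpha_g(1))=0$, is false. Lemma~\ref{lem:liftder1} only gives $p\delta(p)p=0$ for $p=\alpha_g(1)$, whereas $z_g^\dagger\delta(p)=z_g^\dagger p\,\delta(p)(1_{n_g}-p)$. Concretely, take $\LConetwo$ with $z_1=\tfrac{1}{\sqrt2}(e_1,e_2)^t$ and the graded ${}^\ast$-derivation $\delta_A$, $A=\operatorname{diag}(i,0)$. Then $z_1^\dagger\delta(\alpha_1(1))=\tfrac{i}{2\sqrt2}(-e_1^\ast,e_2^\ast)\neq 0$. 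So the claim fails even for a genuine ${}^\ast$-lift as in (1), where your own byproduct gives $z_g^\dagger(\eta(g)+\eta(g)^\dagger)=z_g^\dagger\delta(\alpha_g(1))$.

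The two mistakes cancel, and the correct computation closes the argument directly:
\begin{equation*}
\hd(z_g^\dagger)=z_g^\dagger\bigl(\delta(\alpha_g(1))-\eta(g)\alpha_g(1)\bigr)=z_g^\dagger\alpha_g(1)\eta(g)^\dagger=z_g^\dagger\eta(g)^\dagger=\bigl(\eta(g)z_g\bigr)^\dagger=\hd(z_g)^\dagger .
\end{equation*}
This uses only the $r=1$ instance $\delta(\alpha_g(1))=\eta(g)\alpha_g(1)+\alpha_g(1)\eta(g)^\dagger$ of \eqref{eq:deriv_cond_1_parseval}.

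With this repair your proof is complete. It also shows more transparently than the paper's computation that ${}^\ast$-compatibility of the lift is automatic once $\hd$ is a derivation lifting a ${}^\ast$-derivation and this single identity holds.
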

\begin{proof}
\eqref{eq:deriv_cond_1_parseval} and \eqref{eq:deriv_cond_2_parseval} follow from the same argument as in the proof of Theorem~3.3. 
In that proof the expressions are rearranged to obtain the stated form; in the present ${}^\ast$-ring setting the symmetric form on the right-hand side is retained. 
The remainder of the argument is analogous, and it only remains to verify that the lift preserves the involution. 
Recall that the factor system associated with $(z_g,z_g^\dagger)$ is ${}^\ast$-compatible (see~Lemma~\ref{rem:parseval}).

Let $g \in G$ and let $w^t \in M_{1,n_g}(R) \alpha_g(1)$.
It suffices to show that
\begin{equation*}
    \hd((w^t z_g)^\ast) = \hd(w^t z_g)^\ast.
\end{equation*}
By Lemma~\ref{lem:involution},
\begin{align*}
    \hd\bigl((w^t z_g)^\ast\bigr)
    &= 
    \hd(J_g(w^t) z_{g^{-1}})
    = \delta(J_g(w^t)) z_{g^{-1}} + J_g(w^t)\eta(g^{-1}) z_{g^{-1}}, 
    \\
    \hd(w^t z_g)^\ast
    &= 
    (\delta(w^t) z_g + w^t\eta(g) z_g)^\ast
    = 
    J_g(\delta(w^t)) z_{g^{-1}} + J_g(w^t\eta(g)) z_{g^{-1}}.
\end{align*}
Thus, to show that these two expressions are equal, it is enough to establish that
\begin{equation}\label{eq:star_check}
    \delta(J_g(w^t)) + J_g(w^t)\eta(g^{-1})
    = 
    J_g(\delta(w^t)) + J_g(w^t\eta(g)) + J_g(w^t)\delta(\alpha_{g^{-1}}(1)).
\end{equation}
%Indeed, multiplying by $z_{g^{-1}}$ kills the last term:
Indeed, multiplying by $z_{g^{-1}}$ eliminates the last term:
%Indeed, the extra term disappears after multiplying by $z_{g^{-1}}$, since
\begin{equation*}
    J_g(w^t)\delta(\alpha_{g^{-1}}(1)) z_{g^{-1}}
    =
    J_g(w^t)\alpha_{g^{-1}}(1)\delta(\alpha_{g^{-1}}(1))\alpha_{g^{-1}}(1) z_{g^{-1}}
    = 0,
\end{equation*}
where the last equality uses Lemma~\ref{lem:liftder1}.

To establish~\eqref{eq:star_check}, recall that
\begin{equation*}
    J_g(w^t) = \sum_{i=1}^{n_g} u_i^t \alpha_{g^{-1}}(v_i^t w^\ast),
\end{equation*}
where $v_1^t,\ldots,v_{n_g}^t \in M_{1,n_g}(S_e) \alpha_g(1)$ and $u_1^t,\ldots u_{n_g}^t \in M_{1,n_{g^{-1}}}(S_e) \alpha_{g^{-1}}(1)$ satisfy
\begin{equation}
    \sum_{i=1}^{n_g} u_i^t \alpha_{g^{-1}}(v_i)^t \omega(g^{-1},g) = 1.\label{eq:product_recall}
\end{equation}

Applying $\delta$ to $J_g(w^t)$ and using~\eqref{eq:deriv_cond_1_parseval}, one obtains
\begin{align*} 
    \delta(J_g(w^t)) &= \sum_{i=1}^{n_g} \big( \delta(u_i)^t \alpha_{g^{-1}}(v_i^t w^\ast) + u_i^t \alpha_{g^{-1}}(\delta(v_i)^t w^\ast) + (u_i^t \eta(g^{-1})) \alpha_{g^{-1}}(v_i^t w^\ast) \big) 
    \\ 
    &\quad 
    + J_g(\delta(w^t)) - J_g(w^t) \eta(g^{-1}) + J_g(w^t) \delta(\alpha_{g^{-1}}(1)).
\end{align*}

Next, applying $\delta$ to the identity~\eqref{eq:product_recall} and using~\eqref{eq:deriv_cond_1_parseval} and~\eqref{eq:deriv_cond_2_parseval},
together with Lemma~\ref{lem:liftder2}\emph{(1)}, yields
\begin{equation*}
    \sum_{i=1}^{n_g} \big( \delta(u_i)^t \alpha_{g^{-1}}(v_i)^t 
    + u_i^t \alpha_{g^{-1}}(\delta(v_i))^t 
    + (u_i^t \eta(g^{-1})) \alpha_{g^{-1}}(v_i)^t  
    + u_i^t \alpha_{g^{-1}}(v_i^t \eta(g)) \big)\omega(g^{-1},g) = 0.
\end{equation*}
By Lemma~\ref{lem:multi_2}, this implies
\begin{equation*}
    \sum_{i=1}^{n_g} \big(
        \delta(u_i)^t \alpha_{g^{-1}}(v_i)^t
        + u_i^t \alpha_{g^{-1}}(\delta(v_i))^t
        + (u_i^t \eta(g^{-1})) \alpha_{g^{-1}}(v_i)^t
        + u_i^t \alpha_{g^{-1}}(v_i^t \eta(g))
        \big) = 0.
\end{equation*}
Multiplying from the right by $\alpha_{g^{-1}}(w^\ast)$ then gives
\begin{equation*}
    \sum_{i=1}^{n_g} \big(
        \delta(u_i)^t \alpha_{g^{-1}}(v_i^t w^\ast)
        + u_i^t \alpha_{g^{-1}}(\delta(v_i)^t w^\ast)
        + (u_i^t \eta(g^{-1})) \alpha_{g^{-1}}(v_i^t w^\ast)
        + u_i^t \alpha_{g^{-1}}(v_i^t \eta(g) w^\ast)
    \big) = 0.
\end{equation*}

From the preceding identities it follows that
\begin{equation*}
    \delta(J_g(w^t)) + J_g(w^t) \eta(g^{-1}) = J_g(\delta(w^t)) + J_g(w^t) \delta(\alpha_{g^{-1}}(1)) - \sum_{i=1}^{n_g} u_i^t \alpha_{g^{-1}}(v_i^t \eta(g) w^\ast).
\end{equation*}
Note that
\begin{equation*}
    \sum_{i=1}^{n_g} u_i^t \alpha_{g^{-1}}(v_i^t \eta(g) w^\ast) 
    =
    \sum_{i=1}^{n_g} u_i^t \alpha_{g^{-1}}(v_i^t (w^t \eta(g)^\dagger)^\dagger)
    =
    \sum_{i=1}^{n_g} u_i^t \alpha_{g^{-1}}(v_i^t (w^t \delta(\alpha_g(1)))^\dagger) - J_g(w^t \eta(g)).
\end{equation*}
Since $\alpha_g(1)\delta(\alpha_g(1))\alpha_g(1)=0$ by Lemma~\ref{lem:liftder1}, 
\begin{equation*}
    \delta(J_g(w^t)) + J_g(w^t)\eta(g^{-1})
    =
    J_g(\delta(w^t)) + J_g(w^t\eta(g)) + J_g(w^t)\delta(\alpha_{g^{-1}}(1)).
\end{equation*}
This establishes~\eqref{eq:star_check} and completes the proof.
\end{proof}

\subsection{Cohomological methods}\label{cohomo}

Let $S$ be a strongly $G$-graded ring with principal component $R$, let
$(n,\alpha,\omega)$ be a factor system associated with a module frame system
$(x_g,y_g)_{g\in G}$, and let $\delta: R \to R$ be a derivation.

Assume that for each $g\in G$ we are given an element $\eta(g)\in M_{n_g}(R)$ with
$\eta(e)=0$ such that~\eqref{eq:deriv_cond_1} holds for all $r\in R$, \ie, 
\begin{equation*}
    [\delta,\alpha_g](r)
    =
    [\eta(g),\alpha_g(r)]
    +
    \alpha_g(r)\delta(\alpha_g(1)).
\end{equation*}
%\begin{equation*}
%    \delta(\alpha_g(r))
%    = 
%    \eta(g) \alpha_g(r) + \alpha_g(\delta(r)) + \alpha_g(r) (\delta(\alpha_g(1)) - \eta(g)).
%\end{equation*}
In this situation,~\eqref{eq:deriv_formula} defines, for each $g\in G$,
an additive map
\begin{equation}\label{eq:connection_ng}
    \nabla_{\delta,g}: S_g \to S_g,
    \quad
    \nabla_{\delta,g}(u^t x_g) := \delta(u)^t x_g + u^t \eta(g) x_g,
    \quad u\in M_{n_g,1}(R),
\end{equation}
so that $\hd=\bigoplus_{g\in G}\nabla_{\delta,g}$ is the candidate graded lift.

\begin{lemma}\label{lem:connection_bimodule}
    For each $g \in G$, the map $\nabla_{\delta,g}$ satisfies the two-sided Leibniz rule
    \begin{equation}\label{eq:two_sided_leibniz}
        \nabla_{\delta,g}(r s r')
        =
        \delta(r) s r' + r \nabla_{\delta,g}(s) r' + r s \delta(r')
    \end{equation}
    for all $r,r'\in R$ and $s\in S_g$.
    In particular, $\nabla_{\delta,g}$ is a covariant derivative along $\delta$.
\end{lemma}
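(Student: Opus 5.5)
The plan is to verify \eqref{eq:two_sided_leibniz} directly from the defining formula \eqref{eq:connection_ng}. Write $s = u^t x_g$ with $u \in M_{n_g,1}(R)$; by \eqref{eq:bimod} we have $r s r' = r u^t \alpha_g(r') x_g$. Well-definedness of $\nabla_{\delta,g}$ (independence of the representative $u$) is exactly the argument given in the proof of Theorem~\ref{thm:liftder}(2). That argument uses only \eqref{eq:deriv_cond_1}, in the form $\delta(\alpha_g(1)) = \eta(g)\alpha_g(1) + \alpha_g(1)(\delta(\alpha_g(1))-\eta(g))$, so we may use any representative. Since the left and right actions commute, it suffices to check the left and right Leibniz rules separately.

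\emph{Left rule.} Here $\nabla_{\delta,g}(r u^t x_g) = \delta(r u^t) x_g + r u^t \eta(g) x_g = \delta(r) u^t x_g + r\,\nabla_{\delta,g}(u^t x_g)$. This is immediate from the Leibniz rule for $\delta$ applied entrywise.

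\emph{Right rule.} This is the main step. Using the representative $u^t\alpha_g(r')$, we get
\begin{equation*}
\nabla_{\delta,g}(s r') = \delta(u)^t\alpha_g(r')x_g + u^t\delta(\alpha_g(r'))x_g + u^t\alpha_g(r')\eta(g)x_g .
\end{equation*}
Substitute \eqref{eq:deriv_cond_1}, rewritten as
\begin{equation*}
\delta(\alpha_g(r')) = \alpha_g(\delta(r')) + \eta(g)\alpha_g(r') - \alpha_g(r')\eta(g) + \alpha_g(r')\delta(\alpha_g(1)).
\end{equation*}
The terms $\pm u^t\alpha_g(r')\eta(g)x_g$ cancel. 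What remains is
\begin{equation*}
\delta(u)^t\alpha_g(r')x_g + u^t\eta(g)\alpha_g(r')x_g + u^t\alpha_g(\delta(r'))x_g + u^t\alpha_g(r')\delta(\alpha_g(1))x_g .
\end{equation*}
By \eqref{eq:bimod}, the first two terms are $\nabla_{\delta,g}(s)\cdot r'$, and the third is $s\,\delta(r')$.

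The only delicate point is the leftover term $u^t\alpha_g(r')\delta(\alpha_g(1))x_g$, which must vanish. Write $\alpha_g(r') = \alpha_g(r')\alpha_g(1)$ and $x_g = \alpha_g(1)x_g$ (since $x_g y_g^t x_g = x_g$). The term then contains the factor $\alpha_g(1)\delta(\alpha_g(1))\alpha_g(1)$, which is zero by Lemma~\ref{lem:liftder1} applied to the idempotent $\alpha_g(1)\in M_{n_g}(R)$ and the entrywise extension of $\delta$. This is the step I expect to need care: one has to make sure the idempotent sandwich is available on both sides, and that is exactly what $x_g = \alpha_g(1)x_g$ provides.

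Combining the two one-sided rules gives \eqref{eq:two_sided_leibniz}: $\nabla_{\delta,g}(rsr') = \delta(r)sr' + r\nabla_{\delta,g}(sr') = \delta(r)sr' + r\nabla_{\delta,g}(s)r' + rs\delta(r')$. Additivity of $\nabla_{\delta,g}$ is clear from the formula, so $\nabla_{\delta,g}$ is a covariant derivative along $\delta$ on the bimodule $S_g$.
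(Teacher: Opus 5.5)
Your proof is correct and follows the paper's argument essentially verbatim. The left rule is a direct computation, and the right rule comes from substituting \eqref{eq:deriv_cond_1} and killing the leftover term $u^t\alpha_g(r')\delta(\alpha_g(1))x_g$ via $x_g=\alpha_g(1)x_g$ and Lemma~\ref{lem:liftder1}. Your explicit remarks on well-definedness and on inserting $\alpha_g(r')=\alpha_g(r')\alpha_g(1)$ make precise two points the paper leaves implicit.
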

\begin{proof}
    Let $g \in G$, let $r,r'\in R$, and let $s=u^t x_g \in S_g$ with $u \in M_{n_g,1}(R)$.

    The left Leibniz rule is verified by a direct computation:
    \begin{align*} 
    \nabla_{\delta,g}(r(u^t x_g)) &= \nabla_{\delta,g}((ru)^t x_g) = \delta(ru)^t x_g + (ru)^t \eta(g) x_g 
    \\ 
    &= \delta(r)u^t x_g + r\delta(u)^t x_g + r u^t \eta(g) x_g = \delta(r)\cdot (u^t x_g) + r\cdot \nabla_{\delta,g}(u^t x_g). 
    \end{align*}

    For the right Leibniz rule,
    \begin{align*}
        \nabla_{\delta,g}((u^t x_g) r')
        &=
        \nabla_{\delta,g}(u^t \alpha_g(r') x_g)
        =
        \delta(u^t \alpha_g(r')) x_g + u^t \alpha_g(r') \eta(g) x_g
        \\
        &=
        \delta(u)^t \alpha_g(r') x_g + u^t \delta(\alpha_g(r')) x_g
        + u^t \alpha_g(r') \eta(g) x_g.
    \end{align*}
    Applying~\eqref{eq:deriv_cond_1} to the second term and using $x_g=\alpha_g(1)x_g$ together with Lemma~\ref{lem:liftder1}, one obtains
    \begin{align*}
        u^t \delta(\alpha_g(r')) x_g
        &=
        u^t \eta(g)\alpha_g(r') x_g + u^t \alpha_g(\delta(r')) x_g + u^t \alpha_g(r') \delta(\alpha_g(1)) x_g
        - u^t \alpha_g(r') \eta(g) x_g
        \\
        &=
        u^t \eta(g)\alpha_g(r') x_g + u^t \alpha_g(\delta(r')) x_g
        - u^t \alpha_g(r') \eta(g) x_g.
    \end{align*}
    It follows that
    \begin{align*}
        \nabla_{\delta,g}((u^t x_g) r')
        &=
        \delta(u)^t \alpha_g(r') x_g + u^t \eta(g) \alpha_g(r') x_g + u^t \alpha_g(\delta(r')) x_g
        \\
        &=
        (\delta(u)^t x_g + u^t \eta(g) x_g) r' + (u^t x_g) \delta(r')
        =
        \nabla_{\delta,g}(u^t x_g) r' + (u^t x_g) \delta(r').
        \qedhere
    \end{align*}
\end{proof}

\begin{remark}\label{rem:grassmann}%[The Grassmann connection]
For $g\in G$, set $\eta(g):=\delta(\alpha_g(1))\in M_{n_g}(R)$.
With this choice, a direct computation using Lemma~\ref{lem:liftder1} shows that~\eqref{eq:deriv_cond_1} holds for all $r\in R$ if and only if $y_g^t[\delta,\alpha_g](r)x_g = 0$ for all $r \in R$.

Notably, in this situation, the corresponding covariant derivative 
\begin{equation*}
    \nabla_{\delta,g}: S_g\to S_g,
    \quad
    \nabla_{\delta,g}(u^t x_g)
    = \delta(u)^t x_g + u^t \delta(\alpha_g(1)) x_g,
    \quad u\in M_{n_g,1}(R),
\end{equation*}
coincides with the covariant derivative of the Grassmann connection on the finitely generated projective left $R$-module $S_g$ along the derivation $\delta$.
\end{remark}

We now define the defect of the family $(\nabla_{\delta,g})_{g \in G}$ with respect to the multiplication in $S$.
For $g,h\in G$, let
\begin{equation}\label{eq:defect_map}
    \Delta_\delta(g,h): S_{gh} \to S_{gh}
\end{equation}
be the additive map determined by
\begin{equation}\label{eq:defect_def}
    \Delta_\delta(g,h)(s_g s_h)
    :=
    \nabla_{\delta,g}(s_g) s_h + s_g \nabla_{\delta,h}(s_h) - \nabla_{\delta,gh}(s_g s_h),
\end{equation}
for homogeneous elements $s_g\in S_g$ and $s_h\in S_h$, and extended additively to $S_{gh}$.

\begin{lemma}
    For all $g,h \in G$, the map $\Delta_\delta(g,h): S_{gh} \to S_{gh}$ is well defined.
\end{lemma}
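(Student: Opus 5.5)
To show that $\Delta_\delta(g,h)$ is well defined, I would first recast it as a map on $S_g \otimes_R S_h$ and then use that multiplication $S_g\otimes_R S_h\to S_{gh}$ is an isomorphism. Concretely, define
\begin{equation*}
    D(s_g,s_h) := \nabla_{\delta,g}(s_g) s_h + s_g \nabla_{\delta,h}(s_h) - \nabla_{\delta,gh}(s_g s_h),
    \quad s_g\in S_g,\ s_h\in S_h.
\end{equation*}
This expression is biadditive. I would then check that it is $R$-balanced, \ie\ $D(s_g r, s_h) = D(s_g, r s_h)$. By the two-sided Leibniz rule of Lemma~\ref{lem:connection_bimodule}, $\nabla_{\delta,g}(s_g r) s_h = \nabla_{\delta,g}(s_g) r s_h + s_g \delta(r) s_h$ and $s_g\nabla_{\delta,h}(r s_h) = s_g\delta(r)s_h + s_g r\nabla_{\delta,h}(s_h)$. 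The $s_g\delta(r)s_h$ terms appear once on each side, and the third term depends only on the product $s_g r s_h$. Hence $D$ factors through an additive map $\tilde D: S_g\otimes_R S_h\to S_{gh}$.

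Next I would show that multiplication $\mu: S_g\otimes_R S_h\to S_{gh}$ is bijective; this is a standard property of strongly graded rings. I would still give a direct argument with the module frame system. Surjectivity is immediate from $S_gS_h=S_{gh}$. For injectivity, construct an explicit inverse $s\mapsto \sum_i s\,(y_h)_i\otimes (x_h)_i$, using $y_h^t x_h=1$ and $s\,(y_h)_i\in S_gS_{h^{-1}}$... but here note $s\in S_{gh}$ and $(y_h)_i\in S_{h^{-1}}$, so $s(y_h)_i\in S_g$. Then:
\begin{itemize}
    \item $\mu\circ$(inverse)$=\id$, because $\sum_i s (y_h)_i(x_h)_i = s$.
    \item (inverse)$\circ\mu=\id$ on simple tensors: $\sum_i s_g s_h (y_h)_i\otimes (x_h)_i = \sum_i s_g\otimes s_h(y_h)_i(x_h)_i = s_g\otimes s_h$. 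This moves $s_h(y_h)_i\in S_e=R$ across the tensor and again uses $y_h^t x_h = 1$.
\end{itemize}
This balancing step is the only subtle point: one must be careful which product lands in $R$, and the tensor must be over $R$.

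Finally, $\Delta_\delta(g,h) := \tilde D\circ\mu^{-1}$ is a well-defined additive map $S_{gh}\to S_{gh}$ satisfying \eqref{eq:defect_def}, since it is determined by its values on the elements $s_gs_h$, which generate $S_{gh}$. Equivalently, any relation $\sum_k s_g^{(k)}s_h^{(k)}=0$ lifts to $0$ in $S_g\otimes_R S_h$, and therefore $\sum_k D(s_g^{(k)},s_h^{(k)})=0$. The main obstacle is thus the injectivity of $\mu$; the balancing of $D$ is a routine consequence of Lemma~\ref{lem:connection_bimodule}.
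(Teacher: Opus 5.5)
Your proposal is correct and follows the same route as the paper: you show that $D$ is $R$-balanced via the two-sided Leibniz rule of Lemma~\ref{lem:connection_bimodule}, factor it through $S_g\otimes_R S_h$, and compose with the inverse of the multiplication isomorphism $S_g\otimes_R S_h\to S_{gh}$. The only difference is that the paper cites this isomorphism as a standard fact about strongly graded rings, whereas you verify it directly with the explicit inverse $s\mapsto\sum_i s\,(y_h)_i\otimes(x_h)_i$; this is a correct, self-contained elaboration.
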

\begin{proof}
Let $g,h \in G$.
Since $S$ is strongly $G$-graded, multiplication induces an $R$-bimodule isomorphism
\begin{equation*}
    m_{g,h}: S_g \otimes_R S_h \to S_{gh},
    \quad
    m_{g,h}(s_g \otimes s_h) := s_g s_h.
\end{equation*}
Consider the $R$-bilinear map
\begin{equation*}
    D_\delta(g,h): S_g \times S_h \to S_{gh},
    \quad
    D_\delta(g,h)(s_g,s_h)
    :=
    \nabla_{\delta,g}(s_g) s_h + s_g\nabla_{\delta,h}(s_h) - \nabla_{\delta,gh}(s_g s_h).
\end{equation*}
Using the two-sided Leibniz rule~\eqref{eq:two_sided_leibniz} for $\nabla_{\delta,g}$, $\nabla_{\delta,h}$, and $\nabla_{\delta,gh}$, one verifies that $D_\delta(g,h)$ is~balanced, \ie,
\begin{equation*}
    D_\delta(g,h)(s_g r,s_h) = D_\delta(g,h)(s_g,r s_h)
\end{equation*}
for all $r \in R$, $s_g \in S_g$, and $s_h \in S_h$.
Consequently, $D_\delta(g,h)$ factors through a unique $R$-bimodule homomorphism
\begin{equation*}
    D_\delta(g,h): S_g \otimes_R S_h \to S_{gh},
\end{equation*}
which, by abuse of notation, is denoted by the same symbol.
The composition
\begin{equation*}
    D_\delta(g,h) \circ m_{g,h}^{-1}: S_{gh} \to S_{gh}
\end{equation*}
is therefore well defined and, by construction, coincides with $\Delta_\delta(g,h)$ as defined in~\eqref{eq:defect_def}.
\end{proof}

\begin{lemma}\label{lem:defect_def_dc}
    For all $g,h\in G$, $u\in M_{n_g,1}(R)$, and $v\in M_{n_h,1}(R)$,
    \begin{equation}\label{eq:defect_def_dc}
        \Delta_\delta(g,h)((u^t x_g)(v^t x_h))
        =
        u^t\alpha_g(v)^t \Theta_\delta(g,h) x_{gh},
    \end{equation}
    where
    \begin{equation*}
        \Theta_\delta(g,h):=
        (\eta(g)\kronr 1_{n_h})\omega(g,h)
        +
        \alpha_g(\eta(h))\omega(g,h)
        +
        \omega(g,h)(\delta(\alpha_{gh}(1))-\eta(gh))
        -
        \delta(\omega(g,h)).
    \end{equation*}
\end{lemma}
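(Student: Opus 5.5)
The plan is to compute the three terms of the defining formula~\eqref{eq:defect_def} with $s_g = u^t x_g$ and $s_h = v^t x_h$. Each term will be rewritten in the normal form $(\cdots) x_{gh}$ by means of the multiplication rule~\eqref{eq:multi}. Then the coefficients get compared. Essentially this repeats the Leibniz computation in the proof of Theorem~\ref{thm:liftder}(2), keeping track of the terms that there cancelled only because of~\eqref{eq:deriv_cond_2}.

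\emph{First the product term.} By~\eqref{eq:multi}, $s_g s_h = u^t\alpha_g(v)^t\omega(g,h)x_{gh}$. By~\eqref{eq:connection_ng},
\begin{equation*}
\nabla_{\delta,gh}(s_g s_h) = \delta\bigl(u^t\alpha_g(v)^t\omega(g,h)\bigr)x_{gh} + u^t\alpha_g(v)^t\omega(g,h)\eta(gh)x_{gh}.
\end{equation*}
Expanding with the Leibniz rule gives three terms: $\delta(u)^t\alpha_g(v)^t\omega(g,h)$, $u^t\delta(\alpha_g(v))^t\omega(g,h)$ and $u^t\alpha_g(v)^t\delta(\omega(g,h))$, each multiplied on the right by $x_{gh}$. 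For the middle term I will apply~\eqref{eq:deriv_cond_1} entrywise, which is legitimate since $\eta$ is assumed to satisfy it. This produces
\begin{equation*}
\eta(g)\alpha_g(v)^t + \alpha_g(\delta(v))^t - \alpha_g(v)^t(\eta(g)\kronr 1_{n_h}) + \alpha_g(v)^t(\delta(\alpha_g(1))\kronr 1_{n_h}).
\end{equation*}
As in the proof of Theorem~\ref{thm:liftder}, I need to handle the block/Kronecker bookkeeping carefully, so that $\alpha_g(v)^t\eta(g)$ becomes $\alpha_g(v)^t(\eta(g)\kronr 1_{n_h})$. The term involving $\delta(\alpha_g(1))$ is killed after multiplication by $\omega(g,h)$, using $\alpha_g(v)^t = \alpha_g(v)^t(\alpha_g(1)\kronr 1_{n_h})$ together with Lemma~\ref{lem:liftder2}(1). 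In the same way, inserting $\omega(g,h)\delta(\alpha_{gh}(1))x_{gh}$ costs nothing, by Lemma~\ref{lem:liftder2}(2) and $x_{gh} = \alpha_{gh}(1)x_{gh}$.

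\emph{Next the two factor terms.} By~\eqref{eq:connection_ng} and~\eqref{eq:multi},
\begin{equation*}
\nabla_{\delta,g}(s_g)s_h = \bigl(\delta(u)^t + u^t\eta(g)\bigr)\alpha_g(v)^t\omega(g,h)x_{gh}
\end{equation*}
and
\begin{equation*}
s_g\nabla_{\delta,h}(s_h) = u^t\alpha_g\bigl(\delta(v)^t + v^t\eta(h)\bigr)\omega(g,h)x_{gh}.
\end{equation*}
When I subtract, the terms $\delta(u)^t\cdots$, $u^t\eta(g)\alpha_g(v)^t\cdots$ and $u^t\alpha_g(\delta(v))^t\cdots$ cancel. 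What remains is
\begin{equation*}
u^t\alpha_g(v)^t\bigl[\alpha_g(\eta(h))\omega(g,h) + (\eta(g)\kronr 1_{n_h})\omega(g,h) - \delta(\omega(g,h)) - \omega(g,h)\eta(gh) + \omega(g,h)\delta(\alpha_{gh}(1))\bigr]x_{gh},
\end{equation*}
and this is exactly $u^t\alpha_g(v)^t\Theta_\delta(g,h)x_{gh}$.

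The main obstacle is the sign and Kronecker bookkeeping in the middle term. Specifically, I have to check that $\alpha_g(v)^t\eta(g)$, read blockwise, equals $\alpha_g(v)^t(\eta(g)\kronr 1_{n_h})$. This should follow from the definition of $\kronr$ and the entrywise extension of $\alpha_g$, exactly as it is used in the proof of Theorem~\ref{thm:liftder}. The remaining steps are formal.
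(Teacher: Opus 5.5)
Your proposal is correct and follows the paper's proof essentially step for step. You expand the three terms of \eqref{eq:defect_def} via \eqref{eq:multi} and \eqref{eq:connection_ng}, and rewrite $\delta(\alpha_g(v))^t$ blockwise with \eqref{eq:deriv_cond_1}, so that the right-hand $\eta(g)$ becomes $\eta(g)\kronr 1_{n_h}$. You then discard the $\delta(\alpha_g(1))$ piece by Lemma~\ref{lem:liftder2}(1), and add $\omega(g,h)\delta(\alpha_{gh}(1))x_{gh}=0$ using Lemma~\ref{lem:liftder2}(2) together with $x_{gh}=\alpha_{gh}(1)x_{gh}$.

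The only difference is in the order of steps: you substitute \eqref{eq:deriv_cond_1} before subtracting, whereas the paper subtracts first. Your direct use of $x_{gh}=\alpha_{gh}(1)x_{gh}$ is also a slightly cleaner version of the paper's ``insert the projection $\alpha_{gh}(1)$'' step.
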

\begin{proof}
Let $g,h\in G$, let $u\in M_{n_g,1}(R)$, let $v\in M_{n_h,1}(R)$, and set $s_g:=u^t x_g\in S_g$ and $s_h:=v^t x_h\in S_h$, so that $s_gs_h=u^t\alpha_g(v)^t\omega(g,h)x_{gh}$.

The three terms in~\eqref{eq:defect_def} are computed as follows:
\begin{align*}
    \nabla_{\delta,g}(s_g) s_h
    &=
    (\delta(u)^t x_g + u^t \eta(g) x_g) (v^t x_h)
    =
    \delta(u)^t \alpha_g(v)^t \omega(g,h) x_{gh}
    +
    u^t\eta(g) \alpha_g(v)^t \omega(g,h) x_{gh},
    \\
    s_g \nabla_{\delta,h}(s_h)
    &=
    (u^t x_g)(\delta(v)^t x_h + v^t \eta(h) x_h)
    =
    u^t \alpha_g(\delta(v))^t \omega(g,h) x_{gh}
    +
    u^t \alpha_g(v^t\eta(h)) \omega(g,h) x_{gh},
    \\
    \shortintertext{and}
    \nabla_{\delta,gh}(s_gs_h)
    &=
    \delta(u^t \alpha_g(v)^t \omega(g,h)) x_{gh}
    +
    u^t \alpha_g(v)^t \omega(g,h) \eta(gh) x_{gh}
    \\  
    &=
    \delta(u)^t \alpha_g(v)^t \omega(g,h) x_{gh}
    +
    u^t \delta(\alpha_g(v))^t \omega(g,h) x_{gh}
    +
    u^t \alpha_g(v)^t \delta(\omega(g,h)) x_{gh}
    \\
    &\quad
    +
    u^t \alpha_g(v)^t \omega(g,h) \eta(gh) x_{gh}.
\end{align*}

Putting these pieces together, one obtains
\begin{align*}
    \Delta_\delta(g,h)(s_gs_h)
    &=
    u^t \eta(g) \alpha_g(v)^t \omega(g,h) x_{gh}
    +
    u^t \alpha_g (\delta(v))^t \omega(g,h) x_{gh}
    +
    u^t \alpha_g (v^t\eta(h)) \omega(g,h) x_{gh}
    \\
    &\quad
    -
    u^t \delta(\alpha_g(v))^t \omega(g,h) x_{gh}
    -
    u^t \alpha_g(v)^t \delta(\omega(g,h)) x_{gh}
    -
    u^t \alpha_g(v)^t \omega(g,h) \eta(gh) x_{gh}.
\end{align*}
Substituting~\eqref{eq:deriv_cond_1} into the previous formula cancels the first two summands, leaving
\begin{align*}
    \Delta_\delta(g,h)(s_gs_h)
    &=
    u^t \alpha_g(v^t \eta(h)) \omega(g,h) x_{gh}
    -
    u^t \alpha_g(v)^t ((\delta(\alpha_g(1))-\eta(g))\kronr 1_{n_h}) \omega(g,h) x_{gh}
    +
    \\
    &\quad
    -
    u^t \alpha_g(v)^t \delta(\omega(g,h)) x_{gh}
    -
    u^t \alpha_g(v)^t \omega(g,h) \eta(gh) x_{gh}.
\end{align*}
The piece with $\delta(\alpha_g(1))$ vanishes after multiplying by $\omega(g,h)$
by Lemma~\ref{lem:liftder2}\emph{(1)}, so
\begin{equation*}
    -\alpha_g(v)^t ((\delta(\alpha_g(1))-\eta(g))\kronr 1_{n_h}) \omega(g,h) x_{gh}
    =
    \alpha_g(v)^t (\eta(g)\kronr 1_{n_h}) \omega(g,h) x_{gh}.
\end{equation*}
Moreover, by Lemma~\ref{lem:factorsystem}~\emph{(2)}-\emph{(3)} one has $\alpha_g(\eta(h))b\omega(g,h)b=b\alpha_g(\eta(h))b\omega(g,h)b\alpha_{gh}(1)$,
so the projection $\alpha_{gh}(1)$ may be inserted freely; then adding and subtracting
$\omega(g,h)\delta(\alpha_{gh}(1))$, which vanishes on $S_{gh}$ by
Lemma~\ref{lem:liftder2}~\emph{(2)}, gives~\eqref{eq:defect_def_dc}.
\end{proof}

\begin{corollary}\label{cor:delta_0}
    Under the above assumptions, one has $\Delta_\delta(g,h)=0$ for all $g,h\in G$
    if and only if~\eqref{eq:deriv_cond_2} holds for all $g,h\in G$.
\end{corollary}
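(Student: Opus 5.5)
By Lemma~\ref{lem:defect_def_dc}, $\Delta_\delta(g,h)$ is described entirely by the matrix $\Theta_\delta(g,h)\in M_{n_gn_h,n_{gh}}(R)$, and \eqref{eq:deriv_cond_2} is exactly the statement $\Theta_\delta(g,h)=0$. The plan is therefore to show, for fixed $g,h\in G$, that
\begin{equation*}
    \Delta_\delta(g,h)=0
    \quad\Longleftrightarrow\quad
    \Theta_\delta(g,h)=0 .
\end{equation*}

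The implication from $\Theta_\delta(g,h)=0$ to $\Delta_\delta(g,h)=0$ is immediate. By Lemma~\ref{lem:multi} and its counterpart \eqref{eq:multi} in $S$, the products $(u^tx_g)(v^tx_h)$ additively span $S_{gh}$. Formula~\eqref{eq:defect_def_dc} then shows that $\Delta_\delta(g,h)$ vanishes on every such product, and $\Delta_\delta(g,h)$ is additive.

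For the converse, assume $\Delta_\delta(g,h)=0$. By \eqref{eq:defect_def_dc}, $u^t\alpha_g(v)^t\Theta_\delta(g,h)x_{gh}=0$ for all $u,v$. First, multiply on the right by $y_{gh}^t$; since $x_{gh}y_{gh}^t=\alpha_{gh}(1)$, this gives $u^t\alpha_g(v)^t\Theta_\delta(g,h)\alpha_{gh}(1)=0$. Second, choose $u,v$ as standard basis vectors, as in the proof of Lemma~\ref{lem:multi_1}. Then the rows $u^t\alpha_g(v)^t$ are exactly the rows $e_{(i,j)}\alpha_g(\alpha_h(1))$, and these generate $M_{1,n_gn_h}(R)\alpha_g(\alpha_h(1))$. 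It follows that
\begin{equation*}
    \alpha_g(\alpha_h(1))\,\Theta_\delta(g,h)\,\alpha_{gh}(1)=0 .
\end{equation*}

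The main obstacle is removing these projections. For this I would check term by term that $\Theta_\delta(g,h)=\alpha_g(\alpha_h(1))\Theta_\delta(g,h)\alpha_{gh}(1)$ holds already, after the harmless normalizations $\eta(g)\in M_{n_g}(R)\alpha_g(1)$ and $\eta(g)=\alpha_g(1)\eta(g)$. In the paper these normalizations are implicit in the phrase ``may be inserted freely'' at the end of the proof of Lemma~\ref{lem:defect_def_dc}. The tools are the identities $\omega(g,h)=\alpha_g(\alpha_h(1))\omega(g,h)=\omega(g,h)\alpha_{gh}(1)$, which follow from \eqref{eq:paruni} and \eqref{eq:coaction_condition_new}, together with Lemma~\ref{lem:liftder2}.

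The delicate term is $\delta(\omega(g,h))$. Differentiating $\omega=p\,\omega\,q$ with $p=\alpha_g(\alpha_h(1))$ and $q=\alpha_{gh}(1)$ gives
\begin{equation*}
    \delta(\omega)=\delta(p)\,\omega+p\,\delta(\omega)\,q+\omega\,\delta(q).
\end{equation*}
The extra pieces $\delta(p)\omega$ and $\omega\delta(q)$ should then be matched against the compensating term $\omega(g,h)\delta(\alpha_{gh}(1))$ already contained in $\Theta_\delta(g,h)$, and against the $\eta$-terms, using \eqref{eq:deriv_cond_1} with $r=1$.

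If this exact bookkeeping fails, I would instead read the corollary as an equality modulo these projections, namely after multiplying by $x_{gh}$ and by rows in the image of $\mu_{g,h}$. That is precisely the form in which \eqref{eq:deriv_cond_2} enters the proof of Theorem~\ref{thm:liftder}(2), so the corollary would still yield the intended lifting criterion.
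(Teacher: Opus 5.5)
Your two reductions are sound. The implication from \eqref{eq:deriv_cond_2} to $\Delta_\delta(g,h)=0$ is immediate from Lemma~\ref{lem:defect_def_dc}. From $\Delta_\delta(g,h)=0$ you correctly get $p\,\Theta_\delta(g,h)\,q=0$, with $p:=\alpha_g(\alpha_h(1))$ and $q:=\alpha_{gh}(1)$. Removing these projections is exactly what the paper leaves implicit.

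The gap is the mechanism you propose for that step. The normalization $\eta(g)=\alpha_g(1)\eta(g)$ is not harmless. Evaluating \eqref{eq:deriv_cond_1} at $r=1$ gives, with $p_k:=\alpha_k(1)$,
\[
(1-p_k)\delta(p_k)=\eta(k)p_k-p_k\eta(k),
\quad\text{hence}\quad
p_k\eta(k)(1-p_k)=0
\quad\text{and}\quad
(1-p_k)\eta(k)p_k=(1-p_k)\delta(p_k)p_k .
\]
So $\eta(k)=p_k\eta(k)$ is consistent with the hypothesis only when $\delta(p_k)p_k=0$. This already fails for $L_{\mathbb C}(1,2)$ with $x_1=e_1$, $y_1=e_1^\ast$. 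For the lift $\delta_A$ of the appendix, one has $\eta(1)=\delta_A(e_1)e_1^\ast$ and $(1-p_1)\eta(1)=e_2ce_1^\ast$, which is nonzero whenever $c\neq0$.

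Moreover, the block $(1-p_g)\eta(g)p_g$ contributes to $(\eta(g)\kronr 1_{n_h})\omega(g,h)$. So changing $\eta$ changes $\Theta_\delta$, and \eqref{eq:deriv_cond_2} for a normalized $\eta$ is a different statement. Only $\eta(g)=\eta(g)p_g$ is a harmless normalization. Your fallback reading "modulo projections" would prove something strictly weaker than the corollary.

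No normalization is needed: the block relations above already force $\Theta_\delta(g,h)=p\,\Theta_\delta(g,h)\,q$. Write $\omega:=\omega(g,h)$, so that $\omega=p\omega=\omega q$.

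\emph{Right side.} Differentiating $\omega=\omega q$ gives $\delta(\omega)(1-q)=\omega\delta(q)(1-q)$. Also $\omega(1-q)=0$ and $\omega\eta(gh)(1-q)=\omega q\eta(gh)(1-q)=0$. Hence $\Theta_\delta(g,h)(1-q)=0$; the term $\omega\delta(\alpha_{gh}(1))$ in $\Theta_\delta$ is exactly what makes this work.

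\emph{Left side.} Differentiating $\omega=p\omega$ gives $(1-p)\delta(\omega)=(1-p)\delta(p)\omega$. Apply \eqref{eq:deriv_cond_1} for $g$ entrywise to $p=\alpha_g(p_h)$; the terms beginning with $p$ are killed by $1-p$, and this yields
\[
(1-p)\delta(p)\omega=(1-p)(\eta(g)\kronr 1_{n_h})\omega+(1-p)\alpha_g(\delta(p_h))\omega .
\]
So the $\eta(g)$-terms cancel. Using $(1-p)\alpha_g(p_hY)=0$ and $(1-p)\alpha_g\bigl((1-p_h)Y\bigr)=\alpha_g\bigl((1-p_h)Y\bigr)$, one gets
\[
(1-p)\Theta_\delta(g,h)
=(1-p)\,\alpha_g\bigl((\eta(h)-\delta(p_h))p_h\bigr)\omega
=\alpha_g\bigl((1-p_h)(\eta(h)-\delta(p_h))p_h\bigr)\omega=0 ,
\]
by the block relation for $k=h$.

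Together with $p\,\Theta_\delta(g,h)\,q=0$, this gives $\Theta_\delta(g,h)=0$, i.e.\ \eqref{eq:deriv_cond_2}. Note that this uses \eqref{eq:deriv_cond_1} for $g$ at the entries of $\alpha_h(1)$, not only at $r=1$.
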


Beyond their explicit coordinate description, the defect maps enjoy an important structural property:
%A key structural feature of the defect maps is the following.

\begin{lemma}\label{lem:defect_bimodule}
    For all $g,h\in G$, the defect map $\Delta_\delta(g,h)$ is an $R$-bimodule homomorphism.
\end{lemma}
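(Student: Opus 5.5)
The plan is to verify left and right $R$-linearity of $\Delta_\delta(g,h)$ directly from its definition~\eqref{eq:defect_def}. The key inputs are the two-sided Leibniz rule~\eqref{eq:two_sided_leibniz} of Lemma~\ref{lem:connection_bimodule} and the fact that every element of $S_{gh}$ is a finite sum of products $s_g s_h$. The latter holds because $S_gS_h=S_{gh}$, or equivalently because $m_{g,h}$ is surjective. Since $\Delta_\delta(g,h)$ is already known to be well defined and additive, it suffices to check $\Delta_\delta(g,h)(r\,s_gs_h\,r')=r\,\Delta_\delta(g,h)(s_gs_h)\,r'$ for $r,r'\in R$ and homogeneous $s_g\in S_g$, $s_h\in S_h$.

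For the left action, I would write $r s_g s_h = (rs_g)s_h$ with $rs_g\in S_g$ and apply the definition:
\begin{equation*}
\Delta_\delta(g,h)((rs_g)s_h)=\nabla_{\delta,g}(rs_g)s_h+rs_g\nabla_{\delta,h}(s_h)-\nabla_{\delta,gh}(rs_gs_h).
\end{equation*}
Expanding $\nabla_{\delta,g}(rs_g)=\delta(r)s_g+r\nabla_{\delta,g}(s_g)$ and $\nabla_{\delta,gh}(r s_gs_h)=\delta(r)s_gs_h+r\nabla_{\delta,gh}(s_gs_h)$ by~\eqref{eq:two_sided_leibniz}, the two $\delta(r)s_gs_h$ terms cancel, leaving $r\,\Delta_\delta(g,h)(s_gs_h)$. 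The right action is symmetric. I would write $s_gs_hr'=s_g(s_hr')$, use the right Leibniz rule for $\nabla_{\delta,h}$ and for $\nabla_{\delta,gh}$, and cancel the $s_gs_h\delta(r')$ terms.

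The only subtle point is that the definition of $\Delta_\delta(g,h)$ on a product depends on a chosen factorization. Here I rely on the preceding lemma, which shows that the value is independent of the factorization because $D_\delta(g,h)$ is balanced and factors through $S_g\otimes_R S_h\cong S_{gh}$. Hence evaluating on the factorization $(rs_g)\cdot s_h$ is legitimate. Finally, I would extend from products to all of $S_{gh}$ by additivity. Alternatively, one could read the result off directly: $D_\delta(g,h)$ is an $R$-bimodule map on $S_g\otimes_RS_h$ by the computation above, and $m_{g,h}^{-1}$ is a bimodule isomorphism, so their composition is a bimodule map. I expect no real obstacle beyond this bookkeeping.
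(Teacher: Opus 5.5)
Your proof is correct and follows essentially the same route as the paper: factor $r s_g s_h = (rs_g)s_h$, apply the two-sided Leibniz rule to $\nabla_{\delta,g}$ and $\nabla_{\delta,gh}$, and cancel the two $\delta(r)s_gs_h$ terms. The right action is handled symmetrically, and the result extends additively. Your remark that well-definedness on the chosen factorization comes from the preceding balanced-map lemma is accurate, and it is the same justification the paper relies on implicitly.
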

\begin{proof}
    Let $g,h\in G$. 
    By Lemma~\ref{lem:connection_bimodule}, for any $r\in R$ one has
    \begin{align*}
        \Delta_\delta(g,h)(r(s_g s_h))
        &=
        \Delta_\delta(g,h)((r s_g)s_h)
        =
        \nabla_{\delta,g}(r s_g)s_h + (r s_g)\nabla_{\delta,h}(s_h) - \nabla_{\delta,gh}((r s_g)s_h)
        \\
        &=
        (\delta(r)s_g + r\nabla_{\delta,g}(s_g))s_h + r s_g\nabla_{\delta,h}(s_h)
        - (\delta(r)s_g s_h + r\nabla_{\delta,gh}(s_g s_h))
        \\
        &=
        r(\nabla_{\delta,g}(s_g)s_h + s_g\nabla_{\delta,h}(s_h) - \nabla_{\delta,gh}(s_g s_h))
        =
        r \Delta_\delta(g,h)(s_g s_h).
    \end{align*}
    Right $R$-linearity is verified analogously.
\end{proof}

By~\cite[Lem.~I.3.11.2]{NaOy82}, for all $g,h\in G$ there exists a unique element
$\Delta_\delta(g,h)\in Z(R)$ implementing the map $\Delta_\delta(g,h)$
by left multiplication; we use the same notation for both.
%Thus the family $(\Delta_\delta(g,h))_{g,h\in G}$ may be viewed as a $2$-cochain
%\begin{equation*}
%    \Delta_\delta \in C^2(G,Z(R)).
%\end{equation*}

In~fact, more structure is present.
The ring $S$ induces a $G$-module structure on $Z(R)$ via a homomorphism
$\beta: G \to \aut(Z(R))$, characterized by
\begin{equation}\label{eq:mod_str}
    s z = \beta_g(z) s,
\end{equation}
for all $g\in G$, $s\in S_g$, and $z\in Z(R)$ (see, e.g.,~\cite[Lem.~I.3.12]{NaOy82}).
Notably, with respect to the module frame system $(x_g,y_g)_{g\in G}$, this action is explicitly given by $\beta^{-1}_g(z) = y_g^t z x_g$.
In the special case of a crossed product $R \rtimes_{(\alpha,\omega)} G$, the induced action coincides with the restriction of $\alpha$ to $Z(R)$.

This leads naturally to the group cohomology of $G$ with coefficients in the
$G$-module $Z(R)$. 
For convenience we briefly recall the relevant definitions:

For $p\ge 0$, let
\begin{equation*}
    C^p(G,Z(R)) := \{f: G^p \to Z(R)\}
\end{equation*}
denote the group of $p$-cochains.
The group differential
\begin{equation*}
    d_\beta : C^p(G,Z(R)) \to C^{p+1}(G,Z(R))
\end{equation*}
is given by
\begin{align*}
    (d_\beta f)(g_0,\ldots,g_p)
    &=
    \beta_{g_0}(f(g_1,\ldots,g_p))
    +
    \sum_{j=1}^{p}(-1)^j
    f(g_1,\ldots,g_j g_{j+1},\ldots,g_{p+1})
    \\
    &\quad
    + (-1)^{p+1} f(g_0,\ldots,g_{p-1}).
\end{align*}
The groups of $p$-cocycles and $p$-coboundaries are defined by
\begin{equation*}
    Z^p_\beta(G,Z(R)) := \ker(d_\beta),
    \quad \text{and} \quad
    B^p_\beta(G,Z(R)) := \operatorname{im}(d_\beta),
\end{equation*}
and the $p$-th cohomology group is
\begin{equation*}
    H^p_\beta(G,Z(R)) := Z^p_\beta(G,Z(R)) / B^p_\beta(G,Z(R)).
\end{equation*}
For further details on group cohomology we refer to~\cite[Chap.~IV]{MacLane95}.

%This leads naturally to the group cohomology of $G$ with coefficients in the $G$-module $Z(R)$. 
%In particular, we consider the goups
%\begin{equation*}
%    Z^p_\beta(G,Z(R))
%    \quad \text{and} \quad
%    H^p_\beta(G,Z(R)),
 %   \quad p \ge 1,
%\end{equation*}
%(see, \eg,~\cite[Chap.~IV]{MacLane95}). 
The following discussion shows how the elements $\Delta_\delta(g,h)$ can be interpreted in terms of group cohomology.
In particular, the family $(\Delta_\delta(g,h))_{g,h\in G}$ defines a $2$-cochain
\begin{equation*}
    \Delta_\delta \in C^2(G,Z(R)).
\end{equation*}

\begin{lemma}
    $\Delta_\delta\in Z^2_\beta(G,Z(R))$, \ie, $\Delta_\delta$ is a
    $Z(R)$-valued $2$-cocycle on $G$ with respect to $\beta$.
\end{lemma}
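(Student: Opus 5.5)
We will verify the cocycle identity directly on triple products, working with the maps $\Delta_\delta(g,h)\colon S_{gh}\to S_{gh}$ from \eqref{eq:defect_def}. Recall that each of them is left multiplication by a central element of $R$. The identity to prove is
\begin{equation*}
    \beta_g(\Delta_\delta(h,k)) - \Delta_\delta(gh,k) + \Delta_\delta(g,hk) - \Delta_\delta(g,h) = 0
\end{equation*}
for all $g,h,k\in G$. By strong grading, $S_{ghk}$ is additively spanned by products $s_gs_hs_k$ with $s_g\in S_g$, $s_h\in S_h$, $s_k\in S_k$. It therefore suffices to evaluate the left side, acting by left multiplication, on such triple products and show that the result vanishes.

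Our key tool is an \enquote{associativity of the defect} argument. We expand the quantity
\begin{equation*}
    E := \nabla_{\delta,g}(s_g)s_hs_k + s_g\nabla_{\delta,h}(s_h)s_k + s_gs_h\nabla_{\delta,k}(s_k) - \nabla_{\delta,ghk}(s_gs_hs_k)
\end{equation*}
in two ways, according to the two bracketings. For the bracketing $(s_gs_h)s_k$, the definition \eqref{eq:defect_def} gives
\begin{equation*}
    \nabla_{\delta,ghk}((s_gs_h)s_k) = \nabla_{\delta,gh}(s_gs_h)s_k + s_gs_h\nabla_{\delta,k}(s_k) - \Delta_\delta(gh,k)(s_gs_hs_k).
\end{equation*}
Applying \eqref{eq:defect_def} once more to $\nabla_{\delta,gh}(s_gs_h)$ yields
\begin{equation*}
    E = \Delta_\delta(g,h)(s_gs_h)\,s_k + \Delta_\delta(gh,k)(s_gs_hs_k).
\end{equation*}
Since both defects are left multiplications by central elements, this equals $(\Delta_\delta(g,h)+\Delta_\delta(gh,k))\,s_gs_hs_k$.

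For the bracketing $s_g(s_hs_k)$, the same manipulation gives
\begin{equation*}
    E = s_g\,\Delta_\delta(h,k)(s_hs_k) + \Delta_\delta(g,hk)(s_gs_hs_k).
\end{equation*}
Now $s_g\,\Delta_\delta(h,k)\,s_hs_k = \beta_g(\Delta_\delta(h,k))\,s_gs_hs_k$ by \eqref{eq:mod_str}. Comparing the two expressions for $E$ gives exactly the cocycle identity, evaluated on $s_gs_hs_k$.

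The main technical point is justifying that one may apply \eqref{eq:defect_def} to the element $s_gs_h\in S_{gh}$ as a single homogeneous factor. This is legitimate: the preceding lemma shows that $\Delta_\delta(gh,k)$ is well defined on $S_{gh}\otimes_R S_k$, and the defining formula holds for arbitrary $s_{gh}\in S_{gh}$, $s_k\in S_k$, not only for generators. A second point is the identification of $\Delta_\delta(g,h)(s_gs_h)s_k$ with $\Delta_\delta(g,h)\,s_gs_hs_k$, which uses centrality together with the uniqueness statement from \cite[Lem.~I.3.11.2]{NaOy82}. Finally, we conclude that the central element on the left annihilates every element of $S_{ghk}$, hence in particular $y$-type elements; by the uniqueness of the implementing central element (faithfulness of $Z(R)$ acting on the invertible bimodule $S_{ghk}$), it must be zero. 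Normalization is automatic from $\eta(e)=0$, although it is not needed for the cocycle identity.
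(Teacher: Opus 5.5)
Your proposal is correct and follows essentially the same route as the paper: you expand the same quantity $E$ along both bracketings, pass to central elements using \eqref{eq:mod_str}, and conclude because triple products $s_gs_hs_k$ generate $S_{ghk}$. Your remark that a central element annihilating $S_{ghk}$ must vanish (since $1 \in S_{ghk}S_{(ghk)^{-1}}$) spells out the last step, which the paper states only briefly.
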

\begin{proof}
Let $g,h,k\in G$ and let $s_g\in S_g$, $s_h\in S_h$, and $s_k\in S_k$.
Consider
\begin{equation}\label{eq:triple_defect}
    e:=
    \nabla_{\delta,g}(s_g)s_hs_k + s_g \nabla_{\delta,h}(s_h) s_k + s_g s_h \nabla_k(s_k) - \nabla_{ghk}(s_g s_h s_k)
    \in S_{ghk}.
\end{equation}
This expression can be written in two ways, according to the associations
$(s_g s_h)s_k$ and $s_g(s_h s_k)$.
Using~\eqref{eq:defect_def}, one obtains
\begin{align*}
    e
    &=
    (\nabla_{\delta,gh}(s_g s_h) 
    + 
    \Delta_\delta(g,h)(s_g s_h))s_k
    + 
    (s_g s_h)\nabla_k(s_k) 
    - 
    \nabla_{ghk}(s_g s_h s_k)
    \\
    &=
    \Delta_\delta(gh,k) ((s_g s_h)s_k) + \Delta_\delta(g,h)(s_g s_h)s_k.
\end{align*}
Similarly,
\begin{align*}
    e
    &=
    \nabla_{\delta,g}(s_g)(s_h s_k) 
    + 
    s_g(\nabla_{hk}(s_h s_k) 
    + 
    \Delta_\delta(h,k)(s_h s_k))
    - 
    \nabla_{ghk}(s_gs_hs_k)
    \\
    &=
    \Delta_\delta(g,hk) (s_g(s_h s_k)) + s_g \Delta_\delta(h,k)(s_h s_k).
\end{align*}
Hence,
\begin{equation*}
    \Delta_\delta(gh,k)(s_g s_h s_k) 
    + 
    \Delta_\delta(g,h)(s_g s_h)s_k
    =
    \Delta_\delta(g,hk)(s_g s_h s_k) + s_g \Delta_\delta(h,k)(s_h s_k).
\end{equation*}
Passing to the central element picture, the previous identity becomes
\begin{equation*}
    (\Delta_\delta(gh,k) + \Delta_\delta(g,h) - \Delta_\delta(g,hk)-\beta_g(\Delta_\delta(h,k)))s_g s_h s_k = 0,
\end{equation*}
where the last term was rewritten using the relation
$s_g z=\beta_g(z)s_g$ for $z\in Z(R)$ (see~\eqref{eq:mod_str}.
Since products of the form $s_gs_hs_k$ generate $S_{ghk}$ as a left $R$-module,
it follows that the coefficient vanishes in $Z(R)$:
\begin{equation}\label{eq:deriv_cond_1_again}
    \beta_g(\Delta_\delta(h,k)) + \Delta_\delta(g,hk) - \Delta_\delta(g,h) - \Delta_\delta(gh,k) = 0.
\end{equation}
This is precisely the $2$-cocycle identity for $\Delta_\delta\in C^2(G,Z(R))$ with respect to $\beta$.
\end{proof}

\begin{definition}\label{def:mult_curv}
The cocycle
\begin{equation*}
    \Delta_\delta \in Z^2_\beta(G,Z(R))
\end{equation*}
is called the \emph{multiplicative curvature} associated with the
family $(\nabla_{\delta,g})_{g\in G}$.
Its cohomology class
\begin{equation*}
    [\Delta_\delta] \in H^2_\beta(G,Z(R))
\end{equation*}
is called the \emph{multiplicative curvature class}.
\end{definition}

\begin{remark}\label{rem:mult_curv_class}
The terminology reflects that $\Delta_\delta(g,h)$ measures the failure of the maps $\nabla_{\delta,g}$, $g \in G$, to be compatible with the multiplication in $S$.
Thus $\Delta_\delta$ plays the role of a curvature measuring the obstruction to assembling the $\nabla_g$'s into a graded derivation of $S$.
Equivalently, the class $[\Delta_\delta]$ represents the obstruction to lifting $\delta$ to a graded derivation of $S$, as will be shown in Theorem~\ref{thm:liftder_cohomo} below.
\end{remark}

\begin{lemma}\label{lem:defect_coboundary}
    Let $\xi\in C^1(G,Z(R))$.
    For each $g \in G$ define
    \begin{equation*}
        \eta'(g) := \eta(g) - \xi(g) \alpha_g(1),
        \quad g\in G.
    \end{equation*}
    Let $\nabla'_{\delta,g}$ and $\Delta'_\delta(g,h)$ denote the corresponding maps.
    Then for all $g,h\in G$ one has
    \begin{equation}\label{eq:def_cohomo}
        \Delta'_\delta(g,h) = \Delta_\delta(g,h) - (d\xi)(g,h),
    \end{equation}
    where $(d\xi)(g,h) := \beta_g(\xi(h)) + \xi(g) - \xi(gh)$.
\end{lemma}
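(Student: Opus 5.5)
Our plan is to work at the level of the covariant derivatives rather than with the explicit matrix formula of Lemma~\ref{lem:defect_def_dc}. First we check that the new data are admissible: $\eta'(e)=0$ because $\xi(e)$ is not assumed zero, so we either add the standing normalization $\xi(e)=0$ or simply note that $\eta'$ need not vanish at $e$ for the computation below. More importantly, $\eta'$ still satisfies~\eqref{eq:deriv_cond_1}. Indeed, $\xi(g)\in Z(R)$ gives $\xi(g)\alpha_g(1)=\alpha_g(\xi(g))$, and $\alpha_g(\xi(g))$ commutes with every $\alpha_g(r)$. Hence $[\xi(g)\alpha_g(1),\alpha_g(r)]=0$, and $\nabla'_{\delta,g}$, $\Delta'_\delta$ are defined.

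Next we identify $\nabla'_{\delta,g}$. For $s=u^t x_g$ we have $u^t\xi(g)\alpha_g(1)x_g=\xi(g)u^t x_g$, using centrality of $\xi(g)$ and $\alpha_g(1)x_g=x_g$. Therefore
\begin{equation*}
    \nabla'_{\delta,g}(s)=\nabla_{\delta,g}(s)-\xi(g)s,\qquad s\in S_g,
\end{equation*}
and the two families differ by left multiplication by central elements.

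We then substitute this into the defining formula~\eqref{eq:defect_def}. For $s_g\in S_g$ and $s_h\in S_h$,
\begin{equation*}
    \Delta'_\delta(g,h)(s_gs_h)=\Delta_\delta(g,h)(s_gs_h)-\xi(g)s_gs_h-s_g\xi(h)s_h+\xi(gh)s_gs_h.
\end{equation*}
By~\eqref{eq:mod_str}, $s_g\xi(h)=\beta_g(\xi(h))s_g$, so the correction term equals $-\bigl(\xi(g)+\beta_g(\xi(h))-\xi(gh)\bigr)s_gs_h=-(d\xi)(g,h)\,s_gs_h$.

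Finally, products $s_gs_h$ generate $S_{gh}$ as a left $R$-module (strong grading), and both sides are left multiplication by central elements. So the central element implementing $\Delta'_\delta(g,h)$ is $\Delta_\delta(g,h)-(d\xi)(g,h)$. The uniqueness statement from~\cite[Lem.~I.3.11.2]{NaOy82} then yields~\eqref{eq:def_cohomo}.

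The only delicate point is the identification of $u^t\xi(g)\alpha_g(1)x_g$ with $\xi(g)u^tx_g$. This uses that a central element commutes with all matrix entries of $u$ and $\alpha_g(1)$. Everything else is formal bookkeeping.
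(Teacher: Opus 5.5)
Your proof is correct and follows the paper's argument. As in the paper, you identify $\nabla'_{\delta,g}=\nabla_{\delta,g}-\xi(g)\id_{S_g}$, substitute into \eqref{eq:defect_def}, move $\xi(h)$ past $s_g$ via \eqref{eq:mod_str}, and conclude by uniqueness of the implementing central elements; the one slip is in your side check of \eqref{eq:deriv_cond_1} for $\eta'$, since by \eqref{eq:mod_str} one has $\alpha_g(z)=x_g z y_g^t=\beta_g(z)\alpha_g(1)$ for $z\in Z(R)$, so $\xi(g)\alpha_g(1)\neq\alpha_g(\xi(g))$ in general, although $[\xi(g)\alpha_g(1),\alpha_g(r)]=0$ still holds directly because $\xi(g)$ is central in $R$ and $\alpha_g(1)\alpha_g(r)=\alpha_g(r)=\alpha_g(r)\alpha_g(1)$.
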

\begin{proof}
Let $g,h\in G$, let $s_g\in S_g$, and let $s_h\in S_h$.
Writing $s_g = u^t x_g$ for some $u \in M_{n_g,1}(R)$, the definition of $\nabla'_{\delta,g}$ gives
\begin{equation*}
    \nabla'_{\delta,g}(s_g)
    =
    \delta(u)^t x_g + u^t \eta'(g) x_g
    =
    \nabla_{\delta,g}(s_g) - u^t \xi(g) \alpha_g(1) x_g
    =
    \nabla_{\delta,g}(s_g) - \xi(g) s_g.
\end{equation*}
Thus
\begin{equation*}
    \nabla'_{\delta,g} = \nabla_{\delta,g} - \xi(g) \id_{S_g}.
\end{equation*}
Substituting this into the definition of the defect yields
\begin{align*}
    \Delta'_\delta(g,h)(s_g s_h)
    &=
    \nabla'_{\delta,g}(s_g)s_h + s_g\nabla'_{\delta,h}(s_h) - \nabla'_{\delta,gh}(s_g s_h)
    \\
    &=
    (\nabla_{\delta,g}(s_g)-\xi(g)s_g)s_h
    + 
    s_g(\nabla_{\delta,h}(s_h)-\xi(h) s_h)
    -
    (\nabla_{\delta,gh}(s_g s_h)-\xi(gh)s_g s_h)
    \\
    &=
    \Delta_\delta(g,h)(s_g s_h)
    - 
    \xi(g)s_g s_h
    - 
    s_g \xi(h) s_h
    + 
    \xi(gh)s_g s_h
    \\
    &=
    \Delta_\delta(g,h)(s_g s_h)
    - 
    \xi(g)s_g s_h
    - 
    \beta_g(\xi(h))s_g s_h
    + 
    \xi(gh)s_g s_h
    \\
    &=
    (\Delta_\delta(g,h)-(d\xi)(g,h))s_g s_h.
\end{align*}
Identifying defect maps with their central implementing elements completes the proof.
\end{proof}

\begin{theorem}\label{thm:liftder_cohomo}
    Let $S$ be a strongly $G$-graded ring with principal component $R$, let
    $(n,\alpha,\omega)$ be an associated factor system, and let $\delta: R \to R$ be a derivation.
    Assume that for each $g \in G$ there exists $\eta(g) \in M_{n_g}(R)$ with $\eta(e)=0$
    such that, for all $r \in R$,~\eqref{eq:deriv_cond_1} holds, \ie,
    \begin{equation*}
        [\delta,\alpha_g](r)
        =
        [\eta(g),\alpha_g(r)]
        +
        \alpha_g(r)\delta(\alpha_g(1)).
    \end{equation*}
Then $\delta$ admits a graded lift to $S$ if and only if $[\Delta_\delta] \in H^2_\beta(G,Z(R))$ vanishes.
%cohomology class $[\Delta_\delta] \in H^2_\beta(G,Z(R))$~vanishes.
\end{theorem}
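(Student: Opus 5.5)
The proof has two directions, both reducing to Corollary~\ref{cor:delta_0}, Lemma~\ref{lem:defect_coboundary} and Theorem~\ref{thm:liftder}.

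For the direction ``$[\Delta_\delta]=0$ implies a lift exists'', I pick $\xi\in C^1(G,Z(R))$ with $\Delta_\delta = d_\beta\xi$. I then set $\eta'(g):=\eta(g)-\xi(g)\alpha_g(1)$. Three properties of $\eta'$ need to be checked.
\begin{enumerate}
\item $\eta'$ still satisfies~\eqref{eq:deriv_cond_1}. Since $\xi(g)$ is central, $\xi(g)\alpha_g(1)$ commutes with $\alpha_g(r)=\alpha_g(1)\alpha_g(r)\alpha_g(1)$, so $[\eta'(g),\alpha_g(r)]=[\eta(g),\alpha_g(r)]$.
\item $\eta'(e)=0$. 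The identity $\Delta_\delta(e,e)=\beta_e(\xi(e))+\xi(e)-\xi(e)=\xi(e)$ holds. On the other hand, $\Delta_\delta(e,e)=0$ directly, because $\nabla_{\delta,e}=\delta$ when $\eta(e)=0$. Hence $\xi(e)=0$; alternatively one may simply replace $\xi$ by $\xi-\xi(e)$, which is harmless since $d_\beta$ of a constant is constant.
\item The defect of $\eta'$ vanishes. By Lemma~\ref{lem:defect_coboundary}, $\Delta'_\delta=\Delta_\delta-d\xi=0$.
\end{enumerate}
Corollary~\ref{cor:delta_0} then gives~\eqref{eq:deriv_cond_2} for $\eta'$, and Theorem~\ref{thm:liftder}(2) produces the graded lift $\hd$.

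For the converse, suppose $\hd$ is a graded lift. Define $\eta_0(g):=\hd(x_g)y_g^t$. By Theorem~\ref{thm:liftder}(1), $\eta_0$ satisfies~\eqref{eq:deriv_cond_1} and~\eqref{eq:deriv_cond_2}, so by Corollary~\ref{cor:delta_0} its defect cocycle is zero. It remains to compare $\eta_0$ with the given $\eta$.

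Let $\nabla^0_{\delta,g}$ and $\nabla_{\delta,g}$ be the covariant derivatives from~\eqref{eq:connection_ng} built from $\eta_0$ and $\eta$. Both satisfy the two-sided Leibniz rule of Lemma~\ref{lem:connection_bimodule}, so their difference $\nabla_{\delta,g}-\nabla^0_{\delta,g}$ is an $R$-bimodule endomorphism of the invertible bimodule $S_g$. By~\cite[Lem.~I.3.11.2]{NaOy82} it is therefore left multiplication by a unique central element, which I write as $-\xi(g)\in Z(R)$. For $g=e$, both maps equal $\delta$, so $\xi(e)=0$.

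The defect is additive in the family of covariant derivatives, exactly as in the computation proving Lemma~\ref{lem:defect_coboundary}. Rerunning that computation at the level of covariant derivatives rather than of $\eta$ gives $\Delta_\delta=\Delta^0_\delta-d\xi=-d\xi$. Hence $[\Delta_\delta]=0$.

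The main point needing care is this converse step. Lemma~\ref{lem:defect_coboundary} is phrased via $\eta'=\eta-\xi\alpha_g(1)$, whereas here the two families are only known to differ by a central multiplication operator $\xi(g)\,\id_{S_g}$. I therefore need the lemma's computation in the form ``$\nabla'_{\delta,g}=\nabla_{\delta,g}-\xi(g)\id_{S_g}$ implies $\Delta'_\delta=\Delta_\delta-d\xi$''. Its proof shows exactly this: the first line of that proof only converts $\eta'$ into $\nabla'$, and everything after it uses only the operator relation. Alternatively, one can check that $\eta(g)-\eta_0(g)$ and $\xi(g)\alpha_g(1)$ both act on $x_g$ in the same way, which is all that~\eqref{eq:connection_ng} sees.
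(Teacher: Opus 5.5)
Your proof is correct. The direction ``$[\Delta_\delta]=0$ implies a lift'' is the paper's argument: choose $\xi$ with $\Delta_\delta=d_\beta\xi$, pass to $\eta'(g)=\eta(g)-\xi(g)\alpha_g(1)$, then apply Lemma~\ref{lem:defect_coboundary}, Corollary~\ref{cor:delta_0} and Theorem~\ref{thm:liftder}~(2). You also verify two hypotheses that the paper does not check. First, that $\eta'$ still satisfies \eqref{eq:deriv_cond_1}. Second, that $\eta'(e)=0$, which Theorem~\ref{thm:liftder}~(2) requires; your check via $\Delta_\delta(e,e)=0=(d_\beta\xi)(e,e)=\xi(e)$ is fine.

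The converse direction genuinely differs from the paper. The paper says a graded lift forces $\Delta_\delta=0$ by Theorem~\ref{thm:liftder}~(1) and Corollary~\ref{cor:delta_0}. Those results, however, concern the specific family $\eta_0(g)=\hd(x_g)y_g^t$, whereas $\Delta_\delta$ in the statement is built from the \emph{given} $\eta$. For another admissible $\eta$ the defect need not vanish: replace $\eta_0$ by $\eta_0(g)-\xi(g)\alpha_g(1)$ with $\xi(e)=0$ but $d_\beta\xi\neq 0$. Your comparison step supplies what is actually needed. The two covariant derivatives differ by a bimodule endomorphism of $S_g$, hence by $-\xi(g)\id_{S_g}$ with $\xi(g)\in Z(R)$. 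Rerunning the computation of Lemma~\ref{lem:defect_coboundary} at the operator level then gives $\Delta_\delta=\Delta^0_\delta-d_\beta\xi=-d_\beta\xi$. This proves the statement for arbitrary $\eta$, and it also shows that $[\Delta_\delta]$ is independent of the choice of $\eta$. The paper's shorter argument only covers the case where $\eta$ is induced by the lift.

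One small correction: drop the aside ``alternatively replace $\xi$ by $\xi-\xi(e)$''. For a constant cochain $c$ one has $(d_\beta c)(g,h)=\beta_g(c)$, which is not zero, so that replacement would change $d_\beta\xi$ and is not harmless. The point is moot, since your first argument already forces $\xi(e)=0$.
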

\begin{proof}
(``$\Rightarrow$'')
If $\delta$ admits a graded lift, then $\Delta_\delta(g,h)=0$ for all $g,h\in G$,
by Theorem~\ref{thm:liftder}~\emph{(1)} and Corollary~\ref{cor:delta_0}.
Hence $[\Delta_\delta]=0$ in $H^2_\beta(G,Z(R))$.

(``$\Leftarrow$'')
Conversely, assume that $[\Delta_\delta] = 0$ in $H^2_\beta(G,Z(R))$.
Then $\Delta_\delta$ is a coboundary, so there exists $\xi\in C^1(G,Z(R))$
with $\Delta_\delta = d\xi$.
Define a modified family
\begin{equation*}
    \eta'(g) := \eta(g)-\xi(g)\alpha_g(1), \quad g\in G.
\end{equation*}
For all $g\in G$ and $r\in R$,~\eqref{eq:deriv_cond_1} holds with $\eta'$
in place of $\eta$, and by Lemma~\ref{lem:defect_coboundary} one has
\begin{equation*}
    \Delta'_\delta(g,h) = \Delta_\delta(g,h) - (d\xi)(g,h),
    \quad g,h\in G.
\end{equation*}
Thus $\Delta'_\delta(g,h) = 0$ for all $g,h\in G$.
Applying Corollary~\ref{cor:delta_0} and Theorem~\ref{thm:liftder}~\emph{(2)} yields a graded lift of $\delta$ to $S$.
\end{proof}

\begin{corollary}\label{cor:equivariant_cohomo}
    Let $S$ be a strongly $G$-graded ring with principal component $R$, let
    $(n,\alpha,\omega)$ be an associated factor system, and let $\delta: R \to R$ be a derivation.
    Assume that $\delta(\alpha_g(r)) = \alpha_g(\delta(r))$ for all $g \in G$ and $r \in R$, so that~\eqref{eq:deriv_cond_1} holds with $\eta(g)=0$ for all $g\in G$.
    If $[\Delta_\delta]\in H^2_\beta(G,Z(R))$ vanishes, then $\delta$ admits a graded lift to $S$.
\end{corollary}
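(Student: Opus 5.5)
This is a direct specialization of Theorem~\ref{thm:liftder_cohomo}, so the plan is to check its hypotheses for the choice $\eta(g)=0$, $g\in G$, and then apply its ``$\Leftarrow$'' direction.

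\emph{Step 1: verify~\eqref{eq:deriv_cond_1} with $\eta=0$.}
Under this choice the condition reads
\begin{equation*}
    [\delta,\alpha_g](r)=\alpha_g(r)\delta(\alpha_g(1)).
\end{equation*}
The equivariance hypothesis makes the left-hand side vanish, because $[\delta,\alpha_g](r)=\delta(\alpha_g(r))-\alpha_g(\delta(r))$. So we must show that $\alpha_g(r)\delta(\alpha_g(1))=0$. Taking $r=1$ in the hypothesis gives $\delta(\alpha_g(1))=\alpha_g(\delta(1))=\alpha_g(0)=0$, hence the right-hand side vanishes as well. The normalization $\eta(e)=0$ holds trivially.

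\emph{Step 2: the defect is well defined.}
With $\eta=0$ the covariant derivatives in~\eqref{eq:connection_ng} become $\nabla_{\delta,g}(u^tx_g)=\delta(u)^tx_g$. By Lemma~\ref{lem:connection_bimodule} and the subsequent lemmas, $\Delta_\delta$ is then a well-defined cocycle in $Z^2_\beta(G,Z(R))$, and we may read it off from Lemma~\ref{lem:defect_def_dc}:
\begin{equation*}
    \Theta_\delta(g,h)=\omega(g,h)\delta(\alpha_{gh}(1))-\delta(\omega(g,h))=-\delta(\omega(g,h)),
\end{equation*}
where we used $\delta(\alpha_{gh}(1))=0$ from Step 1. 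This explicit formula is not needed for the proof, but it identifies the obstruction concretely.

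\emph{Step 3: conclude.}
Assuming $[\Delta_\delta]=0$, Theorem~\ref{thm:liftder_cohomo} yields a graded lift of $\delta$ to $S$. Equivalently, one can argue by hand: choose $\xi\in C^1(G,Z(R))$ with $\Delta_\delta=d\xi$ and set $\eta'(g)=-\xi(g)\alpha_g(1)$. One may take $\xi(e)=0$, which ensures $\eta'(e)=0$; the proof of Theorem~\ref{thm:liftder_cohomo} uses this implicitly, and the normalization $\Delta_\delta(e,e)=0$, coming from $\nabla_{\delta,e}=\delta$, allows $\xi(e)=0$ after adjusting. Then Lemma~\ref{lem:defect_coboundary} gives $\Delta'_\delta=0$, and Theorem~\ref{thm:liftder}~(2) produces the lift $\hd(u^tx_g)=\delta(u)^tx_g-\xi(g)u^tx_g$.

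There is no real obstacle here. The only point requiring care is Step~1, namely seeing that $\alpha_g(r)\delta(\alpha_g(1))$ vanishes; this follows from the hypothesis taken at $r=1$, or alternatively after multiplication by $x_g$ using Lemma~\ref{lem:liftder1}.
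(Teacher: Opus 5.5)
Your proposal is correct and follows the paper's route. The corollary is the direct specialization of Theorem~\ref{thm:liftder_cohomo} to $\eta(g)=0$, and~\eqref{eq:deriv_cond_1} reduces to $\delta(\alpha_g(1))=\alpha_g(\delta(1))=0$ exactly as you verify. Your remark about the normalization $\eta'(e)=0$ is a fair point the paper leaves implicit, but no adjustment is needed: any $\xi$ with $d\xi=\Delta_\delta$ automatically satisfies $\xi(e)=(d\xi)(e,e)=\Delta_\delta(e,e)=0$, since $\nabla_{\delta,e}=\delta$.
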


\begin{remark}\label{rem:equivariant_defect_general}
Let $g,h \in G$.
In the situation of Corollary~\ref{cor:equivariant_cohomo}, the defect term simplifies to
\begin{equation*}
    \Theta_\delta(g,h) = -\delta(\omega(g,h))
\end{equation*}
(see Lemma~\ref{lem:defect_def_dc}) and therefore, for $u\in M_{n_g,1}(R)$ and $v\in M_{n_h,1}(R)$,
\begin{equation*}
    \Delta_\delta(g,h)\bigl((u^t x_g)(v^t x_h)\bigr)
    =
    -\,u^t \alpha_g(v)^t\,\delta(\omega(g,h))\,x_{gh}.
\end{equation*}

In the crossed product case $S=R\rtimes_{(\alpha,\omega)}G$, this reduces to
\begin{equation*}
    \Delta_\delta(g,h)(u_g u_h)
    =
    -\delta(\omega(g,h))\,u_{gh}.
\end{equation*}
Since $u_g u_h=\omega(g,h)u_{gh}$ and $\Delta_\delta(g,h)$ is left
$R$-linear, it follows that
\begin{equation*}
    \Delta_\delta(g,h)(u_{gh})
    =
    -\omega(g,h)^{-1}\delta(\omega(g,h))\,u_{gh}.
\end{equation*}
Hence, under the central element identification,
\begin{equation*}
    \Delta_\delta(g,h)
    =
    -\omega(g,h)^{-1}\delta(\omega(g,h))
    \in Z(R),
\end{equation*}
which may be interpreted as the (noncommutative) logarithmic derivative
of the cocycle $\omega(g,h)$.

Indeed, the expression $\omega^{-1}\delta(\omega)$ is formally
analogous to the classical Maurer--Cartan form on a Lie group.
\end{remark}

In the case of strongly $\mathbb{Z}$-graded rings the lifting condition simplifies further. 
In particular, the second cocycle condition~\eqref{eq:deriv_cond_2} becomes redundant, and it suffices to
verify the compatibility condition for the generator $1\in\mathbb{Z}$.
This yields the following corollary.

\begin{corollary}\label{cor:Z_lift_from_generator}
    Let $S$ be a strongly $\mathbb{Z}$-graded ring with principal component $R$, let
    $(n,\alpha,\omega)$ be an associated factor system, and let $\delta: R \to R$ be a derivation.
    Assume that there exists $\eta \in M_{n_1}(R)$ such that, for all $r \in R$,
    \begin{equation}\label{eq:Z_generator_condition}
        [\delta,\alpha_1](r)
        =
        [\eta,\alpha_1(r)]
        +
        \alpha_1(r)\delta(\alpha_1(1)).
    \end{equation}
    Then $\delta$ admits a graded lift to $S$.
\end{corollary}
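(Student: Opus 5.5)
The plan is to reduce to Theorem~\ref{thm:liftder_cohomo}. Two things are needed: a family $\eta(n)\in M_{n_n}(R)$, $n\in\mathbb{Z}$, with $\eta(0)=0$ satisfying~\eqref{eq:deriv_cond_1} for \emph{every} $n$, and the vanishing of the class $[\Delta_\delta]$. The second point is automatic: $\mathbb{Z}$ is free of rank one, so $H^2_\beta(\mathbb{Z},M)=0$ for every $\mathbb{Z}$-module $M$, in particular for $M=Z(R)$ with the action $\beta$. Concretely, given a $2$-cocycle $c$, one normalizes it and solves $c=d\xi$ recursively along $n\mapsto n\pm1$. The real work is therefore to propagate the hypothesis~\eqref{eq:Z_generator_condition} from the generator $1$ to all degrees.

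To do this I would pass from the matrix data $\eta(g)$ to bimodule covariant derivatives. First I would check the converse of Lemma~\ref{lem:connection_bimodule}. Suppose $\nabla\colon S_g\to S_g$ is additive and satisfies the two-sided Leibniz rule~\eqref{eq:two_sided_leibniz} along $\delta$. Put $\eta(g):=\nabla(x_g)y_g^t$, with $\nabla$ applied entrywise. Then $\nabla(u^tx_g)=\delta(u)^tx_g+u^t\eta(g)x_g$. Applying the right Leibniz rule to $x_g r = \alpha_g(r)x_g$ and multiplying on the right by $y_g^t$ should reproduce~\eqref{eq:deriv_cond_1}; this is essentially the computation in the proof of Theorem~\ref{thm:liftder}(1), run backwards. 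Once this is in place, it suffices to construct bimodule covariant derivatives $\nabla_n$ on all $S_n$, with $\nabla_0=\delta$, which forces $\eta(0)=0$. By hypothesis and Lemma~\ref{lem:connection_bimodule}, $\eta$ already yields such a $\nabla_1$ on $S_1$.

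For $n\ge 2$ I would use the bimodule isomorphisms $S_1\otimes_R\cdots\otimes_R S_1\to S_n$ given by multiplication, which hold because $S$ is strongly graded. On the tensor product I would take the Leibniz-type operator $\sum_i \mathrm{id}\otimes\cdots\otimes\nabla_1\otimes\cdots\otimes\mathrm{id}$. It is well defined on the balanced tensor product precisely because $\nabla_1$ satisfies both the left and the right Leibniz rule, since the $\delta(r)$-terms cancel across each tensor sign; it is then transported to $S_n$. For $n=-1$ I would define $\nabla_{-1}$ by duality. For $s\in S_{-1}$, the map $t\mapsto \delta(st)-s\nabla_1(t)$ is a left $R$-linear map $S_1\to R$; using $S_{-1}\otimes_R S_1\cong R$, or directly the frame $(x_1,y_1)$, one sees that it is right multiplication by a unique element $\nabla_{-1}(s)\in S_{-1}$. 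Concretely, $\nabla_{-1}(s)=\sum_i\big(\delta(s(x_1)_i)-s\nabla_1((x_1)_i)\big)(y_1)_i$. Verifying the two-sided Leibniz rule for this formula is a short computation, and negative degrees $n\le -2$ are then handled by tensor powers of $S_{-1}$, as before.

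The main obstacle I expect is the bookkeeping in the negative-degree step: showing that $\nabla_{-1}$ is well defined and is a genuine two-sided covariant derivative. Right Leibniz follows from left Leibniz of $\nabla_1$, and left Leibniz comes from $\delta$ being a derivation. If this turns out to be messy, the fallback is to define $\nabla_{-1}$ directly via the matrix formula with $\eta(-1):=x_{-1}\big(\delta(y_1^t)\ldots\big)$ and verify~\eqref{eq:deriv_cond_1} by hand. Once all $\nabla_n$ are in place, Theorem~\ref{thm:liftder_cohomo} together with $H^2_\beta(\mathbb{Z},Z(R))=0$ gives the graded lift.
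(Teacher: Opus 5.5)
Your proposal is correct and follows essentially the same route as the paper. The steps match one for one:
\begin{itemize}
\item obtain $\nabla_1$ from $\eta$ via Lemma~\ref{lem:connection_bimodule};
\item for $n\ge 2$, take tensor-power connections and transport them along the multiplication isomorphisms;
\item for $n=-1$, use the dual connection on $\Hom_R(S_1,R)\cong S_{-1}$ (your map $t\mapsto\delta(st)-s\nabla_1(t)$ is exactly the paper's $\nabla_1^\vee(\nu(s))$);
\item for $n\le -2$, take tensor powers of $\nabla_{-1}$;
\item conclude with Lemma~\ref{lem:conn_to_eta}, $H^2_\beta(\mathbb{Z},Z(R))=0$ and Theorem~\ref{thm:liftder_cohomo}.
\end{itemize}

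There are two small slips in the duality step. First, that map is \emph{right} $R$-linear, not left $R$-linear. Second, your explicit formula must use a degree-$(-1)$ frame:
\[
\nabla_{-1}(s)=\sum_j\bigl(\delta(s(y_{-1})_j)-s\nabla_1((y_{-1})_j)\bigr)(x_{-1})_j .
\]
The frame $(x_1,y_1)$ only gives $\sum_i(y_1)_i(x_1)_i=1$, not $\sum_i(x_1)_i(y_1)_i=1$; for example, $e_1e_1^\ast\neq 1$ in $L_{\mathbb C}(1,2)$.
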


The proof relies on the following auxiliary result, which is the converse of
Lemma~\ref{lem:connection_bimodule}.

\begin{lemma}\label{lem:conn_to_eta}
    Let $S$ be a strongly $G$-graded ring with principal component $R$,
    and let $(n,\alpha,\omega)$ be a factor system associated with a module frame system
    $(x_g,y_g)_{g \in G}$.
    Let $\delta: R \to R$ be a derivation and suppose that $(\nabla_{\delta,g}: S_g \to S_g)_{g \in G}$ is a family of covariant derivatives along $\delta$. 
    Extend each $\nabla_{\delta,g}$, $g \in G$, entrywise to $M_{n_g,1}(S_g)$ and define
    \begin{equation*}
        \eta(g) := \nabla_{\delta,g}(x_g) y_g^t \in M_{n_g}(R),
        \quad g \in G.
    \end{equation*}
    Then, for all $g \in G$ and all $r\in R$, one has
    \begin{equation*}
        [\delta,\alpha_g](r)
        =
        [\eta(g),\alpha_g(r)]
        +
        \alpha_g(r)\delta(\alpha_g(1)).
    \end{equation*}
%    Equivalently,~\eqref{eq:deriv_cond_1} holds for all $g \in G$.
\end{lemma}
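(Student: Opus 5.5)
The plan is to mimic the computation in the proof of Theorem~\ref{thm:liftder}~(1), replacing the graded derivation $\hd$ by the family $(\nabla_{\delta,g})_{g\in G}$. The two-sided Leibniz rule~\eqref{eq:two_sided_leibniz} is exactly what that computation used. There is one subtlety: $\nabla_{\delta,g}$ is only defined on $S_g$. The term $x_g r\,\hd(y_g^t)$ from that proof has to be produced differently, since $y_g$ lives in $S_{g^{-1}}$ and no covariant derivative on $S_{g^{-1}}$ is available. I will therefore work only with $x_g$ and $\nabla_{\delta,g}$.

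First, I record that $\eta(g)\in M_{n_g}(R)$, because $\nabla_{\delta,g}(x_g)\in M_{n_g,1}(S_g)$ and $S_gS_{g^{-1}}\subseteq R$. The key identity comes from the right Leibniz rule applied entrywise to $x_g r = \alpha_g(r)x_g$, which is~\eqref{eq:commrel1}. The left-hand side gives
\begin{equation*}
\nabla_{\delta,g}(x_g r)=\nabla_{\delta,g}(x_g)r+x_g\delta(r).
\end{equation*}
Entrywise, $\alpha_g(r)x_g$ is an $R$-linear combination of entries of $x_g$ with coefficients from $R$ on the left. The left Leibniz rule then gives
\begin{equation*}
\nabla_{\delta,g}(\alpha_g(r)x_g)=\delta(\alpha_g(r))x_g+\alpha_g(r)\nabla_{\delta,g}(x_g).
\end{equation*}
Equating the two expressions and multiplying on the right by $y_g^t$ yields
\begin{equation*}
\nabla_{\delta,g}(x_g) r y_g^t+\alpha_g(\delta(r))=\delta(\alpha_g(r))\alpha_g(1)+\alpha_g(r)\eta(g).
\end{equation*}

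Next I rewrite the first term. Using $r y_g^t = y_g^t x_g r y_g^t = y_g^t\alpha_g(r)$, it becomes $\eta(g)\alpha_g(r)$. It remains to express $\delta(\alpha_g(r))\alpha_g(1)$ through $\delta(\alpha_g(r))$. Since $\alpha_g(r)=\alpha_g(r)\alpha_g(1)$, Leibniz gives
\begin{equation*}
\delta(\alpha_g(r))\alpha_g(1)=\delta(\alpha_g(r))-\alpha_g(r)\delta(\alpha_g(1))\alpha_g(1).
\end{equation*}
The last piece needs care. Applying $\delta$ to $\alpha_g(1)^2=\alpha_g(1)$ gives $\delta(\alpha_g(1))\alpha_g(1)=\delta(\alpha_g(1))-\alpha_g(1)\delta(\alpha_g(1))$. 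Since $\alpha_g(r)\alpha_g(1)=\alpha_g(r)$, we get
\begin{equation*}
\alpha_g(r)\delta(\alpha_g(1))\alpha_g(1)=\alpha_g(r)\delta(\alpha_g(1))-\alpha_g(r)\delta(\alpha_g(1))=0.
\end{equation*}
So in fact $\delta(\alpha_g(r))\alpha_g(1)=\delta(\alpha_g(r))-\alpha_g(r)\delta(\alpha_g(1))$, and substituting and rearranging gives
\begin{equation*}
\delta(\alpha_g(r))-\alpha_g(\delta(r))=\eta(g)\alpha_g(r)-\alpha_g(r)\eta(g)+\alpha_g(r)\delta(\alpha_g(1)).
\end{equation*}
This is exactly~\eqref{eq:deriv_cond_1}.

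The main obstacle is the bookkeeping with the idempotent $\alpha_g(1)$ in the last step. One has to check the sign and the side on which $\delta(\alpha_g(1))$ appears, so that the projection identities (Lemma~\ref{lem:liftder1} and $\alpha_g(r)=\alpha_g(r)\alpha_g(1)$) produce precisely the term $\alpha_g(r)\delta(\alpha_g(1))$ rather than $\alpha_g(r)\delta(\alpha_g(1))\alpha_g(1)$. I would verify this by comparing directly with the corresponding line in the proof of Theorem~\ref{thm:liftder}~(1). A minor point is justifying that the entrywise extension of $\nabla_{\delta,g}$ satisfies the matrix Leibniz rules $\nabla(Ax)=\delta(A)x+A\nabla(x)$ and $\nabla(xr)=\nabla(x)r+x\delta(r)$; both are immediate from~\eqref{eq:two_sided_leibniz} applied entry by entry.
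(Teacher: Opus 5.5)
Your strategy is the paper's: apply $\nabla_{\delta,g}$ entrywise to $x_g r=\alpha_g(r)x_g$, expand with the right and left Leibniz rules, multiply on the right by $y_g^t$, and then eliminate the idempotent $\alpha_g(1)$. Your check that $ry_g^t=y_g^tx_gry_g^t=y_g^t\alpha_g(r)$, so that $\nabla_{\delta,g}(x_g)ry_g^t=\eta(g)\alpha_g(r)$, fills in a step the paper leaves implicit.

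The last step, however, is wrong as written. The Leibniz rule applied to $\alpha_g(r)=\alpha_g(r)\alpha_g(1)$ gives
\begin{equation*}
    \delta(\alpha_g(r))=\delta(\alpha_g(r))\alpha_g(1)+\alpha_g(r)\delta(\alpha_g(1)),
\end{equation*}
with no trailing factor $\alpha_g(1)$. Your display $\delta(\alpha_g(r))\alpha_g(1)=\delta(\alpha_g(r))-\alpha_g(r)\delta(\alpha_g(1))\alpha_g(1)$ is false in general. Combined with the vanishing $\alpha_g(r)\delta(\alpha_g(1))\alpha_g(1)=0$, which you prove correctly, it would give $\alpha_g(r)\delta(\alpha_g(1))=0$. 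For $r=1$ this says $\alpha_g(1)\delta(\alpha_g(1))=0$. That fails already in $L_{\mathbb C}(1,2)$ with $x_1=e_1$, $y_1=e_1^\ast$ and $\delta=\delta_A\vert_{L_0}$ for $A=\begin{pmatrix}0&1\\0&0\end{pmatrix}$, where $\alpha_1(1)\delta(\alpha_1(1))=-e_1e_2^\ast\neq 0$.

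So your sentence ``So in fact \dots'' does not follow from the two displays before it. The identity it asserts is still correct: it is exactly the Leibniz identity displayed above, and substituting it is what the paper does. Drop the detour through Lemma~\ref{lem:liftder1}. With that one-line correction your proof is complete and coincides with the paper's.
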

\begin{proof}
Let $g \in G$ and let $r \in R$.
Using the relation $x_g r = \alpha_g(r) x_g$, apply the $\delta$-bimodule~connection $\nabla_{\delta,g}$ to both sides.
By the right Leibniz rule,
\begin{equation*}
    \nabla_{\delta,g}(x_g r) = \nabla_{\delta,g}(x_g)r + x_g \delta(r),
\end{equation*}
while the left Leibniz rule gives
\begin{equation*}
    \nabla_{\delta,g}(\alpha_g(r)x_g)
    =
    \delta(\alpha_g(r)) x_g + \alpha_g(r) \nabla_{\delta,g}(x_g).
\end{equation*}
Hence
\begin{equation*}
    \delta(\alpha_g(r)) x_g + \alpha_g(r) \nabla_{\delta,g}(x_g)
    =
    \nabla_{\delta,g}(x_g) r+x_g \delta(r).
\end{equation*}
Multiplying on the right by $y_g^t$, one obtains
\begin{equation*}
    \delta(\alpha_g(r)) \alpha_g(1)
    =
    \eta(g) \alpha_g(r) + \alpha_g(\delta(r)) - \alpha_g(r) \eta(g).
\end{equation*}
Finally, since
\begin{equation*}
    \delta(\alpha_g(r))
    =
    \delta(\alpha_g(r)) \alpha_g(1) + \alpha_g(r) \delta(\alpha_g(1))
\end{equation*}
substituting this identity into the previous equation completes the proof.
\end{proof}

\begin{remark}
    The existence of a family $(\eta(g))_{g\in G}$ satisfying~\eqref{eq:deriv_cond_1} is equivalent to the existence of a family of covariant derivatives $(\nabla_{\delta,g}: S_g \to S_g)_{g \in G}$ along $\delta$.
\end{remark}

\begin{proof}[Proof of Corollary~\ref{cor:Z_lift_from_generator}]
\eqref{eq:Z_generator_condition} is invariant under conjugation of factor systems:
if $(n',\alpha',\omega')$ is conjugate to $(n,\alpha,\omega)$ via
$(v_k,w_k)_{k\in\mathbb Z}$ as in Lemma~\ref{lem:factorsystem}\emph{(\ref{it:conjugate})}, then
\begin{equation*}
    \alpha'_1(r) = v_1 \alpha_1(r) w_1,
    \quad r\in R,
\end{equation*}
and~\eqref{eq:Z_generator_condition} for $\alpha_1$ is equivalent to the corresponding identity for $\alpha'_1$ with $\eta' := v_1 \eta w_1\in M_{n'_1}(R)$.
Consequently, $(n,\alpha,\omega)$ may be replaced by any conjugate factor system.

Choose $n_1\in\mathbb N$ and elements
\begin{equation*}
    x_1 \in M_{n_1,1}(S_1),
    \quad
    y_1 \in M_{1,n_1}(S_{-1}),
\end{equation*}
such that $y_1^t x_1=1$ (existence follows from Lemma~\ref{lem:dade}).
Since $S$ is strongly $\mathbb Z$-graded, multiplication induces $R$-bimodule isomorphisms
\begin{equation*}
    \mu_{m,n}: S_m \otimes_R S_n \to S_{m+n},
    \quad
    \mu_{m,n}(s_m \otimes s_n) = s_m s_n,
\end{equation*}
for all $m,n\in\mathbb Z$.
In particular, for $m\ge 1$ one obtains $R$-bimodule isomorphisms
\begin{equation*}
    \mu_m : S_1^{\otimes_R m} \to S_m,
    \quad
    \mu_m(s_1\otimes\cdots\otimes s_m)=s_1\cdots s_m,
\end{equation*}
and analogously
\begin{equation*}
    \mu_{-m}: S_{-1}^{\otimes_R m} \to S_{-m}.
\end{equation*}
These choices are fixed once and for all.

Set $\eta(1) := \eta$ and define a map $\nabla_1: S_1 \to S_1$ by
\begin{equation*}
    \nabla_1(u^t x_1)
    :=
    \delta(u)^t x_1 + u^t \eta(1) x_1,
    \quad u \in M_{n_1,1}(R).
\end{equation*}
By Lemma~\ref{lem:connection_bimodule},~\eqref{eq:Z_generator_condition}
is exactly what is required for $\nabla_1$ to be a $\delta$-bimodule~connection on $S_1$.

For $m \ge 1$, define an additive map
$\nabla^{\otimes m}:S_1^{\otimes_R m}\to S_1^{\otimes_R m}$ by
\begin{equation*}
    \nabla^{\otimes m}(s_1 \otimes \cdots \otimes s_m)
    :=
    \sum_{j=1}^m
    s_1 \otimes \cdots \otimes s_{j-1}
    \otimes \nabla_1(s_j)
    \otimes s_{j+1} \otimes \cdots \otimes s_m.
\end{equation*}
Since $\nabla_1$ satisfies the two-sided Leibniz rule, this map is compatible with the balancing relations and hence well-defined on $S_1^{\otimes_R m}$; 
moreover it is a $\delta$-bimodule~connection.
Transporting along $\mu_m$ yields $\delta$-bimodule~connections
\begin{equation*}
    \nabla_m := \mu_m \circ \nabla^{\otimes m} \circ \mu_m^{-1}: S_m\to S_m,
    \quad m\ge 1.
\end{equation*}

Since $S$ is strongly $\mathbb Z$-graded, the $R$-bimodules $S_1$ and $S_{-1}$ are inverse to each other, so the multiplication pairing induces an $R$-bimodule isomorphism
\begin{equation*}
    \nu: S_{-1} \to \Hom_R(S_1,R),
    \quad
    \nu(t)(s):=t s,
\end{equation*}
where $\Hom_R(S_1,R)$ denotes right $R$-linear maps and carries the obvious $R$-bimodule structure.
Define an additive map $\nabla_1^\vee : \Hom_R(S_1,R) \to \Hom_R(S_1,R)$ by
\begin{equation*}
    (\nabla_1^\vee f)(s) := \delta(f(s))-f(\nabla_1(s)),
    \quad f \in \Hom_R(S_1,R).
\end{equation*}
A direct verification using the two-sided Leibniz rule for $\nabla_1$ shows that $\nabla_1^\vee$ is a covariant derivative along $\delta$ on $\Hom_R(S_1,R)$.
Transporting along $\nu$ yields a $\delta$-bimodule~connection on~$S_{-1}$,
\begin{equation*}
    \nabla_{-1} : =\nu^{-1} \circ \nabla_1^\vee \circ \nu: S_{-1} \to S_{-1}.
\end{equation*}

For $m \ge 1$, form tensor product connections $(\nabla_{-1})^{\otimes m}$ on $S_{-1}^{\otimes_R m}$
by the same formula as above, and transport along $\mu_{-m}$ to obtain covariant derivatives along $\delta$:
\begin{equation*}
    \nabla_{-m} := \mu_{-m} \circ (\nabla_{-1})^{\otimes m} \circ \mu_{-m}^{-1}: S_{-m} \to S_{-m}.
\end{equation*}

Choose a module frame system $(x_k,y_k)_{k \in \mathbb Z}$ with $x_0=y_0=1$ and whose degree-$1$ part coincides with the fixed pair $(x_1,y_1)$.
For each $k \in \mathbb Z$, define
\begin{equation*}
    \eta(k):=\nabla_k(x_k)\,y_k^t\in M_{n_k}(R).
\end{equation*}
By Lemma~\ref{lem:conn_to_eta},~\eqref{eq:deriv_cond_1} holds for all
$k \in \mathbb Z$ with this choice of $\eta(k)$.
Thus the algebraic hypotheses of Theorem~\ref{thm:liftder_cohomo} are satisfied for
$G = \mathbb Z$.

The remaining cohomological condition is automatic in this case:
since $H^2_\beta(\mathbb Z,Z(R))=0$, the class
$[\Delta_\delta]\in H^2(\mathbb Z,Z(R))_\beta$ vanishes.
It follows that $\delta$ admits a graded lift to $S$.
\end{proof}

\begin{remark}
Corollary~\ref{cor:Z_lift_from_generator} admits the following interpretation. 
A graded derivation of $S$ is completely determined by two pieces of data:
\begin{itemize}
\item 
    a derivation $\delta$ of the principal component $L_0$, and
\item 
    a connection on the $L_0$-bimodule $L_1$ which is compatible with $\delta$.
\end{itemize}
More precisely, the action of the derivation on $L_1$ satisfies a Leibniz-type
rule with respect to the bimodule structure, and this data uniquely determines
the graded derivation on the whole algebra.
\end{remark}

We proceed with a series of immediate corollaries:

\begin{corollary}\label{cor:Z_lift_from_generator_comm}
    Let $S$ be a strongly $\mathbb{Z}$-graded ring with principal component $R$, let
    $(n,\alpha,\omega)$ be an associated factor system, and let $\delta: R \to R$ be a derivation.
    Assume that $\delta \circ \alpha_1 = \alpha_1 \circ \delta$.
    Then $\delta$ admits a graded lift to $S$.
\end{corollary}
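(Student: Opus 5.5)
The plan is to reduce the statement directly to Corollary~\ref{cor:Z_lift_from_generator}: we exhibit an element $\eta\in M_{n_1}(R)$ for which \eqref{eq:Z_generator_condition} holds. Since $\delta\circ\alpha_1=\alpha_1\circ\delta$, the left-hand side $[\delta,\alpha_1](r)$ vanishes for all $r\in R$. It therefore suffices to find $\eta$ with
\begin{equation*}
    0 = [\eta,\alpha_1(r)] + \alpha_1(r)\delta(\alpha_1(1))
    \quad \text{for all } r\in R.
\end{equation*}

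The first step is to observe that the hypothesis forces $\delta(\alpha_1(1))=\alpha_1(\delta(1))=\alpha_1(0)=0$, because $\delta(1)=0$ for any derivation. So the correction term $\alpha_1(r)\delta(\alpha_1(1))$ vanishes identically. The condition then reduces to $[\eta,\alpha_1(r)]=0$, which the choice $\eta:=0\in M_{n_1}(R)$ satisfies trivially. This is the same mechanism as in Corollary~\ref{cor:equivariant_cohomo}, where commutation with $\alpha_g$ yields \eqref{eq:deriv_cond_1} with $\eta(g)=0$.

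The second step is to invoke Corollary~\ref{cor:Z_lift_from_generator} with $\eta=0$. This provides the covariant derivative $\nabla_1$ on $S_1$, extends it to all degrees, and uses $H^2_\beta(\mathbb{Z},Z(R))=0$ to kill the multiplicative curvature class via Theorem~\ref{thm:liftder_cohomo}. The conclusion is that $\delta$ admits a graded lift.

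I expect essentially no obstacle here. The only point needing care is that the hypothesis of Corollary~\ref{cor:Z_lift_from_generator} is stated for the given factor system $(n,\alpha,\omega)$. The commutation assumption is likewise imposed on that same $\alpha_1$, so no conjugation argument is required: we apply the corollary to exactly the factor system in the statement. One can also check directly that the identity $\delta(\alpha_1(1))=0$ is consistent with Lemma~\ref{lem:liftder1}, although it is not needed for the argument.
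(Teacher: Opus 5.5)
Your proposal is correct and matches the paper's intended argument: the paper lists this statement as an immediate corollary of Corollary~\ref{cor:Z_lift_from_generator}, and the reduction is exactly yours. Commutation gives $[\delta,\alpha_1]=0$ and $\delta(\alpha_1(1))=\alpha_1(\delta(1))=0$, so \eqref{eq:Z_generator_condition} holds with $\eta=0$ for the given factor system, which is the same mechanism used in Corollary~\ref{cor:equivariant_cohomo} and Remark~\ref{rem:grassmann_flat}.
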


\begin{corollary}\label{cor:Z_crossed_lift}
    Let $R$ be a ring, let $\alpha \in \aut(R)$, let $\omega: \mathbb Z \times \mathbb Z \to R^\times$ be a normalized $2$-cocycle, and let $S := R \rtimes_{(\alpha,\omega)} \mathbb Z$ be the associated crossed product.
    A derivation $\delta: R \to R$ admits a graded lift to $S$ if and only if there exists $\eta \in R$ such that
    \begin{equation*}
        \delta(\alpha(r)) - \alpha(\delta(r)) = [\eta,\alpha(r)]
    \end{equation*}
    for all $r \in R$.
    Equivalently, the class of $\delta$ in $\der(R)/\inn(R)$ is fixed by the action of $\alpha$.
\end{corollary}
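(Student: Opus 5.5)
The plan is to reduce the statement to Corollary~\ref{cor:Z_lift_from_generator} for sufficiency and to Corollary~\ref{cor:liftder_crossed} for necessity. Throughout we use the standard module frame system of the crossed product, $x_k := u_k$ and $y_k := u_k^{-1}$ with $n_k = 1$. Its associated factor system is the classical pair, with $\alpha_k(r) = u_k r u_k^{-1}$ and $\omega(k,l)$ the given cocycle. In particular $\alpha_1 = \alpha$ (up to the normalization $u_1 r u_1^{-1} = \alpha(r)$ built into the crossed product), and $\alpha_1(1) = 1$, so $\delta(\alpha_1(1)) = 0$.

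For \emph{necessity}, suppose $\hd$ is a graded lift of $\delta$. By Corollary~\ref{cor:liftder_crossed}(1), the element $\eta(1) := \hd(u_1)u_1^{-1} \in R$ satisfies \eqref{eq:deriv_cond_1_crossed} with $g=1$:
\begin{equation*}
    \delta(\alpha(r)) - \alpha(\delta(r)) = [\eta(1),\alpha(r)] \quad \text{for all } r \in R.
\end{equation*}
This is exactly the stated condition with $\eta := \eta(1)$.

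For \emph{sufficiency}, assume there is $\eta \in R$ with $\delta(\alpha(r)) - \alpha(\delta(r)) = [\eta,\alpha(r)]$. Because $\delta(\alpha_1(1)) = \delta(1) = 0$, this is precisely \eqref{eq:Z_generator_condition} with $n_1 = 1$ and $\eta \in M_1(R) = R$. Corollary~\ref{cor:Z_lift_from_generator} then yields a graded lift. The only point to check is that the factor system implicit there may be taken to be the crossed-product one. This holds because Corollary~\ref{cor:Z_lift_from_generator} applies to any associated factor system, and the crossed-product data form one.

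For the \emph{equivalent formulation}, note that $\alpha$ acts on $\der(R)$ by $\delta \mapsto \alpha^{-1}\circ\delta\circ\alpha$, and this action preserves $\inn(R)$. Applying $\alpha^{-1}$ to the displayed condition gives
\begin{equation*}
    \alpha^{-1}\circ\delta\circ\alpha - \delta = \operatorname{ad}(\alpha^{-1}(\eta)),
\end{equation*}
which is inner. Conversely, if $\alpha^{-1}\delta\alpha - \delta = \operatorname{ad}(c)$, then applying $\alpha$ gives the condition with $\eta := \alpha(c)$. This is the content of Remark~\ref{rem:primary_obstruction} specialized to $\mathbb Z$, with the point that for $\mathbb Z$ fixedness under the single generator $\alpha$ suffices. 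The main (mild) obstacle is bookkeeping of conventions: we must ensure that $\alpha_1$ in the factor system really equals $\alpha$ rather than $\alpha^{-1}$, and that conjugating by units does not alter the inner/outer class. Both are handled by the identity $u_1 r u_1^{-1} = \alpha(r)$ from \eqref{eq:crossed_product}.
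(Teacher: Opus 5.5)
Your proposal is correct and follows the route the paper intends. The paper states this result as an immediate corollary: sufficiency comes from Corollary~\ref{cor:Z_lift_from_generator} applied to the crossed-product frame $x_k=u_k$, $y_k=u_k^{-1}$ (where $\alpha_1(1)=1$ removes the extra term), necessity from Corollary~\ref{cor:liftder_crossed}(1), and the reformulation from Remark~\ref{rem:primary_obstruction}, exactly as you argue. Your choice $\eta(1)=\hd(u_1)u_1^{-1}$ is precisely $\hd(x_1)y_1^t$ from Theorem~\ref{thm:liftder}(1) for this frame, so it is the right normalization when $\omega$ is nontrivial; it differs from the literal formula $\hd(u_1)u_{-1}$ in Corollary~\ref{cor:liftder_crossed} by a right factor $\omega(1,-1)$.
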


\begin{corollary}\label{cor:Z_crossed_lift_comm}
    Let $R$ be a commutative ring, let $\alpha \in \aut(R)$, let $\omega : \mathbb Z \times \mathbb Z \to R^\times$ be a normalized $2$-cocycle, and let $S := R \rtimes_{(\alpha,\omega)} \mathbb Z$ be the associated crossed product.
    A derivation $\delta \in \der(R)$ admits a graded lift to $S$ if and only if $\delta \circ \alpha = \alpha \circ \delta$.
%Equivalently,
%\begin{equation}\label{eq:alpha-derivations}
%    \der_\mathrm{ext}(R) = \der(R)_\alpha := \{\delta \in \der(R):\delta \circ \alpha = \alpha \circ \delta\}
%\end{equation}
\end{corollary}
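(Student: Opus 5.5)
The statement to prove is Corollary~\ref{cor:Z_crossed_lift_comm}. The plan is to deduce it from Corollary~\ref{cor:Z_crossed_lift}, using only that $R$ is commutative. By that corollary, $\delta$ admits a graded lift to $S = R \rtimes_{(\alpha,\omega)} \mathbb Z$ if and only if there exists $\eta \in R$ with
\begin{equation*}
    \delta(\alpha(r)) - \alpha(\delta(r)) = [\eta,\alpha(r)]
\end{equation*}
for all $r \in R$. When $R$ is commutative, the commutator $[\eta,\alpha(r)]$ vanishes identically, whatever $\eta$ is. So the condition reduces to $\delta\circ\alpha = \alpha\circ\delta$, and any $\eta$, for instance $\eta = 0$, works. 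This gives both directions at once.

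Equivalently, $\inn(R)=0$ for commutative $R$, so "the class of $\delta$ in $\der(R)/\inn(R)$ is fixed by $\alpha$" just means $\alpha^{-1}\circ\delta\circ\alpha = \delta$.

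To be safe, one can also check the two directions directly, without relying on how Corollary~\ref{cor:Z_crossed_lift} was obtained.

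\emph{Sufficiency.} If $\delta\circ\alpha=\alpha\circ\delta$, then Corollary~\ref{cor:Z_lift_from_generator_comm} applies directly. Here the crossed product factor system has $n_1=1$ and $\alpha_1=\alpha$ (with $x_1=u_1$ and $y_1 = u_1^{-1}$), so the hypothesis is exactly the one assumed.

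\emph{Necessity.} Suppose $\hd$ is a graded lift. Corollary~\ref{cor:liftder_crossed}(1) gives \eqref{eq:deriv_cond_1_crossed} for $g=1$, with $\eta(1)=\hd(u_1)u_1^{-1}\in R$. Its right-hand side is a commutator in the commutative ring $R$ and hence vanishes, so $\delta\circ\alpha=\alpha\circ\delta$.

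I do not expect a genuine obstacle. The only point needing care is identifying the crossed product data with the factor system in the sense of Definition~\ref{def:factorsystem}: for $g=1$ one has $\alpha_1(1)=1$, so the extra term $\alpha_g(r)\delta(\alpha_g(1))$ in \eqref{eq:deriv_cond_1} vanishes. This is already built into Corollary~\ref{cor:liftder_crossed}, so citing it suffices.
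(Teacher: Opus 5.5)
Your proposal is correct and follows the paper's route. The paper lists this as an immediate specialization of Corollary~\ref{cor:Z_crossed_lift}, which rests on Corollary~\ref{cor:liftder_crossed}(1) for necessity and on Corollary~\ref{cor:Z_lift_from_generator} for sufficiency, and for commutative $R$ the inner term $[\eta,\alpha(r)]$ vanishes exactly as you observe; your direct check of both directions, including the identification $\alpha_1=\alpha$ via $x_1=u_1$, $y_1=u_1^{-1}$, is also sound.
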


\begin{example}\label{ex:L(1,2)_lift_simple}
Let $E$ be the graph with one vertex $v$ and two loop edges $e_1,e_2$.
The associated Leavitt path algebra $L_{\mathbb C}(1,2)$ is strongly $\mathbb Z$-graded but not a crossed product (see Section~\ref{sec:L(1,2)}).

Using the module frame system from Example~\ref{ex:L(1,2)_factorsystem}, we may take $x_1 = e_1$ and $y_1 = e_1^\ast$ so that $y_1 x_1 = e_1^\ast e_1 = 1$.
Consequently,
\begin{equation*}
    \alpha_1(r) = e_1 r e_1^\ast,
    \quad
    r\in L_0.
\end{equation*}

Let $N_1(\gamma)$ denote the number of occurrences of $e_1$ in a path $\gamma$.
As in Lemma~\ref{lem:alg_action}, the map
\begin{equation*}
    \delta(\alpha\beta^\ast)
    :=
    (N_1(\alpha)-N_1(\beta))\alpha\beta^\ast
\end{equation*}
defines a derivation of $L_0$.

For $r=\alpha\beta^\ast \in L_0$, with $|\alpha|=|\beta|$, one has
\begin{equation*}
\delta(e_1re_1^\ast)
=
\delta(e_1\alpha\beta^\ast e_1^\ast)
=
\bigl(N_1(e_1\alpha)-N_1(e_1\beta)\bigr)e_1\alpha\beta^\ast e_1^\ast
=
e_1\delta(r)e_1^\ast.
\end{equation*}
Hence $\delta(\alpha_1(r))=\alpha_1(\delta(r))$, and therefore the hypothesis of Corollary~\ref{cor:Z_lift_from_generator_comm} holds.

It follows that $\delta$ admits a lift
\begin{equation*}
    \hd: L_{\mathbb C}(1,2) \to L_{\mathbb C}(1,2)
\end{equation*}
given on the spanning elements $\alpha\beta^\ast$ by
\begin{equation*}
    \hd(\alpha\beta^\ast)
    =
    (N_1(\alpha)-N_1(\beta))\alpha\beta^\ast.
\end{equation*}
In particular,
\begin{equation*}
    \hd(e_1)=e_1,
    \quad
    \hd(e_2)=0,
    \quad
    \hd(e_1^\ast)=-e_1^\ast,
    \quad
    \hd(e_2^\ast)=0.
\end{equation*}
In the notation of Section~\ref{sec:L(1,2)}, where graded derivations of $\LConetwo$ are parametrized by matrices
\begin{equation*}
A=
\begin{pmatrix}
a_1 & b\\
c & a_2
\end{pmatrix}
\in M_2(L_0),
\end{equation*}
via
\begin{equation*}
\begin{aligned}
    \delta_A(e_1) &= e_1a_1 + e_2c,
    &\quad
    \delta_A(e_2) &= e_1b + e_2a_2,
    \\
    \delta_A(e_1^\ast) &= -a_1e_1^\ast - be_2^\ast,
    &\quad
    \delta_A(e_2^\ast) &= -ce_1^\ast - a_2e_2^\ast,
\end{aligned}
\end{equation*}
the derivation $\hd$ equals $\delta_A$ for
\begin{equation*}
    A =
    \begin{pmatrix}
        1 & 0\\
        0 & 0
    \end{pmatrix}.
\end{equation*}
\end{example}

The preceding example suggests a structural constraint on derivations commuting with $\alpha_1$. 
More generally, we have the following observation:

\begin{lemma}\label{lem:delta.alpha.com}
    Let $\delta \in \der(L_0)$. 
    Then $\delta \circ \alpha_1 = \alpha_1 \circ \delta$ if and only if there exists $a \in L_0$ such that
    \begin{equation*}
        \delta = \delta_A\vert_{L_0},
        \quad
        A =
        \begin{pmatrix}
            0 & 0 \\
            0 & a
        \end{pmatrix},
    \end{equation*}
    using the notation of Section~\ref{sec:L(1,2)}.
\end{lemma}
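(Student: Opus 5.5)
The plan is to use the lifting result to move from derivations of $L_0$ to graded derivations of $\LConetwo$. There the parametrization $A\mapsto\delta_A$ of Section~\ref{sec:L(1,2)} lets the commutation condition be read off as conditions on the entries of $A$. Throughout, $\alpha_1(r)=e_1re_1^\ast$ as in Example~\ref{ex:L(1,2)_lift_simple}.

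\emph{Step 1: a formula for $\delta_A\circ\alpha_1-\alpha_1\circ\delta_A$.} Let $A=\begin{pmatrix} a_1 & b\\ c& a_2\end{pmatrix}\in M_2(L_0)$ and $r\in L_0$. The Leibniz rule and the formulas for $\delta_A(e_1)$ and $\delta_A(e_1^\ast)$ give
\begin{equation*}
\delta_A(e_1re_1^\ast)-e_1\delta_A(r)e_1^\ast
=(e_1a_1+e_2c)\,r\,e_1^\ast-e_1\,r\,(a_1e_1^\ast+be_2^\ast).
\end{equation*}
Both directions of the lemma are decided by when this expression vanishes for all $r$.

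\emph{Step 2: the converse direction.} Take $A=\mathrm{diag}(0,a)$. Then $a_1=b=c=0$, so the right-hand side above is identically zero. Hence $\delta_A|_{L_0}$ commutes with $\alpha_1$.

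\emph{Step 3: the forward direction.} Let $\delta\circ\alpha_1=\alpha_1\circ\delta$.
\begin{enumerate}
\item By Corollary~\ref{cor:Z_lift_from_generator_comm}, $\delta$ has a graded lift $\hd$ to $\LConetwo$.
\item By the description of graded derivations in Section~\ref{sec:L(1,2)}, $\hd=\delta_A$ for some $A$, and $\delta=\delta_A|_{L_0}$. I take from that section that every graded derivation is of the form $\delta_A$; if inner graded pieces occur there, they should be absorbed into $A$ in the same way.
\item The right-hand side of Step 1 now vanishes for all $r$. Put $r=1$, multiply on the left by $e_2^\ast$ and on the right by $e_1$. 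Using $e_i^\ast e_j=\delta_{ij}v$ this gives $c=0$. Multiplying instead on the left by $e_1^\ast$ and on the right by $e_2$ gives $b=0$.
\item With $b=c=0$, multiplying by $e_1^\ast$ on the left and by $e_1$ on the right gives $a_1r=ra_1$ for all $r\in L_0$. So $a_1\in Z(L_0)$.
\item $L_0$ is the algebraic direct limit of $M_{2^n}(\mathbb C)$ under unital embeddings, so its center is $\mathbb C$. Hence $a_1=\lambda\in\mathbb C$.
\item The matrix $\lambda I_2$ gives the scaled degree derivation $\delta_{\lambda I}(\gamma\mu^\ast)=\lambda(|\gamma|-|\mu|)\gamma\mu^\ast$, which vanishes on $L_0$. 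Since $A\mapsto\delta_A$ is additive, $\delta=\delta_{A-\lambda I}|_{L_0}$, and $A-\lambda I=\mathrm{diag}(0,a_2-\lambda)$, which is of the required form.
\end{enumerate}

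The main obstacle is Step 3(2), which relies on the parametrization in Section~\ref{sec:L(1,2)} covering every graded derivation. A secondary obstacle is the identification $Z(L_0)=\mathbb C$ in Step 3(5). I would justify it by noting that an element of $L_0$ central in $L_0$ lies in some $M_{2^n}(\mathbb C)$ and commutes with all of it, hence is scalar.
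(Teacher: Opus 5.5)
Your proposal is correct and follows essentially the same route as the paper: lift via Corollary~\ref{cor:Z_lift_from_generator_comm}, write the lift as $\delta_A$ by Lemma~\ref{lem:der_L(1,2)}, and expand $\delta_A(e_1re_1^\ast)-e_1\delta_A(r)e_1^\ast$ to force $b=c=0$ and $a_1\in Z(L_0)=\mathbb{C}$. Then subtract $\lambda I_2$, which vanishes on $L_0$; your sandwiching with $r=1$ only spells out what the paper states in one line, and the hedge in Step~3(2) is unnecessary because Lemma~\ref{lem:der_L(1,2)} already gives that every graded derivation is $\delta_A$ for a unique $A$.
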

\begin{proof}
Assume first that $\delta \circ \alpha_1 = \alpha_1 \circ \delta$.
Then Corollary~\ref{cor:Z_lift_from_generator_comm} implies that $\delta$
admits a graded lift $\hd \in \der_{\mathrm{gr}}(L_{\mathbb C}(1,2))$.
According to Lemma~\ref{lem:der_L(1,2)}, this lift is of the form $\hd = \delta_A$
for some
\begin{equation*}
    A =
    \begin{pmatrix}
        a_1 & b \\
        c & a_2
    \end{pmatrix}
    \in M_2(L_0).
\end{equation*}
Applying the formulas for $\delta_A(e_1)$ and $\delta_A(e_1^\ast)$ from Lemma~\ref{lem:der_L(1,2)}, one finds that
\begin{equation*}
\begin{aligned}
    0
    &=
    (\delta \circ \alpha_1)(r) - (\alpha_1 \circ \delta)(r)
    \\
    &=
    \delta_A(e_1 r e_1^\ast) - e_1 \delta_A(r) e_1^\ast 
    \\
    &=
    \delta_A(e_1) r e_1^\ast + e_1 r \delta_A(e_1^\ast) 
    \\
    &=
    (e_1 a_1 + e_2 c) r e_1^\ast - e_1 r (a_1 e_1^\ast + b e_2^\ast) 
    \\
    &=
    e_1 [a_1,r] e_1^\ast + e_2 c r e_1^\ast - e_1 r b e_2^\ast .
\end{aligned}
\end{equation*}
It follows that $b=c=0$ and $[a_1,r]=0$ for all $r \in L_0$.
Since $Z(L_0) \cong \mathbb{C}$ by Lemma~\ref{lem:L_0}, $a_1=\lambda$ for some $\lambda\in\mathbb{C}$.
Thus
\begin{equation*}
    A =
    \begin{pmatrix}
        \lambda & 0 \\
        0 & a_2
    \end{pmatrix}.
\end{equation*}
The claim now follows from the fact that $\delta_A\vert_{L_0} = \delta_{A - \lambda \id_2}\vert_{L_0}$.

Conversely, let
\begin{equation*}
    A =
    \begin{pmatrix}
        0 & 0 \\
        0 & a
    \end{pmatrix}
\end{equation*}
for some $a \in L_0$, and set $\delta := \delta_A\vert_{L_0}$.
By Lemma~\ref{lem:der_L(1,2)}, $\delta_A(e_1) = \delta_A(e_1^\ast) = 0$, and therefore, for every $r \in L_0$,
\begin{equation*}
    \delta(\alpha_1(r))
    =
    \delta_A(e_1 r e_1^\ast)
    =
    e_1 \delta_A(r) e_1^\ast
    =
    \alpha_1(\delta(r)).
\end{equation*}
Thus $\delta \circ \alpha_1 = \alpha_1 \circ \delta$.
\end{proof}

\begin{remark}
The conclusions above do not depend on the particular choice of module frame system. 
Different choices lead to different associated factor systems, but the resulting structural statements remain the same. 
In practice, however, the complexity of the formulas can vary substantially, and suitable choices may simplify computations considerably.
\end{remark}

\subsection{The Atiyah sequence of a strongly graded ring}\label{sec:Atiyah}

Let $S$ be a strongly $G$-graded ring with principal component $R$.
In this section we associate an Atiyah-type sequence to $S$ and analyze~its constituent  Lie rings, providing a framework in which classical Riemannian concepts - such as Levi–Civita connections - can be extended to this algebraic context.

Denote by
\begin{align*}
    \der_{\mathrm{gr}}(S)
    :=
    \{ \delta \in \der(S) : (\forall g \in G) \, \delta(S_g) \subseteq S_g \}
\end{align*}
the Lie subalgebra of $\der(S)$ consisting of graded derivations.
The restriction map
\begin{equation}\label{eq:res}
    \mathrm{res}: \der_{\mathrm{gr}}(S) \to \der(R),
    \quad
    \delta \mapsto \delta \vert_R
\end{equation}
has kernel
\begin{align*}
    \gau(S)
    :=
    \{\delta \in \der_{\mathrm{gr}}(S): \delta \vert_R = 0\}.
\end{align*}
Let $\der_\mathrm{ext}(R)$ be the Lie subalgebra of $\der(R)$ consisting of derivations that admit a graded lift to $S$.
By definition, $\der_\mathrm{ext}(R)$ coincides with the image of the restriction map~\eqref{eq:res}.
This yields the following short exact sequence of Lie rings,
which we refer to as the \emph{Atiyah sequence} of~$S$:
\begin{equation}\label{eq:AtiyahNC}
    0 \longrightarrow \gau(S) \longrightarrow \der_{\mathrm{gr}}(S)
    \longrightarrow \der_\mathrm{ext}(R) \longrightarrow 0.
\end{equation}

Our aim is to make this sequence more explicit by providing concrete descriptions of its terms.
For any factor system of $S$, Theorem~\ref{thm:liftder}~(2) yields the characterization
\begin{align*}
    \der_\mathrm{ext}(R)
    =
    \{ \delta \in \der(R) : \delta \text{ satisfies the assumptions of Theorem~\ref{thm:liftder}~(2)} \}.
\end{align*}
Moreover, for a skew-group ring $S = R \rtimes_\alpha G$, it is readily seen that $\gau(S)$ is isomorphic to the Abelian Lie ring
\begin{equation*}
    Z^1_\beta(G,Z(R))
    :=
    \{\eta: G \to Z(R) : (\forall g,h \in G) \, \eta(gh) = \eta(g) + \beta_g(\eta(h))\},
\end{equation*}
of crossed $Z(R)$-valued homomorphisms on $G$
(see Remark~\ref{rem:skew}).
Here $\beta: G \to \aut(Z(R))$~denotes the action induced by $S$ (\cf~the discussion around~\eqref{eq:mod_str});
in the skew-group case one has $\beta_g = \alpha_g\vert_{Z(R)}$ for all $g\in G$.
We proceed to show that this identification extends to arbitrary strongly
$G$-graded rings.

%To this end, recall that $S$ induces a $G$-action on $Z(R)$ via a homomorphism $\alpha: G \to \aut(Z(R))$, determined by 
%\begin{equation}\label{eq:mod_str}
%    s z = \alpha_g(z) s
%\end{equation}
%for all $g \in G$, $x \in S_g$, and $z \in Z(R)$ (see, e.g.,~\cite[Lem.~I.3.12]{NaOy82})
%With this in place, we can state the following theorem:

\begin{theorem}
\label{thm:gau}
    The Lie ring $\gau(S)$ is isomorphic to $Z^1_\beta(G,Z(R))$.
\end{theorem}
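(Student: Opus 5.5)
We construct mutually inverse maps between $\gau(S)$ and $Z^1_\beta(G,Z(R))$. Let $D \in \gau(S)$. For each $g\in G$ the restriction $D_g := D\vert_{S_g}: S_g \to S_g$ is additive. Because $D\vert_R = 0$, the Leibniz rule gives $D(r s r') = r D(s) r'$ for $r,r'\in R$ and $s\in S_g$, so $D_g$ is an $R$-bimodule endomorphism of the invertible bimodule $S_g$. By \cite[Lem.~I.3.11.2]{NaOy82}, the lemma already used above to identify the defect maps with central elements, there is a unique $\eta(g)\in Z(R)$ with $D(s) = \eta(g)s$ for all $s\in S_g$. In the frame picture one can take $\eta(g)$ to be read off from $D(x_g)y_g^t$, which then equals $\eta(g)\alpha_g(1)$. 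Since $D\vert_R=0$, we have $\eta(e)=0$.

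Next we check the cocycle identity. For $s_g\in S_g$ and $s_h\in S_h$, the Leibniz rule together with \eqref{eq:mod_str} gives
\begin{equation*}
    \eta(gh)s_gs_h = D(s_gs_h) = \eta(g)s_gs_h + s_g\eta(h)s_h = \bigl(\eta(g)+\beta_g(\eta(h))\bigr)s_gs_h .
\end{equation*}
Products $s_gs_h$ generate $S_{gh}=S_gS_h$ as a left $R$-module, and a central element $z$ with $zS_{gh}=0$ must vanish, since $1\in S_{gh}S_{(gh)^{-1}}$. Hence $\eta(gh)=\eta(g)+\beta_g(\eta(h))$, so $\eta\in Z^1_\beta(G,Z(R))$.

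For the inverse map, start from $\eta\in Z^1_\beta(G,Z(R))$ and define $D_\eta$ on homogeneous elements by $D_\eta(s):=\eta(g)s$ for $s\in S_g$, extended additively. This map is graded. It vanishes on $R$ because the cocycle identity at $g=h=e$ forces $\eta(e)=0$. Running the displayed computation backwards shows that $D_\eta$ satisfies the Leibniz rule on homogeneous elements, and hence on all of $S$. The two constructions are clearly inverse to each other, and both are additive.

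It remains to compare the Lie brackets. $Z^1_\beta$ is abelian, so we must show that $\gau(S)$ is abelian as well. For $s\in S_g$ we have $D_\eta D_{\eta'}(s)=\eta(g)\eta'(g)s$, and the product $\eta(g)\eta'(g)$ is symmetric in $\eta,\eta'$ because $Z(R)$ is commutative. Hence $[D_\eta,D_{\eta'}]=0$. The main delicate point is the uniqueness and centrality of $\eta(g)$, that is, that bimodule endomorphisms of $S_g$ are left multiplications by central elements. If one prefers not to rely on the cited lemma, this can be proved directly: set $z := \sum_i D(a_i)b_i$, where $1=\sum_i a_ib_i$ with $a_i\in S_g$ and $b_i\in S_{g^{-1}}$. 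Then $z$ commutes with $R$, because $D$ is $R$-bilinear and $\sum_i r a_i\otimes b_i=\sum_i a_i\otimes b_i r$ in $S_g\otimes_R S_{g^{-1}}\cong R$. Moreover $D(s)=\sum_i D(a_ib_is)=\sum_i D(a_i)b_is=zs$, using that $b_is\in R$ for $s\in S_g$.
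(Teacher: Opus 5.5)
Your proof is correct and follows essentially the same route as the paper. In both, the restriction of a gauge derivation to each $S_g$ is identified with left multiplication by a central element via \cite[Lem.~I.3.11.2]{NaOy82}, the crossed-homomorphism identity follows from the Leibniz rule and~\eqref{eq:mod_str}, and the inverse map is $s\mapsto \eta(g)s$. Beyond the paper, you spell out what it leaves as a ``direct computation'': that $\eta(e)=0$, that the two maps are mutually inverse, and that $\gau(S)$ is abelian. Your self-contained proof of the central-element lemma via $z=\sum_i D(a_i)b_i$ is also valid.
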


The proof proceeds by establishing two mutually inverse constructions:

\begin{lemma}\label{lem:gau-to-cocycle}
    Each $\delta \in \gau(S)$ determines an element
    $\eta_\delta \in Z^1_\beta(G,Z(R))$.
\end{lemma}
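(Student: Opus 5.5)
Let $\delta\in\gau(S)$. Since $\delta$ is graded and vanishes on $R$, its restriction $\delta_g:=\delta|_{S_g}\colon S_g\to S_g$ is additive. For $r,r'\in R$ and $s\in S_g$ the Leibniz rule gives $\delta_g(rsr')=r\,\delta_g(s)\,r'$, because $\delta(r)=\delta(r')=0$. Thus each $\delta_g$ is an $R$-bimodule endomorphism of the invertible bimodule $S_g$. By \cite[Lem.~I.3.11.2]{NaOy82}, the result already used to identify the defect maps $\Delta_\delta(g,h)$ with central elements, there is a unique $\eta_\delta(g)\in Z(R)$ with $\delta_g(s)=\eta_\delta(g)s$ for all $s\in S_g$. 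This defines $\eta_\delta\colon G\to Z(R)$, and $\eta_\delta(e)=0$ since $\delta|_{R}=0$.

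For an explicit formula in terms of a module frame system $(x_g,y_g)_{g\in G}$, I would use $\eta_\delta(g)=\delta(x_g)y_g^t$. This matrix equals $\eta_\delta(g)x_gy_g^t=\eta_\delta(g)\alpha_g(1)$, so it is the element $\eta(g)$ of Theorem~\ref{thm:liftder}~(1) specialised to $\delta|_R=0$. Alternatively, uniqueness can be read off from $\eta_\delta(g)=\sum_i\delta((x_g)_i)(y_g)_i$, using $y_g^tx_g=1$ and centrality. Neither formula is needed for the lemma itself, but they show how $\eta_\delta$ is computed in practice.

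It remains to check the crossed-homomorphism identity. Take $s_g\in S_g$ and $s_h\in S_h$. The Leibniz rule gives
\begin{equation*}
    \eta_\delta(gh)s_gs_h=\delta(s_gs_h)=\eta_\delta(g)s_gs_h+s_g\eta_\delta(h)s_h .
\end{equation*}
By \eqref{eq:mod_str}, $s_g\eta_\delta(h)=\beta_g(\eta_\delta(h))s_g$. Hence the element $z:=\eta_\delta(gh)-\eta_\delta(g)-\beta_g(\eta_\delta(h))\in Z(R)$ annihilates every product $s_gs_h$. Since $S_gS_h=S_{gh}$, we get $zS_{gh}=0$. To conclude $z=0$, multiply by $S_{(gh)^{-1}}$: this gives $zR=zS_{gh}S_{(gh)^{-1}}=0$, so $z=z\cdot 1=0$. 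Therefore $\eta_\delta(gh)=\eta_\delta(g)+\beta_g(\eta_\delta(h))$, that is, $\eta_\delta\in Z^1_\beta(G,Z(R))$.

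The main point needing care is the first step: that a bimodule endomorphism of $S_g$ is multiplication by a central element of $R$. If one prefers not to cite \cite{NaOy82}, I would argue directly. Set $z:=\sum_i\delta_g((x_g)_i)(y_g)_i$. Then $z\in R$, and for $s\in S_g$ one has $\delta_g(s)=\delta_g(s\,y_g^tx_g)$. Because $s(y_g)_i\in R$, left $R$-linearity gives
\begin{equation*}
    \delta_g(s)=\sum_i s(y_g)_i\,\delta_g((x_g)_i)\quad\text{and}\quad \sum_i\delta_g((x_g)_i)(y_g)_i\,s=zs .
\end{equation*}
Comparing these two expressions, and checking that $z$ commutes with $R$ via the two-sided linearity of $\delta_g$, is where the bookkeeping lies. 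Citing the lemma from \cite{NaOy82} avoids that computation.
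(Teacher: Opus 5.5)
Your main argument is correct and is essentially the paper's proof: the restriction to $S_g$ is an $R$-bimodule endomorphism, \cite[Lem.~I.3.11.2]{NaOy82} supplies the central element $\eta_\delta(g)$, and the Leibniz rule together with \eqref{eq:mod_str} gives $z\,s_gs_h=0$. Your explicit step $zR=zS_{gh}S_{(gh)^{-1}}=0$ is exactly what the paper compresses into ``using strong gradedness''.

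The optional side material, however, contains a genuine error. The relation $y_g^tx_g=\sum_i (y_g)_i(x_g)_i=1$ has the factors in the opposite order. Hence $\sum_i\delta((x_g)_i)(y_g)_i=\eta_\delta(g)\sum_i(x_g)_i(y_g)_i$, which is $\eta_\delta(g)$ times the trace of the idempotent $\alpha_g(1)=x_gy_g^t$, not $\eta_\delta(g)$. For instance, take $L_{\mathbb C}(1,2)$ with $x_{-1}=(e_1^\ast,e_2^\ast)^t$, $y_{-1}=(e_1,e_2)^t$ and the gauge derivation $\delta_{\lambda\id_2}$. There $\eta_\delta(-1)=-\lambda$, but $\delta(e_1^\ast)e_1+\delta(e_2^\ast)e_2=-2\lambda$.

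For the same reason, your direct replacement for the citation, with $z:=\sum_i\delta_g((x_g)_i)(y_g)_i$, cannot be completed: the two expressions you intend to compare are not equal in general. What your left-linearity computation actually shows is $\delta_g(s)=s\,w$ with $w:=y_g^t\delta_g(x_g)\in R$. Right linearity then gives $s(rw-wr)=\delta_g(sr)-\delta_g(s)r=0$, so $S_g(rw-wr)=0$, and multiplying on the left by $S_{g^{-1}}$ gives $rw=wr$. Thus $w\in Z(R)$, and by \eqref{eq:mod_str} $\delta_g(s)=\beta_g(w)s$, i.e.\ $\eta_\delta(g)=\beta_g(y_g^t\delta(x_g))$.

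Alternatively, use the frame in degree $g^{-1}$, whose $y$-entries lie in $S_g$. Right linearity gives $\delta_g(s)=\delta_g(y_{g^{-1}})^t x_{g^{-1}}s$, so $\eta_\delta(g)=\delta(y_{g^{-1}})^tx_{g^{-1}}$, and centrality follows from $(zr-rz)S_g=0$. Since none of these remarks is used in your main line, your proof of the lemma stands as written.
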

\begin{proof}
Let $\delta \in \gau(S)$.
Since $\delta$ vanishes on $R$ and is graded, its restriction to each
homogeneous component is an $R$-bimodule homomorphism.
By~\cite[Lem.~I.3.11.2]{NaOy82}, it follows that for~each $g \in G$ there
exists a unique element $\eta_\delta(g) \in Z(R)$ such that
$\delta\vert_{S_g}(s) = \eta_\delta(g)s$ for all $s \in S_g$.
Thus $\delta$ determines a map $\eta_\delta: G \to Z(R)$.

To verify that $\eta_\delta$ satisfies the crossed homomorphism property, let $g,h \in G$ and let $s \in S_g$, and $t \in S_h$.
Then $\delta(st) = \eta_\delta(st) xy$.
On the other hand, using the Leibniz rule and~\eqref{eq:mod_str}
\begin{equation*}
    \delta(st)
    =
    \delta(s) t + s \delta(t)
    =
    \eta_\delta(g) st + s\eta_\delta(h) t
    =
    (\eta_\delta(g) + \beta_g(\eta_\delta(h))) st.
\end{equation*}
Comparing the two expressions for $\delta(st)$ and using strong gradedness
of $S$ yields
\begin{equation*}
    \eta_\delta(gh) = \eta_\delta(g) + \beta_g(\eta_\delta(h)),
\end{equation*}
\ie, $\eta_\delta \in Z^1_\beta(G,Z(R))$.
\end{proof}

\begin{lemma}\label{lem:cocycle-to-gau}
Each $\eta \in Z^1_\beta(G,Z(R))$ determines an element
$\delta_\eta \in \gau(S)$.
\end{lemma}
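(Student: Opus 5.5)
Given $\eta \in Z^1_\beta(G,Z(R))$, I will define $\delta_\eta \colon S \to S$ on homogeneous elements by
\[
\delta_\eta(s) := \eta(g)\, s \qquad \text{for } g \in G,\ s \in S_g,
\]
and extend additively using $S = \bigoplus_{g\in G} S_g$. This is well defined because the decomposition is direct. It is graded, since $\eta(g)\in R$ and $R S_g \subseteq S_g$.

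\emph{Vanishing on $R$.} The cocycle identity at $g=h=e$ gives $\eta(e) = \eta(e) + \beta_e(\eta(e)) = 2\eta(e)$, because $\beta_e = \id$. Hence $\eta(e)=0$, so $\delta_\eta|_R = 0$. Once we know $\delta_\eta$ is a derivation, this puts it in $\gau(S)$.

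\emph{Leibniz rule.} By biadditivity it suffices to check homogeneous $s\in S_g$ and $t\in S_h$. Then $st \in S_{gh}$, so $\delta_\eta(st) = \eta(gh)\,st$. On the other hand, using centrality of $\eta(h)$ in $R$ and the relation \eqref{eq:mod_str}, $s z = \beta_g(z) s$, we get
\[
\delta_\eta(s)t + s\delta_\eta(t) = \eta(g)st + s\eta(h)t = \bigl(\eta(g)+\beta_g(\eta(h))\bigr)st .
\]
This equals $\eta(gh)\,st$ by the crossed homomorphism identity. So $\delta_\eta$ is a graded derivation vanishing on $R$, i.e. $\delta_\eta \in \gau(S)$.

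\emph{Consistency with Lemma~\ref{lem:gau-to-cocycle}.} I will also note that the constructions are mutually inverse, $\eta_{\delta_\eta} = \eta$, which follows from the uniqueness in \cite[Lem.~I.3.11.2]{NaOy82}. The only point needing care is the use of \eqref{eq:mod_str} to move the central element $\eta(h)$ past $s$; this is exactly the defining property of $\beta$, so I expect no real obstacle.
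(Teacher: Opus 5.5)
Your proof is correct and follows the paper's argument essentially verbatim: the same definition $\delta_\eta(s) = \eta(g)s$ on $S_g$, and the same Leibniz check via \eqref{eq:mod_str} and the crossed homomorphism identity. You also justify $\eta(e)=0$ (from the cocycle identity at $g=h=e$ and $\beta_e=\id$), which the paper's ``Clearly, $\delta_\eta\vert_R = 0$'' leaves implicit.
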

\begin{proof}
Let $\eta \in Z^1_\beta(G,Z(R))$.
Define a map $\delta_\eta: S \to S$ on homogeneous components by
\begin{equation*}
    \delta_\eta(s) := \eta(g) s,
    \quad
    g \in G, s \in S_g.
\end{equation*}
Clearly, $\delta_\eta\vert_R = 0$.
To verify the Leibniz rule, let $g,h \in G$ and let $s \in S_g$ and $t \in S_h$.
Then
\begin{equation*}
    \delta_\eta(st)
    =
    \eta(gh) st
    =
    (\eta(g) + \beta_g(\eta(h))) st.
\end{equation*}
On the other hand,
\begin{equation*}
    \delta_\eta(s) t + s \delta_\eta(t)
    =
    \eta(g) st + s \eta(h)t
    =
    (\eta(g) + \beta_g(\eta(h))) st,
\end{equation*}
where the last equality follows from~\eqref{eq:mod_str}.
Thus $\delta_\eta \in \gau(S)$.
\end{proof}

A direct computation now shows that the constructions in
Lemmas~\ref{lem:gau-to-cocycle} and~\ref{lem:cocycle-to-gau}
are mutually inverse and compatible with the Lie ring structure,
which establishes Theorem~\ref{thm:gau}.

In some situations it is convenient to work with a Lie subring
$\mathfrak{g} \subseteq \der_\mathrm{ext}(R)$.
Pulling back the Atiyah sequence~\eqref{eq:AtiyahNC} along the inclusion $\iota:\mathfrak{g} \hookrightarrow \der_\mathrm{ext}(R)$ yields the exact sequence
\begin{equation}\label{eq:AtiyahNC_pb}
    0 \longrightarrow \gau(S) \longrightarrow \hat{\mathfrak{g}} \longrightarrow \mathfrak{g} \longrightarrow 0,
\end{equation}
where
\begin{equation*}
    \hat{\mathfrak{g}}
    :=
    \iota^\ast(\der_{\mathrm{gr}}(S))
    =
    \{(\delta,\hd) \in \mathfrak  g\times \der_{\mathrm{gr}}(S)
    : \mathrm{res}(\hd) = \delta\}.
\end{equation*}

\begin{example}\label{ex:at_skew}
    Let $S = R \rtimes_\alpha G$ be a skew group ring.
    Consider the Lie subring
    \begin{equation*}
        \mathfrak{g}
        :=
        \{ \delta \in \der(R) : (\forall g \in G) \, \delta \circ \alpha_g = \alpha_g \circ \delta \} \subseteq \der_\mathrm{ext}(R) .
    \end{equation*}
    By Example~\ref{ex:skew_lift}, each $\delta \in \mathfrak{g}$ admits a canonical
    graded lift $\hd \in \der_{\mathrm{gr}}(S)$ given, for all $g \in G$ and $r \in R$, by $\hd(r u_g)=\delta(r)u_g$.
    Moreover, the assignment
    \begin{equation*}
        \mathfrak{g} \to \der_{\mathrm{gr}}(S),
        \quad
        \delta \mapsto \hd,
    \end{equation*}
    is a Lie ring homomorphism.
    Consequently, the pull-back sequence~\eqref{eq:AtiyahNC_pb} splits in this case.
\end{example}

%For the next example, recall that each element in a complex group ring $\mathbb{C}[G]$ can be uniquely written as a sum $\sum_{g\in G} f_g u_g$ with only finitely many non-zero coefficients $f_g \in \C$ and the Dirac functions
%\begin{equation*}
%    u_g :G \to \mathbb{C}, 
%    \quad 
%    u_g(g')
%    :=
%    \begin{cases}
%        1 \quad \text{if} \quad g = g'
%        \\
%        0 \quad \text{otherwise}.
%    \end{cases}
%\end{equation*}

\begin{example}\label{ex:heisenberg_der}
Let $H_3$ denote the discrete Heisenberg group, realized as the semidirect product $\mathbb{Z}^2 \rtimes \mathbb{Z}$, where the action of $\mathbb{Z}$ on $\mathbb{Z}^2$ is given by $k.(m,n) := (m, km+n)$.
Then the group ring $\C[H_3]$ admits the description as the skew group ring
\begin{equation*}
    \C[H_3] \cong R \rtimes_\alpha \mathbb Z,
    \quad
    R := \C[\mathbb Z^2] \cong \C[u^{\pm1},v^{\pm1}],
\end{equation*}
where $\alpha \in \aut(R)$ is determined by $\alpha(u) = uv$ and $\alpha(v) = v$.

Since $R$ is commutative, Corollary~\ref{cor:Z_crossed_lift_comm} implies
\begin{equation*}
    \der_\mathrm{ext}(R)
    =
    \{\delta \in \der(R) : \delta \circ \alpha = \alpha \circ \delta\}.
\end{equation*}
Every derivation of $R$ is of the form
\begin{equation*}
    \delta = f \delta_u + g \delta_v,
    \quad
    f,g \in R,
\end{equation*}
where $\delta_u := u \frac{\partial}{\partial u}$ and $\delta_v := v \frac{\partial}{\partial v}$.
We claim that
\begin{equation*}
    \der_\mathrm{ext}(R)
    =
    \C[v^{\pm1}] \delta_u .
\end{equation*}

Indeed, evaluating the commutation condition $\delta\circ\alpha=\alpha\circ\delta$ at $v$ yields
\begin{equation*}
    \delta(\alpha(v)) = \delta(v) = gv,
    \quad
    \alpha(\delta(v)) = \alpha(gv) = \alpha(g)v,
\end{equation*}
so $g=\alpha(g)$, and hence $g\in\C[v^{\pm1}]$.

Evaluating at $u$ yields
\begin{equation*}
    \delta(\alpha(u))
    =
    \delta(uv)
    =
    (f+g)uv,
    \qquad
    \alpha(\delta(u))
    =
    \alpha(f)uv,
\end{equation*}
so $\alpha(f)=f+g$.
Iterating yields $\alpha^n(f)=f+ng$.
Consider the decomposition
\begin{equation*}
    R = \bigoplus_{k\in\mathbb Z} \C[v^{\pm1}] u_k
\end{equation*}
and let $p_0: R \to \C[v^{\pm1}]$ denote the projection onto the $u_0$–component.
Since $\alpha(u)=uv$ preserves the $u$–degree, one has $p_0 \circ \alpha^n = p_0$.
Hence
\begin{equation*}
    p_0(f)
    =
    p_0(\alpha^n(f))
    =
    p_0(f+ng)
    =
    p_0(f)+ng,
\end{equation*}
which forces $g=0$.
Thus $\alpha(f)=f$, and hence $f\in\C[v^{\pm1}]$.

Consequently, 
\begin{equation*}
    \der_\mathrm{ext}(R)
    =
    \C[v^{\pm1}] \delta_u,
\end{equation*}
as claimed.
In particular, the Atiyah sequence~\eqref{eq:AtiyahNC} for $S=\C[H_3]$ takes the form
\begin{equation*}
    0 \longrightarrow R \longrightarrow \der_{\mathrm{gr}}(S) \longrightarrow \C[v^{\pm1}] \delta_u \longrightarrow 0,
\end{equation*}
which splits by Example~\ref{ex:at_skew}.
\end{example}

\begin{example}
Let $E$ be a finite directed graph consisting of a single cycle of length $n$,
\begin{equation*}
    v_1 \xrightarrow{e_1} v_2 \xrightarrow{e_2} \cdots
    \xrightarrow{e_{n-1}} v_n \xrightarrow{e_n} v_1 .
\end{equation*}
The associated Leavitt path algebra admits the well-known description
\begin{equation*}
    L_{\mathbb C}(E) \cong M_n(\mathbb C[t,t^{-1}]).
\end{equation*}
The degree-zero component is generated by the vertex idempotents,
\begin{equation*}
    L_0 = \operatorname{span}_{\mathbb{C}}\{v_1,\dots,v_n\}.
\end{equation*}
As the vertices are mutually orthogonal idempotents, it follows that
\begin{equation*}
L_0 \cong \mathbb{C}^n,
\end{equation*}
which is a finite commutative $C^\ast$-algebra. 
Consequently, every derivation of $L_0$ is trivial. 
It follows that the graded derivations of $L_{\mathbb{C}}(E)$ coincide with the gauge derivations. 
More precisely, by Theorem~\ref{thm:gau} one obtains
\begin{equation*}
    \der_{\mathrm{gr}}(L_{\mathbb{C}}(E)) \cong \mathbb{C}^n .
\end{equation*}
\end{example}

\begin{example}\label{ex:L(1,2)_atiyah}
Let $E$ be the graph with one vertex $v$ and two loop edges $e_1,e_2$.
The associated Leavitt path algebra $L_{\mathbb C}(1,2)$ is strongly $\mathbb Z$-graded but not a crossed product (see Section~\ref{lem:notcrossed}).

Lemma~\ref{lem:L_0} shows that $Z(L_0)\cong\mathbb{C}$. 
Hence Theorem~\ref{thm:gau} implies that $\gau(L_{\mathbb{C}}(1,2)) \cong \mathbb{C}$, and the Atiyah sequence~\eqref{eq:AtiyahNC} for $L_{\mathbb C}(1,2)$ becomes
\begin{equation*}
    0 \longrightarrow \mathbb{C} 
    \longrightarrow \der_{\mathrm{gr}}(L_{\mathbb{C}}(1,2))
    \longrightarrow \der_{\mathrm{ext}}(L_0)
    \longrightarrow 0.
\end{equation*}

Corollary~\ref{cor:lieaut} identifies $\der_{\mathrm{gr}}(L_{\mathbb{C}}(1,2))$ with $M_2(L_0)$ via the assignment $A \mapsto \delta_A$. 
The kernel of the projection in the Atiyah sequence corresponds to scalar matrices, and therefore
\begin{equation*}
    \der_{\mathrm{ext}}(L_0)\cong M_2(L_0)/\mathbb{C}.
\end{equation*}
In particular, $\der_{\mathrm{ext}}(L_0)\neq 0$.

More concretely, consider the Lie subalgebra
\begin{equation*}
    \mathfrak{g}_{\alpha_1}
    =
    \left\{
    \delta_A|_{L_0} :
    A =
    \begin{pmatrix}
        0 & 0 \\
        0 & a
    \end{pmatrix},
    \, a \in L_0
    \right\}
    \subseteq \der(L_0).
\end{equation*}
By Lemma~\ref{lem:delta.alpha.com}, these derivations commute with $\alpha_1$
introduced in Example~\ref{ex:L(1,2)_factorsystem}.
Corollary~\ref{cor:Z_lift_from_generator_comm} therefore shows that they admit graded lifts to $L_{\mathbb C}(1,2)$.
Thus $\mathfrak{g}_{\alpha_1}$ identifies with a Lie subalgebra of
$\der_{\mathrm{ext}}(L_0)$.
\end{example}

\subsection{The Atiyah curvature of a strongly graded ring}\label{sec:atiyah_curvature}

Let $S$ be a strongly $G$-graded ring with principal component $R$.
In this section we introduce the Atiyah curvature associated with a section of the Atiyah sequence and study its algebraic properties.
This curvature measures the failure of a chosen lift of derivations of $R$ to form a Lie homomorphism.
We then relate it to the induced connections on the homogeneous components and indicate how it enters characteristic-class constructions.

Let $\mathfrak{g}\subseteq \der_\mathrm{ext}(R)$ be a Lie subring, and consider a (not necessarily Lie) section
\begin{equation*}
    \sigma:\mathfrak{g} \to \hat{\mathfrak{g}},
    \quad
    \mathrm{res} \circ \sigma = \mathrm{id}_{\mathfrak{g}},
\end{equation*}
of the pull-back sequence~\eqref{eq:AtiyahNC_pb}.

\begin{definition}\label{def:atiyah_curvature}
The \emph{Atiyah curvature} (or \emph{Lie curvature}) of the section $\sigma$ is the map
\begin{equation*}
    F_\sigma: \mathfrak{g} \times \mathfrak{g} \to \gau(S),
    \quad
    F_\sigma(\delta_1,\delta_2)
    :=
    [\sigma(\delta_1),\sigma(\delta_2)]-\sigma([\delta_1,\delta_2]).
\end{equation*}
\end{definition}

It is readily verified that $F_\sigma$ is biadditive and alternating, \ie,
\begin{equation*}
    F_\sigma(\delta,\delta) = 0,
    \quad
    F_\sigma(\delta_1,\delta_2) = -F_\sigma(\delta_2,\delta_1)
\end{equation*}
for all $\delta,\delta_1,\delta_2 \in \mathfrak{g}$.
Moreover, $\sigma$ is a Lie ring homomorphism, equivalently, the pull-back sequence~\eqref{eq:AtiyahNC_pb} splits as Lie rings, if and only if $F_\sigma = 0$.

Recall that the multiplicative curvature $\Delta_\delta \in Z^2_\beta(G,Z(R))$ measures the obstruction to lifting a single derivation $\delta \in \der(R)$ to a graded derivation of $S$ (see Remark~\ref{rem:mult_curv_class}).
For derivations $\delta \in \mathfrak{g}$ such lifts exist by definition.
The Atiyah curvature $F_\sigma$ measures the obstruction to choosing these lifts \emph{coherently} so that the resulting section $\sigma: \mathfrak{g} \to \hat{\mathfrak{g}}$ becomes a Lie homomorphism.

Note that $\der_{\mathrm{gr}}(S)$ acts on $\gau(S)$ by commutators, \ie,
\begin{equation*}
    \delta.\eta:=[\delta,\eta],
    \quad
    \delta \in \der_{\mathrm{gr}}(S), \eta \in \gau(S).
\end{equation*}
Using the Jacobi identity in $\der_{\mathrm{gr}}(S)$ applied to
$\sigma(\delta_1),\sigma(\delta_2),\sigma(\delta_3)$,
for $\delta_1,\delta_2,\delta_3 \in \mathfrak{g}$, and expanding
\begin{equation*}
    [\sigma(\delta_i),\sigma(\delta_j)]
    =
    \sigma([\delta_i,\delta_j]) + F_\sigma(\delta_i,\delta_j),
\end{equation*}
one obtains the following abstract Bianchi identity:

\begin{lemma}\label{lem:bianchi}
$F_\sigma$ satisfies the (Chevalley--Eilenberg) $2$-cocycle identity
\begin{gather*}
    F_\sigma([\delta_1,\delta_2],\delta_3)
    +
    F_\sigma([\delta_2,\delta_3],\delta_1)
    +
    F_\sigma([\delta_3,\delta_1],\delta_2)
    \\
    =
    \sigma(\delta_1).F_\sigma(\delta_2,\delta_3)
    +
    \sigma(\delta_2).F_\sigma(\delta_3,\delta_1)
    +
    \sigma(\delta_3).F_\sigma(\delta_1,\delta_2).
\end{gather*}
\end{lemma}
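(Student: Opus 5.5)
The plan is to expand the Jacobi identity in $\der_{\mathrm{gr}}(S)$ for $X_i:=\sigma(\delta_i)$, $i=1,2,3$:
\begin{equation*}
    [X_1,[X_2,X_3]]+[X_2,[X_3,X_1]]+[X_3,[X_1,X_2]]=0 .
\end{equation*}
By Definition~\ref{def:atiyah_curvature} we have $[X_2,X_3]=\sigma([\delta_2,\delta_3])+F_\sigma(\delta_2,\delta_3)$, and similarly for the other two inner brackets. Substituting and using biadditivity of the commutator, the first summand becomes
\begin{equation*}
    [X_1,\sigma([\delta_2,\delta_3])]+\sigma(\delta_1).F_\sigma(\delta_2,\delta_3).
\end{equation*}

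The only nontrivial step is to rewrite $[X_1,\sigma([\delta_2,\delta_3])]$. By antisymmetry of the commutator and the definition of $F_\sigma$ once more,
\begin{equation*}
    [\sigma(\delta_1),\sigma([\delta_2,\delta_3])]=-[\sigma([\delta_2,\delta_3]),\sigma(\delta_1)]
    =-\sigma([[\delta_2,\delta_3],\delta_1])-F_\sigma([\delta_2,\delta_3],\delta_1).
\end{equation*}
Here $[\delta_2,\delta_3]\in\mathfrak{g}$ because $\mathfrak{g}$ is a Lie subring, so this application of $F_\sigma$ is legitimate. The other two summands are handled in the same way by cyclic permutation.

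Summing the three cyclic contributions, the terms $-\sigma([[\delta_2,\delta_3],\delta_1])$ and their cyclic permutations combine to $-\sigma$ applied to the Jacobiator in $\mathfrak{g}$, using additivity of $\sigma$. This vanishes by the Jacobi identity in $\der(R)$. What remains is
\begin{equation*}
    \sum_{\mathrm{cyc}}\sigma(\delta_1).F_\sigma(\delta_2,\delta_3)-\sum_{\mathrm{cyc}}F_\sigma([\delta_2,\delta_3],\delta_1)=0 .
\end{equation*}
The cyclic sum $\sum F_\sigma([\delta_2,\delta_3],\delta_1)$ is exactly the left-hand side of the claimed identity, since its terms are $F_\sigma([\delta_2,\delta_3],\delta_1)$, $F_\sigma([\delta_3,\delta_1],\delta_2)$ and $F_\sigma([\delta_1,\delta_2],\delta_3)$. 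This gives the lemma.

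The main point to watch is that $\sigma$ is only assumed additive, not a Lie homomorphism; that is all the cancellation step uses. One should also note that $F_\sigma$ lands in $\gau(S)$, which is an ideal of $\der_{\mathrm{gr}}(S)$ as the kernel of $\mathrm{res}$. Hence $\sigma(\delta_i).F_\sigma(\delta_j,\delta_k)=[\sigma(\delta_i),F_\sigma(\delta_j,\delta_k)]$ is well defined in $\gau(S)$, and the identity holds there.
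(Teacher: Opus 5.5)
Your proof is correct and is the same argument the paper gives. The paper also applies the Jacobi identity in $\der_{\mathrm{gr}}(S)$ to $\sigma(\delta_1),\sigma(\delta_2),\sigma(\delta_3)$, substitutes $[\sigma(\delta_i),\sigma(\delta_j)]=\sigma([\delta_i,\delta_j])+F_\sigma(\delta_i,\delta_j)$, and cancels $\sigma$ of the Jacobiator in $\mathfrak{g}$. Your remarks that only additivity of $\sigma$ is used, and that $\gau(S)$ is an ideal so the action is well defined, spell out what the paper leaves implicit.
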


%Via Theorem~\ref{thm:gau}, $F_\sigma$ may be regarded as a $Z^1_\beta(G,Z(R))$-valued $2$-cocycle on $\mathfrak{g}$.

To relate this abstract construction to the graded structure of $S$, we now express the Atiyah curvature in terms of the induced connections on the homogeneous components.

Each lifted derivation $\hat{\delta} := \sigma(\delta)$, for $\delta \in \mathfrak{g}$,
restricts to a map
\begin{equation*}
    \nabla_{\delta,g} := \sigma(\delta)\vert_{S_g}: S_g \to S_g,
    \quad
    g \in G.
\end{equation*}
Thus the family $(\nabla_{\delta,g})_{\delta\in\mathfrak{g}}$ defines a $\mathfrak{g}$-bimodule connection $\nabla_g$ on $S_g$.
Its curvature is given by
\begin{equation*}
    R_{\nabla_g}(\delta_1,\delta_2)
    :=
    [\nabla_{\delta_1,g},\nabla_{\delta_2,g}]
    -
    \nabla_{[\delta_1,\delta_2],g},
    \quad
    \delta_1,\delta_2\in\mathfrak{g}.
\end{equation*}
For $s\in S_g$ one then has
\begin{equation*}
    F_\sigma(\delta_1,\delta_2)(s)
    =
    R_{\nabla_g}(\delta_1,\delta_2)(s).
\end{equation*}
Hence the Atiyah curvature records the curvature of the induced connections on the homogeneous components.

Using a module frame system for $S$, the induced connection and its curvature admit the following explicit matrix description.

\begin{lemma}\label{lem:atiyah_curvature_factor}
    Let $S$ be a strongly $G$-graded ring with principal component $R$, and let $(n,\alpha,\omega)$ be a factor system associated with a module frame system $(x_g,y_g)_{g \in G}$.
    Let $\mathfrak{g} \subseteq \der_{\mathrm{ext}}(R)$ be a Lie subring, and let $\sigma:\mathfrak{g} \to \hat{\mathfrak{g}}$ be a section of the corresponding pull-back extension~\eqref{eq:AtiyahNC_pb}.

    For $\delta\in\mathfrak{g}$ and $g\in G$, define
    \begin{equation*}
        \eta_\sigma(\delta,g)
        :=
        \sigma(\delta)(x_g)y_g^t
        \in M_{n_g}(R)\alpha_g(1).
    \end{equation*}
    Then the induced connection on $S_g$ is given by
    \begin{equation*}
        \nabla_{\delta,g}(u^t x_g)
        :=
        \delta(u)^t x_g + u^t \eta_\sigma(\delta,g) x_g,
        \quad
        u \in M_{n_g,1}(R),
    \end{equation*}
    and its curvature satisfies
    \begin{equation*}%\label{eq:curvature_component_formula}
        R_{\nabla_g}(\delta_1,\delta_2)(u^t x_g)
        =
        u^t \Omega_\sigma(\delta_1,\delta_2,g) x_g,
    \end{equation*}
    where
    \begin{align*}
        \Omega_\sigma(\delta_1,\delta_2,g)
        &:=
        \delta_1(\eta_\sigma(\delta_2,g))
        -
        \delta_2(\eta_\sigma(\delta_1,g))
        \\
        &\quad
        +
        [\eta_\sigma(\delta_1,g),\eta_\sigma(\delta_2,g)]
        -
        \eta_\sigma([\delta_1,\delta_2],g)
        \in M_{n_g}(R)\alpha_g(1).
    \end{align*}
\end{lemma}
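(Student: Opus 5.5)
The plan has two parts. First, identify the restriction $\nabla_{\delta,g}=\sigma(\delta)\vert_{S_g}$ with the formula~\eqref{eq:deriv_formula}, now with $\eta_\sigma(\delta,g)$ in place of $\eta(g)$. Then compute the commutator $[\nabla_{\delta_1,g},\nabla_{\delta_2,g}]$ directly in coordinates $u^t x_g$.

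For the first part, fix $\delta\in\mathfrak g$ and write $\hd:=\sigma(\delta)$. This is a graded derivation with $\hd\vert_R=\delta$. For $u\in M_{n_g,1}(R)$, the Leibniz rule gives
\begin{equation*}
\hd(u^t x_g)=\delta(u)^t x_g+u^t\hd(x_g),
\end{equation*}
where $\hd$ acts entrywise on $x_g$. Using $y_g^t x_g=1$ we have $\hd(x_g)=\hd(x_g)y_g^t x_g=\eta_\sigma(\delta,g)x_g$, which yields the stated formula. The membership $\eta_\sigma(\delta,g)\in M_{n_g}(R)\alpha_g(1)$ follows exactly as in the proof of Theorem~\ref{thm:liftder}~(1). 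The entries lie in $R$ because $\hd(S_g)\subseteq S_g$ and $S_gS_{g^{-1}}\subseteq R$, and right multiplication by $\alpha_g(1)=x_gy_g^t$ fixes $\eta_\sigma(\delta,g)$.

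For the second part, abbreviate $\eta_i:=\eta_\sigma(\delta_i,g)$. Applying the formula twice gives
\begin{equation*}
\nabla_{\delta_1,g}\nabla_{\delta_2,g}(u^tx_g)=\delta_1\delta_2(u)^tx_g+\delta_2(u)^t\eta_1x_g+\delta_1(u)^t\eta_2x_g+u^t\delta_1(\eta_2)x_g+u^t\eta_2\eta_1x_g .
\end{equation*}
Here I apply $\nabla_{\delta_1,g}$ to $(\delta_2(u)^t+u^t\eta_2)x_g$, viewed as $w^tx_g$ with $w^t=\delta_2(u)^t+u^t\eta_2$, and expand $\delta_1(w)$ by the Leibniz rule entrywise. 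Antisymmetrizing, the mixed terms $\delta_2(u)^t\eta_1+\delta_1(u)^t\eta_2$ cancel, and $\delta_1\delta_2(u)-\delta_2\delta_1(u)=[\delta_1,\delta_2](u)$. Subtracting $\nabla_{[\delta_1,\delta_2],g}(u^tx_g)$ removes this term together with $u^t\eta_\sigma([\delta_1,\delta_2],g)x_g$. What remains is
\begin{equation*}
u^t\bigl(\delta_1(\eta_2)-\delta_2(\eta_1)+\eta_2\eta_1-\eta_1\eta_2-\eta_\sigma([\delta_1,\delta_2],g)\bigr)x_g .
\end{equation*}

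The main point of care is the order of the $\eta$'s. Because coefficients act on the right of $u^t$, the composite produces $\eta_2\eta_1$, that is, $-[\eta_1,\eta_2]$, and not $+[\eta_1,\eta_2]$. I would double-check this sign against the stated $\Omega_\sigma$. If the discrepancy persists, the stated formula holds with the commutator term reversed, or equivalently with the convention that matrices act from the other side. I would flag this and reconcile it by recomputing on a crossed product with $n_g=1$.

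Finally, membership of $\Omega_\sigma$ in $M_{n_g}(R)\alpha_g(1)$ requires checking that $\delta_1(\eta_2)\alpha_g(1)$ differs from $\delta_1(\eta_2)$ only by $\eta_2\delta_1(\alpha_g(1))$-type terms. Alternatively, one can right-multiply the whole expression by $\alpha_g(1)$ without changing its action on $x_g$, since $\alpha_g(1)x_g=x_g$.
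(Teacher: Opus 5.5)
Your argument is the same as the paper's: restrict $\sigma(\delta)$ to $S_g$, use $y_g^tx_g=1$ to write $\sigma(\delta)(x_g)=\eta_\sigma(\delta,g)x_g$, then expand $\nabla_{\delta_1,g}\nabla_{\delta_2,g}$ and antisymmetrize. Your computation is correct, and the sign you flag is a genuine error in the statement, so there is nothing to reconcile. The paper's own expansion of the composite contains $u^t\eta_\sigma(\delta_2,g)\eta_\sigma(\delta_1,g)x_g$, exactly as yours does. The slip occurs at the antisymmetrization step. With the convention fixed in the statement (coefficients act on $u^t$ from the right), the curvature matrix is
\[
\delta_1(\eta_2)-\delta_2(\eta_1)-[\eta_1,\eta_2]-\eta_\sigma([\delta_1,\delta_2],g),
\qquad \eta_i:=\eta_\sigma(\delta_i,g).
\]
The discrepancy is visible on $x_g$. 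Consider the skew group ring $M_2(\mathbb{C})\rtimes_{\Ad(P)}\mathbb{Z}$ with $P=E_{12}+E_{21}$, $x_1=u$, $y_1=u^{-1}$. Let $\mathfrak{g}$ consist of the inner derivations $\operatorname{ad}a$ with $a$ traceless, and set $\sigma(\operatorname{ad}a):=\operatorname{ad}_S a$. This is well defined and a Lie homomorphism, so $R_{\nabla_1}=0$. Here $\eta_\sigma(\operatorname{ad}a,1)=a-PaP^{-1}$. For $a=E_{11}-E_{22}$ and $b=E_{12}$, your formula gives $0$, whereas the stated one gives $2[\eta_1,\eta_2]=8P\neq 0$. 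So you should state and prove the corrected formula rather than try to match the printed one.

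Your last paragraph touches a second inaccuracy: the membership $\Omega_\sigma\in M_{n_g}(R)\alpha_g(1)$ fails in general. Right multiplication by $1-\alpha_g(1)$ kills the commutator term and $\eta_\sigma([\delta_1,\delta_2],g)$. It leaves $\eta_2\,\delta_1(\alpha_g(1))(1-\alpha_g(1))-\eta_1\,\delta_2(\alpha_g(1))(1-\alpha_g(1))$, which can be nonzero. For example, take $S=\mathbb{C}[t][u^{\pm1}]$ with $x_1=(u,tu)^t$ and $y_1=((1-t^2)u^{-1},tu^{-1})^t$. Let $\mathfrak{g}$ be spanned by $\delta_1=\partial_t$ and $\delta_2=t\partial_t$, and let $\sigma$ be the linear section lifting them by $u\mapsto 0$ and $u\mapsto u$. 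Then this term equals $(1+t^2)\begin{pmatrix}1\\ t\end{pmatrix}\begin{pmatrix}-t & 1\end{pmatrix}\neq 0$. So your first suggested check cannot succeed. Your fallback is the right repair: replace $\Omega_\sigma$ by $\Omega_\sigma\alpha_g(1)$, which acts identically on $x_g$.
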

\begin{proof}
The formula for $\nabla_{\delta,g}$ follows from Theorem~\ref{thm:liftder}.

Now let $\delta_1,\delta_2 \in \mathfrak{g}$ and let $u\in M_{n_g,1}(R)$.
A direct computation gives
\begin{align*}
    \nabla_{\delta_1,g}\nabla_{\delta_2,g}(u^t x_g)
    &=
    \nabla_{\delta_1,g}(\delta_2(u)^t x_g + u^t\eta_\sigma(\delta_2,g) x_g)
    \\
    &=
    \delta_1(\delta_2(u))^t x_g
    + \delta_2(u)^t\eta_\sigma(\delta_1,g) x_g
    \\
    &\quad
    + \delta_1(u)^t\eta_\sigma(\delta_2,g) x_g
    + u^t \delta_1(\eta_\sigma(\delta_2,g)) x_g
    \\
    &\quad
    + u^t \eta_\sigma(\delta_2,g) \eta_\sigma(\delta_1,g) x_g.
\end{align*}
Interchanging $\delta_1$ and $\delta_2$ and subtracting, the mixed terms cancel, and one obtains
\begin{align*}
    R_{\nabla_g}(\delta_1,\delta_2)(u^t x_g)
    &=
    u^t(
    \delta_1(\eta_\sigma(\delta_2,g))
    -
    \delta_2(\eta_\sigma(\delta_1,g))
    \\
    &\qquad\qquad
    +
    [\eta_\sigma(\delta_1,g),\eta_\sigma(\delta_2,g)]
    -
    \eta_\sigma([\delta_1,\delta_2],g)
    ) x_g.
    \qedhere
\end{align*}
\end{proof}

\begin{remark}
Since $F_\sigma(\delta_1,\delta_2) \in \gau(S)$ for all $\delta_1,\delta_2 \in \mathfrak{g}$, its restriction to $S_g$ is an $R$-bimodule endomorphism and therefore is given by multiplication by a unique central element of $R$.
\end{remark}

The Grassmann connection on $S_g$ is of the same form as the
connections appearing in Lemma~\ref{lem:atiyah_curvature_factor}, with
\begin{equation*}
    \eta(\delta,g) = \delta(\alpha_g(1)).
\end{equation*}
The corresponding curvature can therefore be obtained by specializing the general formula of the lemma.

\begin{remark}\label{ex:grassmann_curvature}
Let $S$ be a strongly $G$-graded ring with principal component $R$,
and let $(n,\alpha,\omega)$ be a factor system associated with a module frame system
$(x_g,y_g)_{g \in G}$.
Let $\mathfrak{g} \subseteq \der(R)$ be a Lie subring.
For each $g \in G$ and each $\delta \in \mathfrak{g}$ consider the left covariant derivative
\begin{equation*}
    \nabla_{\delta,g}: S_g \to S_g,
    \quad
    \nabla_{\delta,g}(u^t x_g)
    :=
    \delta(u)^t x_g + u^t \delta(\alpha_g(1))x_g,
    \quad
    u \in M_{n_g,1}(R).
\end{equation*}
By Remark~\ref{rem:grassmann}, this is the covariant derivative along $\delta$
induced by the Grassmann connection on the finitely generated projective
left $R$-module $S_g$.

Its curvature is given by
\begin{equation*}
    R_{\nabla_g}(\delta_1,\delta_2)(u^t x_g)
    =
    u^t [\delta_1(\alpha_g(1)),\delta_2(\alpha_g(1))] x_g
\end{equation*}
for all $\delta_1,\delta_2 \in \mathfrak{g}$ and $u\in M_{n_g,1}(R)$.

Indeed, let $u \in M_{n_g,1}(R)$.
A direct computation gives
\begin{align*}
    \nabla_{\delta_1,g}\nabla_{\delta_2,g}(u^t x_g)
    &=
    \nabla_{\delta_1,g}(\delta_2(u)^t x_g + u^t\delta_2(\alpha_g(1)) x_g)
    \\
    &=
    \delta_1\delta_2(u)^t x_g
    + \delta_2(u)^t\delta_1(\alpha_g(1)) x_g
    + \delta_1(u)^t\delta_2(\alpha_g(1)) x_g
    \\
    &\quad
    + u^t\delta_1\delta_2(\alpha_g(1)) x_g
    + u^t\delta_2(\alpha_g(1))\delta_1(\alpha_g(1)) x_g.
\end{align*}
Interchanging $\delta_1$ and $\delta_2$ and subtracting, one obtains
\begin{align*}
    [\nabla_{\delta_1,g},\nabla_{\delta_2,g}](u^t x_g)
    &=
    [\delta_1,\delta_2](u)^t x_g
    + u^t [\delta_1,\delta_2](\alpha_g(1)) x_g
    \\
    &\quad
    + u^t[\delta_1(\alpha_g(1)),\delta_2(\alpha_g(1))]x_g .
\end{align*}
Since
\begin{equation*}
    \nabla_{[\delta_1,\delta_2],g}(u^t x_g)
    =
    [\delta_1,\delta_2](u)^t x_g
    + u^t[\delta_1,\delta_2](\alpha_g(1))x_g,
\end{equation*}
it follows that
\begin{equation*}
    R_{\nabla_g}(\delta_1,\delta_2)(u^t x_g)
    =
    u^t[\delta_1(\alpha_g(1)),\delta_2(\alpha_g(1))]x_g.
\end{equation*}
\end{remark}

\begin{remark}\label{rem:grassmann_flat}
If $\delta \in \der(R)$ commutes with $\alpha_g$, then $\delta(\alpha_g(1)) = \alpha_g(\delta(1))=0$, and hence
\begin{equation*}
    R_{\nabla_g}(\delta_1,\delta_2) = 0
\end{equation*}
for all $\delta_1,\delta_2 \in \der(R)$.
In particular, the Grassmann connection on $S_g$ is flat along any pair of derivations that commute with $\alpha_g$.
\end{remark}

We next describe how the Atiyah curvature changes when the section of the Atiyah sequence is replaced:

\begin{lemma}\label{lem:curvature_change_section}
Let $S$ be a strongly $G$-graded ring with principal component $R$,
and let~$\mathfrak{g} \subseteq \der_{\mathrm{ext}}(R)$ be a Lie subring.
Suppose $\sigma,\sigma': \mathfrak{g} \to \hat{\mathfrak{g}}$ are sections of the pull-back sequence~\eqref{eq:AtiyahNC_pb}.
Then there exists a map $\psi: \mathfrak{g} \to \gau(S)$ such that $\sigma' = \sigma + \psi$.
Moreover, their Atiyah curvatures satisfy
\begin{equation*}
    F_{\sigma'}(\delta_1,\delta_2)
    =
    F_\sigma(\delta_1,\delta_2)
    +
    d_\sigma\psi(\delta_1,\delta_2)
    +
    [\psi(\delta_1),\psi(\delta_2)],
\end{equation*}
for all $\delta_1,\delta_2 \in \mathfrak{g}$, where
\begin{equation*}
    d_\sigma\psi(\delta_1,\delta_2)
    =
    [\sigma(\delta_1),\psi(\delta_2)]
    -
    [\sigma(\delta_2),\psi(\delta_1)]
    -
    \psi([\delta_1,\delta_2]).
\end{equation*}
\end{lemma}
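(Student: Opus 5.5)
The proof splits into two steps: first produce $\psi$, then expand the bracket defining $F_{\sigma'}$.

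\emph{Existence of $\psi$.} For each $\delta\in\mathfrak{g}$, both $\sigma(\delta)$ and $\sigma'(\delta)$ lie in $\hat{\mathfrak{g}}$ and satisfy $\mathrm{res}(\sigma(\delta))=\delta=\mathrm{res}(\sigma'(\delta))$. Set $\psi(\delta):=\sigma'(\delta)-\sigma(\delta)$, computed in the second component $\der_{\mathrm{gr}}(S)$. This is a graded derivation, and its restriction to $R$ vanishes. By exactness of the pull-back sequence~\eqref{eq:AtiyahNC_pb}, $\psi(\delta)\in\gau(S)$, and by construction $\sigma'=\sigma+\psi$. No additivity of $\psi$ is needed, since the sections themselves are not assumed to be additive.

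\emph{Curvature formula.} Fix $\delta_1,\delta_2\in\mathfrak{g}$ and substitute $\sigma'=\sigma+\psi$ into Definition~\ref{def:atiyah_curvature}. Expanding the commutator biadditively in $\der_{\mathrm{gr}}(S)$ gives
\begin{equation*}
[\sigma'(\delta_1),\sigma'(\delta_2)]
=
[\sigma(\delta_1),\sigma(\delta_2)]
+[\sigma(\delta_1),\psi(\delta_2)]
+[\psi(\delta_1),\sigma(\delta_2)]
+[\psi(\delta_1),\psi(\delta_2)].
\end{equation*}
Next subtract $\sigma'([\delta_1,\delta_2])=\sigma([\delta_1,\delta_2])+\psi([\delta_1,\delta_2])$. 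Then rewrite $[\psi(\delta_1),\sigma(\delta_2)]$ as $-[\sigma(\delta_2),\psi(\delta_1)]$ using antisymmetry of the commutator. Grouping the terms yields exactly $F_\sigma(\delta_1,\delta_2)+d_\sigma\psi(\delta_1,\delta_2)+[\psi(\delta_1),\psi(\delta_2)]$.

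\emph{Optional refinement.} Since $\gau(S)\cong Z^1_\beta(G,Z(R))$ is abelian by Theorem~\ref{thm:gau}, the last term $[\psi(\delta_1),\psi(\delta_2)]$ actually vanishes. One can see this directly: elements of $\gau(S)$ act on each $S_g$ by multiplication by a central element $\eta(g)\in Z(R)$, and such operators commute. I would state the formula as given and remark on this vanishing afterwards.

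There is no real obstacle in this argument. The only point needing care is to record that $[\sigma(\delta_1),\psi(\delta_2)]$ again lies in $\gau(S)$, because $\gau(S)$ is an ideal of $\der_{\mathrm{gr}}(S)$ as the kernel of a Lie homomorphism. This guarantees that every term of the formula takes values in $\gau(S)$.
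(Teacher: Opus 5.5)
Your proposal is correct and is essentially the paper's proof: set $\psi:=\sigma'-\sigma$, note that it lands in $\ker(\mathrm{res})=\gau(S)$, and expand the commutator biadditively. Your extra remark that $[\psi(\delta_1),\psi(\delta_2)]=0$, because $\gau(S)\cong Z^1_\beta(G,Z(R))$ is abelian, is also correct, although the paper does not state it.
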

\begin{proof}
Since $\mathrm{res} \circ \sigma = \mathrm{res} \circ \sigma' = \mathrm{id}_{\mathfrak{g}}$,
their difference takes values in the kernel of $\mathrm{res}$.
Therefore $\psi(\delta) := \sigma'(\delta)-\sigma(\delta)$ belongs to $\gau(S)$
for all $\delta \in \mathfrak{g}$, showing $\sigma' = \sigma + \psi$.

Using the definition of $F_\sigma(\delta_1,\delta_2)$, and substituting $\sigma'=\sigma+\psi$, a direct computation yields
\begin{equation*}
    F_{\sigma'}
    =
    F_\sigma + d_\sigma\psi + [\psi,\psi],
\end{equation*}
which is the claimed identity.
\end{proof}

\begin{remark}\label{rem:curvature_change_matrix}
Let $\sigma,\sigma'$ be sections as in Lemma~\ref{lem:curvature_change_section}
and write $\sigma' = \sigma + \psi$ with $\psi: \mathfrak{g} \to \gau(S)$.
For $g\in G$ and $\delta \in \mathfrak{g}$, define
\begin{equation*}
    A_\psi(\delta,g) := \psi(\delta)(x_g)y_g^t \in M_{n_g}(R)\alpha_g(1).
\end{equation*}
Then the corresponding connection matrices satisfy
\begin{equation*}
    \eta_{\sigma'}(\delta,g)
    =
    \eta_\sigma(\delta,g)
    +
    A_\psi(\delta,g).
\end{equation*}
Consequently, the curvature matrices are related by
\begin{equation*}
    \Omega_{\sigma'}
    =
    \Omega_\sigma
    +
    d_{\eta_\sigma}A_\psi
    +
    [A_\psi,A_\psi],
\end{equation*}
where $d_{\eta_\sigma}$ denotes the covariant differential determined by
$\eta_\sigma$.
Thus changing the section of the Atiyah sequence modifies the induced
connections on the components $S_g$ by a gauge term and transforms the
curvature according to the usual gauge transformation law.
\end{remark}

For the following discussion we assume that all algebras are defined over
a field $k$ of characteristic zero.

The Atiyah curvature also fits naturally into the framework of
Lecomte's generalization of the Chern--Weil homomorphism for Lie algebra
extensions~\cite{Lec85} (see also~\cite{Wa18}).
To apply this construction, let $V$ be a $\mathfrak{g}$-module and view $V$ as a $\hat{\mathfrak{g}}$-module via the restriction map.
Consider
\begin{equation*}
    \Sym^p(\gau(S),V)
\end{equation*}
the space of symmetric $p$-linear maps $f:\gau(S)^p \to V$.
Let $\Sym^p(\gau(S),V)^{\hat{\mathfrak{g}}}$ denote the subset consisting of those maps which are invariant under
the natural action of $\hat{\mathfrak{g}}$, \ie,
\begin{equation*}
    x.f(\eta_1,\ldots,\eta_p)
    =
    \sum_{i=1}^p
    f(\eta_1,\ldots,[x,\eta_i],\ldots,\eta_p)
\end{equation*}
for all $x \in \hat{\mathfrak{g}}$ and $\eta_1,\ldots,\eta_p \in \gau(S)$.

Lecomte’s Chern--Weil construction then yields, for each $p\in\mathbb N_0$,
a natural map
\begin{equation*}
    C_p:
    \Sym^p(\gau(S),V)^{\hat{\mathfrak{g}}}
    \to
    H^{2p}(\mathfrak{g},V),
    \quad
    f \mapsto \frac{1}{p!}[f_\sigma],
\end{equation*}
which is independent of the choice of the section $\sigma$.
Here $f_\sigma \in C^{2p}(\mathfrak{g},V)$ is the cochain obtained by
evaluating $f$ on $p$ copies of the curvature $F_\sigma$ and then
alternating, \ie,
\begin{equation*}
    f_\sigma(\delta_1,\ldots,\delta_{2p})
    =
    \frac{1}{2^p}
    \sum_{\pi \in S_{2p}}
    \operatorname{sgn}(\pi)
    f
    (
    F_\sigma(\delta_{\pi(1)},\delta_{\pi(2)}),
    \ldots,
    F_\sigma(\delta_{\pi(2p-1)},\delta_{\pi(2p)})
    ).
\end{equation*}

\begin{example}
We once more consider the graph $E$ with one vertex $v$ and two loop edges $e_1,e_2$, and the associated Leavitt path algebra $L_{\mathbb{C}}(1,2)$.
Recall that $L_{\mathbb C}(1,2)$ is strongly $\mathbb Z$-graded but not a crossed product (see Section~\ref{sec:L(1,2)}).
Our aim is to use this algebra to construct a Lie algebra cohomology class arising from the Atiyah curvature and Lecomte's Chern--Weil homomorphism.

As shown in Example~\ref{ex:L(1,2)_atiyah}, the Atiyah sequence for $L_{\mathbb{C}}(1,2)$ yields a central extension of Lie rings
\begin{equation*}
    0 \longrightarrow \mathbb{C}
    \longrightarrow \der_{\mathrm{gr}}(L_{\mathbb{C}}(1,2))
    \longrightarrow \der_{\mathrm{ext}}(L_0)
    \longrightarrow 0.
\end{equation*}

Let $\mathfrak{g} \subseteq \der_{\mathrm{ext}}(L_0)$ be a Lie subring and let
$\sigma: \mathfrak{g} \to \der_{\mathrm{gr}}(L_{\mathbb C}(1,2))$
be a section of the corresponding pull-back sequence~\eqref{eq:AtiyahNC_pb}.
Since the kernel of the extension is central and identified with $\mathbb{C}$, the Atiyah curvature
\begin{equation*}
    F_\sigma(\delta_1,\delta_2)
    =
    [\sigma(\delta_1),\sigma(\delta_2)]
    -
    \sigma([\delta_1,\delta_2]),
    \qquad
    \delta_1,\delta_2 \in \mathfrak{g},
\end{equation*}
takes values in $\mathbb{C}$ and defines an alternating $2$-cocycle on $\mathfrak{g}$.

To apply Lecomte's Chern--Weil construction, consider the trivial $\hat{\mathfrak g}$-module $V = \mathbb{C}$.
For $p=1$, one may choose
\begin{equation*}
    f = \mathrm{id}_{\mathbb{C}}
    \in
    \Sym^1(\gau(L_{\mathbb{C}}(1,2)),\mathbb{C})^{\hat{\mathfrak g}} .
\end{equation*}
The associated Lecomte class is then
\begin{equation*}
    C_1(f)=[F_\sigma]\in H^2(\mathfrak{g},\mathbb{C}).
\end{equation*}
Thus, in this example the first Lecomte class is simply the $2$-cohomology class represented by the pull-back sequence~\eqref{eq:AtiyahNC_pb}
\begin{equation*}
    0 \longrightarrow \mathbb{C} \longrightarrow \hat{\mathfrak{g}} \longrightarrow \mathfrak{g} \longrightarrow 0.
\end{equation*}
The vanishing or nonvanishing of this class determines whether the extension splits.
It remains an interesting problem to determine for which Lie subalgebras $\mathfrak{g} \subseteq \der_{\mathrm{ext}}(L_0)$ this occurs.

\end{example}

%\begin{remark}
%A natural strategy for producing a nontrivial class is to choose a two-dimensional Abelian Lie subalgebra $\mathfrak{g} = \mathbb{C} \delta_1 \oplus \mathbb{C} \delta_2 \subseteq \der_{\mathrm{ext}}(L_0)$.
%Since $\mathfrak{g}$ is Abelian, one has
%\begin{equation*}
%    H^2(\mathfrak{g},\mathbb{C})
%    \cong
%    \Lambda^2\mathfrak{g}^\ast
%    \cong
%    \mathbb{C},
%\end{equation*}
%so any nonzero alternating $2$-cocycle already represents a nontrivial cohomology class.

%In this situation, $F_\sigma(\delta_1,\delta_2) = [\sigma(\delta_1),\sigma(\delta_2)]$,
%because $[\delta_1,\delta_2] = 0$.
%Hence a nontrivial class would arise if one could find commuting
%derivations $\delta_1,\delta_2 \in \der_{\mathrm{ext}}(L_0)$ together with a section $\sigma$ such that their graded lifts fail to commute, \ie, $[\sigma(\delta_1),\sigma(\delta_2)] \neq 0$.
%However, Lemma~\ref{XXX}~shows that this strategy cannot succeed in the case of $L_{\mathbb{C}}(1,2)$.
%Equivalently, one seeks commuting derivations on the principal component $L_0$ whose chosen lifts to graded derivations of $L_{\mathbb{C}}(1,2)$ differ from a Lie homomorphism by a nonzero scalar.

\begin{remark}
Lecomte's Chern--Weil construction provides a useful tool for detecting when a strongly
graded ring is not a crossed product. 
Indeed, if one can exhibit a Lie subalgebra of
derivations for which a nontrivial Lecomte class exists, this shows that the Atiyah
sequence does not admit a Lie section in the given setting, suggesting that the graded
structure is unlikely to arise from a crossed product.
\end{remark}

\appendix

\section{Leavitt path algebras}\label{sec:LPA}

Let $k$ be a field and let $E=(E_0,E_1,s,r)$ be a directed graph with vertex set $E_0$, edge set $E_1$, and source and range maps $s,r:E_1\to E_0$.
In addition, let $E_1^\ast:=\{e^\ast : e\in E_1\}$ be a formal copy of $E_1$, whose elements are called ghost edges.

The \emph{Leavitt path algebra} $L_k(E)$ is the $k$-algebra generated by $E_0\cup E_1\cup E_1^\ast$
subject to the relations, for all $v,w\in E_0$ and $e,f\in E_1$,
\begin{enumerate}[label=(\arabic*),wide=0pt,leftmargin=*]
\item $vw=\delta_{v,w}\,v$.
\item $s(e)e=e=e\,r(e)$.
\item $r(e)e^\ast=e^\ast=e^\ast s(e)$.
\item $e^\ast f=\delta_{e,f}\,r(e)$.
\item For every \emph{regular vertex} $v$ (\ie, $0<|s^{-1}(v)|<\infty$), $\sum_{e\in s^{-1}(v)} ee^\ast = v$.
\end{enumerate}
A Leavitt path algebra is called finite if both $E_0$ and $E_1$ are finite sets. Furthermore, a \emph{real path of length $n$} is a sequence of edges $e_1\cdots e_n$ such that $r(e_i)=s(e_{i+1})$ for $i=1,\ldots,n-1$; analogously, a \emph{ghost path} is a sequence of ghost edges. We let $P_n(E)$ denote the set of all real paths of length $n$.

The algebra $L_k(E)$ carries a natural $\mathbb{Z}$-grading defined on generators by
\begin{equation*}
    \deg(v) = 0, 
    \quad 
    \deg(e) = 1, 
    \quad 
    \deg(e^\ast) = -1,
\end{equation*}
for all $v\in E_0$ and $e\in E_1$. If $k=\mathbb{C}$, then $L_\mathbb{C}(E)$ is a graded ${}^\ast$-algebra with involution defined on generators by
\begin{equation*}
    v^\ast :=v ,
    \quad 
    e^\ast := e^\ast\, \text{(the ghost edge of $e$)}, 
    \quad 
    (e^\ast)^\ast := e,
\end{equation*}
for all $v\in E_0$ and $e\in E_1$.

In this paper we mainly consider Leavitt path algebras that are unital and strongly $\mathbb{Z}$-graded.
By~\cite{h:graded.struct.lpa,LuOi22}, this class corresponds to directed graphs $E$ that are row-finite, have no sinks, and satisfy Condition~(Y); in particular, every vertex emits at least one edge.
Hence, for each $v\in E_0$ and every $n\ge1$, there exists a real path $p$ of length $n$ with $s(p)=v$.

\subsection{The Leavitt path algebra \texorpdfstring{$L_{\complex}(1,2)$}{L(C,1,2)}}\label{sec:L(1,2)}

Let $E$ be the directed graph with one single vertex and two loop edges $e_1,e_2$:

\begin{center}
\begin{tikzpicture}[>=Stealth]
  \node[circle, draw, minimum size=10pt, inner sep=0pt] (v) at (0,0) {};
  \draw[->] (v) to[out=140,in=220,looseness=14] node[left]  {$e_1$} (v);
  \draw[->] (v) to[out=40, in=320,looseness=14] node[right] {$e_2$} (v);
\end{tikzpicture}
\end{center}
The associated Leavitt path algebra $L_{\complex}(1,2) := L_{\complex}(E)$ is the unital ${}^\ast$-algebra over $\complex$ generated by $e_1,e_2$ subject to the relations
\begin{align*}
    e_1^\ast e_1 = e_2^\ast e_2 = 1, 
    \quad
    e_1^\ast e_2 = e_2^\ast e_1 = 0, 
    \quad
    e_1 e_1^\ast + e_2 e_2^\ast = 1.
\end{align*}
It is linearly spanned by elements $\alpha \beta^\ast$, where $\alpha,\beta$ are real paths in $E$.
Writing $|\alpha|$ for the length of a path, the canonical $\integers$-grading is given by
\begin{equation*}
    \deg(\alpha\beta^\ast) = |\alpha|-|\beta| ,
\end{equation*}
and
\begin{equation*}
    L_{\complex}(1,2)=\bigoplus_{n\in\integers} L_n, 
    \quad
    L_n = \operatorname{span}_{\complex}\{\alpha\beta^\ast:\ |\alpha|-|\beta| = n\}.
\end{equation*}
Since $E$ has no sinks, $L_{\complex}(1,2)$ is strongly $\integers$-graded. 

% For $\mu\in\{1,2\}^n$ we set
% \begin{align*}
%     e_{\mu} = e_{\mu_1}e_{\mu_2}\cdots e_{\mu_n}.
% \end{align*}
% \ptcomj{Do we need this notation?}

% Note that for $\alpha\beta^\ast\in L_1$ one may write
% \begin{align*}
%     a = \sum_{k}c_k\alpha_k\beta_k^\ast
% \end{align*}

Next, we note that the principal component of $\LConetwo$ admits a convenient description as a direct limit of matrix algebras:

\begin{lemma}\label{lem:L_0}
    The principal component $L_0$ is the inductive limit
    \begin{equation*}
        L_0 \cong \varinjlim M_{2^n}(\mathbb{C}).
    \end{equation*}
    In particular, $Z(L_0) \cong \mathbb{C}$.
\end{lemma}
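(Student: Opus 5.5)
I will filter $L_0$ by path length and show that each stage of the filtration is a full matrix algebra over $\mathbb{C}$. For $n\ge 0$ let $P_n$ denote the set of real paths of length $n$; since $E$ has one vertex and two loops, $|P_n|=2^n$. Set
\begin{equation*}
    A_n:=\operatorname{span}_{\mathbb{C}}\{\alpha\beta^\ast : \alpha,\beta\in P_n\}.
\end{equation*}

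\textbf{Step 1: $A_n$ is a full matrix algebra.} For $\alpha,\beta,\gamma,\mu\in P_n$, the relations $e_i^\ast e_j=\delta_{ij}$ applied letter by letter give $\beta^\ast\gamma=\delta_{\beta,\gamma}$. Hence
\begin{equation*}
    (\alpha\beta^\ast)(\gamma\mu^\ast)=\delta_{\beta,\gamma}\,\alpha\mu^\ast,
\end{equation*}
so the elements $\alpha\beta^\ast$ multiply like matrix units. Lemma~\ref{lem:neg.degree.x} (equation~\eqref{eq:ppstar.one}) gives $\sum_{\alpha\in P_n}\alpha\alpha^\ast=1$. A nonzero algebra spanned by a complete system of $N\times N$ matrix units is isomorphic to $M_N(\mathbb{C})$: simplicity of $M_N(\mathbb{C})$ makes the surjection $M_N(\mathbb{C})\to A_n$, $E_{\alpha\beta}\mapsto\alpha\beta^\ast$, injective. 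To see that the target is nonzero, I will use that $L_{\mathbb{C}}(1,2)\ne 0$, which holds because it admits a nonzero representation on $\ell^2$ by Cuntz isometries. Thus $A_n\cong M_{2^n}(\mathbb{C})$.

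\textbf{Step 2: the inclusions and exhaustion.} The Cuntz--Krieger relation gives
\begin{equation*}
    \alpha\beta^\ast=\alpha\Big(\sum_i e_ie_i^\ast\Big)\beta^\ast=\sum_i(\alpha e_i)(\beta e_i)^\ast,
\end{equation*}
so $A_n\subseteq A_{n+1}$. In matrix terms this inclusion is $X\mapsto X\otimes 1_2$, a unital embedding $M_{2^n}(\mathbb{C})\hookrightarrow M_{2^{n+1}}(\mathbb{C})$. By the grading description, $L_0$ is spanned by the elements $\alpha\beta^\ast$ with $|\alpha|=|\beta|$, so $L_0=\bigcup_n A_n$. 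It follows that $L_0\cong\varinjlim M_{2^n}(\mathbb{C})$ with these connecting maps.

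\textbf{Step 3: the center.} Let $z\in Z(L_0)$. Then $z\in A_n$ for some $n$, and $z$ commutes with all of $A_n\cong M_{2^n}(\mathbb{C})$. The center of a full matrix algebra consists of scalars, so $z=\lambda\cdot 1$, and conversely every scalar is central. Hence $Z(L_0)\cong\mathbb{C}$.

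The only delicate point I foresee is the injectivity in Step 1. If the nonvanishing of $L_{\mathbb{C}}(1,2)$ is not taken as known, I will instead cite the standard basis theorem for Leavitt path algebras, which shows directly that the elements $\alpha\beta^\ast$ with $\alpha,\beta\in P_n$ are linearly independent.
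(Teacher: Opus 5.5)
Your proposal is correct and follows the paper's proof essentially step by step. Both arguments use the matrix units spanning $A_n$, the nested unital inclusions $A_n\subseteq A_{n+1}$ obtained from $e_1e_1^\ast+e_2e_2^\ast=1$, exhaustion of $L_0$ by the $A_n$, and the centre computed inside a single $A_n$; your added justification of injectivity, via simplicity of $M_{2^n}(\mathbb{C})$ together with $L_{\mathbb{C}}(1,2)\neq 0$, fills in a point the paper leaves implicit.
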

\begin{proof}
For $n\ge 0$, let $P_n(E)$ denote the set of paths in $E$ of length $n$.
Since $E$ has two loop~edges, $|P_n(E)| = 2^n$. 
Define
\begin{equation*}
    A_n := \operatorname{span}_{\mathbb{C}}\{\alpha\beta^\ast : \alpha,\beta \in P_n(E)\} \subseteq L_0.
\end{equation*}
Now, let $\alpha,\beta,\gamma,\delta \in P_n(E)$. 
Using the Cuntz--Krieger relations one finds that
\begin{equation*}
    (\alpha\beta^\ast)(\gamma\delta^\ast)
    =
    \begin{cases}
        \alpha\delta^\ast & \text{if } \beta = \gamma,
        \\
        0 & \text{otherwise}.
    \end{cases}
\end{equation*}
Consequently, $\{\alpha\beta^\ast : \alpha,\beta\in P_n(E)\}$ form a system of matrix units indexed by $P_n(E)$, and hence
\begin{equation*}
    A_n \cong M_{|P_n(E)|}(\mathbb{C}) = M_{2^n}(\mathbb{C}).
\end{equation*}
Next we show that $(A_n)_{n\ge0}$ is increasing.
For $\alpha,\beta\in P_n(E)$, the relation $e_1e_1^\ast+e_2e_2^\ast=1$~gives
\begin{equation*}
    \alpha\beta^\ast
    =
    \alpha(e_1e_1^\ast+e_2e_2^\ast)\beta^\ast
    =
    (\alpha e_1) (\beta e_1)^\ast + (\alpha e_2) (\beta e_2)^\ast .
\end{equation*}
Since $\alpha e_i,\beta e_i\in P_{n+1}(E)$ for $i=1,2$, it may be concluded that
$A_n\subseteq A_{n+1}$.

Finally, every homogeneous element of degree zero in
$L_{\mathbb C}(E)$ is a finite linear combination of elements
$\alpha\beta^\ast$ with $|\alpha|=|\beta|$. 
Hence every element of $L_0$
belongs to $A_n$ for some sufficiently large $n$, and therefore
\begin{equation*}
    L_0 = \bigcup_{n\ge0} A_n .
\end{equation*}
From $A_n\cong M_{2^n}(\mathbb{C})$ and $A_n\subseteq A_{n+1}$ it follows that
\begin{equation*}
    L_0 \cong \varinjlim M_{2^n}(\mathbb{C}).
\end{equation*}
For each $n$, $Z(A_n) \cong \mathbb{C}$, and the inclusions
$A_n\subseteq A_{n+1}$ are unital. 
Hence~$Z(L_0) \cong \mathbb C$.
\end{proof}

The preceding lemma shows that $L_0$ is an AF-algebra, hence in particular directly finite. 
This can now be used to show that the degree-one component contains no invertible elements:

\begin{lemma}\label{lem:notcrossed}
    $L_{\mathbb C}(1,2)$ has no invertible homogeneous element of degree $1$. 
    In particular, $L_{\complex}(E)$ is not a crossed product.
\end{lemma}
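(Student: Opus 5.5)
Suppose, for contradiction, that $u\in L_1$ is invertible with inverse $w$. Since $L_{\complex}(1,2)$ is graded and $uw=wu=1\in L_0$, comparing homogeneous components shows that we may assume $w\in L_{-1}$: replace $w$ by its degree $-1$ component, which is still a two-sided inverse.

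The plan is to compare $u$ with the basis elements $e_1,e_2$ of $L_1$ over $L_0$. Set
\begin{equation*}
    a_i := e_i^\ast u\in L_0
    \quad\text{and}\quad
    b_i := w e_i\in L_0,
    \qquad i=1,2.
\end{equation*}
The relation $e_1e_1^\ast+e_2e_2^\ast=1$ gives $u=e_1a_1+e_2a_2$ and $w=b_1e_1^\ast+b_2e_2^\ast$. Consider the column $a:=(a_1,a_2)^t\in M_{2,1}(L_0)$ and the row $b:=(b_1,b_2)\in M_{1,2}(L_0)$. Using $e_i^\ast e_j=\delta_{ij}$, we compute:
\begin{itemize}
    \item $ba=\sum_i b_ia_i=w\big(\sum_i e_ie_i^\ast\big)u=wu=1$;
    \item $(ab)_{ij}=e_i^\ast u w e_j=e_i^\ast e_j=\delta_{ij}$, so $ab=1_2$.
\end{itemize}
Hence $L_0\cong L_0^2$ as left (equivalently right) $L_0$-modules, via mutually inverse matrices of sizes $2\times 1$ and $1\times 2$.

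The contradiction then comes from Lemma~\ref{lem:L_0}. The entries of $a$ and $b$ are finitely many elements of $L_0=\bigcup_n A_n$, so all of them lie in a single $A_n\cong M_{2^n}(\complex)$. The identities $ba=1$ and $ab=1_2$ then hold inside $A_n$ and its matrix rings, giving $A_n\cong A_n^2$ as modules. Comparing $\complex$-dimensions gives $4^n=2\cdot 4^n$, which is impossible. Equivalently, in $K_0$ terms, $[1]=2[1]$ with $[1]=2^n\neq 0$ in $K_0(A_n)\cong\integers$, or one can invoke the IBN property of matrix algebras over a field. So no invertible element of degree $1$ exists.

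For the final clause, a crossed product structure $S\cong L_0\rtimes_{(\alpha,\omega)}\integers$ with principal component $L_0$ requires an invertible $u_1\in S_1$, which we have just excluded. The only point needing care, and the main obstacle, is the reduction from the infinite-dimensional $L_0$ to a single finite stage $A_n$; this works because the module isomorphism involves only finitely many elements, and because each inclusion $A_n\subseteq A_{n+1}$ is a unital subalgebra inclusion, so the identities $ba=1$ and $ab=1_2$ already hold in $A_n$.
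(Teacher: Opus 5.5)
Your proof is correct, and it reaches the contradiction by a different mechanism than the paper.

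The paper sets $a:=u^{-1}e_1$ and $b:=u^{-1}e_2$; these are exactly the entries of your row $b$. It then uses the involution to get $a^\ast u^\ast u a=b^\ast u^\ast u b=1$ and $a^\ast u^\ast u b=0$. From these it derives $1=0$ via direct finiteness of $L_0$, which it deduces from the AF structure in Lemma~\ref{lem:L_0}. The final clause is obtained through Lemma~\ref{lem:j.oi}.

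You instead take $e_i^\ast u$ as the complementary matrix. This gives the two-sided relations $ba=1$ and $ab=1_2$, i.e.\ an explicit isomorphism $L_0\cong L_0^2$, which you refute by a dimension (IBN) count at a finite stage $A_n$.

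What your route buys:
\begin{itemize}
\item It never uses the involution, so it works verbatim for $L_k(1,2)$ over any field.
\item It makes explicit the finite-stage reduction that the paper hides in the phrase ``AF, hence directly finite''.
\item It exposes the conceptual reason: an invertible $u$ would force $L_1\cong L_0$, whereas $L_1\cong L_0^2$ by Lemma~\ref{lem:rank2}.
\end{itemize}

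What the paper's route buys: it only needs the one-sided relation (a $2\times 1$ matrix times your row $b$ equals $1_2$) together with direct finiteness of $L_0$ itself. No dimension or $K_0$ computation is required.

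For the last clause, your direct appeal to the invertibility of $u_1$ in a crossed product is more straightforward than the paper's detour through Lemma~\ref{lem:j.oi}.
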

\begin{proof}
Suppose, for contradiction, that $u \in L_1$ is invertible and set $a := u^{-1} e_1$ and $b := u^{-1} e_2$.
Then $a,b\in L_0$.
Using the defining relations of $L_{\mathbb C}(1,2)$, one obtains
\begin{equation*}
    a^\ast u^\ast u a = 1,
    \quad
    b^\ast u^\ast u b = 1,
    \quad \text{and} \quad 
    a^\ast u^\ast u b = 0.
\end{equation*}
Since $L_0$ is directly finite, the first two identities imply $aa^\ast u^\ast u = 1$ and $bb^\ast u^\ast u = 1$.
Hence
\begin{equation*}
    1
    =
    (aa^\ast u^\ast u) (bb^\ast u^\ast u)
    =
    aa^\ast u^\ast u bb^\ast u^\ast u
    =
    a(a^\ast u^\ast u b) b^\ast u^\ast u
    =
    0,
\end{equation*}
a contradiction. Hence, the homogeneous component $L_1$ does not contain any invertible elements, which implies by Lemma~\ref{lem:j.oi} that $\LConetwo$ is not a crossed product. 
\end{proof}

Thus $L_{\mathbb C}(1,2)$ is not a crossed product.
It is nevertheless useful to describe the structure of the degree-one component as a module over the principal component. 
The following lemma provides such a description:

\begin{lemma}\label{lem:rank2}
    $L_1$ is generated as a free right $L_0$-module by $\{e_1,e_2\}$, \ie, $L_1$ is of rank 2 as a right $L_0$-module.
\end{lemma}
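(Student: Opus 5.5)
The lemma asserts two things: $\{e_1,e_2\}$ generates $L_1$ as a right $L_0$-module, and it is linearly independent over $L_0$. I will prove each directly from the Cuntz--Krieger relations $e_i^\ast e_j=\delta_{ij}$ and $e_1e_1^\ast+e_2e_2^\ast=1$.

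\emph{Generation.} Take $s\in L_1$. Multiplying by the last relation on the left gives
\begin{equation*}
    s = e_1(e_1^\ast s) + e_2(e_2^\ast s).
\end{equation*}
Since $e_i^\ast\in L_{-1}$, each coefficient $e_i^\ast s$ lies in $L_{-1}L_1\subseteq L_0$. Hence $L_1=e_1L_0+e_2L_0$.

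\emph{Independence.} Suppose $e_1a+e_2b=0$ with $a,b\in L_0$. Multiplying on the left by $e_1^\ast$ and using $e_1^\ast e_1=1$, $e_1^\ast e_2=0$ gives $a=0$. Multiplying by $e_2^\ast$ instead gives $b=0$. So the sum is direct, and $L_1\cong L_0^2$ as right $L_0$-modules, with basis $(e_1,e_2)$.

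The argument is routine and has no real obstacle. The only point needing care is the side: freeness holds for the \emph{right} module structure because the scalars $a,b$ sit to the right of $e_i$, and the ghost edges $e_i^\ast$ are used as left multipliers to extract them.

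Note that rank $2$ here does not conflict with Lemma~\ref{lem:j.oi}, which needs rank one. Indeed $L_0\cong L_0^2$ as right modules via $e_1,e_2$, so $L_1$ is also free of rank $1$ abstractly. The claim is only that $\{e_1,e_2\}$ is a basis, and that is exactly what the two steps above prove.
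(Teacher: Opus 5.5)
Your generation and independence arguments are correct and are exactly the paper's proof: write $s=e_1(e_1^\ast s)+e_2(e_2^\ast s)$, and left-multiply $e_1a+e_2b=0$ by $e_1^\ast$ and by $e_2^\ast$.

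Your closing remark, however, is false and should be deleted.

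\emph{Where the confusion comes from.} The isomorphism ``$R\cong R^2$ via $e_1,e_2$'' holds for the whole Leavitt algebra $R=L_{\mathbb C}(1,2)$; this is its classical failure of IBN. It does not hold for $L_0$: the map $(a,b)\mapsto e_1a+e_2b$ lands in $L_1$, not in $L_0$.

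\emph{Why $L_0$ has IBN.} By Lemma~\ref{lem:L_0}, $L_0=\bigcup_n A_n$ with $A_n\cong M_{2^n}(\mathbb C)$. The normalized traces $2^{-n}\mathrm{Tr}$ on the $A_n$ are compatible with the inclusions $\alpha\beta^\ast=\alpha e_1(\beta e_1)^\ast+\alpha e_2(\beta e_2)^\ast$. They therefore define a trace $\tau$ on $L_0$ with $\tau(1)=1$.

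Suppose $L_0\cong L_0^2$ as right modules. This would give $X\in M_{1,2}(L_0)$ and $Y\in M_{2,1}(L_0)$ with $XY=1$ and $YX=1_2$. Then
$1=\tau(XY)=\tau(X_1Y_1)+\tau(X_2Y_2)=\tau(Y_1X_1)+\tau(Y_2X_2)=2$, a contradiction. So $L_0$ has IBN, and this is precisely what makes ``rank $2$'' in the statement well defined.

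\emph{Consequence for $L_1$.} Suppose $L_1$ were free of rank one as a right $L_0$-module. The proof of Lemma~\ref{lem:j.oi} works one degree at a time, so it would make that basis element invertible. This contradicts Lemma~\ref{lem:notcrossed}.

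So rank $2$ is not merely ``compatible'' with Lemma~\ref{lem:j.oi}: $L_1$ genuinely cannot be free of rank one, and that is exactly why $L_{\mathbb C}(1,2)$ fails to be a crossed product.
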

\begin{proof}
Since $e_1e_1^\ast + e_2e_2^\ast = 1$ one has
\begin{equation*}
    x 
    = 
    (e_1e_1^\ast + e_2e_2^\ast)x
    = 
    e_1(e_1^\ast x) + e_2(e_2^\ast x)
\end{equation*}
for any $x\in L_1$. It follows that $\{e_1,e_2\}$ generates $L_1$ as a right $L_0$-module.

To prove linear independence, suppose that $e_1 r + e_2 s = 0$ for some $r,s \in L_0$. Multiplying from the left by $e_1^\ast$ and $e_2^\ast$ gives
\begin{equation*}
    e_1^\ast(e_1 r + e_2 s) = r = 0
    \quad \text{and} \quad
    e_2^\ast(e_1 r + e_2 s) = s = 0,
\end{equation*}
implying that $r = s = 0$, and that $\{e_1,e_2\}$ is a basis of $L_1$.
\end{proof}

We now turn to derivations on $\LConetwo$. 
We begin by showing that $(\complex^{\times})^2$ acts naturally on $\LConetwo$, giving rise to derivations:

\begin{lemma}\label{lem:alg_action}
    There is an algebraic action
    \begin{equation*}
        \alpha:(\mathbb{C}^\times)^2 \to \operatorname{Aut}(L_{\mathbb C}(1,2))
    \end{equation*}
    given on generators by
    \begin{equation*}
    \alpha_{(z,w)}(v)=v,
    \quad
    \alpha_{(z,w)}(e_1)=ze_1,
    \quad \text{and} \quad 
    \alpha_{(z,w)}(e_2)=we_2,
    \end{equation*}

    Moreover, the map $\hd: L_{\mathbb C}(1,2) \to L_{\mathbb C}(1,2)$ defined on monomials by
    \begin{equation*}
        \hd(\alpha\beta^\ast)
        =
        (N_1(\alpha) - N_1(\beta)) \alpha\beta^\ast
    \end{equation*}
    is a graded derivation, where $N_1(\gamma)$ denotes the number of occurrences of $e_1$ in a path $\gamma$.  
    It is the~infinitesimal generator of the above action in the first coordinate, \ie,
    \begin{equation*}
        \hd(x)
        =
        \left.\frac{d}{dz}\right\vert_{z=1}\alpha_{(z,1)}(x).
        \end{equation*}
    In particular, $\hd$ restricts to a derivation on $L_0$.
\end{lemma}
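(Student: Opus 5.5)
The plan has four steps. First construct the action, then build $\hd$ on monomials, then show $\hd$ is a derivation by identifying it as the infinitesimal generator, and finally record the restriction to $L_0$.

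\emph{The action.} By the universal property of $L_{\mathbb C}(1,2)$, an algebra endomorphism is determined by images of generators respecting the defining relations. For $(z,w)\in(\mathbb C^\times)^2$, send $e_1\mapsto ze_1$, $e_2\mapsto we_2$, $e_1^\ast\mapsto z^{-1}e_1^\ast$ and $e_2^\ast\mapsto w^{-1}e_2^\ast$. Then check the relations:
\begin{itemize}
\item $e_i^\ast e_i=1$ is preserved, since $z^{-1}z=1$;
\item $e_1^\ast e_2=0$ is preserved trivially, as scalar multiples of zero;
\item $e_1e_1^\ast+e_2e_2^\ast=1$ is preserved, since $zz^{-1}=ww^{-1}=1$.
\end{itemize}
The assignment is multiplicative in $(z,w)$ on generators, so $\alpha_{(z,w)}\alpha_{(z',w')}=\alpha_{(zz',ww')}$. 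Together with $\alpha_{(1,1)}=\id$, this gives invertibility and a group homomorphism into $\aut(L_{\mathbb C}(1,2))$.

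\emph{The spectral decomposition.} On a monomial $\alpha\beta^\ast$ one has $\alpha_{(z,1)}(\alpha\beta^\ast)=z^{N_1(\alpha)-N_1(\beta)}\alpha\beta^\ast$. This yields a $\mathbb Z$-grading
\[
L_{\mathbb C}(1,2)=\bigoplus_{k\in\mathbb Z} B_k,
\]
where $B_k$ is the span of monomials with $N_1(\alpha)-N_1(\beta)=k$. The key point is that this grading is intrinsic. Indeed, $B_k=\{x:\alpha_{(z,1)}(x)=z^kx\ \text{for all}\ z\}$. Distinct characters $z\mapsto z^k$ on the infinite group $\mathbb C^\times$ are linearly independent (Vandermonde), so the sum is direct as subspaces. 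Thus any relation $\sum c\,\alpha\beta^\ast=0$ splits into relations within each $B_k$.

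\emph{Defining $\hd$ and proving the Leibniz rule.} Define $\hd$ to act on $B_k$ as multiplication by $k$. This is well defined and agrees with the stated formula on monomials. It avoids the potential problem that monomials are not linearly independent (for example $\alpha\beta^\ast=\alpha e_1(\beta e_1)^\ast+\alpha e_2(\beta e_2)^\ast$); the identity respects the count anyway, which is consistent. Because $\alpha_{(z,1)}$ is an algebra map, $B_kB_l\subseteq B_{k+l}$. Hence for $x\in B_k$ and $y\in B_l$,
\[
\hd(xy)=(k+l)xy=\hd(x)y+x\hd(y),
\]
and $\hd$ is a derivation by bilinearity. Since $\alpha_{(z,1)}(x)=\sum_k z^kx_k$ is a Laurent polynomial in $z$, differentiating at $z=1$ gives $\sum_k kx_k=\hd(x)$. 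This proves the generator formula. Each monomial is homogeneous for the $\mathbb Z$-grading by length, so $\hd$ is graded and in particular maps $L_0$ into itself.

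The main obstacle is the well-definedness of $\hd$ on the nonfree spanning set of monomials. The eigenspace description handles this, and it is the step where the direct-sum argument via independence of characters must be stated carefully.
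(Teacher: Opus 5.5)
Your proposal is correct, but it gets the derivation property by a different mechanism than the paper. The paper checks the relations as you do and computes $\alpha_{(z,1)}(\alpha\beta^\ast)=z^{N_1(\alpha)-N_1(\beta)}\alpha\beta^\ast$. It then differentiates at $z=1$ to identify $\hd$ with the infinitesimal generator. Finally it reads off the Leibniz rule and gradedness from the facts that each $\alpha_{(z,1)}$ is an algebra automorphism preserving the length grading. In effect, this is the product rule applied to $\alpha_{(z,1)}(xy)=\alpha_{(z,1)}(x)\alpha_{(z,1)}(y)$.

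You instead pass to the eigenspace decomposition $\bigoplus_k B_k$ of the $\mathbb C^\times$-action, which is a second $\mathbb Z$-grading by $e_1$-count. You prove the sum is direct via independence of the characters $z\mapsto z^k$, define $\hd$ as the associated degree operator, and get the Leibniz rule from $B_kB_l\subseteq B_{k+l}$. The generator formula then comes out as a consequence rather than as the definition.

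What each route buys:
\begin{itemize}
\item Your route is purely algebraic. It explicitly handles well-definedness of $\hd$ on the linearly dependent spanning set of monomials, which the paper passes over in silence. In the paper this is only implicitly covered: $z\mapsto\alpha_{(z,1)}(x)$ depends only on $x$ and takes values in a finite-dimensional subspace, so its derivative and the product rule make sense there.
\item The paper's argument is shorter and presents $\hd$ directly as the derivative of the action.
\item Your argument needs one small step that you assert rather than prove: the span of the count-$k$ monomials equals the full $z^k$-eigenspace. This follows immediately from the monomials spanning plus directness of the eigenspace sum, so it is not a gap.
\end{itemize}
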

\begin{proof}
By the universal property of $L$, it suffices to verify that the assignment
on generators preserves the defining relations.
%$e_i^\ast e_j = \delta_{ij}v$, $i,j\in\{1,2\}$, and $e_1 e_1^\ast + e_2 e_2^\ast = v$.
For $z_1 = z$ and $z_2 = w$, one has
\begin{equation*}
    \alpha_{(z,w)}(e_i^\ast) \alpha_{(z,w)}(e_j)
    =
    z_i^{-1} z_j e_i^\ast e_j
    =
    \delta_{ij}v,
\end{equation*}
and
\begin{equation*}
    \alpha_{(z,w)}(e_1)\alpha_{(z,w)}(e_1^\ast)
    +
    \alpha_{(z,w)}(e_2)\alpha_{(z,w)}(e_2^\ast)
    =
    e_1 e_1^\ast + e_2 e_2^\ast
    =
    v.
\end{equation*}
Thus $\alpha_{(z,w)} \in \operatorname{Aut}(L_{\mathbb C}(1,2))$ for all
$(z,w) \in (\mathbb C^\times)^2$.

Let $\alpha,\beta$ be finite paths. 
Since $\alpha_{(z,1)}$ acts multiplicatively and rescales each occurrence of $e_1$ by $z$ and
each occurrence of $e_1^\ast$ by $z^{-1}$,
\begin{equation*}
    \alpha_{(z,1)}(\alpha\beta^\ast)
    =
    z^{N_1(\alpha) - N_1(\beta)} \alpha\beta^\ast.
\end{equation*}
Differentiating at $z=1$ gives
\begin{equation*}
    \left.\frac{d}{dz}\right\vert_{z=1}\alpha_{(z,1)}(\alpha\beta^\ast)
    =
    (N_1(\alpha) - N_1(\beta))\alpha\beta^\ast .
\end{equation*}
Hence
\begin{equation*}
    \hd(x)
    =
    \left.\frac{d}{dz}\right\vert_{z=1}\alpha_{(z,1)}(x)
\end{equation*}
for all monomials $x$, and therefore for all $x\in L_{\mathbb C}(1,2)$ by linearity.
That $\hd$ is a derivation follows from the fact that each $\alpha_{(z,1)}$, $z \in \mathbb{C}^\times$, is an algebra automorphism. 
Moreover, $\hd$ is graded, because each $\alpha_{(z,1)}$, $z \in \mathbb{C}^\times$, preserves the canonical $\mathbb Z$-grading of $L_{\mathbb C}(1,2)$.

If $x = \alpha\beta^\ast\in L_0$, then $|\alpha|=|\beta|$, and so
$\hd(x) \in L_0$. 
Thus $\hd$ restricts to a derivation of $L_0$.
\end{proof}

In fact, it turns out that there is an easy description of all graded derivations of $\LConetwo$.

\begin{lemma}\label{lem:der_L(1,2)}
    For every matrix
    \begin{equation*}
        A=
        \begin{pmatrix}
            a_1 & b
            \\
            c & a_2
        \end{pmatrix}
        \in M_2(L_0)
    \end{equation*}
    there exists $\delta_A \in \der_{\mathrm{gr}}(L_{\mathbb{C}}(1,2))$
    determined by
    \begin{equation*}
    \begin{aligned}
        \delta_A(e_1) &= e_1a_1 + e_2c,
        &\quad
        \delta_A(e_2) &= e_1b + e_2a_2,
        \\
        \delta_A(e_1^\ast) &= -a_1e_1^\ast - be_2^\ast,
        &\quad
        \delta_A(e_2^\ast) &= -ce_1^\ast - a_2e_2^\ast .
    \end{aligned}
    \end{equation*}
    Conversely, every $\delta\in \der_{\mathrm{gr}}(L_{\mathbb{C}}(1,2))$ is of the form $\delta = \delta_A$ for a unique $A\in M_2(L_0)$.
\end{lemma}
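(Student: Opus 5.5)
We need to prove Lemma~\ref{lem:der_L(1,2)}: existence of $\delta_A$ for every $A$, and that every graded derivation has this form for a unique $A$.

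\emph{Existence.} I would use the universal property of $L_{\mathbb C}(1,2)$, but for derivations instead of homomorphisms. Consider the algebra $L_{\mathbb C}(1,2)[\epsilon]$ with $\epsilon^2=0$ and $\epsilon$ central. Define the assignment $\phi(x) := x + \epsilon\,\delta_A(x)$ on the generators $e_1,e_2,e_1^\ast,e_2^\ast$. It suffices to check that $\phi$ respects the defining relations; the $\epsilon$-linear parts of these checks are exactly the Leibniz identities:
\begin{equation*}
    \delta_A(e_i^\ast)e_j + e_i^\ast\delta_A(e_j) = 0,
    \qquad
    \sum_{i=1}^2 \bigl(\delta_A(e_i)e_i^\ast + e_i\delta_A(e_i^\ast)\bigr)=0.
\end{equation*}
For example, $\delta_A(e_1^\ast)e_1 + e_1^\ast\delta_A(e_1) = (-a_1e_1^\ast-be_2^\ast)e_1 + e_1^\ast(e_1a_1+e_2c) = -a_1+a_1 = 0$, and $\delta_A(e_1^\ast)e_2 + e_1^\ast\delta_A(e_2) = -b + b = 0$. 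Likewise, $\sum_i \delta_A(e_i)e_i^\ast = e_1a_1e_1^\ast+e_2ce_1^\ast+e_1be_2^\ast+e_2a_2e_2^\ast$, which is cancelled exactly by $\sum_i e_i\delta_A(e_i^\ast)$.

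Hence $\phi$ extends to a unital homomorphism $L_{\mathbb C}(1,2)\to L_{\mathbb C}(1,2)[\epsilon]$. Its $\epsilon$-component is a derivation $\delta_A$. This derivation is graded: $a_1,b,c,a_2\in L_0$, so the generators keep their degrees, and the Leibniz rule propagates this to monomials $\alpha\beta^\ast$.

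\emph{Converse.} Let $\delta\in\der_{\mathrm{gr}}(L_{\mathbb C}(1,2))$. Then $\delta(e_1),\delta(e_2)\in L_1$. By Lemma~\ref{lem:rank2}, there are unique $a_1,c,b,a_2\in L_0$ with $\delta(e_1)=e_1a_1+e_2c$ and $\delta(e_2)=e_1b+e_2a_2$; explicitly $a_1=e_1^\ast\delta(e_1)$, $c=e_2^\ast\delta(e_1)$, and so on. Next determine $\delta(e_i^\ast)$. Apply $\delta$ to $e_i^\ast = e_i^\ast(e_1e_1^\ast+e_2e_2^\ast)$ and use $e_i^\ast e_j=\delta_{ij}$. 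Differentiating $e_i^\ast e_j=\delta_{ij}$ gives $\delta(e_i^\ast)e_j = -e_i^\ast\delta(e_j)$, so
\begin{equation*}
    \delta(e_i^\ast)=\sum_j \delta(e_i^\ast)e_je_j^\ast = -\sum_j e_i^\ast\delta(e_j)e_j^\ast.
\end{equation*}
This reproduces the stated formulas, e.g.\ $\delta(e_1^\ast) = -a_1e_1^\ast-be_2^\ast$. So $\delta$ and $\delta_A$ agree on the generators $e_i,e_i^\ast$, and $1$ is handled since derivations kill $1$. Hence $\delta=\delta_A$. Uniqueness of $A$ follows from the uniqueness of coefficients in Lemma~\ref{lem:rank2}.

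The main obstacle is the existence step, namely making rigorous that an assignment on generators extends to a derivation. The dual-number trick reduces this cleanly to the relation checks above, which are routine.
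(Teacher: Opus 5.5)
Your proof is correct and follows the paper's argument. For existence, you check that the assignment on generators respects the defining relations under the Leibniz rule; the paper does the same check in matrix form, with $e=(e_1,e_2)$, $\delta_A(e)=eA$ and $\delta_A(e^\dagger)=-Ae^\dagger$. For the converse, you read off $A$ from $\delta(e_1),\delta(e_2)$ via Lemma~\ref{lem:rank2}, differentiate $e_i^\ast e_j=\delta_{ij}$, then right-multiply by $e_j^\ast$ and sum, exactly as the paper does. Your dual-number device $x\mapsto x+\epsilon\,\delta_A(x)$, together with your explicit gradedness check, only makes rigorous the step ``compatible with the relations, hence extends to a graded derivation'', which the paper asserts without further justification.
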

\begin{proof}
%Write $e := (e_1,e_2)$.
Let
\begin{equation*}
    A=
    \begin{pmatrix}
        a_1 & b
        \\
        c & a_2
    \end{pmatrix}
    \in M_2(L_0)
\end{equation*}
and define $\delta_A$ on the generators by the above formulas. 
Writing $e := (e_1,e_2)$, one has $\delta_A(e) = eA$ and $\delta_A(e^\dagger) = -Ae^\dagger$.
Since
\begin{equation*}
    e^\dagger e =
    \begin{pmatrix}
        1 & 0\\
        0 & 1
    \end{pmatrix}
    \quad \text{and} \quad
    ee^\dagger = 1,
\end{equation*}
it follows that
\begin{equation*}
    \delta_A(e^\dagger)e + e^\dagger\delta_A(e)
    =
    -Ae^\dagger e + e^\dagger e A = 0
\end{equation*}
and
\begin{equation*}
    \delta_A(e)e^\dagger + e\delta_A(e^\dagger)
    =
    eAe^\dagger - eAe^\dagger = 0.
\end{equation*}
Hence $\delta_A$ is compatible with the defining relations and therefore extends to a graded derivation of~$L_{\mathbb{C}}(1,2)$.

Conversely, let $\delta \in \der_{\mathrm{gr}}(L_{\mathbb{C}}(1,2))$.
Since $L_1$ is generated as a free right $L_0$-module by $\{e_1,e_2\}$ by Lemma~\ref{lem:rank2}, there exists $A \in M_2(L_0)$ such that 
\begin{equation*}
    \delta_A(e_1) = e_1a_1 + e_2c
    \quad \text{and} \quad
    \delta_A(e_2) = e_1b + e_2a_2.
\end{equation*}
Writing
\begin{equation*}
    A=
    \begin{pmatrix}
    a_1 & b
    \\
    c & a_2
\end{pmatrix},
\end{equation*}
one obtains
\begin{equation*}
    \delta(e_1)=e_1a_1+e_2c
    \quad \text{and} \quad 
    \delta(e_2)=e_1b+e_2a_2.
\end{equation*}
It follows that 
\begin{align*} 
    0 
    &= 
    \delta(e_1^\ast e_1) = \delta(e_1^\ast)e_1 + e_1^\ast\delta(e_1) 
    = 
    \delta(e_1^\ast)e_1 + a_1, 
    \\ 
    0 
    &= 
    \delta(e_1^\ast e_2) 
    = 
    \delta(e_1^\ast)e_2 + e_1^\ast\delta(e_2) = \delta(e_1^\ast)e_2 + b. 
\end{align*} 
Multiplying the first identity on the right by $e_1^\ast$ and the second by $e_2^\ast$ gives
\begin{equation*} 
    \delta(e_1^\ast) = -a_1e_1^\ast - be_2^\ast. 
\end{equation*} 
A similar computation shows that
\begin{equation*} 
    \delta(e_2^\ast) = -ce_1^\ast - a_2e_2^\ast. 
\end{equation*}
Now, let $A,A'\in M_2(L_0)$ such that $\delta_A=\delta_{A'}$ and write
\begin{align*}
    A = 
    \begin{pmatrix}
    a_1 & b
    \\
    c & a_2
    \end{pmatrix}
    \qquad
    A' = 
    \begin{pmatrix}
    a_1' & b'
    \\
    c' & a_2'
    \end{pmatrix}.
\end{align*}
Since $\delta_A(e_1)=\delta_{A'}(e_1)$ one finds that
\begin{align*}
    e_1a_1 + e_2c = e_1a_1' + e_2c'    
\end{align*}
which, by multiplying with $e_1^\ast$ from the left, yields $a_1=a_1'$. Similarly, one shows that $a_2=a_2'$, $c=c'$ and $b=b'$. Hence, $\delta_{A}=\delta_{A'}$ implies that $A=A'$.
\end{proof}

\noindent
Note that there are many $\delta_A\in\Dergr(\LConetwo)$ restricting to nontrivial derivations on $L_0$. For instance, considering
\begin{equation*}
    \delta_A(e_1e_2^\ast) = e_1(a_1-a_2)e_2^\ast + e_2ce_2^\ast - e_1ce_1^\ast
\end{equation*}
(in the notation of Lemma~\ref{lem:der_L(1,2)}) gives a nonzero element of $L_0$ in general.

\begin{remark}\label{rem:der_L(1,2)}
The gauge derivations $\gau(L_{\mathbb{C}}(1,2)) \cong \mathbb{C}$, described in Example~\ref{ex:L(1,2)_atiyah}, are realized by scalar matrices $A = \lambda \id_2$ with $\lambda \in \mathbb{C}$.
In this case one obtains
\begin{equation*}
    \delta_A(\alpha\beta^\ast)
    =
    \lambda (|\alpha|-|\beta|)\alpha\beta^\ast
    =
    \lambda \deg(\alpha\beta^\ast)\alpha\beta^\ast,
\end{equation*}
and, in particular, $\delta_A(r)=0$ for all $r\in L_0$.
Moreover, hermitian derivations are precisely those $\delta_A$ for which $A^\dagger = -A$. 
\end{remark}

Furthermore, it is easy to express the Lie algebra structure of graded derivations in terms of their representation by elements of $M_2(L_0)$.

\begin{lemma}\label{lem:commutator}
    Let $A_1,A_2 \in M_2(L_0)$. 
    Then
    \begin{equation*}
        [\delta_{A_1},\delta_{A_2}]
        =
        \delta_{[A_1,A_2]+\delta_{A_1}(A_2)-\delta_{A_2}(A_1)} .
    \end{equation*}
\end{lemma}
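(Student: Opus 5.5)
Both sides of the claimed identity are graded derivations of $L_{\mathbb C}(1,2)$. By the uniqueness part of Lemma~\ref{lem:der_L(1,2)}, it therefore suffices to compare them on the generators $e_1,e_2$. In fact, the proof of that lemma shows that a graded derivation is determined by its values on $e=(e_1,e_2)$ via $\delta(e)=eA$: the values on $e_1^\ast,e_2^\ast$ are forced by the relations $e^\dagger e=1_2$ and $ee^\dagger=1$, and $L_{\mathbb C}(1,2)$ is generated by $e_1,e_2,e_1^\ast,e_2^\ast$. Note also that the commutator of two graded derivations is again a graded derivation. Hence it is enough to compute $[\delta_{A_1},\delta_{A_2}](e)$ and show that it equals $eB$ with
\begin{equation*}
    B=[A_1,A_2]+\delta_{A_1}(A_2)-\delta_{A_2}(A_1).
\end{equation*}
Here $\delta_{A_i}(A_j)$ means $\delta_{A_i}$ applied entrywise, which lands in $M_2(L_0)$ because $\delta_{A_i}$ is graded.

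The key computation uses the Leibniz rule applied entrywise to the row vector $e$ times the matrix $A_2$:
\begin{equation*}
    \delta_{A_1}(\delta_{A_2}(e))=\delta_{A_1}(eA_2)=\delta_{A_1}(e)A_2+e\,\delta_{A_1}(A_2)=eA_1A_2+e\,\delta_{A_1}(A_2).
\end{equation*}
Swapping the roles of $A_1$ and $A_2$ and subtracting gives
\begin{equation*}
    [\delta_{A_1},\delta_{A_2}](e)=e\bigl(A_1A_2-A_2A_1+\delta_{A_1}(A_2)-\delta_{A_2}(A_1)\bigr)=eB.
\end{equation*}
Thus $[\delta_{A_1},\delta_{A_2}]$ and $\delta_B$ agree on $e_1,e_2$.

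It remains to conclude equality on all of $L_{\mathbb C}(1,2)$. The only point requiring care is the ghost edges. As in the proof of Lemma~\ref{lem:der_L(1,2)}, any derivation $D$ with $D(e)=eB$ must satisfy $D(e^\dagger)=-Be^\dagger$: apply $D$ to $e^\dagger e=1_2$, giving $D(e^\dagger)e=-B$, and multiply on the right by $e^\dagger$, using $ee^\dagger=1$. Hence the two derivations agree on all generators and therefore everywhere. Uniqueness of $B$ then follows from the uniqueness statement in Lemma~\ref{lem:der_L(1,2)}.

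I expect no real obstacle here. The main thing to be careful about is the order of the factors $\delta_{A_1}(e)A_2=eA_1A_2$, which produces the commutator with the sign $[A_1,A_2]$ rather than $[A_2,A_1]$.
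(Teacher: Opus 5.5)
Your proof is correct and follows the paper's argument: the same Leibniz-rule computation $[\delta_{A_1},\delta_{A_2}](e)=eB$ on the row vector $e=(e_1,e_2)$. The only difference is that where the paper says ``a similar computation'' handles $e^\dagger$, you show that the value on $e^\dagger$ is forced by $e^\dagger e=1_2$ and $ee^\dagger=1$, which fills in that step cleanly.
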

\begin{proof}
Set $\delta_1 := \delta_{A_1}$, $\delta_2 := \delta_{A_2}$, and $e := (e_1,e_2)$.
Since $\delta_i(e) = eA_i$, $i \in \{1,2\}$, one has
\begin{equation*}
    \begin{aligned}
        [\delta_1,\delta_2](e)
        &= \delta_1(eA_2)-\delta_2(eA_1)
        \\
        &= eA_1A_2 + e\delta_1(A_2) - eA_2A_1 - e\delta_2(A_1)
        \\
        &= e([A_1,A_2]+\delta_1(A_2)-\delta_2(A_1)).
    \end{aligned}
\end{equation*}
Thus
\begin{equation*}
[\delta_1,\delta_2](e)
=
\delta_{[A_1,A_2]+\delta_{A_1}(A_2)-\delta_{A_2}(A_1)}(e).
\end{equation*}
A similar computation shows that
\begin{equation*}
[\delta_1,\delta_2](e^\dagger)
=
\delta_{[A_1,A_2]+\delta_{A_1}(A_2)-\delta_{A_2}(A_1)}(e^\dagger).
\end{equation*}
This establishes the result.
\end{proof}

\begin{corollary}\label{cor:lieaut}
    The assignment $M_2(L_0) \ni A \mapsto \delta_A \in \der_{\mathrm{gr}}(L_{\mathbb{C}}(1,2))$ defines an isomorphism of Lie algebras from $(M_2(L_0),[\cdot,\cdot]_{\mathrm{gr}})$ onto $\der_{\mathrm{gr}}(L_{\mathbb C}(1,2))$, where
    \begin{equation*}
        [A_1,A_2]_{\mathrm{gr}}
        :=
        [A_1,A_2]+\delta_{A_1}(A_2)-\delta_{A_2}(A_1)
    \end{equation*}
    for all $A_1,A_2 \in M_2(L_0)$.
\end{corollary}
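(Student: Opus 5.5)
The map $A \mapsto \delta_A$ is already a bijection of $M_2(L_0)$ onto $\der_{\mathrm{gr}}(L_{\mathbb C}(1,2))$ by Lemma~\ref{lem:der_L(1,2)}. It is also additive and $\mathbb C$-linear. To see this, note that $\delta_{A+\lambda A'}$ and $\delta_A+\lambda\delta_{A'}$ are both derivations agreeing on the generators $e_1,e_2,e_1^\ast,e_2^\ast$, because the defining formulas are linear in $A$. The vertex $v=1$ is sent to $0$ by any derivation. Since $L_{\mathbb C}(1,2)$ is generated as an algebra by these elements, a derivation is determined by its values on them.

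So it suffices to show that the transported bracket is exactly $[\cdot,\cdot]_{\mathrm{gr}}$. This is the content of Lemma~\ref{lem:commutator}: $[\delta_{A_1},\delta_{A_2}]=\delta_{[A_1,A_2]_{\mathrm{gr}}}$. Here $\delta_{A_i}(A_j)$ means entrywise application of $\delta_{A_i}$, which preserves $M_2(L_0)$ because $\delta_{A_i}$ is graded, so $[A_1,A_2]_{\mathrm{gr}}\in M_2(L_0)$. The commutator of two derivations is a derivation, and it is graded. By the uniqueness part of Lemma~\ref{lem:der_L(1,2)}, equality on the generators $e$ and $e^\dagger$, which Lemma~\ref{lem:commutator} provides, gives equality of the derivations.

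The one point that needs care is that $[\cdot,\cdot]_{\mathrm{gr}}$ is a Lie bracket on $M_2(L_0)$ at all. I would not check Jacobi directly. Instead I would define the bracket on $M_2(L_0)$ by transport of structure along the bijection $A\mapsto\delta_A$. Since $\der_{\mathrm{gr}}$ is a Lie algebra under commutators, the transported bracket satisfies Jacobi and antisymmetry automatically. Lemma~\ref{lem:commutator} then identifies this transported bracket with the explicit formula $[A_1,A_2]_{\mathrm{gr}}$. Hence $(M_2(L_0),[\cdot,\cdot]_{\mathrm{gr}})$ is a Lie algebra, and $A\mapsto\delta_A$ is a Lie algebra isomorphism onto $\der_{\mathrm{gr}}(L_{\mathbb C}(1,2))$.

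The main obstacle is already handled in Lemma~\ref{lem:commutator}: verifying the identity on $e^\dagger$ as well as on $e$. If I had to redo it, I would use $\delta_i(e^\dagger)=-A_ie^\dagger$ and compute
\[
[\delta_1,\delta_2](e^\dagger)=-(\delta_1(A_2)-\delta_2(A_1))e^\dagger-[A_1,A_2]e^\dagger,
\]
which matches $\delta_{B}(e^\dagger)=-Be^\dagger$ for $B=[A_1,A_2]_{\mathrm{gr}}$.
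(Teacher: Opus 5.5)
Your proposal is correct and follows the paper's own argument. The paper states the corollary without a separate proof, deriving it directly from the bijection in Lemma~\ref{lem:der_L(1,2)} and the bracket identity in Lemma~\ref{lem:commutator}; your $\mathbb{C}$-linearity check for $A\mapsto\delta_A$ and your transport-of-structure argument for the Lie axioms of $[\cdot,\cdot]_{\mathrm{gr}}$ fill in details the paper leaves implicit.
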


% For $\lambda\in\complex$ let
% \begin{align*}
%     A_\lambda = \lambda 
%     \begin{pmatrix}
%         0 & 1 \\ 1 & 0
%     \end{pmatrix}
% \end{align*}
% giving 
% \begin{align*}
%     &\delta_{A_\lambda}(e_1) = \lambda e_2\qquad
%     \delta_{A_\lambda}(e_2) = \lambda e_1\\
%     &\delta_{A_\lambda}(e_1^\ast) = -\lambda e_2^\ast\qquad
%     \delta_{A_\lambda}(e_2^\ast) = -\lambda e_1^\ast
% \end{align*}
% satisfying $\delta_{A_{\lambda}}\notin\gau(\LConetwo)$. For $\lambda,\mu\in\complex$ one checks that 
% \begin{align*}
%     [A_\lambda,A_{\mu}]=\delta_{A_{\lambda}}(A_{\mu})
%     =\delta_{A_{\mu}}(A_{\lambda})=0
% \end{align*}
% giving $[\delta_{A_{\lambda}},\delta_{A_{\mu}}]=0$; for convenience we set $\delta_{\lambda}=\delta_{A_{\lambda}}$ and $\delta_{\mu}=\delta_{A_{\mu}}$.  Letting $\g\subseteq\Derext(L_0)$ be the abelian Lie algebra generated by $\delta_{\lambda}\vert_{L_{0}},\delta_{\mu}\vert_{L_0}$
% we define $\sigma:\g\to\Dergr(\LConetwo)$ by 
% \begin{align*}
%     &\sigma(\delta_\lambda\vert_{L_0}) = A_{\lambda}\qquad
%     \sigma(\delta_{\mu}\vert_{L_0}) = \delta_{B}\qquad
%     B = 
%     \begin{pmatrix}
%         z & \mu \\ \mu & z
%     \end{pmatrix}
% \end{align*}
% for $0\neq z\in\complex$.

\bibliographystyle{alpha}
\bibliography{rings}

\begin{thebibliography}{SW17b}

\bibitem[AH26]{ah:on.the.existence.LC}
J.~Arnlind and V.~Hildebrandsson.
\newblock On the existence of noncommutative {L}evi-{C}ivita connections in
  derivation based calculi.
\newblock {\em J. Geom. Phys.}, 222:105764, 2026.

\bibitem[Arn24]{a:lc.kronecker}
J.~Arnlind.
\newblock Noncommutative {R}iemannian geometry of {K}ronecker algebras.
\newblock {\em J. Geom. Phys.}, 199:Paper No. 105145, 22, 2024.

\bibitem[Bae34]{Baer34}
R.~Baer.
\newblock Erweiterung von {Gruppen} und ihren {Isomorphismen}.
\newblock {\em Math. Z.}, 38:375--416, 1934.

\bibitem[Dad80]{Dade1980}
E.~C. Dade.
\newblock Group-graded rings and modules.
\newblock {\em Math. Z.}, 174(3):241--262, 1980.

\bibitem[EM42]{MacEil42}
S.~Eilenberg and S.~MacLane.
\newblock Group extensions and homology.
\newblock {\em Ann. Math. (2)}, 43:757--831, 1942.

\bibitem[EM47]{EilenbergMacLane}
S.~Eilenberg and S.~MacLane.
\newblock Cohomology theory in abstract groups. {II}. {G}roup extensions with a
  non-{A}belian kernel.
\newblock {\em Ann. of Math. (2)}, 48:326--341, 1947.

\bibitem[Haz13]{h:graded.struct.lpa}
R.~Hazrat.
\newblock The graded structure of {L}eavitt path algebras.
\newblock {\em Israel J. Math.}, 195(2):833--895, 2013.

\bibitem[Lec85]{Lec85}
P.~B.~A. Lecomte.
\newblock Sur la suite exacte canonique associ\'ee \`a un fibr\'e principal.
\newblock {\em Bull. Soc. Math. France}, 113(3):259--271, 1985.

\bibitem[LO22]{LuOi22}
P.~Lundstr\"{o}m and J.~\"{O}inert.
\newblock Strongly graded {L}eavitt path algebras.
\newblock {\em J. Algebra Appl.}, 21(07):2250141, 2022.

\bibitem[ML95]{MacLane95}
Saunders Mac~Lane.
\newblock {\em Homology}.
\newblock Class. Math. Berlin: Springer-Verlag, reprint of the 3rd corr. print.
  1975 edition, 1995.

\bibitem[NvO82]{NaOy82}
C.~Nastasescu and F.~van Oystaeyen.
\newblock {\em Graded ring theory}, volume~28 of {\em North-Holland Math.
  Libr.}
\newblock Elsevier (North-Holland), Amsterdam, 1982.

\bibitem[Sch25]{Schreier1925}
O.~Schreier.
\newblock \"{U}ber die {E}rweiterung von {G}ruppen. {II}.
\newblock {\em Abh. Math. Sem. Univ. Hamburg}, 4(1):321--346, 1925.

\bibitem[SW17a]{SchWa15}
K.~Schwieger and S.~Wagner.
\newblock {Part} {I}, {Free} actions of compact {A}belian groups on
  {C$^*$}-algebras.
\newblock {\em Adv. Math.}, 317:224--266, 2017.

\bibitem[SW17b]{SchWa16}
K.~Schwieger and S.~Wagner.
\newblock {Part} {II}, {Free} actions of compact groups on {C$^*$}-algebras.
\newblock {\em J. Noncommut. Geom.}, 11(2):641--688, 2017.

\bibitem[SW17c]{SchWa17}
K.~Schwieger and S.~Wagner.
\newblock {Part III}, {Free} actions of compact quantum groups on
  {C$^*$}-algebras.
\newblock {\em SIGMA}, 13(062):19 pages, 2017.

\bibitem[SW22]{SchWa21}
Kay Schwieger and Stefan Wagner.
\newblock An {Atiyah} sequence for noncommutative principal bundles.
\newblock {\em SIGMA}, 18(015):22 pages, 2022.

\bibitem[Wag19]{Wa18}
S.~Wagner.
\newblock Secondary characteristic classes of {L}ie algebra extensions.
\newblock {\em Bull. Soc. Math. France}, 147(3):443--453, 2019.

\end{thebibliography}

\end{document}